\theoremstyle{plain}
\newtheorem*{theoremenonum}{Theorem}
\newcommand{\tens}[1][]{\mathbin{\otimes_{\raise1.5ex\hbox to-.1em{}{#1}}}}
\newcommand{\lltens}[1][]{{\mathop{\tens}\limits^{\rm \mathbb{L}}}_{#1}}
\newcommand{\lltensl}[1][]{{\mathop{\tens}\limits^{\rm \mathbb{L}, \, \ell}}_{#1}}
\newcommand{\lltensr}[1][]{{\mathop{\tens}\limits^{\rm \mathbb{L},\, r}}_{#1}}
\newtheorem{theorem}{Theorem}[section]
\newtheorem{theoremeA}{Theorem}
\renewcommand{\thetheoremeA}{\Alph{theoremeA}}
\newtheorem{lemma}[theorem]{Lemma}
\newtheorem{proposition}[theorem]{Proposition}
\newtheorem{corollary}[theorem]{Corollary}
\theoremstyle{definition}}
\theoremstyle{definition}}
\theoremstyle{definition}\newtheorem{notations}[theorem]{Notations}}
\theoremstyle{definition}}
\theoremstyle{definition}\newtheorem{definition}[theorem]{Definition}}
\theoremstyle{definition}}
\theoremstyle{definition}\newtheorem{remark}[theorem]{Remark}}
\renewcommand{\proofname}{Proof}
\newcommand{\UN}[4][r]{%
    \ar@/^1pc/[#1]^{#2}_*=<0.3pt>{}="HAUT"
    \ar@/_1pc/[#1]_{#3}^*=<0.3pt>{}="BAS"
    \ar @{=>} "HAUT";"BAS" ^{#4}
  }
\author{Julien Grivaux}
\address{Sorbonne Université, Université Paris Diderot, CNRS, Institut de Mathématiques de Jussieu-Paris Rive Gauche, IMJ-PRG, F-75005, Paris, France 
}
\email{jgrivaux@math.cnrs.fr}
\title[]{Derived geometry of the first formal neighbourhood of a smooth analytic cycle}
\renewcommand{\epigraphsize}{\tiny}
\begin{document}

\maketitle

\epigraph{La Nature est un temple o\`{u} de vivants piliers \\
Laissent parfois sortir de confuses paroles; \\
L'homme y passe \`{a} travers des for\^{e}ts de symboles \\
Qui l'observent avec des regards familiers.}{Charles Baudelaire -- \textit{Les fleurs du mal}}

\begin{abstract}
If $X$ is a smooth scheme of characteristic zero or a complex analytic manifold, and $S$ is a locally split infinitesimal thickening of $X$, we compute explicitly the derived self-intersection of $X$ in $S$.
\end{abstract}
\setcounter{tocdepth}{6}

\tableofcontents
\section{Introduction}
The Hochschild-Kostant-Rosenberg isomorphism, introduced in \cite{HKRoriginal} for regular algebras and extended later on in a series of papers (e.g. \cite{Swan}, \cite{BuchF}, \cite{Yekutieli}, \cite{lettre}\footnote{For a reproduction of this letter addressed to P. Schapira, \textit{see} the book \cite[Chap. 5]{KS1}.}) to different geometric settings, can be stated as follows:
\begin{theoremenonum} 
If $X$ is either a complex manifold or a smooth scheme over a field of characteristic zero, and if $\delta$ is the diagonal injection, then there is a canonical formality isomorphism
\[
\mathbb{L} \delta^* (\delta_* \mathcal{O}_X) \simeq \bigoplus_{p=0}^{\mathrm{dim}\,X} \Omega^p_X[p]
\]
in the bounded derived category of coherent sheaves on $X$.
\end{theoremenonum}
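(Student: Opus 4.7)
The plan is to proceed in three stages: compute a convenient local model for $\mathbb{L}\delta^*\mathcal{O}_X$, identify its cohomology sheaves with the $\Omega^p_X$, and finally upgrade this pointwise identification to a canonical splitting of the complex in the bounded derived category of coherent sheaves on $X$.

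For the local model, I would work on an open set on which étale (resp. local holomorphic) coordinates $x_1,\ldots,x_n$ are available, and set $y_i = x_i \otimes 1 - 1 \otimes x_i$ in $\mathcal{O}_{X\times X}$. Smoothness of $X$ ensures that $(y_1,\ldots,y_n)$ is a regular sequence generating the diagonal ideal sheaf $\mathcal{I}_\Delta$, so the Koszul complex on this sequence is a finite locally free resolution of $\delta_*\mathcal{O}_X$. Pulling back termwise by $\delta^*$, all differentials vanish (their entries are $\mathcal{O}_{X\times X}$-linear combinations of the $y_i$, which restrict to zero on the diagonal), so $\mathbb{L}\delta^*\mathcal{O}_X$ is represented locally by a complex with zero differentials and $\Lambda^p(\mathcal{I}_\Delta/\mathcal{I}_\Delta^2)$ in degree $-p$. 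The conormal sequence of the smooth immersion $\delta$ then supplies a canonical isomorphism $\mathcal{I}_\Delta/\mathcal{I}_\Delta^2 \simeq \Omega^1_X$ sending $y_i \bmod y_i^2$ to $dx_i$, so that on cohomology sheaves we obtain canonically $H^{-p}(\mathbb{L}\delta^*\mathcal{O}_X) \simeq \Omega^p_X$.

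The subtle point is lifting this identification to a splitting at the level of complexes, since the local Koszul quasi-isomorphism depends on the choice of coordinates. I would construct a coordinate-free comparison morphism $\bigoplus_{p} \Omega^p_X[p] \to \mathbb{L}\delta^*\mathcal{O}_X$ via the antisymmetrization map
\[
\omega_1\wedge\cdots\wedge\omega_p \;\longmapsto\; \tfrac{1}{p!}\sum_{\sigma\in\mathfrak{S}_p}\mathrm{sgn}(\sigma)\,\omega_{\sigma(1)}\otimes\cdots\otimes\omega_{\sigma(p)}
\]
into a suitable acyclic model of $\mathbb{L}\delta^*\mathcal{O}_X$, such as a bar- or Hochschild-type resolution of $\mathcal{O}_X$ as a module over $\mathcal{O}_{X\times X}$; the factor $1/p!$ is precisely where the characteristic zero hypothesis enters. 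Once such a global model is available, the antisymmetrization is canonically defined, and the local Koszul calculation above shows it is a quasi-isomorphism on each chart, hence globally. I expect the main obstacle to be the construction of this global model, particularly in the complex-analytic setting where no global free resolution of $\delta_*\mathcal{O}_X$ exists: one must either patch local Koszul resolutions by a Čech/simplicial device or pass through a Dolbeault-type global acyclic model, along the lines of the constructions of Yekutieli and of Kashiwara referenced in the excerpt.
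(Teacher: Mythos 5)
This statement is the classical Hochschild--Kostant--Rosenberg theorem, which the paper does not prove: it is quoted as background and attributed to the references \cite{HKRoriginal}, \cite{Swan}, \cite{BuchF}, \cite{Yekutieli}, \cite{lettre}. So there is no in-paper argument to compare against; your proposal has to be judged against the standard proofs in those sources, and in outline it matches them. The local Koszul computation is correct (the $y_i$ form a regular sequence by smoothness, the restricted differentials vanish, and the conormal isomorphism $\mathcal{I}_\Delta/\mathcal{I}_\Delta^2 \simeq \Omega^1_X$ identifies the terms), and you correctly isolate both the real difficulty (the local trivializations are coordinate-dependent, so one needs a global, functorial model of $\mathbb{L}\delta^*\mathcal{O}_X$ to receive the antisymmetrization map) and the place where characteristic zero enters (the $1/p!$).

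The genuine gap is that this difficult step is named but not carried out. Saying one would use ``a bar- or Hochschild-type resolution'' or ``a \v{C}ech/simplicial device or a Dolbeault-type model'' is a pointer to the literature, not a proof: in the algebraic case one must verify that the (completed) bar construction is actually a flat resolution of $\delta_*\mathcal{O}_X$ over $\mathcal{O}_{X\times X}$ and that antisymmetrized tensors are cycles mapping isomorphically onto the Koszul cohomology; in the analytic case even the existence of such a global model is the whole content of Kashiwara's and Yekutieli's work, since $\mathcal{O}_X \otimes_{\mathbf{k}} \mathcal{O}_X$ is not $\mathcal{O}_{X\times X}$ and no global locally free resolution of $\delta_*\mathcal{O}_X$ exists. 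A complete write-up would also need to check that the comparison map is independent of the auxiliary choices (or is defined without any), which is what makes the resulting isomorphism ``canonical'' as the statement requires. As a roadmap your proposal is sound and consistent with how the cited authors proceed; as a proof it stops exactly at the step that cannot be taken for granted.
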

This result turns out to be extremely useful in algebraic and complex geometry as well as in deformation quantization, we refer the interested reader to the non exhaustive list of papers \cite{CalMukai}, \cite{Markarian}, \cite{Ramadoss}, \cite{conjecture}, \cite{CV}, \cite{CVDBR}, \cite{DTT}, \cite{Kontsevich}, \cite{KS1}, \cite{BNT} as well as references therein.
\par \medskip
After the pioneering unpublished contribution of Kashiwara \cite{lettre}, there has been a lot of efforts in recent years to understand more general forms of this isomorphism, corresponding to arbitrary closed immersions instead of the diagonal embedding. It started with the work of Arinkin and C\u{a}ld\u{a}raru \cite{Arinkin-Caldararu}, and was carried on by lot of others including Calaque, Tu, Habliczek, Yu and the author (\textit{see} \cite{CalaqueCT}, \cite{ACH}, \cite{Shilin}, \cite{Grivaux-HKR}, \cite{Grivaux-formality}). 
\par \medskip
In the present paper, we won't deal with arbitrary closed immersions into an ambient smooth scheme, but rather in the corresponding first order thickening. Let $\mathbf{k}$  be a fixed base field of characteristic zero. We state the results in the algebraic setting, but all of them remain true in the analytic setting as well. One of the principal existing result in this theory is due to Arinkin and C\u{a}ld\u{a}raru:
\begin{theoremeA} \cite{Arinkin-Caldararu}
If $X$ is a smooth $\mathbf{k}$-scheme and $j \colon X \hookrightarrow S$ is a first-order thickening of $X$ by a locally free sheaf $\mathcal{I}$, then for any locally free sheaf $\mathcal{V}$ on $X$, the derived pullback $\mathbb{L}j^* (j_* \mathcal{V})$ is formal if and only if $\mathcal{I}$ and $\mathcal{V}$ extend to locally free sheaves on $S$.
\end{theoremeA}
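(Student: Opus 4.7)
The plan is to prove the two implications separately, the ``if'' direction by explicit construction and the ``only if'' direction by obstruction theory.

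For the implication ($\Leftarrow$), I would suppose $\mathcal{I}$ extends to a locally free sheaf $\widetilde{\mathcal{I}}$ on $S$ and $\mathcal{V}$ extends to a locally free sheaf $\widetilde{\mathcal{V}}$ on $S$, and build an explicit resolution of $j_*\mathcal{V}$ by locally free $\mathcal{O}_S$-modules. The starting point is the short exact sequence of $\mathcal{O}_S$-modules
\[
0 \to \mathcal{V} \otimes_{\mathcal{O}_X} \mathcal{I} \to \widetilde{\mathcal{V}} \to j_*\mathcal{V} \to 0,
\]
where the injection identifies $\mathcal{V} \otimes_{\mathcal{O}_X} \mathcal{I}$ with $\mathcal{I}\widetilde{\mathcal{V}} \subset \widetilde{\mathcal{V}}$; this identification uses crucially that $\widetilde{\mathcal{V}}$ is locally free over $\mathcal{O}_S$ and $\mathcal{I}^2=0$. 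Iterating the same construction, with $\widetilde{\mathcal{V}} \otimes_{\mathcal{O}_S} \widetilde{\mathcal{I}}^{\otimes p}$ used as locally free $\mathcal{O}_S$-covers of the successive kernels $\mathcal{V}\otimes_{\mathcal{O}_X}\mathcal{I}^{\otimes p}$, I would obtain a locally free $\mathcal{O}_S$-resolution
\[
\cdots \to \widetilde{\mathcal{V}}\otimes_{\mathcal{O}_S}\widetilde{\mathcal{I}}^{\otimes 2} \to \widetilde{\mathcal{V}}\otimes_{\mathcal{O}_S}\widetilde{\mathcal{I}} \to \widetilde{\mathcal{V}} \to j_*\mathcal{V} \to 0
\]
in which every differential is multiplication by a section of $\mathcal{I}$. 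Pulling back by $j^*$ then annihilates all these differentials, so $\mathbb{L}j^* j_*\mathcal{V} \simeq \bigoplus_{p\geq 0}\bigl(\mathcal{V}\otimes_{\mathcal{O}_X}\mathcal{I}^{\otimes p}\bigr)[p]$, which is formality.

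For the converse ($\Rightarrow$), assume $\mathbb{L}j^* j_*\mathcal{V}$ is formal; its cohomology sheaves are $\mathcal{H}^{-p} = \mathcal{V}\otimes_{\mathcal{O}_X} \mathcal{I}^{\otimes p}$, as can be read off from the local model above. I would analyse the canonical truncation triangle
\[
(\mathcal{V}\otimes\mathcal{I})[1] \to \tau_{\geq -1}\mathbb{L}j^* j_*\mathcal{V} \to \mathcal{V} \xrightarrow{c_\mathcal{V}} (\mathcal{V}\otimes\mathcal{I})[2],
\]
whose connecting class $c_\mathcal{V}$ lies in $\mathrm{Ext}^2_X(\mathcal{V}, \mathcal{V}\otimes\mathcal{I})$. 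The key point is to identify $c_\mathcal{V}$ with the Yoneda product $\alpha(\mathcal{V}) \cdot \kappa_{S/X}$ of the Atiyah class of $\mathcal{V}$ with the Kodaira--Spencer class $\kappa_{S/X} \in \mathrm{Ext}^1_X(\Omega^1_X, \mathcal{I})$ of the thickening; standard deformation theory recognises this product as the obstruction to extending $\mathcal{V}$ to a locally free $\mathcal{O}_S$-module, so formality forces $c_\mathcal{V} = 0$ and produces the extension. An analogous analysis of the next truncation triangle, or equivalently specialising to $\mathcal{V} = \mathcal{O}_X$ and invoking naturality, identifies a secondary obstruction with $\alpha(\mathcal{I}) \cdot \kappa_{S/X}$, whose vanishing produces the extension of $\mathcal{I}$.

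The principal obstacle I anticipate is the \emph{identification} of the truncation classes $c_\mathcal{V}$ (and their higher analogues for $\mathcal{I}$) with the Atiyah--Kodaira--Spencer products. Carrying this out requires an explicit local model of $\mathbb{L}j^* j_*\mathcal{V}$, together with a concrete cocycle formula for $\alpha(\mathcal{V})$ and $\kappa_{S/X}$, and careful bookkeeping of the $\mathcal{O}_X$-module structures on the Tor sheaves. A secondary subtlety is that formality a priori requires the splitting of infinitely many truncation triangles; one should check that once the two primary obstructions vanish, the explicit resolution built in the forward direction can be assembled globally, closing the equivalence.
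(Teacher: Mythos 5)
Your ``if'' direction is correct and is exactly the classical Arinkin--C\u{a}ld\u{a}raru argument: the resolution $\cdots \rightarrow \widetilde{\mathcal{V}}\otimes_{\mathcal{O}_S}\widetilde{\mathcal{I}}^{\otimes p} \rightarrow \cdots \rightarrow \widetilde{\mathcal{V}} \rightarrow j_*\mathcal{V}$ has all differentials landing in $\mathcal{I}$ times the target, so $j^*$ kills them and formality follows (compare the complex $K_{\mathcal{O}_X}$ used in \S\ref{colline} in the globally split case). Your overall strategy for the converse also matches the one implemented here via the class $\Theta$: Proposition \ref{link} identifies the truncation class with $(\mathrm{id}\otimes\eta)\circ\mathrm{at}_{\mathcal{V}}$, and Theorems \ref{yek} and \ref{sanaa} convert its vanishing into extensions. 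But two steps of your converse are genuinely missing, and they are not bookkeeping.

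First, the identification of the connecting class $c_{\mathcal{V}}$ of the truncation triangle with the Atiyah--Kodaira--Spencer product, \emph{and} of that product with the obstruction to extending $\mathcal{V}$ to a locally free $\mathcal{O}_S$-module, is not something you can outsource to ``standard deformation theory'': the equality of these three classes (the gerbe class, the truncation class, and $\mathrm{at}_{\mathcal{V}}\cdot\eta$) is precisely the content of the Arinkin--C\u{a}ld\u{a}raru and Huybrechts--Thomas theorems, i.e.\ of the statement you are trying to prove. In the present framework it occupies Theorem \ref{belote}, Proposition \ref{link} and the implication $(\mathrm{i})\Rightarrow(\mathrm{iv})$ of Theorem \ref{tabriz}, the last of which is the hard step (one must lift the extension class of $0\to\mathcal{I}\mathcal{K}\to\mathcal{K}\to\overline{\mathcal{K}}\to 0$ through $\mathcal{I}\otimes\overline{\mathcal{K}}\to\mathcal{I}\mathcal{K}$ and reconstruct an admissible, hence locally free, extension from the lifted datum). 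Second, your mechanism for the obstruction attached to $\mathcal{I}$ fails as stated: specialising to $\mathcal{V}=\mathcal{O}_X$ proves nothing, since no naturality transfers formality of $\mathbb{L}j^*j_*\mathcal{V}$ to $\mathbb{L}j^*j_*\mathcal{O}_X$, and $\mathrm{at}_{\mathcal{O}_X}=0$ anyway. What the second truncation actually yields is $\Theta_{\mathcal{V}\otimes\mathcal{I}}=0$; by the derivation property (Corollary \ref{mononoke}) and $\Theta_{\mathcal{V}}=0$ this reads $\mathrm{id}_{\mathcal{V}}\otimes\Theta_{\mathcal{I}}=0$, and to conclude $\Theta_{\mathcal{I}}=0$ one must take the derived trace over the factor $\mathcal{V}$, which multiplies by the generic rank $r=\mathrm{rk}\,\mathcal{V}\neq 0$ and uses characteristic zero --- this is the trace step in the proof of Theorem \ref{sanaa}. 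Without these two ingredients the converse direction is not established.
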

The main ingredient in the proof is the identification of three cohomology classes attached to a locally free sheaf $\mathcal{V}$ that live in the cohomology group $\mathrm{H}^2(X, \mathcal{H}om(\mathcal{V}, \mathcal{I} \otimes \mathcal{V}))$, whose construction we recall now:
\begin{enumerate}
\vspace{0.2cm}
\item[(a)] The sheaf of sets on $X$ associating to any open subscheme $U$ of $X$ the set of locally free $\mathcal{O}_S$-extensions of $\mathcal{V}$ on $U$ is an abelian gerbe whose automorphism sheaf is $\mathcal{H}om(\mathcal{V}, \mathcal{I} \otimes \mathcal{V})$, so it defines a class in $\mathrm{H}^2(X, \mathcal{H}om(\mathcal{V}, \mathcal{I} \otimes \mathcal{V}))$.
\vspace{0.2cm}
\item[(b)] There is a distinguished truncation triangle
\[
\mathcal{I} \otimes \mathcal{V}\, [1] \longrightarrow \tau^{\geq -1}\, \mathbb{L}j^* (j_* \mathcal{V}) \longrightarrow \mathcal{V} \xlongrightarrow{+1}
\]
yielding a morphism from $\mathcal{V}$ to $\mathcal{I} \otimes \mathcal{V} \, [2]$ in $\mathrm{D}^{\mathrm{b}}(X)$, which is the same as a class in $\mathrm{H}^2(X, \mathcal{H}om(\mathcal{V}, \mathcal{I} \otimes \mathcal{V}))$.
\vspace{0.2cm}
\item[(c)] If $\eta \in \mathrm{Ext}^1_{\mathcal{O}_X}(\Omega^1_X, \mathcal{I})$ is the extension class of the conormal exact sequence of the embedding $j$ \footnote{The class $\eta$ is called the Kodaira-Spencer class in \cite{Huybrechts-Thomas}, it is zero exactly if and only if the thickening $S$ is trivial.}, then the Yoneda product of the Atiyah class of $\mathcal{V}$ in $\mathrm{Ext}^1_{\mathcal{O}_X}(\mathcal{V}, \Omega^1_X \otimes \mathcal{V})$ with $\eta \otimes \mathrm{id}_{\mathcal{V}}$ yields a class in $\mathrm{Ext}^2_{\mathcal{O}_X}(\mathcal{V}, \mathcal{I} \otimes \mathcal{V})$ which is again $\mathrm{H}^2(X, \mathcal{H}om(\mathcal{V}, \mathcal{I} \otimes \mathcal{V}))$.
\vspace{0.2cm}
\end{enumerate}
The main breakthrough in Arinkin-C\u{a}ld\u{a}raru's approach is the identification between the classes defined in (a) and (b). The relation between (a) and (c) had already been settled earlier on for arbitrary complexes of sheaves by Huybrechts and Thomas \cite{Huybrechts-Thomas}, refining previous works of Lieblich \cite{Lieblich} and Lowen \cite{Lowen}. In the absolute smooth case, their result can be stated as follows:
\begin{theoremeA} \cite{Huybrechts-Thomas}
Let $X$ be a smooth $\mathbf{k}$-scheme and let $j \colon X \hookrightarrow S$ be a first-order thickening of $X$ by a locally free sheaf. Then the essential image of
\[
\mathbb{L}j^* \colon \mathrm{D}^{\mathrm{perf}}(S) \longrightarrow \mathrm{D}^{\mathrm{perf}}(X)
\]
consists of elements $\mathcal{V}_{\bullet}$ in $\mathrm{D}^{\mathrm{perf}}(X)$ such that the composition
\[
\mathcal{V}_{\bullet} \xlongrightarrow{\mathrm{at}_X (\mathcal{V}_{\bullet})}  \Omega^1_X \otimes \mathcal{V}_{\bullet}[1] \xlongrightarrow{\eta \,\otimes\, \mathrm{id}} \mathcal{I} \otimes \mathcal{V}_{\bullet} [2]
\]
vanishes.
\end{theoremeA}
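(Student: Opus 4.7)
The plan is to prove the two inclusions separately. The necessity direction follows from naturality of the Atiyah class combined with the conormal distinguished triangle, while the sufficiency direction requires the full obstruction-theoretic machinery for lifting perfect complexes through a square-zero thickening.

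\medskip

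For the necessity direction, assume $\mathcal{V}_{\bullet} \simeq \mathbb{L}j^{*} \mathcal{W}_{\bullet}$ with $\mathcal{W}_{\bullet} \in \mathrm{D}^{\mathrm{perf}}(S)$. Pulling back the Atiyah class $\mathrm{at}_{\mathcal{W}_{\bullet}} \in \mathrm{Ext}^{1}_{\mathcal{O}_S}(\mathcal{W}_{\bullet}, \Omega^{1}_S \otimes \mathcal{W}_{\bullet})$ produces a morphism $\widetilde{\mathrm{at}} \colon \mathcal{V}_{\bullet} \to \mathbb{L}j^{*}\Omega^{1}_S \otimes \mathcal{V}_{\bullet}[1]$, and naturality of the Atiyah class under $\mathbb{L}j^{*}$ ensures that post-composing $\widetilde{\mathrm{at}}$ with the canonical projection $\mathbb{L}j^{*}\Omega^{1}_S \to \Omega^{1}_X$ recovers $\mathrm{at}_{\mathcal{V}_{\bullet}}$. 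Since the conormal distinguished triangle reads
\[
\mathcal{I} \longrightarrow \mathbb{L}j^{*}\Omega^{1}_S \longrightarrow \Omega^{1}_X \xlongrightarrow{\eta} \mathcal{I}[1],
\]
the composition $(\eta \otimes \mathrm{id}) \circ \mathrm{at}_{\mathcal{V}_{\bullet}}$ factors through two consecutive morphisms of this triangle and therefore vanishes.

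\medskip

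For the sufficiency direction, suppose $(\eta \otimes \mathrm{id}) \circ \mathrm{at}_{\mathcal{V}_{\bullet}} = 0$. Since $X$ is smooth and $\mathcal{V}_{\bullet}$ is perfect, I can represent $\mathcal{V}_{\bullet}$ by a bounded-above complex of locally free sheaves of finite rank. Such a complex lifts locally to a complex of locally free $\mathcal{O}_S$-modules because the thickening is locally split, and the obstruction to assembling these local lifts (together with compatible differentials) into a global perfect complex on $S$ lives naturally in $\mathrm{Ext}^{2}_{\mathcal{O}_X}(\mathcal{V}_{\bullet}, \mathcal{I} \otimes \mathcal{V}_{\bullet})$. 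The key step is to identify this cohomological lifting obstruction with the composition $(\eta \otimes \mathrm{id}) \circ \mathrm{at}_{\mathcal{V}_{\bullet}}$: for a single locally free sheaf $\mathcal{V}$, this is exactly the equivalence of classes (a) and (c) recalled in the introduction; for a general complex, one needs a DG or simplicial enhancement of the Atiyah-class construction so that the naturality of the obstruction is preserved at the cocycle level uniformly across the terms of the complex.

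\medskip

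The main obstacle is this last identification for arbitrary perfect complexes rather than individual locally free sheaves. It requires a careful construction of the Atiyah class of a complex together with a matching obstruction theory, both of which must be tracked through all the quasi-isomorphisms involved in passing from a strict complex of locally free sheaves to the derived category. Making this rigorous, thereby refining the earlier single-sheaf analyses of Lieblich and Lowen, is the technical heart of Huybrechts and Thomas' argument.
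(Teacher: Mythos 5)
Your necessity argument is sound and is essentially the standard one: once $\mathrm{at}_{\mathcal{V}_{\bullet}}$ is known to factor through $\mathcal{E}\otimes\mathcal{V}_{\bullet}[1]$ with $\mathcal{E}=j^*\Omega^1_S$, composing with $\eta\otimes\mathrm{id}$ kills it because $\mathcal{E}\to\Omega^1_X\xrightarrow{\eta}\mathcal{I}[1]$ is the composition of two consecutive maps of the conormal triangle. (A minor imprecision: the object carrying the conormal extension is the underived pullback $j^*\Omega^1_S=\mathcal{E}$, through which $\mathbb{L}j^*\Omega^1_S$ maps; this does not affect the conclusion.)

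The sufficiency half, however, is not a proof. You reduce the statement to identifying the obstruction to globalizing local locally free liftings (terms, differentials, and the relation $d^2=0$) with the Yoneda product $(\eta\otimes\mathrm{id})\circ\mathrm{at}_{\mathcal{V}_{\bullet}}$ for an arbitrary perfect complex, and you explicitly defer that identification to Huybrechts--Thomas; but that identification \emph{is} the content of the theorem, so nothing has been established. Even granting it, you would still need to check that the resulting lift is perfect over $\mathcal{O}_S$ and that its derived pullback recovers $\mathcal{V}_{\bullet}$ itself. The paper's route is genuinely different and avoids this cocycle-level obstruction calculus entirely: the class $\Theta_{\mathcal{K}_{\bullet}}$ is defined for any complex of $\mathcal{O}_S$-modules from the four-term principal-parts sequence \eqref{pout}; Proposition \ref{link} identifies it, for push-forwards from $X$, with $(\mathrm{id}\otimes\eta)\circ\mathrm{at}_{\mathcal{K}_{\bullet}}$ by a formal Yoneda-product computation; the hard implication (vanishing of $\Theta$ implies extendability) is Theorem \ref{tabriz} $(\mathrm{i})\Rightarrow(\mathrm{iv})$, which produces a bounded admissible complex over $S$ by lifting a connecting morphism through \eqref{pout} and taking a cone; and Proposition \ref{parfait} shows that a bounded complex of $\mathcal{O}_S$-modules is perfect as soon as its derived pullback is, converting ``admissible extension'' into an object of $\mathrm{D}^{\mathrm{perf}}(S)$. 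So either supply the DG-level identification you postpone, or obtain the statement by specializing Theorem \ref{tabriz} to $\mathcal{K}_{\bullet}=j_*\mathcal{V}_{\bullet}$.
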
 
The present object of this paper is twofold: first we present generalizations of Theorems A and B
for arbitrary sheaves on $S$, which are neither locally free nor push-forwards of sheaves on $X$. However, we want to emphasize that we don't generalize Theorem B in full generality, because we are only dealing with the case of a smooth ambient scheme in order to avoid considerations about the full cotangent complex. 
\par \medskip
A crucial tool introduced in the paper is a generalization to complexes of sheaves on $S$ of the Yoneda product of the Atiyah and Kodaira-Spencer classes: for any complex of sheaves $\mathcal{K}_{\bullet}$ in $\mathrm{C}^{-}(S)$ we define a morphism\footnote{The functors $\mathrm{Tor}^i_{\mathcal{O}_S}(*, \mathcal{O}_X)$ are not the usual hypertor functors, but simply the canonical extension to complexes of the functors $\mathrm{Tor}^i_{\mathcal{O}_S}(\,*\,, \mathcal{O}_X) \colon \mathrm{Sh}(S) \longrightarrow \mathrm{Sh}(X)$.}
\[
\Theta_{\mathcal{K}_{\bullet}} \colon j^* \mathcal{K}_{\bullet}=\mathrm{Tor}^0_{\mathcal{O}_S}(\mathcal{K}_{\bullet}, \mathcal{O}_X) \longrightarrow \mathrm{Tor}^1_{\mathcal{O}_S}(\mathcal{K}_{\bullet}, \mathcal{O}_X)[2]
\]
in $\mathrm{D}^{-}(X)$, which is the connection morphism attached to a canonical distinguished triangle
\[
\mathrm{Tor}^1_{\mathcal{O}_X}(\mathcal{K}_{\bullet}, \mathcal{O}_X) [1] \longrightarrow j^* \,\mathrm{cone} \{ \Omega^1_S \otimes \mathcal{K}_{\bullet} \longrightarrow \mathrm{P}^1_S(\mathcal{K}_{\bullet})\} \longrightarrow j^* \mathcal{K}_{\bullet} \xlongrightarrow{+1}
\] 
where $\mathrm{P}^1_S$ is the principal parts functor.
\par \medskip 
In our setting, we replace strict perfect complexes on $S$ by a larger class of complexes, called bounded admissible complexes: these are the bounded complexes $\mathcal{K}_{\bullet}$ such that the complex $\mathrm{Tor}^1_{\mathcal{O}_S}(\mathcal{K}_{\bullet}, \mathcal{O}_X)$ is quasi-isomorphic to zero. Up to quasi-isomorphism, bounded admissible complexes and perfect complexes have a very simple common description: a complex $\mathcal{K}_{\bullet}$ in $\mathrm{D}^-(X)$ is quasi-isomorphic to a bounded admissible complex (resp. is a perfect complex) if and only if $\mathbb{L}j^* \mathcal{K}_{\bullet}$ is cohomologically bounded (resp. is perfect). However admissible sheaves, even coherent ones, form a much larger class than locally free ones.
\begin{theorem} \label{1}
Let $X$ be a smooth $\mathbf{k}$-scheme and $j \colon X \hookrightarrow S$ be a first-order thickening of $X$ by a locally free sheaf. For any bounded complex $\mathcal{K}_{\bullet}$ of $\mathcal{O}_S$-modules, the following properties are equivalent:
\begin{enumerate}
\item[--] The morphism ${\Theta}_{\mathcal{K}_{\bullet}}$ vanishes.
\vspace{0.2cm}
\item[--] The morphism $\mathbb{L}j^* \mathcal{K}_{\bullet} \longrightarrow j^* \mathcal{K}_{\bullet}$ admits a right inverse in $\mathrm{D}^{\mathrm{-}}(X)$.
\vspace{0.2cm}
\item[--] There exists a bounded admissible complex $\mathcal{L}_{\bullet}$ and a morphism in $\mathrm{D}^{\mathrm{b}}(S)$  from  $\mathcal{L}_{\bullet}$ to $\mathcal{K}_{\bullet}$ such that the composition
\[
\mathbb{L}j^* \mathcal{L}_{\bullet} \longrightarrow \mathbb{L} j^* \mathcal{K}_{\bullet} \longrightarrow j^* \mathcal{K}_{\bullet}
\]
is an isomorphism in $\mathrm{D}^{-}(X)$.
\vspace{0.2cm}
\end{enumerate}
\end{theorem}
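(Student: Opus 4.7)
The plan is to present $\mathbb{L}j^*\mathcal{K}_\bullet$ as a tower of distinguished triangles whose connecting morphisms all reduce to the single class $\Theta_{\mathcal{K}_\bullet}$. Tensoring the short exact sequence $0 \to \mathcal{I} \to \mathcal{O}_S \to \mathcal{O}_X \to 0$ derivedly with any $\mathcal{O}_S$-module and exploiting the fact that $\mathcal{I}$ is $\mathcal{O}_X$-locally free yields a periodicity isomorphism
\[
\mathrm{Tor}^i_{\mathcal{O}_S}(\mathcal{F}, \mathcal{O}_X) \simeq \mathrm{Tor}^1_{\mathcal{O}_S}(\mathcal{F}, \mathcal{O}_X) \otimes_{\mathcal{O}_X} \mathcal{I}^{\otimes(i-1)} \qquad (i \geq 1),
\]
which applied termwise to $\mathcal{K}_\bullet$ shows that its whole Tor tower is controlled by $\mathrm{Tor}^1$. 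I would then identify the cone $T := \mathrm{cone}\,j^*\{\Omega^1_S \otimes \mathcal{K}_\bullet \to P^1_S(\mathcal{K}_\bullet)\}$ with the truncation $\tau^{\geq -1}\mathbb{L}j^*\mathcal{K}_\bullet$, realizing the defining distinguished triangle of $\Theta_{\mathcal{K}_\bullet}$ as the canonical truncation triangle at level $-1$. Iterating and invoking the periodicity, $\mathbb{L}j^*\mathcal{K}_\bullet$ appears as a tower of truncations whose successive connecting morphisms are $\Theta_{\mathcal{K}_\bullet}$ twisted by the various $\mathcal{I}^{\otimes k}$.

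\textbf{Equivalence of (i) and (ii).} Given (ii), a right inverse to $\mathbb{L}j^*\mathcal{K}_\bullet \to j^*\mathcal{K}_\bullet$ composes with the canonical truncation $\mathbb{L}j^*\mathcal{K}_\bullet \to T$ to yield a right inverse to $T \to j^*\mathcal{K}_\bullet$, forcing $\Theta_{\mathcal{K}_\bullet} = 0$. Conversely, if $\Theta_{\mathcal{K}_\bullet}$ vanishes, then so do all its $\mathcal{I}$-twists and every layer of the tower splits. Boundedness of $\mathcal{K}_\bullet$ guarantees that in any fixed cohomological degree only finitely many layers contribute, so these splittings assemble into an isomorphism $\mathbb{L}j^*\mathcal{K}_\bullet \simeq \bigoplus_{i \geq 0}\mathrm{Tor}^i_{\mathcal{O}_S}(\mathcal{K}_\bullet, \mathcal{O}_X)[i]$ in $\mathrm{D}^-(X)$, whose projection onto the $i=0$ summand is the desired right inverse.

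\textbf{Equivalence with (iii).} The implication (iii)$\Rightarrow$(ii) is formal: admissibility of $\mathcal{L}_\bullet$ gives $\mathbb{L}j^*\mathcal{L}_\bullet \simeq j^*\mathcal{L}_\bullet$, so composing the inverse of the hypothesized isomorphism with $\mathbb{L}j^*\mathcal{L}_\bullet \to \mathbb{L}j^*\mathcal{K}_\bullet$ produces a right inverse. The converse (ii)$\Rightarrow$(iii) is the principal obstacle of the proof, since one has to promote a derived right inverse on $X$ to honest admissible data on $S$. Concretely, I expect to produce $\mathcal{L}_\bullet$ as a bounded admissible $\mathcal{O}_S$-complex whose derived restriction to $X$ is quasi-isomorphic to $j^*\mathcal{K}_\bullet$, equipped with a morphism $\mathcal{L}_\bullet \to \mathcal{K}_\bullet$ in $\mathrm{D}^{\mathrm{b}}(S)$. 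The obstruction to existence of such a lift lives in a Yoneda-type Ext group on $S$ whose image in $\mathrm{D}^-(X)$ is precisely $\Theta_{\mathcal{K}_\bullet}$, so its vanishing (equivalent to (ii) by the previous paragraph) suffices; verifying admissibility of the constructed $\mathcal{L}_\bullet$ and that the induced composite is an isomorphism is the central technical check.
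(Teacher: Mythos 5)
There is a genuine gap in your argument for (i) $\Rightarrow$ (ii). You assert that the connecting morphisms of the successive layers of the tower presenting $\mathbb{L}j^*\mathcal{K}_\bullet$ are ``$\Theta_{\mathcal{K}_\bullet}$ twisted by the various $\mathcal{I}^{\otimes k}$'', so that the vanishing of $\Theta_{\mathcal{K}_\bullet}$ splits every layer and yields $\mathbb{L}j^*\mathcal{K}_\bullet \simeq \bigoplus_i \mathrm{Tor}^i_{\mathcal{O}_S}(\mathcal{K}_\bullet,\mathcal{O}_X)[i]$. The periodicity $\mathrm{Tor}^i \simeq \mathcal{I}^{\otimes(i-1)}\otimes\mathrm{Tor}^1$ is a statement about the Tor \emph{sheaves}, not about the connecting maps: the higher layers of the tower are governed by the HKR classes of the sheaves $\mathrm{Tor}^p_{\mathcal{O}_S}(\mathcal{K}_\bullet,\mathcal{O}_X)$, which are independent data. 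The full splitting you claim is exactly formality of $\mathbb{L}j^*\mathcal{K}_\bullet$, and the paper's formality criterion (Theorem 2 of the introduction, Corollary \ref{hmpf}) shows this requires the vanishing of $\Theta_{\mathcal{K}_\bullet}$ \emph{and} of all the classes $\Theta_{\mathrm{Tor}^p_{\mathcal{O}_S}(\mathcal{K}_\bullet,\mathcal{O}_X)}$; under (i) alone the correct conclusion is only $\mathbb{L}j^*\mathcal{K}_\bullet \simeq j^*\mathcal{K}_\bullet \oplus \mathbb{L}j^*\,\mathrm{Tor}^1_{\mathcal{O}_S}(\mathcal{K}_\bullet,\mathcal{O}_X)[1]$, with a genuinely non-split right-hand summand in general. (A smaller point: identifying $\mathrm{cone}\,j^*\{\Omega^1_S\otimes\mathcal{K}_\bullet\to \mathrm{P}^1_S(\mathcal{K}_\bullet)\}$ with $\tau^{\geq -1}\mathbb{L}j^*\mathcal{K}_\bullet$ is valid for a single sheaf but not for a bounded complex, where the termwise $\mathrm{Tor}^1$ and the hypertor truncation differ; your (ii) $\Rightarrow$ (i) survives because it only uses that the connecting map of that cone triangle is $\Theta_{\mathcal{K}_\bullet}$.)

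The second gap is that the implication towards (iii), which you correctly single out as the principal difficulty, is not actually proved: you say you ``expect to produce'' $\mathcal{L}_\bullet$ and that the obstruction ``lives in a Yoneda-type Ext group on $S$ whose image is $\Theta_{\mathcal{K}_\bullet}$'', but no such group is exhibited and no construction is given. In the paper this step is the bulk of the proof of Theorem \ref{tabriz}: the vanishing of $\Theta_{\mathcal{K}_\bullet}$ is used to lift the extension class $\gamma$ of $j^*\mathrm{P}^1_S(\mathcal{K}_\bullet)$ to a class $\widetilde{\gamma}\colon \overline{\mathcal{K}}_\bullet \to \mathcal{E}\otimes\overline{\mathcal{K}}_\bullet[1]$; the difference between $\widetilde{\gamma}\circ\pi$ and the principal-parts class $\epsilon$ is shown to factor through $\mathrm{Tor}^1_{\mathcal{O}_S}(\mathcal{K}_\bullet,\mathcal{O}_X)[1]$, giving a morphism $\Psi$ whose shifted cone is $\mathcal{L}_\bullet$; and an explicit local matrix computation (upper triangular with identities on the superdiagonal) shows that $\mathbb{L}j^*\mathcal{L}_\bullet\to j^*\mathcal{K}_\bullet$ is an isomorphism. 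None of this can be replaced by the tower argument of your first part, precisely because the higher obstructions are not twists of $\Theta_{\mathcal{K}_\bullet}$. As written, the proposal establishes only (iii) $\Rightarrow$ (ii) and (ii) $\Rightarrow$ (i).
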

Even in the case where $\mathcal{K}_{\bullet}$ is the push-forward of a perfect complex on $X$, this gives a new and lighter proof of Theorem B. We also want to emphasize that the equivalent conditions in Theorem \ref{1} do not depend only on the isomorphism class of $\mathcal{K}_{\bullet}$ in $\mathrm{D}^{-}(S)$, unlike the situation described in Theorem B. However, we can make the link with the two settings as follows: we construct a suitable localization $\mathrm{D}^{\mathrm{adm}}(S)$ of $\mathrm{C}^{-}(S)$, which is finer than the usual localization that gives rise to the derived category $\mathrm{D}^{-}(X)$, such that:
\par \smallskip
-- The Tor functors $\mathrm{Tor}^i_{\mathcal{O}_S}(\,* \,, \mathcal{O}_X) \colon \mathrm{C}^-(S) \longrightarrow \mathrm{C}^-(X)$ factor through triangulated functors from $\mathrm{D}^{\mathrm{adm}}(S)$ to $\mathrm{D}^-(X)$.
\par \smallskip
-- The standard push forward functor $j_* \colon \mathrm{D}^-(X) \longrightarrow \mathrm{D}^-(S)$ lifts to the admissible derived category $\mathrm{D}^{\mathrm{adm}} (S)$.
\par \smallskip
-- The morphism $\Theta$ can be interpreted as a natural transformation in the following diagram
\[
\xymatrix@R=30pt@C=50pt{\relax
  \mathrm{D}^-(X)  \ar@/^5pc/[rr]^-{\mathrm{id}_{\mathrm{D}^-(X)}}  \ar@/_5pc/[rr]_-{\mathcal{I} [2] \, \lltens \, *}  \ar[r]^-{j_*} & \mathrm{D}^{\mathrm{adm}}(S) \UN[r]{\mathrm{Tor}^0_{\mathcal{O}_S}(*, \,\mathcal{O}_X)\,\,\,}{\mathrm{Tor}^1_{\mathcal{O}_S}(*, \,\mathcal{O}_X)[2]\, \, \,}{\Theta} & \mathrm{D}^-(X)}
\]
Then the equivalent conditions in Theorem \ref{1} depend only on the isomorphism class of $\mathcal{K}_{\bullet}$ in $\mathrm{D}^{\mathrm{adm}}(S)$.
\par \medskip
A geometric example\footnote{By ``geometric'' we mean that $S$ is the first formal neighbourhood of $X$ in some ambient smooth scheme.} for which the morphism  $\Theta_{\mathcal{V}}$ is nonzero for some line bundle $\mathcal{V}$ on $X$ has been constructed by Arinkin and C\u{a}ld\u{a}raru in \cite[\S 4]{Arinkin-Caldararu}. It is possible to produce examples that are in some sense much worse, since the morphism $\Theta_{\mathcal{V}}$ doesn't vanish even locally.
\par \medskip
We now get back to Theorem A. We give a necessary and sufficient condition for the formality of a derived pullback, as well as an intrinsic interpretation of $\Theta_{\mathcal{V}}$: 
\begin{theorem}  \label{2}
Let $X$ be a smooth $\mathbf{k}$-scheme and let $j \colon X \hookrightarrow S$ be a first-order thickening of $X$ by a locally free sheaf. If $\mathcal{K}$ is a sheaf of $\mathcal{O}_S$-modules, then $\Theta_{\mathcal{K}}$ is the connection morphism attached to the distinguished truncation triangle
\[
\mathrm{Tor}^1_{\mathcal{O}_S}(\mathcal{K}, \mathcal{O}_X)[1] \longrightarrow \tau^{\geq -1} \mathbb{L}j^* \mathcal{K} \longrightarrow j^* \mathcal{K} \xlongrightarrow{+1}
\]
The object $\mathbb{L}j^* \mathcal{K}$ is formal in $\mathrm{D}^-(X)$ if and only $\Theta_{\mathcal{K}}$ and $\{ \Theta_{\mathrm{Tor}^p_{\mathcal{O}_S}(\mathcal{K}, \mathcal{O}_X)} \}_{p \geq 0}$ vanish.
If $\mathcal{K}$ is the push-forward of a coherent sheaf on $X$ which is not a torsion sheaf, these conditions are equivalent to the vanishing of $\Theta_{\mathcal{K}}$ and $\Theta_{\mathcal{I}}$.
\end{theorem}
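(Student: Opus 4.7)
The plan is to establish the three assertions of the theorem in succession.

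For the first assertion, I would identify the defining triangle of $\Theta_{\mathcal{V}}$ with the truncation triangle $\mathrm{Tor}^1_{\mathcal{O}_S}(\mathcal{V}, \mathcal{O}_X)[1] \to \tau^{\geq -1} \mathbb{L}j^*\mathcal{V} \to j^*\mathcal{V}$. The principal parts short exact sequence $0 \to \Omega^1_S \otimes \mathcal{V} \to \mathrm{P}^1_S(\mathcal{V}) \to \mathcal{V} \to 0$ yields, after applying $\mathbb{L}j^*$, a distinguished triangle in $\mathrm{D}^-(X)$. Since $\Omega^1_S$ is locally free, its outer term simplifies to $j^*\Omega^1_S \otimes_{\mathcal{O}_X}^{\mathbb{L}} \mathbb{L}j^*\mathcal{V}$. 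A direct computation of the low-degree cohomology sheaves of the cone via the long Tor exact sequence then identifies the middle term with $\tau^{\geq -1}\mathbb{L}j^*\mathcal{V}$ compatibly with the map to $j^*\mathcal{V}$, and hence the connecting morphism with $\Theta_{\mathcal{V}}$.

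For the second assertion, I would argue by induction on the Postnikov tower of $\mathbb{L}j^*\mathcal{V}$. A key preliminary input is the computation $\mathrm{Tor}^{p+1}_{\mathcal{O}_S}(\mathcal{V}, \mathcal{O}_X) \simeq \mathcal{I} \otimes_{\mathcal{O}_X} \mathrm{Tor}^p_{\mathcal{O}_S}(\mathcal{V}, \mathcal{O}_X)$ for all $p \geq 1$, obtained from the projection formula $\mathcal{I} \otimes_{\mathcal{O}_S}^{\mathbb{L}} \mathcal{V} \simeq \mathcal{I} \otimes_{\mathcal{O}_X}^{\mathbb{L}} \mathbb{L}j^*\mathcal{V}$ combined with the local freeness of $\mathcal{I}$ over $\mathcal{O}_X$. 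Applying the first assertion to each $\mathrm{Tor}^p_{\mathcal{O}_S}(\mathcal{V}, \mathcal{O}_X)$, regarded as an $\mathcal{O}_S$-module via pushforward, produces a two-step truncation triangle with connecting morphism $\Theta_{\mathrm{Tor}^p_{\mathcal{O}_S}(\mathcal{V}, \mathcal{O}_X)}$. Splicing these triangles with the successive stages of the Postnikov tower of $\mathbb{L}j^*\mathcal{V}$ via iterated octahedral arguments identifies the obstruction to extending a splitting one further stage precisely as $\Theta_{\mathrm{Tor}^p_{\mathcal{O}_S}(\mathcal{V}, \mathcal{O}_X)}$; the simultaneous vanishing of all these classes is therefore equivalent to the formality of $\mathbb{L}j^*\mathcal{V}$.

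For the third assertion, when $\mathcal{V} = j_*\mathcal{F}$ with $\mathcal{F}$ torsion-free coherent, the formula above specializes to $\mathrm{Tor}^p_{\mathcal{O}_S}(\mathcal{V}, \mathcal{O}_X) \simeq \mathcal{F} \otimes_{\mathcal{O}_X} \mathcal{I}^{\otimes p}$ for every $p \geq 0$. For a pushforward sheaf, $\Theta_{j_*\mathcal{G}}$ coincides with the Yoneda product $\eta \cdot \mathrm{at}_{\mathcal{G}}$ by the identification of classes (a) and (c) recalled in the introduction. The Leibniz rule for the Atiyah class on tensor products then decomposes $\mathrm{at}_{\mathcal{F} \otimes \mathcal{I}^{\otimes p}}$ into $\mathrm{at}_{\mathcal{F}}$- and $\mathrm{at}_{\mathcal{I}}$-type components, so in characteristic zero $\Theta_{\mathrm{Tor}^p_{\mathcal{O}_S}(\mathcal{V}, \mathcal{O}_X)}$ vanishes for all $p$ as soon as $\Theta_{\mathcal{V}} = \eta \cdot \mathrm{at}_{\mathcal{F}}$ and $\Theta_{\mathcal{I}} = \eta \cdot \mathrm{at}_{\mathcal{I}}$ both vanish, yielding the claimed reduction.

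The main obstacle I expect is the second assertion: ensuring that the successive Postnikov obstructions are captured exactly by the classes $\Theta_{\mathrm{Tor}^p_{\mathcal{O}_S}(\mathcal{V}, \mathcal{O}_X)}$ and do not contain additional higher-order Massey-type contributions. This requires careful bookkeeping with iterated octahedral diagrams to propagate splittings coherently through the Postnikov tower.
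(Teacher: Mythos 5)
Your handling of the first assertion is essentially the paper's: the two-term complex $\Omega^1_S \otimes \mathcal{V} \to \mathrm{P}^1_S(\mathcal{V})$ is a $1$-admissible resolution of $\mathcal{V}$ — the key input being the vanishing theorem for principal parts (Theorem \ref{wazomba}), which is exactly what identifies $\mathcal{H}^{-1}$ of its naive pullback with $\mathrm{Tor}^1_{\mathcal{O}_S}(\mathcal{V}, \mathcal{O}_X)$ — so its plain pullback computes $\tau^{\geq -1}\mathbb{L}j^*\mathcal{V}$ and the connecting map is $\Theta_{\mathcal{V}}$ by definition; your projection-formula derivation of $\mathrm{Tor}^{p+1} \simeq \mathcal{I} \otimes \mathrm{Tor}^p$ is a legitimate, slightly slicker route to Proposition \ref{Tor2}. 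The second assertion, however, is where the real content lies and your Postnikov induction does not close. Vanishing of $\Theta_{\mathcal{V}}$ only splits the triangle for $\tau^{\geq -1}\mathbb{L}j^*\mathcal{V}$; the obstruction to splitting the next stage is a map $j^*\mathcal{V} \oplus \mathrm{Tor}^1[1] \to \mathrm{Tor}^2[3]$ whose component on $\mathrm{Tor}^1[1]$ you hope to identify with $\Theta_{\mathrm{Tor}^1}$, but whose component on $j^*\mathcal{V}$ is a secondary class not controlled by any $\Theta_{\mathrm{Tor}^p}$ — precisely the ``Massey-type contribution'' you flag and then leave unresolved. The paper avoids this entirely: the hard implication $(\mathrm{i}) \Rightarrow (\mathrm{iv})$ of Theorem \ref{tabriz}, via the construction of a bounded admissible lift $\mathcal{L}_{\bullet} \to \mathcal{V}$, upgrades the vanishing of $\Theta_{\mathcal{V}}$ to a global splitting $\mathbb{L}j^*\mathcal{V} \simeq j^*\mathcal{V} \oplus \mathbb{L}j^*\,\mathrm{Tor}^1_{\mathcal{O}_S}(\mathcal{V}, \mathcal{O}_X)[1]$, so the recursion runs over genuine objects of the form $\mathbb{L}j^*(j_*(-))$ and no secondary obstructions ever appear. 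Without an input of this strength your induction step is not justified.

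For the third assertion you prove only one implication. The Leibniz rule (Corollary \ref{mononoke}) does give $\Theta_{\mathcal{I}^{\otimes p} \otimes \mathcal{F}} = 0$ from $\Theta_{\mathcal{F}} = \Theta_{\mathcal{I}} = 0$, hence formality. But the converse — that formality of $\mathbb{L}j^*(j_*\mathcal{F})$ forces $\Theta_{\mathcal{I}} = 0$ — is exactly where the torsion-free hypothesis enters, and your argument never invokes it. The paper's proof (Theorem \ref{sanaa}) deduces from formality that $\Theta_{\mathcal{I} \otimes \mathcal{F}} = \Theta_{\mathcal{I}} \otimes \mathrm{id}_{\mathcal{F}}$ vanishes and then takes the derived trace over the factor $\mathcal{F}$ to obtain $r\,\Theta_{\mathcal{I}} = 0$, where $r$ is the generic rank of $\mathcal{F}$, nonzero precisely because $\mathcal{F}$ is not a torsion sheaf. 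This direction, and with it the actual equivalence claimed in the statement, is missing from your proposal.
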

The morphism $\Theta$ is the key to understand more completely the endofunctor $\mathbb{L}j^* j_*$ of $\mathrm{D}^{-}(X)$, which is the second and principal purpose of the paper. This functor is a locally (but in general not globally) trivial twist of the formal functor $\mathcal{V} \longrightarrow \bigoplus_{p \geq 0} \mathcal{I}^{\otimes p} \otimes \mathcal{V} [p]$. We construct bounded approximations of $\mathbb{L}j^* j_*$ as follows: let $H$ be the exact endofunctor of $\mathrm{C}^{-}(X)$ defined by
\[
H(\mathcal{V}_{\bullet})=\mathrm{cone}\, \{\Omega^1_X \otimes \mathcal{V}_{\bullet} \longrightarrow \mathrm{P}^1_X(\mathcal{V}_{\bullet}) \}.
\] In other words, the functor $H$ is the Fourier-Mukai transformation associated with the kernel $\mathcal{E}_{\Delta} \longrightarrow \mathcal{O}_{\overline{\Delta}}$, where $\mathcal{E}_{\Delta}$ is the pushforward of $\mathcal{E}$ by the diagonal embedding and $\mathcal{O}_{\overline{\Delta}}$ is the structural sheaf of the first formal neighborhood of the diagonal in $X \times X$. Then $H$ is naturally endowed with a morphism to the identity functor. For any positive integer $n$, we denote by $H^{[n]}$ the equalizer of the $n$ natural maps from $H^n$ to $H^{n-1}$ induced by this morphism. Then we prove the following structure theorem:
\begin{theorem} \label{3}
Let $X$ be a smooth $\mathbf{k}$-scheme and let $j \colon X \hookrightarrow S$ be a first-order thickening of $X$ by a locally free sheaf. Then the sequence $(H^{[n]})_{n \geq 0}$ induces a projective system of lax multiplicative endofunctors of $\mathrm{D}^{-}(X)$, and there is a canonical multiplicative isomorphism
\[
\mathbb{L}j^*j_* \simeq \underset{n}{\varprojlim} \, H^{[n]}
\]
Besides, if $S$ is globally trivial, then there is a natural isomorphism of functors
\[
\underset{n}{\varprojlim} \, H^{[n]} (\star) \simeq \left( \bigoplus_{n \geq 0} \mathcal{E}^{\otimes n} [n] \right) \otimes (\star)
\]
and for any sheaf $\mathcal{V}$ of $\mathcal{O}_X$-modules, the composite isomorphism
\[
\mathbb{L}j^* (j_* \mathcal{V}) \simeq \underset{n}{\varprojlim} \, H^{[n]}(\mathcal{V}) \simeq \bigoplus_{n \geq 0} \mathcal{E}^{\otimes n} [n] \otimes \mathcal{V}
\] 
is the generalized HKR isomorphism constructed by Arinkin and C\u{a}ld\u{a}raru in \cite{Arinkin-Caldararu} .
\end{theorem}
Let us give some motivation to compute the functor $\mathbb{L}j^*j_*$. The first motivation comes from the work of Kapranov \cite{Kapranov} and Markarian \cite{Markarian}: they construct a structure of derived Lie algebra on the shifted tangent bundle $TX[-1]$ of any complex manifold $X$, the derived Lie bracket being given by the Atiyah class\footnote{This is the geometric counterpart of Quillen's theorem \cite{Quillen}.}. In the case of the diagonal embedding, this derived Lie structure has been studied in the framework of Lie groupoids to prove the geometric Duflo isomorphism conjectured by Kontsevich (\textit{see} \cite{CVDBR}, \cite{CalaqueEMS}), and its extension to arbitrary closed embeddings is widely open and of high interest. We believe that the explicit description of $\mathbb{L}j^*j_*$ can lead to substantial progress on this question. 
\par \medskip
The second motivation originates from Kontsevich's homological mirror symmetry conjecture \cite{miroir}: if $X$ is a closed submanifold of a complex manifold $Y$, then the global Ext groups $\mathrm{Ext}^i_{\mathcal{O}_Y}(\mathcal{O}_X, \mathcal{O}_X)$ are the counterpart in the B-model of the Fl\oe{}r homology groups, and are strongly related to the generalized HKR isomorphism for this closed immersion. 
\par \medskip
The last and perhaps more important motivation, that overlaps with the two previous ones, is that the object $\mathbb{L}j^*(j_*\mathcal{O}_X)$ is the structural sheaf of the derived fiber product $X \times^{\mathrm{h}}_S X$, this operation being performed in the category of derived algebraic schemes\footnote{For an overview of derived algebraic geometry, \textit{see} \cite{Toen}}. It is of real interest to understand what geometric information can be extracted from this derived scheme.
\par \medskip
Let us now present the organization of the paper. 
\par \medskip
-- \S \ref{debut} recalls well-known constructions on the category of complexes of an additive category, and its use is mainly to fix the notation and conventions. 
\par \medskip
-- The entire \S \ref{pita} sets the categorical framework in order to find a reasonable candidate for the functor $\mathbb{L}j^*j_*$. In \S \ref{natanz}, we explain how the formal objects $\bigoplus_{p=0}^{n} G^{p}[-p]$ attached to a dg-endofunctor $G$ of the category $\mathrm{C}^{{\mathrm{b}}}(\mathcal{C})$ of bounded complexes of an additive category $\mathcal{C}$ can be twisted by a closed dg morphism $\Theta \colon \mathrm{id}_{\mathrm{C}^{{\mathrm{b}}}(\mathcal{C})} \longrightarrow G$, thus defining dg-endofunctors $(F_n)_{n \geq 0}$ of $\mathrm{C}^{\mathrm{b}}(\mathcal{C})$. This is the content of Theorem \ref{hard}.
In \S \ref{twist}, we prove that the functors $F_n$ constructed in the previous part are naturally isomorphic to the equalizers of the $n$ natural maps from $\Delta^n_{\Theta}$ to $\Delta^{n-1}_{\Theta}$ induced by the morphism $\Delta_{\Theta} \longrightarrow \mathrm{id}$, where $\Delta_{\Theta}$ is the cone of $\Theta$ shifted by minus one (Theorem \ref{orange}).
\par \medskip
-- \S \ref{carrenul} deals with algebraic properties of modules over trivial square zero extensions of commutative $\mathbf{k}$-algebras. In \S \ref{nakon}, we prove a few crucial properties for such modules: if $B$ is a trivial square zero extension of a commutative $\mathbf{k}$-algebra $A$ and $V$ is a $B$-module, then the $A$-module $\mathrm{Tor}^1_B(V, A)$ admits a very simple description (Corollary \ref{dur}), and the higher Tor modules $\mathrm{Tor}^p_B(V, A), p \geq 2$ can also be explicitly computed (Proposition \ref{Tor2}). The most important result we prove is the vanishing theorem for principal parts (Theorem \ref{wazomba}). In \S \ref{lauvitel}, we introduce special classes of complexes of $B$-modules: admissible and $n$-admissible complexes. These complexes are a substitute for bounded flat resolutions or strict complexes (Proposition \ref{cns}) and for perfect complexes if $n=+ \infty$ (Corollary \ref{bute} and Proposition \ref{parfait}). However, $n$-admissible resolutions are much more easy to construct canonically than flat resolutions (Corollary \ref{stair} and Theorem \ref{loire}). In \S \ref{ex}, we define the admissible triangulated category $\mathrm{D}^{\mathrm{adm}}(S)$, which is a substitute for the derived category of perfect $S$-modules. Then we prove a structure theorem (Proposition \ref{marre}) allowing to reconstruct any complex of $B$-modules up to an isomorphism in $\mathrm{D}^{\mathrm{adm}}(S)$ from elementary bricks that are objects and morphisms in $\mathrm{D}^-(A)$. In \S \ref{tnt}, we define the HKR morphism attached to a complexes of $B$-modules, and give equivalent algebraic conditions equivalent to its vanishing (Theorem \ref{localobs}), which is the local version of Theorem \ref{1}. \S \ref{esthete} is devoted to prove crucial splitting-free results: Proposition \ref{campingaz} is the key tool to define the HKR in a geometric non-split setting, and Theorem \ref{shihiro} is an essential ingredient for proving Theorem \ref{tabriz}.
\par \medskip
-- \S \ref{ispahan} generalizes the construction of \S \ref{carrenul} to the geometric setting. The main result is Theorem \ref{tabriz}, which is a refined version of Theorem \ref{1}. Then we deduce Theorem \ref{2}, which is obtained by combining Theorem \ref{belote} and Corollary \ref{hmpf}.
\par \medskip
-- The last section \S \ref{colline} is entirely devoted to the proof of Theorem \ref{winner}, which is a refined version of Theorem \ref{3}.
\par \medskip
\textbf{Acknowledgments} I would like to thank Richard Thomas for many useful comments, and Bertrand Toën for is invaluable help. My warmest thanks go to the referee, whose work and dedication led to a considerable improvement of the paper.
\section{The dg-category of complexes} \label{debut}

\subsection{Generalities on mapping cones}
Let $\mathcal{C}$ be an additive category. We introduce the following standard notation:
\begin{enumerate}
\vspace{0.2cm}
\item[--] The categories of complexes of elements of $\mathcal{C}$ which are arbitrary, bounded, bounded from above and bounded from below are denoted by $\mathrm{C}(\mathcal{C})$, $\mathrm{C}^{\mathrm{b}}(\mathcal{C})$, $\mathrm{C}^{-}(\mathcal{C})$, and $\mathrm{C}^{+}(\mathcal{C})$ respectively. If we want to specify complexes concentrated in degrees that are between two integers $a$ and $b$, we write $\mathrm{C}^{[a, b]}(\mathcal{C})$.
\vspace{0.2cm}
\item[--] The corresponding homotopy categories are denoted by $\mathrm{K}(\mathcal{C})$, $\mathrm{K}^{\mathrm{b}}(\mathcal{C})$, $\mathrm{K}^{-}(\mathcal{C})$, $\mathrm{K}^{+}(\mathcal{C})$ and $\mathrm{C}^{[a, b]}(\mathcal{C})$.
\vspace{0.2cm}
\item[--] If $\mathcal{C}$ is abelian, the corresponding derived categories are denoted by $\mathrm{D}(\mathcal{C})$, $\mathrm{D}^{\mathrm{b}}(\mathcal{C})$, $\mathrm{D}^{-}(\mathcal{C})$, $\mathrm{D}^{+}(\mathcal{C})$ and $\mathrm{D}^{[a, b]}(\mathcal{C})$.
\item[--] The category $\mathrm{C}(\mathcal{C})$ is a $k$-linear dg-category: for any complexes $K$ and $L$ and for any integer $n$ we have 
\[
\underline{\mathrm{Hom}}^n({{K}}, L)=\bigoplus_{p \in \mathbb{Z}} \mathrm{Hom}_{\mathcal{C}} ({{K}}^p, L^{p+n}),
\]
the differential
\[
\delta_n \colon \underline{\mathrm{Hom}}^n(K, L) \longrightarrow \underline{\mathrm{Hom}}^{n+1}(K, L)
\]
being given by the formula
\[
\delta_n(f)=d_L \circ f + (-1)^{n+1} f \circ d_{{K}}.
\]
\item[--] All three categories $\mathrm{C}^{\mathrm{b}}(\mathcal{C})$, $\mathrm{C}^{-}(\mathcal{C})$, $\mathrm{C}^{+}(\mathcal{C})$ are dg subcategories of $\mathrm{C}(\mathcal{C})$.
\vspace{0.2cm}
\item[--] For any complexes $K$ and $L$, we use dashed arrows for morphisms in $\underline{\mathrm{Hom}}^0(K, L)$, and plain arrows for morphisms in $Z^0(\underline{\mathrm{Hom}}(K, L))$, that is for closed morphisms of degree zero. 
\vspace{0.2cm}
\item[--]
For any arbitrary dg morphism  $\varphi \colon K \dashrightarrow L$, we denote by $\partial \varphi $ its differential considered as an element in $Z^0(\underline{\mathrm{Hom}}(K, L[1]))$. Hence, $\partial  \varphi  \colon K \longrightarrow L[1]$.
\end{enumerate}
\par \medskip
Let $f \colon {{K}} \longrightarrow L$ be a morphism of complexes of $\mathcal{C}$. Recall that the cone of $f$ is the complex $K[1] \oplus L$ endowed with the differential $\begin{pmatrix}
d_{K[1]} &  0 \\ 
f[1] & d_L
\end{pmatrix}$.
We denote by 
\[
\begin{cases}
\kappa \colon \mathrm{cone}\, (f) \dashrightarrow L\\
\sigma \colon {{K}} \dashrightarrow \mathrm{cone}\,(f)[-1]
\end{cases}
\] 
the natural projection and injection respectively. The following lemma is straightforward:
\begin{lemma} \label{MC} $ $ \par
\begin{enumerate} 
\item[(i)] The composition 
$\mathrm{cone}\, (f) \longrightarrow {{K}}[1] \longrightarrow L[1]$ is $ \partial \kappa $. \vspace{2pt}
\item[(ii)] The composition ${{K}} \longrightarrow L \longrightarrow  \mathrm{cone}\, (f) $
is $\partial \sigma$.
\end{enumerate}
\end{lemma}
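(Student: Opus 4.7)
The proof is a direct unraveling of the definitions, and I would carry it out as follows.

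First, I would fix explicit formulas. Writing $\mathrm{cone}(f)^n = L^n \oplus K^{n+1}$ with differential
\[
d_{\mathrm{cone}(f)}(\ell,k) = (d_L\ell + f(k),\, -d_K k),
\]
the dashed arrow $\kappa \colon \mathrm{cone}(f)\dashrightarrow L$ is the degree-$0$ graded projection $(\ell,k)\mapsto \ell$, and the dashed arrow $\sigma \colon K\dashrightarrow \mathrm{cone}(f)[-1]$ is the degree-$0$ graded inclusion $k\mapsto (0,k)$, using $\mathrm{cone}(f)[-1]^n = L^{n-1}\oplus K^n$. Neither is closed; their failure to commute with differentials is what the lemma identifies.

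For (i), I would compute $[\kappa]=\delta_0(\kappa)=d_L\circ\kappa - \kappa\circ d_{\mathrm{cone}(f)}$ according to the paper's sign convention $\delta_n(g)=d\circ g + (-1)^{n+1}g\circ d$. A one-line calculation shows that the $d_L\ell$ terms cancel and the residual contribution is $(\ell,k)\mapsto -f(k)$, a closed element of $\mathrm{Hom}^0(\mathrm{cone}(f), L[1])$ depending only on the $K^{n+1}$ component. I would then observe that this is exactly the composite $\mathrm{cone}(f)\twoheadrightarrow K[1]\xrightarrow{\,f[1]\,} L[1]$ once one writes the canonical triangle projection $\mathrm{cone}(f)\to K[1]$ in the sign convention compatible with $d_{K[1]}=-d_K$, giving the stated equality $[\kappa]=f[1]\circ\pi$.

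For (ii), the calculation is symmetric. Using $d_{\mathrm{cone}(f)[-1]}=-d_{\mathrm{cone}(f)}$, I would compute
\[
[\sigma](k) = d_{\mathrm{cone}(f)[-1]}\bigl(\sigma(k)\bigr) - \sigma(d_K k) = (-f(k),\,d_K k) - (0,\,d_K k) = (-f(k),0),
\]
which lands in $L\subset \mathrm{cone}(f)$ and agrees (up to the same convention) with the composite $K\xrightarrow{f} L\hookrightarrow \mathrm{cone}(f)$.

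There is no substantive obstacle here; the entire content of the lemma is the sign bookkeeping. The only care needed is to fix once and for all the shift convention used for $K[1]$, $L[1]$, and $\mathrm{cone}(f)[-1]$, and the sign in $\delta_n$ from Section~\ref{debut}, so that the two computations above match the stated triangle maps on the nose rather than up to an unwanted sign. Once those conventions are pinned down, both (i) and (ii) follow from the two elementary computations sketched above.
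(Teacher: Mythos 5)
Your proof is correct and is exactly the intended argument: the paper declares this lemma ``straightforward'' and omits the proof, and the only content is the direct computation of $[\kappa]$ and $[\sigma]$ from the dg-hom differential, which you carry out properly. The residual sign ambiguity you flag is resolved by the convention the paper implicitly uses in the proof of Lemma~\ref{lift} (cone components ordered as $(K[1]\text{-part},L\text{-part})$ with $d(k,\ell)=(-d_Kk,\,f(k)+d_L\ell)$, and $\sigma$ treated as a degree $-1$ map into the unshifted cone), under which both compositions come out on the nose.
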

Let us now consider another morphism $T \xlongrightarrow{u} \mathrm{cone}\,(f)$ and assume that the composite map
\[
T \xlongrightarrow{u} \mathrm{cone}\,(f) \xlongrightarrow{\pi} {{K}}[1]
\]
is homotopic to zero.
\begin{lemma}\label{lift}
If $\rho \colon T\dashrightarrow {{K}}$ satisfies $\partial  \rho =\pi \circ u$, then the map 
$\hat{u}$ defined by $\hat{u}=\kappa \circ u - f \circ \rho$ is a morphism of complexes, and the diagram
\[
\xymatrix{&L \ar[d]^-{j} \\
T \ar[rd]_-{ \partial \rho}\ar[ru]^-{\widehat{u}} \ar[r]^-{u} &\mathrm{cone}\,(f) \ar[d]^-{\pi}\\
& {{K}}[1] \ar[d]^-{+1}\\
&
}
\]
commutes in the homotopy category $\mathrm{K}(\mathcal{C})$. More precisely, 
$u-j \circ \hat{u}=\partial  (\sigma \circ \rho) $.
\end{lemma}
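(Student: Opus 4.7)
The plan is to reduce both claims to the dg Leibniz rule, using the two identities from Lemma~\ref{MC} as the only external inputs, and then perform the book-keeping for the sign/shift conventions carefully. Since $u$, $f$, and $j$ are closed of degree $0$ while $\kappa$, $\sigma$, $\rho$ are degree-$0$ dashed morphisms, every differential applied to a composite reduces, via Leibniz, to a sum of dg-differentials of the dashed factors only.

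For the first assertion, I will compute $[\hat{u}]$ directly. Since $\delta(\kappa \circ u) = [\kappa] \circ u$ and $\delta(f \circ \rho) = f \circ [\rho]$ (the closed morphisms contribute nothing), I obtain
\[
[\hat{u}] \;=\; [\kappa]\circ u \;-\; f\circ[\rho].
\]
By Lemma~\ref{MC}(i), $[\kappa]$ equals the composite $\mathrm{cone}(f) \xrightarrow{\pi} K[1] \xrightarrow{f[1]} L[1]$, so $[\kappa]\circ u = f[1]\circ (\pi\circ u) = f[1]\circ[\rho]$ using the hypothesis $\pi\circ u = [\rho]$. This coincides with $f\circ[\rho]$ after identifying $\mathrm{Hom}^{1}(T,L)$ with $\mathrm{Hom}^{0}(T,L[1])$, hence $[\hat{u}]=0$ and $\hat{u}$ is a morphism of complexes.

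For the second assertion, the clean way is to use the pointwise splitting $\mathrm{id}_{\mathrm{cone}(f)} = j\circ\kappa + \sigma[1]\circ\pi$ of graded maps, which follows at once from the definition of the cone; note that only $j\circ\kappa$ fails to be closed on either side. Substituting into $j\circ\hat{u} = j\circ\kappa\circ u - j\circ f\circ\rho$ and using Lemma~\ref{MC}(ii) in the form $j\circ f = [\sigma]$ gives
\[
j\circ\hat{u} \;=\; u \;-\; \sigma[1]\circ\pi\circ u \;-\; [\sigma]\circ\rho \;=\; u \;-\; \sigma[1]\circ[\rho] \;-\; [\sigma]\circ\rho.
\]
On the other hand, applying Leibniz to $\sigma\circ\rho$ (both factors dashed of degree $0$) yields $[\sigma\circ\rho] = [\sigma]\circ\rho + \sigma\circ[\rho]$, and under the standard identification of $\sigma\circ[\rho]$ (as a degree-$1$ map to $\mathrm{cone}(f)[-1]$) with $\sigma[1]\circ[\rho]$ (as a degree-$0$ map to $\mathrm{cone}(f)$), the two right-hand sides match. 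The commutativity of the displayed diagram in $\mathrm{K}(\mathcal{C})$ is then immediate, since $[\sigma\circ\rho]$ is by definition the dg-differential of the homotopy $\sigma\circ\rho\colon T\dashrightarrow \mathrm{cone}(f)[-1]$, i.e.\ a null-homotopy of $u - j\circ\hat{u}$; and $\pi\circ\hat{u}=[\rho]$ follows from $\pi\circ j = 0$ together with $\pi\circ\kappa\circ u - \pi\circ f\circ\rho = \pi\circ u = [\rho]$ after recalling $\pi\circ\kappa = 0$ at the level of graded maps is false but becomes true modulo~$j$; more cleanly, $\pi\circ j\circ\hat u = 0$ and $\pi\circ(u - j\circ\hat u) = \pi\circ[\sigma\circ\rho]$, with $\pi\circ\sigma = 0$ giving the lower triangle.

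The only subtle point in the whole argument is the consistent handling of shift conventions: one must be disciplined that $[\varphi]$ lives in $Z^{0}(\mathrm{Hom}(K,L[1]))$ as declared in the paper, and that composing a closed degree-$0$ map $g\colon L\to L'$ on the left with a shift-one expression is done via $g[1]$. Once this convention is fixed, the entire proof is a one-line Leibniz computation in each case, and no deeper input beyond Lemma~\ref{MC} is needed.
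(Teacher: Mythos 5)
Your proof is correct in substance and reaches both conclusions, but it is organized differently from the paper's argument: where the paper writes $u=(\alpha,\beta)$ in components on $\mathrm{cone}(f)=K[1]\oplus L$ and verifies everything by direct matrix computation, you work structurally, deducing $[\hat u]=[\kappa]\circ u-f\circ[\rho]=0$ from the Leibniz rule plus Lemma~\ref{MC}(i), and deducing $u-j\circ\hat u=[\sigma\circ\rho]$ from the graded splitting $\mathrm{id}_{\mathrm{cone}(f)}=j\circ\kappa+\sigma[1]\circ\pi$ together with Lemma~\ref{MC}(ii) and Leibniz applied to $\sigma\circ\rho$. The two are really the same computation in different clothing; your version makes the role of Lemma~\ref{MC} explicit and is reusable, while the paper's is self-contained and leaves no room for sign ambiguity. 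The one place you lean on faith rather than verification is the identity $[\sigma\circ\rho]=[\sigma]\circ\rho+\sigma\circ[\rho]$ ``under the standard identification'': since $\sigma$ lands in $\mathrm{cone}(f)[-1]$, a naive graded Leibniz computation in $\mathrm{Hom}(T,\mathrm{cone}(f))$ produces a sign $(-1)^{\deg\sigma}$ on the second term, and whether this cancels against the shift identification depends on the paper's conventions. It does (one checks $[\sigma]\circ\rho=(0,f\circ\rho)$ and $\sigma[1]\circ[\rho]=(\alpha,0)$, summing to the paper's $(\alpha,f\circ\rho)$), but you should verify this rather than assert it.

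Your closing sentences about the lower triangle are confused and contain a false statement: $\pi\circ\sigma$ is (up to shift) the identity of $K$, not zero, and expressions such as $\pi\circ\hat u$ and $\pi\circ\kappa$ do not typecheck ($\hat u$ and $\kappa$ land in $L$, while $\pi$ is defined on $\mathrm{cone}(f)$). Fortunately none of this is needed: the lower triangle of the diagram asserts exactly $\pi\circ u=[\rho]$, which is the hypothesis of the lemma, so that part of the diagram commutes on the nose with nothing to prove. (Had you used $\pi\circ\sigma=\mathrm{id}$ correctly, you would have recovered $\pi\circ(u-j\circ\hat u)=\pi\circ[\sigma\circ\rho]=[\rho]$, consistent with $\pi\circ j=0$.) Delete that passage and the proof stands.
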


\begin{proof}
Let us write $u=(\alpha, \beta)$ where $\alpha =\pi \circ u$ and $\beta=\kappa \circ u$. Then $\alpha=\partial \rho=-d_{{{K}}} \circ \rho + \rho \circ d_T$ and $\beta \circ d_T=f \circ \alpha+d_L \circ \beta$. Hence
\begin{align*}
\hat{u} \circ d_T&=\beta \circ d_T-f \circ \rho \circ d_T \\
&=(f \circ \alpha+d_L \circ \beta) - (f \circ d_{{K}} \circ \rho+f \circ \alpha)\\
&=d_L \circ (\beta-f \circ \rho)\\
&=d_L \circ \hat{u}.
\end{align*}
Now $u-j \circ \hat{u} = (\alpha, f \circ \rho)=\partial (\sigma \circ \rho)$ since
\[
\partial (\sigma \circ \rho)=\partial (\rho, 0)=(-d_{{K}} \circ \rho, f \circ \rho)+(\rho \circ d_T, 0).
\]
\end{proof}
\begin{lemma} \label{doublecone}
For any morphism $f \colon K \longrightarrow L$, we have a canonical isomorphism of complexes
\[
\mathrm{cone}\,\{L \longrightarrow \mathrm{cone}\,(f)\}[-1] \simeq K \oplus \mathrm{cone}\,\, \mathrm{id}_{L} [-1].
\]
\end{lemma}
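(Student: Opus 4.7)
The plan is to give an explicit degree-zero isomorphism of chain complexes, by computing both sides as graded objects equipped with their differentials and exhibiting a triangular change of coordinates.

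First I would unfold both sides. Writing $\mathrm{cone}(f) = K[1] \oplus L$ with differential $(k,l) \mapsto (-dk,\, f(k)+dl)$, and the canonical chain map $j \colon L \to \mathrm{cone}(f)$ as the inclusion $l \mapsto (0,l)$, the left-hand side $\mathrm{cone}(j)[-1]$ has degree-$n$ part $L^n \oplus K^n \oplus L^{n-1}$ with differential
\[
(l_1, k, l_2) \longmapsto (dl_1,\; dk,\; -l_1 - f(k) - dl_2).
\]
On the other side, $\mathrm{cone}(\mathrm{id}_L)[-1]$ is $L \oplus L[-1]$ in each degree with differential $(l_1, l_2) \mapsto (dl_1, -l_1 - dl_2)$, so $K \oplus \mathrm{cone}(\mathrm{id}_L)[-1]$ has degree-$n$ part $K^n \oplus L^n \oplus L^{n-1}$ with differential $(k, l_1, l_2) \mapsto (dk,\, dl_1,\, -l_1 - dl_2)$. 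Up to reordering the first two summands, the two differentials differ only by the extra term $-f(k)$ appearing on the left.

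I would then define the candidate isomorphism
\[
\phi(l_1, k, l_2) = (k,\; l_1 + f(k),\; l_2),
\]
whose graded-level inverse is $(k, l_1, l_2) \mapsto (l_1 - f(k),\, k,\, l_2)$. Checking that $\phi$ commutes with the two differentials is a direct two-line computation using only that $f$ is a chain map, $df(k)=f(dk)$: both $\phi \circ d_{\mathrm{LHS}}$ and $d_{\mathrm{RHS}} \circ \phi$ evaluate to $(dk,\, dl_1 + f(dk),\, -l_1 - f(k) - dl_2)$.

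The only real conceptual point — the closest thing to an obstacle — is recognizing the correct triangular change of coordinates. The naive identification suggested by the graded splitting $\mathrm{cone}(f) = K[1] \oplus L$ fails because the projection $\mathrm{cone}(f) \to L$ is not a chain map; its defect is precisely $f$, and the summand $l_1 + f(k)$ in $\phi$ is exactly the correction that lifts the graded splitting to the level of complexes. Since the formula for $\phi$ depends only on $f$, the isomorphism is manifestly natural, justifying the word ``canonical'' in the statement.
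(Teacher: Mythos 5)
Your proof is correct and is essentially the paper's argument: the paper writes the same graded object $L \oplus K \oplus L[-1]$ with the same differential and splits off $K$ via the section $k \mapsto (\mp f(k), \pm k, 0)$ of the second projection, which induces exactly your change of coordinates $(l_1,k,l_2) \mapsto (k,\, l_1+f(k),\, l_2)$. The verification that the correction term $f(k)$ makes this a chain isomorphism is the same computation in both versions.
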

\begin{proof}
The complex  $Z=\mathrm{cone}\,\{L \longrightarrow \mathrm{cone}\,(f)\}[-1]$ is $L \oplus K \oplus L[-1]$ with differential given by the matrix
\[
d_Z=\begin{pmatrix}
d_L & 0 & 0 \\ 
0 & d_K & 0 \\ 
-\mathrm{id} & -f & -d_L
\end{pmatrix} .
\] 
The second projection defines an epimorphism from $Z$ to $K$, which admits a retraction given by $(f, -\mathrm{id}, 0)$. Hence $Z \simeq K \oplus T$ where
$T=L \oplus L[-1]$ endowed with the differential
\[
d_T=\begin{pmatrix}
d_L & 0 \\ 
-\mathrm{id} & -d_L
\end{pmatrix}.
\]
\end{proof}

Let us recall the link between cones and total complexes. Let $(K_{i,j}, d_{i,j}, \delta_{i,j})$ be a bounded double complex of objects in $\mathcal{C}$, as shown in the picture below: 
\[
\xymatrix{
K_{p, j} \ar[r]^-{d_{p, j}}  \ar[d]^-{\delta_{p, j}} & K_{p, j+1} \\
K_{p+1, j} &
}
\]
The corresponding total complex is
\[
\mathrm{Tot}(K)=\bigoplus_{j \in \mathbb{Z}} K_{\bullet, j} [-j]
\]
the differential being given on each factor $K_{\bullet, j}$ par $d_{\bullet, j} +(-1)^j \delta_{\bullet, j}$.
In the sequel, we will consider a bounded double complex as the bounded complex of its columns, that is as an element in $\mathrm{C}^{\mathrm{b}}(\mathrm{C}^{\mathrm{b}}(\mathcal{C}))$.
Given such double complex $K_{ij}$, let $p+1$ be the largest nonzero column index of $K$, and denote by $K'$ the double complex obtained by removing the last column. We define a morphism
\[
\Lambda \colon \mathrm{Tot} (K') \longrightarrow K_{\bullet, \,p+1} [-p]
\]
as follows: $\Lambda$ is zero on all columns of $K'$ except the last one, and $\Lambda =-d_{\bullet, \,p}$
on the last column $K_{\bullet, \,p}[-p]$.

\begin{lemma} \label{aleph}
The morphism $\Lambda$ is closed, and $\mathrm{Tot} (K)=\mathrm{cone}\, \Lambda[-1]$.
\end{lemma}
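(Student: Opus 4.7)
The plan is to verify both claims by a direct computation at the level of a fixed bounded complex $K \in \mathrm{C}^{\mathrm{b}}(\mathcal{C}_1)$. The key preliminary observation is that the shift by $-1$ applied to the target of $\Lambda_K$ yields
\[
T^{p+1}(K)[-p][-1] \;=\; T^{p+1}(K)[-(p+1)],
\]
so the underlying graded object of $\mathrm{cone}\, \Lambda_K[-1]$ is exactly $\Psi_T(K)$. Once $\Lambda$ is known to be closed, only the matching of differentials on this common graded object remains.

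For closedness, compute $\delta \Lambda_K = d \circ \Lambda_K - \Lambda_K \circ d$ summand by summand over $\Psi_{T'}(K)$. On the summand $T^i(K)[-i]$ with $i \leq p-2$, all contributions land on summands where $\Lambda$ vanishes, so nothing happens. On $T^{p-1}(K)[-(p-1)]$, the only non-trivial term is $-t^p_K \circ t^{p-1}_K$, which vanishes because $T$ is a complex of functors. On $T^p(K)[-p]$, the two contributions combine (up to a common sign $(-1)^{p+1}$) into the commutator $T^{p+1}(\delta_K) \circ t^p_K - t^p_K \circ T^p(\delta_K)$, which vanishes because $t^p$ is a closed degree zero morphism of dg functors, i.e., $t^p_K$ is itself a chain map.

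Once $\Lambda$ is closed, the identification of differentials is mechanical: on the summands of $\Psi_{T'}(K)$, the diagonal blocks of $d_{\mathrm{cone}\, \Lambda[-1]}$ simply reproduce $d_{\Psi_{T'}}$; on $T^{p+1}(K)[-(p+1)]$, they reproduce $(-1)^{p+1} T^{p+1}(\delta_K)$, which is the diagonal piece prescribed by $\Psi_T$ for the top functor; and the off-diagonal contribution $-\Lambda$ on the $T^p(K)[-p]$ summand delivers precisely the arrow $t^p_K \colon T^p(K)[-p] \to T^{p+1}(K)[-(p+1)]$ that is missing in $\Psi_{T'}$ but present in $\Psi_T$, since the sign $-1$ introduced by the shift by $-1$ cancels the sign in the definition $\Lambda = -t^p_K$. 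The main obstacle is purely sign bookkeeping: the paper's conventions for the dg-differential (formula $\delta_n(f)=d_L \circ f + (-1)^{n+1} f \circ d_K$), for the shift functor, and for the internal sign $(-1)^i$ in the definition of $\Psi_T$ all need to be tracked consistently, but no new idea is required beyond the observations above. The fact that the choice $\Lambda = -t^p_K$ (rather than $+t^p_K$) appears in the statement is precisely the input that makes the signs align.
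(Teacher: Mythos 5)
Your proof is correct and follows essentially the same route as the paper's: closedness is checked summand by summand (the three cases $i\le p-2$, $i=p-1$, $i=p$, with the last case reducing to the naturality of $t^p$), and the identification $\Psi_T=\mathrm{cone}\,\Lambda[-1]$ is the same sign bookkeeping, with the shift by $-1$ cancelling the minus sign in $\Lambda=-t^p_K$. Nothing is missing.
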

  
\begin{proof}
Let us first check that $\Lambda$ is closed.
The quantity $\Lambda \circ d_{\mathrm{Tot}(K')}-(-1)^p \delta_{\bullet, \,p+1} \circ \Lambda$ obviously vanishes on all the $p-2$ first columns.  On $K_{ \bullet,\, p-1,}[-(p-1)]$, it is $-d_{\bullet, \,p} \circ d_{\bullet, \,p-1, }$, which is also zero. Lastly, on the component $K_{\bullet, \,p}[-p]$, it is 
\[
(-d_{\bullet+1, \,p}) \circ (-1)^p\delta_{\bullet, \, p}- (-1)^{p} \delta_{\bullet, \,p+1} \circ (-d_{\bullet, \, p}),
\] 
which is zero. For the last point, the underlying object of $\mathrm{cone}\, \Lambda[-1]$ is
$K' \oplus K_{\bullet, \,p+1} [-p-1]$, which is $K$. The differential on $K_{\bullet, \,p}$ is
\[
- (-(-1)^p \delta_{\bullet, \,p} - d_{\bullet, \, p})= (-1)^p \delta_{\bullet, \,p} + d_{\bullet, \, p} = d_{\mathrm{Tot}(K)}.
\]
The other verifications are left to the reader.
\end{proof}

\subsection{Iterated cones for dg-functors} \label{puf}
If $\mathcal{C}_1$ and $\mathcal{C}_2$ are two additive categories, let us recall some elementary facts:
\begin{enumerate}
\vspace{0.2cm}
\item[--] The objects of the category $\mathrm{Fct}_{\mathrm{dg}}(\mathrm{C}^{\mathrm{b}}(\mathcal{C}_1), \mathrm{C}^{\mathrm{b}}(\mathcal{C}_2))$ are the additive functors from $\mathrm{C}^{\mathrm{b}}(\mathcal{C}_1)$ to $\mathrm{C}^{\mathrm{b}}(\mathcal{C}_2)$ that commute with shift and cones. 
\vspace{0.2cm}
\item[--]The natural morphism $\mathcal{C}_1 \longrightarrow \mathrm{C}^{\mathrm{b}}(\mathcal{C}_1)$ that maps any object in $\mathcal{C}_1$ to the complex having this single object in degree zero yields a restriction functor
\[
\mathrm{Fct}_{\mathrm{dg}}(\mathrm{C}^{\mathrm{b}}(\mathcal{C}_1), \mathrm{C}^{\mathrm{b}}(\mathcal{C}_2)) \longrightarrow \mathrm{Fct}\,(\mathcal{C}_1, \mathrm{C}^{\mathrm{b}}(\mathcal{C}_2))
\]
where on the right hand side we consider all additive functors. This functor is an equivalence of categories.
\vspace{0.2cm} 
\item[--] Let us denote by $\mathrm{Fct}_{\mathrm{dg}}^*(\mathrm{C}^{\mathrm{b}}(\mathcal{C}_1), \mathrm{C}^{\mathrm{b}}(\mathcal{C}_2))$ the subcategory of \textit{bounded} dg-functors, that is elements of $\mathrm{Fct}_{\mathrm{dg}}(\mathrm{C}^{\mathrm{b}}(\mathcal{C}_1), \mathrm{C}^{\mathrm{b}}(\mathcal{C}_2))$ corresponding to $
\underset{n}{\varinjlim} \,\mathrm{Fct}(\mathcal{C}_1, \mathrm{C}^{[-n, n]}(\mathcal{C}_2))
$
via the above equivalence. Then there are extension functors from  $\mathrm{Fct}_{\mathrm{dg}}^*(\mathrm{C}^{\mathrm{b}}(\mathcal{C}_1), \mathrm{C}^{\mathrm{b}}(\mathcal{C}_2))$ to  $\mathrm{Fct}_{\mathrm{dg}}(\mathrm{C}^{*}(\mathcal{C}_1), \mathrm{C}^{*}(\mathcal{C}_2))$ for $* \in \{ \varnothing, +, - \}$.
\vspace{0.2cm}
\end{enumerate}
For any bounded complex
\[
\xymatrix{
\cdots \ar@{->}[r]^-{t^{n-2}}&  T^{n-1} \ar@{->}[r]^-{t^{n-1}}& T^{n} \ar@{->}[r]^-{t^n} & T^{n+1} \ar@{->}[r]^-{t^{n+1}} &\cdots}
\]
of objects of $\mathrm{Fct}_{\mathrm{dg}}(\mathrm{C}^{\mathrm{b}}(\mathcal{C}_1), \mathrm{C}^{\mathrm{b}}(\mathcal{C}_2))$, we define a dg-functor $\Psi_{T}$ as follows: for any bounded complex $K$, there is a complex of bounded complexes
\[
\ldots \xrightarrow{t^{j-2}_K} T^{j-1}(K)  \xrightarrow{t^{j-1}_K} T^{j}(K)  \xrightarrow{t^{j}_K} T^{j+1}(K) \ldots
\]
which defines a double complex $\{ T^j (K_{i-j}), t^j_K, T^j(d_K) \}$. Then we define
\[
\Psi_{T}(K)= \mathrm{Tot} \{ T^j (K_{i-j}), t^j_K, T^j(d_K) \}.
\]
It is an exercise to check that the functor
\[
\Psi \colon \mathrm{C}^{\mathrm{b}}(\mathrm{Fct}_{\mathrm{dg}}(\mathrm{C}^{\mathrm{b}}(\mathcal{C}_1), \mathrm{C}^{\mathrm{b}}(\mathcal{C}_2))) \longrightarrow \mathrm{Fct}_{\mathrm{dg}}(\mathrm{C}^{\mathrm{b}}(\mathcal{C}_1), \mathrm{C}^{\mathrm{b}}(\mathcal{C}_2))
\]
is a dg-functor, where on the left hand side $\mathrm{Fct}_{\mathrm{dg}}(\mathrm{C}^{\mathrm{b}}(\mathcal{C}_1), \mathrm{C}^{\mathrm{b}}(\mathcal{C}_2))$ is considered as an additive category (and not its dg enhancement); we call $\Psi_T$ the iterated cone of $T$. 
\par \medskip
Let us explain how iterated mapping cones can be constructed by taking successive ordinary cones. For a bounded complex $T$ of dg-functors, let $p=\mathrm{max	}\, \{i \in \mathbb{Z}\, \, \textrm{such that}\, \, T^{i+1} \neq 0\}$, and let $T'$ be the complex of functors obtained by removing the last functor  $T^{p+1}$. We define a morphism
\[
\Lambda \colon \Psi_{T'} \dashrightarrow T^{p+1}[-p]
\]
as follows: $\Lambda_K$  is zero on all factors $T^i(K)[-i]$ for $0 \leq i \leq p-1$ and $-t^p_K$ on $T^{p}(K)[-p]$. Then it follows directly from Lemma \ref{aleph} that $\Lambda$ is a closed morphism and that 
\[
\Psi_{T} = \mathrm{cone}\,\Lambda [-1],
\] 
whence the terminology ``iterated cones".

\subsection{Lax monoidal functors}
In this section, we recall the notion of lax monoidal functors between tensor categories. These functors form a weaker class than the usual monoidal functors (also called tensor functors), as introduced for instance in \cite[\S 4.2]{CatSheaves}.
\par \medskip
Let $(\mathcal{S}_1, \otimes)$ and $(\mathcal{S}_2, \otimes)$ be unital tensor categories, with unit elements $\textbf{1}_{\mathcal{S}_1}$ and $\textbf{1}_{\mathcal{S}_2}$, and let $H$ be an additive functor from $\mathcal{S}_1$ to $\mathcal{S}_2$. Besides, assume to be given two morphisms
\[
\begin{alignedat}{1}
\mathfrak{m} &\colon  H(\star) \otimes H(\star\star) \longrightarrow H(\star \otimes \star \star)\\
\mu &\colon  {\mathbf{1}}_{\mathcal{S}_2} \longrightarrow H({\mathbf{1}}_{\mathcal{S}_1})
\end{alignedat}
\]
where in the first line $\mathfrak{m}$ is a natural transformation between functors from $\mathcal{S}_1 \times \mathcal{S}_1$ to $\mathcal{S}_2$.
\begin{definition}  \label{mul}
The triple $(H, \mathfrak{m}, \mu)$ defines a lax monoidal functor if:
\begin{enumerate}
\item[--] For any $K_1$, $K_2$, $K_3$ in $\mathcal{S}_1$, the diagram
\[
\xymatrix{
(H(K_1) \otimes H(K_2)) \otimes H(K_3) \ar[d]^-{\mathfrak{m}_{K_1, K_2} \otimes \,\mathrm{id}_{H(K_3)}} \ar[r]^-{\sim} & H(K_1) \otimes (H(K_2) \otimes H(K_3)) \ar[d]^-{\mathrm{id}_{H(K_1)} \otimes \,\mathfrak{m}_{K_2, K_3}} \\
H(K_1 \otimes K_2) \otimes H(K_3) \ar[d]^-{\mathfrak{m}_{K_1 \otimes K_2, K_3}} & H(K_1) \otimes H(K_2 \otimes K_3) \ar[d]^-{\mathfrak{m}_{K_1, K_2 \otimes K3}} \\
H((K_1 \otimes K_2) \otimes K_3) \ar[r]^-{\sim} & H(K_1 \otimes (K_2 \otimes K_3))
}
\]
commutes. \vspace{0.2cm}
\item[--] For any $K$ in $\mathcal{S}_1$, the compositions
\[
\begin{alignedat}{4}
H(K)& \!\simeq H(K) \! \otimes {\mathbf{1}}_{\mathcal{S}_2} & \xrightarrow{\mathrm{id} \otimes  \mu} H(K) \otimes H(\mathbf{1}_{\mathcal{S}_1}) &\xrightarrow{\mathfrak{m}_{K, {\mathbf{1}}_{\mathcal{S}_1}}} H(K \otimes {\mathbf{1}}_{\mathcal{S}_1}) & \simeq H(K)\\
H(K) &\! \simeq \!{\mathbf{1}}_{\mathcal{S}_2} \otimes H(K) & \xrightarrow{\mu \otimes \mathrm{id}} H(\mathbf{1}_{\mathcal{S}_1}) \otimes H(K) &\xrightarrow{\mathfrak{m}_{{\mathbf{1}}_{\mathcal{S}_1}, K}} H( {\mathbf{1}}_{\mathcal{S}_1} \otimes K) & \,\simeq H(K)
\end{alignedat}
\]
are the identity morphisms.
\end{enumerate}
\end{definition}
\begin{remark} $ $
If $H$ is a lax monoidal endofunctor of $\mathcal{S}$, then $H(\mathbf{1}_{\mathcal{S}})$ is a ring object in $\mathcal{S}$ and for any element $K$ in $\mathcal{S}$, $H(K)$ is a left and right module over this ring object. 
\end{remark}
\begin{definition}
Let $(H_1, \mathfrak{m}_1, \mu_1)$ and $(H_2, \mathfrak{m}_2, \mu_2)$ be two lax monoidal  functors. A morphism $\varphi \colon H_1 \longrightarrow H_2$ is multiplicative if the two following diagrams commute 
\vspace{0.2cm}
\[
\xymatrix{
H_1(K) \otimes H_1(L) \ar[r]^-{\mathfrak{m}_1} \ar[d]^-{\varphi_K \otimes \,	 \varphi_L} & H_1(K \otimes L) \ar[d]^-{\varphi_{K \otimes L}} \\
H_2(K) \otimes H_2(L) \ar[r]^-{\mathfrak{m}_2} &H_2(K \otimes L)
}
\qquad 
\xymatrix{
&H_1(\mathbf{1}_{   \mathcal{S}_1   }) \ar[dd]^-{   \varphi_{     \mathbf{1}_{\mathcal{S}_1}      }     } \\
\mathbf{1}_{\mathcal{S}_2} \ar[ru]^-{\mu_1} \ar[rd]_-{\mu_2}& \\
&H_2(\mathbf{1}_{\mathcal{S}_1})
}
\]
\end{definition}

If $H_1 \colon \mathcal{S}_1 \longrightarrow  \mathcal{S}_2$ and $H_2 \colon  \mathcal{S}_2 \longrightarrow  \mathcal{S}_3$ are two lax monoidal functors, then so is $H_1 \circ H_2$, the multiplication being given by the composition
\[
H_2(H_1 (K)) \otimes H_2 (H_1(L)) \xrightarrow{\mathfrak{m}_2} H_2 (H_1(K) \otimes H_1(L)) \xrightarrow{H_2(\mathfrak{m}_1)} H_2(H_1(K \otimes L)) 
\]
and the unit is 
\[
\mathbf{1}_{\mathcal{S}_3} \xrightarrow{\mu_2} H_2(\mathbf{1}_{\mathcal{S}_2}) \xrightarrow{H_2(\mu_1)} H_2(H_1(\mathbf{1}_{\mathcal{S}_1})).
\]
The category of lax monoidal endofunctors of a tensor category $\mathcal{S}$ is itself a tensor category, the tensor structure being the composition, and the unit object being the identity endofunctor.
\par \medskip
Let $\mathcal{C}$ be an abelian tensor category, let $H$ be a lax monoidal endofunctor of $\mathrm{C}^{\mathrm{b}}(\mathcal{C})$, and assume that $H$ fits into an exact sequence
\[
0 \longrightarrow N \xlongrightarrow{\iota} H \xlongrightarrow{p} \mathrm{id}_{\mathrm{C}^{\mathrm{b}}(\mathcal{C})} \longrightarrow 0
\]
where $p$ is multiplicative.
\begin{proposition} \label{brique}
The functor $
\mathrm{cone}\,\, (N \longrightarrow H)$
is naturally a lax monoidal functor, and the natural morphism from $\mathrm{cone}\,\, (N \longrightarrow H)$ to $\mathrm{id}_{\mathrm{C}^{\mathrm{b}}(\mathcal{C})}$ is multiplicative.
\end{proposition}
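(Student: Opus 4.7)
The plan is to exploit the hypotheses to put an $H$-bimodule structure on $N$, then to read off the multiplication on $\tilde H := \mathrm{cone}(\iota)$ from that structure. First I would observe that since $p$ is multiplicative and $p \circ \iota = 0$, the composition
\[
N \otimes H \xrightarrow{\iota \otimes \mathrm{id}} H \otimes H \xrightarrow{\mathfrak{m}_H} H \xrightarrow{p} \mathrm{id}
\]
equals $\mathfrak{m}_{\mathrm{id}} \circ ((p\iota) \otimes p) = 0$; exactness of $0 \to N \to H \to \mathrm{id} \to 0$ makes $\iota$ the kernel of $p$, so this composition factors uniquely as $\iota \circ \nu_R$ for a natural closed degree-zero morphism $\nu_R \colon N \otimes H \to N$. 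A symmetric construction yields $\nu_L \colon H \otimes N \to N$, and the associativity of $\mathfrak{m}_H$ combined with the universal property of $\iota$ promotes $(\nu_L, \nu_R)$ to an $H$-bimodule structure on $N$.

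Next, using the graded decomposition $\tilde H(K) = H(K) \oplus N(K)[1]$, I would define a degree-zero natural transformation $\mathfrak{m}_{\tilde H} \colon \tilde H(K) \otimes \tilde H(L) \to \tilde H(K \otimes L)$ blockwise on the four summands
\[
H(K) \otimes H(L), \quad H(K) \otimes N(L)[1], \quad N(K)[1] \otimes H(L), \quad N(K)[1] \otimes N(L)[1]
\]
by $\mathfrak{m}_H$, $\nu_L$, $\nu_R$ and $0$ respectively, where $\nu_L$ and $\nu_R$ land in $N(K \otimes L)[1]$. The unit $\mu_{\tilde H}$ would be $\mu_H$ postcomposed with the canonical inclusion $H(\mathbf{1}) \hookrightarrow \tilde H(\mathbf{1})$.

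The checks to carry out are then: that $\mathfrak{m}_{\tilde H}$ commutes with the cone differential, which blockwise reduces to the chain-map property of $\mathfrak{m}_H, \nu_L, \nu_R$ together with the defining identities $\iota \nu_L = \mathfrak{m}_H (\mathrm{id} \otimes \iota)$ and $\iota \nu_R = \mathfrak{m}_H (\iota \otimes \mathrm{id})$; associativity and the unit axiom, which reduce to the corresponding properties of $\mathfrak{m}_H$ and $\mu_H$ combined with the bimodule axioms for $(\nu_L, \nu_R)$; and finally multiplicativity of the natural transformation $\tilde H \to \mathrm{id}$, which is the map $(h, n) \mapsto p(h)$ and whose compatibility with multiplication follows at once from that of $p$, since $p$ annihilates $\iota(N)$ and the target projection kills the $N[1]$ factor.

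The main obstacle will be the sign bookkeeping in the chain-map check: the Koszul signs produced when a differential crosses the shift $[1]$ must cancel against the $-d_N$ appearing in the cone differential, and it is exactly this balance that forces the $N[1] \otimes N[1]$ block of $\mathfrak{m}_{\tilde H}$ to be zero rather than some composition involving $\iota \otimes \iota$ or a $\nu$-type term.
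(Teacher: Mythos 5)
Your proposal is correct and follows essentially the same route as the paper: both use the multiplicativity of $p$ together with $\iota=\ker p$ to factor $\mathfrak{m}_H\circ(\iota\otimes\mathrm{id})$ and $\mathfrak{m}_H\circ(\mathrm{id}\otimes\iota)$ through $N(K\otimes L)$, define the multiplication on $\mathrm{cone}\,(N\to H)$ blockwise with the $N\otimes N$ component equal to zero, and obtain the unit from $\mu\colon\mathbf{1}\to H(\mathbf{1})$ followed by the inclusion into the cone. Your ``$H$-bimodule structure on $N$'' is just a repackaging of the paper's commutative diagram with exact rows, and your explicit attention to the Koszul signs fills in precisely the verification the paper leaves to the reader.
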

\begin{proof}
For any objects $K$ and $L$ of $\mathrm{C}^{\mathrm{b}}(\mathcal{C})$, we have a commutative diagram of elements in $\mathrm{C}^{\mathrm{b}}(\mathcal{C})$:
\begin{equation} \label{huez}
\xymatrix@C=10pt{
N(K) \otimes N(L) \ar[r] \ar[d] & N(K) \otimes H(L) \oplus H(K) \otimes N(L) \ar[r]&H(K) \otimes H(L) \ar[r] \ar[d]^-{\mathfrak{m}_{K, L}} &K \otimes L \ar@{=}[d] \ar[r] & 0 \ar@{=}[d]\\
0 \ar[r] & N(K \otimes L) \ar[r] & H(K \otimes L) \ar[r] & K \otimes L \ar[r] & 0
}
\end{equation}
where the first horizontal arrow is ${(\iota_K \otimes \mathrm{id}_{H(L)}, -\mathrm{id}_{H(K)} \otimes \iota_L)}$. Since the second line is exact,  there is a unique morphism
\[
\phi \colon N(K) \otimes H(L) \oplus H(K) \otimes N(L) \longrightarrow N(K \otimes L)
\]
making \eqref{huez} commutative. We define the multiplicative structure on  
$Y=\mathrm{cone}\, \, (N \longrightarrow H)$ by extracting the leftmost part of \eqref{huez}, namely
\[
\xymatrix@C=17pt{
N(K) \otimes N(L) \ar[r] \ar[d] & N(K) \otimes H(L) \oplus H(K) \otimes N(L) \ar[r] \ar[d]^-{\phi} &H(K) \otimes H(L) \ar[d]^-{\mathfrak{m}_{K, L}} \\
0 \ar[r] &N(K \otimes L) \ar[r] &H(K \otimes L)
}
\]
Taking the total complex of each line yields a morphism from $Y(K) \otimes Y(L)$ to $Y(K \otimes L)$.
We leave to the reader the tedious verification that this morphism is associative.
The unit of $Y$ is analogously obtained using the diagram
\[
\xymatrix{
0 \ar[r] \ar[d]& \mathbf{1}_{\mathrm{C}^{\mathrm{b}}(\mathcal{C})} \ar[d]^-{\mu} \\
N(\mathbf{1}_{\mathrm{C}^{\mathrm{b}}(\mathcal{C})}) \ar[r] &  H(\mathbf{1}_{\mathrm{C}^{\mathrm{b}}(\mathcal{C})})
}
\]
where $\mu$ is the unit of $H$. Lastly, \eqref{huez} yields a commutative diagram
\[
\xymatrix{Y(K) \otimes Y(L) \ar[d] \ar[r] &K \otimes L \ar@{=}[d] \\
Y(K\otimes L) \ar[r]& K \otimes L
}
\]
which gives the multiplicativity of the morphism from $Y$ to $\mathrm{id}_{\mathrm{C}^{\mathrm{b}}(\mathcal{C})}$.
\end{proof}

\section{Construction of dg-endofunctors of $\mathrm{C}^{\mathrm{b}}(\mathcal{C})$}
\label{pita}

\subsection{Canonical functors} \label{natanz}
\subsubsection{Main construction} 
Let $\mathcal{C}$ be an additive category, and assume to be given a pair $(G, \Theta)$ where $G$ is in $\mathrm{EndFct}_{\mathrm{dg}}(\mathrm{C}^{\mathrm{b}}(\mathcal{C}))$ and
\[
\Theta \colon \mathrm{id}_{\mathrm{C}^{\mathrm{b}}(\mathcal{C})} \longrightarrow G
\]
is a dg morphism. For any nonnegative integer $n$ we define a dg morphism 
\[
S_n\colon G^n\longrightarrow G^{n+1}
\] 
by the formula
\begin{equation} \label{sum}
S_n=\displaystyle\sum_{i=0}^{n} (-1)^{n+i+1} G^{n-i} \left(\Theta_{G^i}  \right).
\end{equation}
\begin{lemma}
For any nonnegative integer $n$, $S_{n+1} \circ S_n=0$.
\end{lemma}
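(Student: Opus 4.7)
The plan is to recognize $S_n$ as the coboundary of a (semi-)cosimplicial structure on the sequence of endofunctors $(G^n)_{n \geq 0}$ induced by $\Theta$, and then invoke the standard ``alternating sum is nilpotent'' argument.

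First, I would rewrite the summand $G^{n-i}(\Theta_{G^i})$ of $(\ref{sum})$ in a cleaner notation, say $\alpha_i^n \colon G^n \to G^{n+1}$, so that $S_n = (-1)^{n+1} \sum_{i=0}^n (-1)^i \alpha_i^n$. The key step is to establish the cosimplicial identity
\[
\alpha_j^{n+1} \circ \alpha_i^n = \alpha_i^{n+1} \circ \alpha_{j-1}^n \qquad (0 \leq i < j \leq n+1).
\]
To prove it, I would invoke the naturality of $\Theta$: for any natural transformation $\beta \colon F \Rightarrow F'$ of dg-endofunctors, one has $\Theta_{F'} \circ \beta = G(\beta) \circ \Theta_F$. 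Apply this to $\beta = G^c(\Theta_{G^d})$ to get $\Theta_{G^{c+d+1}} \circ G^c(\Theta_{G^d}) = G^{c+1}(\Theta_{G^d}) \circ \Theta_{G^{c+d}}$, and then apply the functor $G^k$ on both sides to obtain
\[
G^k(\Theta_{G^{c+d+1}}) \circ G^{k+c}(\Theta_{G^d}) = G^{k+c+1}(\Theta_{G^d}) \circ G^k(\Theta_{G^{c+d}}).
\]
Setting $i = k+c+d$, $j = k+c+d+1$ (and reading off $k$, $c$, $d$ in terms of $i, j, n$) matches this to the claimed cosimplicial identity for $i < j$.

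Once this is in hand, the proof is formal: expand
\[
S_{n+1} \circ S_n = \sum_{i=0}^n \sum_{j=0}^{n+1} (-1)^{i+j+1}\, \alpha_j^{n+1} \circ \alpha_i^n
\]
and split the double sum according to whether $i < j$ or $i \geq j$. On the $i<j$ part, use the cosimplicial identity to rewrite $\alpha_j^{n+1} \circ \alpha_i^n$ as $\alpha_i^{n+1} \circ \alpha_{j-1}^n$, then change variables $(i,j) \mapsto (i',j') = (j-1,i)$. This sends $\{i<j\}$ bijectively onto $\{i' \geq j'\}$, and the sign $(-1)^{i+j+1}$ becomes $(-1)^{i'+j'+2} = -(-1)^{i'+j'+1}$, so the transformed sum cancels the $i \geq j$ sum term-by-term. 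Hence $S_{n+1} \circ S_n = 0$.

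The only delicate point is bookkeeping: checking that the substitution induced by naturality really matches the bijection of index sets, and that signs come out exactly opposite. Once the cosimplicial identity is written in the symmetric form above, this is essentially the verification that the alternating face sum in a cosimplicial object squares to zero, which is routine.
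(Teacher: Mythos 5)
Your proof is correct and is essentially the paper's own argument: the ``cosimplicial identity'' $\alpha_j^{n+1}\circ\alpha_i^n=\alpha_i^{n+1}\circ\alpha_{j-1}^n$ is exactly the relation the paper derives from naturality of $\Theta$, and your change of variables $(i,j)\mapsto(j-1,i)$ is the same pairing of terms (the paper cancels $(i,j)$ with $(j+1,i)$ in the transposed index convention). The cosimplicial packaging is a nice way to organize the bookkeeping, but the substance is identical.
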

\begin{proof}
We have 
\begin{equation} \label{doublesomme}
S_{n+1} \circ S_n =\displaystyle\sum_{i=0}^{n+1} \displaystyle\sum_{j=0}^n
(-1)^{i+j+1} G^{n+1-i}(\Theta_{G^i}) \circ G^{n-j}(\Theta_{G^j}).
\end{equation}
If $i \leq j$, we can write
\begin{align*}
G^{n+1-i}(\Theta_{G^i}) \circ G^{n-j}(\Theta_{G^j}) &=G^{n-j} (G^{j-i+1}(\Theta_{G^i})) \circ G^{n-j}(\Theta_{G^i})\\
&=G^{n-j} (G^{j-i+1}(\Theta_{G^i} ) \circ \Theta_{G^i}).
\end{align*}
For any morphism $f \colon U \longrightarrow V$ of bounded complexes of $\mathcal{C}$, we have 
\[
G(f) \circ \Theta_U=\Theta_V \circ f. 
\]
We put $f=G^{j-i}(\Theta_{G^i(K)})$, where $K$ is any bounded complex of objects in ${\mathcal{C}}$. This gives 
\[
G^{j-i+1}(\Theta_{G^i}) \circ \Theta_{G^i}=\Theta_{G^{j+1}} \circ G^{j-i}(\Theta_{G^i})
\]
so that we get
\begin{align*}
G^{n+1-i}(\Theta_{G^i}) \circ G^{n-j}(\Theta_{G^j}) &=G^{n-j}(\Theta_{G^{j+1}}) \circ G^{n-i}(\Theta_{G^i}) \\
&= G^{n+1-(j+1)}(\Theta_{G^{j+1}}) \circ G^{n-i}(\Theta_{G^i}).
\end{align*}
Hence in the double sum \eqref{doublesomme}, every component indexed by a couple $(i, j)$ with $i \leq j$ cancels with the component indexed by $(j+1, i)$.  

\end{proof}

\begin{definition} $ $
\par
\begin{enumerate}
\item[(i)] The functor $F_n$ is the element in $\mathrm{EndFct}_{\mathrm{dg}}(\mathrm{C}^{\mathrm{b}}(\mathcal{C} ))$ obtained as the iterated cone (for the definition, see \S \ref{puf})
\[
\xymatrix@C=30pt{
\mathrm{id}_{\mathrm{C}^{\mathrm{b}}(\mathcal{C})} \ar[r]^-{S_0}& G  \ar[r]^-{S_1}& G^2  \ar[r]^-{S_2}& \cdots  \ar[r]^-{S_{n-2}}& G^{n-1} \ar[r]^-{S_{n-1}}& G^n,
}
\]
where the functor $\mathrm{id}_{\mathrm{C}^{\mathrm{b}}(\mathcal{C})}$ sits in degree zero. 
\vspace{0.2cm}
\item[(ii)] The transformation $\Theta_n \colon F_n \dashrightarrow G^{n+1}[-n]$ is defined by the composition
\[
(\Theta_n)_{K}  \colon \bigoplus_{i=0}^n G^i(K)[-i] \longrightarrow  G^n(K)[-n] \xrightarrow {-(S_n)_K} G^{n+1}(K)[-n].
\]
\vspace{0.2cm}
\item[(iii)] The transformation $\rho_n \colon F_n \dashrightarrow G(F_n)[-1]$ is the map
\[
(\rho_n)_K \colon \bigoplus_{i=0}^n G^i(K) [-i] \dashrightarrow  \bigoplus_{i=1}^{n+1} G^i(K) [-i]
\]
that is zero on the first factor $K$, and the identity morphism on the components $G^i(K)[-i]$ for $1 \leq i \leq n$.
\end{enumerate}
\end{definition}

\begin{theorem} \label{hard}
The elements satisfy the following properties:
\begin{enumerate}
\item[(i)] $F_0=\mathrm{id}$ and $\Theta_0=\Theta$.
\vspace{0.2cm}
\item[(ii)] $\Theta_n$ is closed and $F_{n}=\mathrm{cone}\, (\Theta_{n-1})[-1]$. In particular there is an associated natural transformation $\Pi_n$ from $F_{n}$ to $F_{n-1}$ and we have a cone  exact sequence 
\[
\xymatrix{
0 \ar@{->}[r]& G^{n}[-n]  \ar@{->}[r]^-{\tau_n} & F_{n}  \ar@{->}[r]^-{\Pi_n}& F_{n-1} \ar@{->}[r]& 0.
}
\]

\vspace{0.2cm}
\item[(iii)] $\Theta_{F_n}-G(\tau_n) \circ \Theta_n=\partial \rho_n$.
\[
\xymatrix{ & G^{n+1}[-n] \ar@{->}[d]^-{G(\tau_n)} \\
F_{n} \ar@{->}[r]_-{\Theta_{F_n}} \ar@{->}[ru]^-{\Theta_{n}}  &G(F_{n}) 
}
\]
\item[(iv)] If $G$ is bounded, then all $F_{n}$ are also bounded.
\end{enumerate}
\end{theorem}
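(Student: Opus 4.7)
The plan is to treat the four parts in order. Parts (i), (ii), (iv) will follow almost formally from the iterated-cone construction of \S\ref{puf}, while only (iii) will require a direct dg-calculation with signs.

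Part (i) unwinds the definitions: $F_0$ is $\Psi_T$ for the one-term complex $T=\mathrm{id}$ placed in degree zero, so $F_0=\mathrm{id}_{\mathrm{C}^{\mathrm{b}}(\mathcal{C})}$, and $(\Theta_0)_K=-(S_0)_K=-(-1)\Theta_K=\Theta_K$. Part (iv) is also formal: if $G$ is bounded then so is each $G^i$ (with bounds growing linearly in $i$), and $F_n(K)=\bigoplus_{i=0}^{n}G^i(K)[-i]$ is a finite direct sum of shifts of bounded complexes.

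For (ii), the main input is Lemma \ref{aleph}. Let $T_n$ denote the bounded complex $\mathrm{id}\to G\to G^2\to\cdots\to G^n$ of dg endofunctors with differentials $S_i$, which is a well-defined complex precisely because $S_{i+1}\circ S_i=0$. By definition $F_n=\Psi_{T_n}$ and $F_{n-1}=\Psi_{T_{n-1}}$. Writing out the morphism $\Lambda\colon\Psi_{T_{n-1}}\dashrightarrow G^n[-(n-1)]$ produced by Lemma \ref{aleph}, one sees it vanishes on every factor $G^i(-)[-i]$ with $i<n-1$ and equals $-S_{n-1}$ on $G^{n-1}(-)[-(n-1)]$, which matches exactly the definition of $\Theta_{n-1}$. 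The lemma then gives that $\Theta_{n-1}$ is closed and that $F_n=\mathrm{cone}\,(\Theta_{n-1})[-1]$; the short exact sequence together with the morphisms $\tau_n$ and $\Pi_n$ is the usual mapping-cone data.

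The only nontrivial verification is (iii). I plan to compute $\Theta_{F_n}-G(\tau_n)\circ\Theta_n$ factor by factor on $F_n(K)=\bigoplus_{i=0}^{n}G^i(K)[-i]$ and match it with $[\rho_n]$. On a factor $G^i(K)[-i]$ with $i<n$ the map $\Theta_n$ vanishes, so only $\Theta_{F_n}$ contributes, sending the factor into $G^{i+1}(K)[-i]\subset G(F_n)$ via $\Theta_{G^i(K)}$. On the top factor $G^n(K)[-n]$ both terms land in $G^{n+1}(K)[-n]$ and the signs coming from the shift cancel. For the right-hand side, $\delta(\rho_n)=d_{G(F_n)[-1]}\circ\rho_n-\rho_n\circ d_{F_n}$: the pieces of the differential involving $G^i(\delta_K)$ pass through $\rho_n$ using the naturality identity $G(f)\circ\Theta_U=\Theta_V\circ f$ already invoked in the proof of $S_{n+1}\circ S_n=0$, while the $S_i$-pieces produce exactly the $\Theta_{G^i(K)}$ contributions identified above. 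The main obstacle will be the sign bookkeeping, but no new identity is required beyond those already used to establish $S_{n+1}\circ S_n=0$.
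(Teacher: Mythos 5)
Your proposal is correct and follows essentially the same route as the paper: (i) and (iv) are immediate, (ii) is exactly the application of Lemma \ref{aleph} with $\Lambda=\Theta_{n-1}$, and (iii) is the same factor-by-factor computation of $[\rho_n]$ on $\bigoplus_i G^i(K)[-i]$. One small precision: in (iii) the internal differentials $(-1)^iG^i(\delta_K)$ simply cancel between the two terms of $\delta(\rho_n)$ (no naturality needed), and the identity that actually produces the $\Theta_{G^i}$ contributions is the telescoping relation $G(S_{i-1})+S_i=-\Theta_{G^i}$, an immediate consequence of the defining sum \eqref{sum}.
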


\begin{proof}
The first point is obvious, and the second point is a consequence of Lemma \ref{aleph}. Let us prove the third point. For this, we write down explicitly everything in terms of bicomplexes of complexes .
\par \medskip
The morphisms $\Theta_n$ and $\tau_n$ are obtained by applying the iterated cone functor $\Psi$ defined in \S \ref{puf} to each line of the two commutative diagrams of dg-functors
\[
\xymatrix@C=30pt{
\mathrm{id}_{\mathrm{C}^{\mathrm{b}}(\mathcal{C})} \ar[r]^-{S_0} \ar[d]& G  \ar[r]^-{S_1} \ar[d] & G^2  \ar[r]^-{S_2} \ar[d] & \cdots  \ar[r]^-{S_{n-2}}  & G^{n-1} \ar[r]^-{S_{n-1}} \ar[d] & G^n \ar[d]^-{-S_n} \\
0 \ar[r] & 0 \ar[r] & 0 \ar[r] & \cdots \ar[r] &0 \ar[r] & G^{n+1}
}
\]
and
\[
\xymatrix{
0 \ar[r] \ar[d]& 0 \ar[r] \ar[d] & 0 \ar[r] \ar[d] & \cdots  \ar[r]  &0 \ar[r] \ar[d] & G^{n} \ar[d] \\
\mathrm{id}_{\mathrm{C}^{\mathrm{b}}(\mathcal{C})} \ar[r]^-{S_0} & G  \ar[r]^-{S_1}  & G^2  \ar[r]^-{S_2}  & \cdots  \ar[r]^-{S_{n-2}}  & G^{n-1} \ar[r]^-{S_{n-1}}  & G^n 
} \vspace{0.2cm}
\]
respectively. Hence $G(\tau_n) \circ \Theta_n \colon F_n \longrightarrow G(F_n)$ is induced (via the same procedure) by the diagram
\[
\xymatrix@C=40pt{
\mathrm{id}_{\mathrm{C}^{\mathrm{b}}(\mathcal{C})} \ar[r]^-{S_0} \ar[d]^-{0}& G  \ar[r]^-{S_1} \ar[d]^-{0} & G^2  \ar[r]^-{S_2} \ar[d]^-{0} & \cdots  \ar[r]^-{S_{n-2}} & G^{n-1} \ar[r]^-{S_{n-1}} \ar[d]^-{0} & G^n \ar[d]^-{-S_n} \\
G \ar[r]^-{G(S_0)} & G^2  \ar[r]^-{G(S_1)} & G^3  \ar[r]^-{G(S_2)}  & \cdots  \ar[r]^-{G(S_{n-2})}  & G^{n} \ar[r]^-{G(S_{n-1})}  & G^{n+1}  
}\vspace{0.2cm}
\]
From this, we deduce that $\Theta_{F_n}-G(\tau_n) \circ \Theta_n$ is induced by the diagram
\[
\xymatrix@C=40pt{
\mathrm{id}_{\mathrm{C}^{\mathrm{b}}(\mathcal{C})} \ar[r]^-{S_0} \ar[d]^-{\Theta}& G  \ar[r]^-{S_1} \ar[d]^-{\Theta_G} & G^2  \ar[r]^-{S_2} \ar[d]^-{\Theta_{G^2}} & \cdots  \ar[r]^-{S_{n-2}} & G^{n-1} \ar[r]^-{S_{n-1}} \ar[d]^-{\Theta_{G^{n-1}}} & G^n \ar[d]^-{\Theta_{G^n}+S_n}  \\
G \ar[r]^-{G(S_0)} & G^2  \ar[r]^-{G(S_1)} & G^3  \ar[r]^-{G(S_2)}  & \cdots  \ar[r]^-{G(S_{n-2})}  & G^{n} \ar[r]^-{G(S_{n-1})}  & G^{n+1}
} \vspace{0.2cm}
\]
Now we express $\rho_n$ using the diagram of dg-functors\footnote{The minus signs on the bottom line come from the shift by $1$.}:
\[
\xymatrix@C=40pt{
\mathrm{id}_{\mathrm{C}^{\mathrm{b}}(\mathcal{C})} \ar[r]^-{S_0} \ar[d]^-{0}& G  \ar[r]^-{S_1} \ar@{=}[d]& G^2  \ar[r]^-{S_2} \ar@{=}[d] & \cdots  \ar[r]^-{S_{n-1}} & G^{n} \ar[r] \ar@{=}[d] & 0 \ar@{=}[d] \\
0 \ar[r]& G  \ar[r]^-{-G(S_0)} & G^2  \ar[r]^-{-G(S_1)}  & \cdots  \ar[r]^-{-G(S_{n-2})}  & G^{n} \ar[r]^-{-G(S_{n-1})}  & G^{n+1}  
}\vspace{0.2cm}
\]
This diagram is noncommutative, and induces the morphism $\rho_n$  (which is not closed) by taking the iterated cones of each line. Now $\partial  \rho_n $ is the closed morphism given by the diagram
\[
\xymatrix@C=50pt@R=35pt{
\mathrm{id}_{\mathrm{C}^{\mathrm{b}}(\mathcal{C})} \ar[r]^-{S_0} \ar[d]^-{-S_0}& G  \ar[r]^-{S_1} \ar[d]^-{-G(S_0)-S_1}& \cdot  \ar[r]  &G^{n-1}   \ar[r]^-{S_{n-1}} \ar[d]^-{-G(S_{n-2})-S_{n-1}} & G^{n} \ar[d]^-{-G(S_{n-1})}   \\
G  \ar[r]^-{G(S_0)} & G^2  \ar[r]^-{G(S_1)}  & \cdots  \ar[r]^-{G(S_{n-2})}  & G^{n} \ar[r]^-{G(S_{n-1})}  & G^{n+1}  }
\vspace{0.2cm}
\]
By the very definition of the $S_i$'s, 
\begin{equation} \label{ok}
(S_{i-1})+S_i=-\Theta_{G^i}
\end{equation} 
which finishes the proof.
\end{proof}

\subsubsection{Derived invariance}
In this section, we study some specific properties of the functors $F_n$ when $G$ is exact.
\begin{lemma}
Assume that $G$ is an exact functor. Then all functors $F_n$ are also exact.
\end{lemma}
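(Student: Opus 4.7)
The plan is to proceed by induction on $n$, leveraging the cone exact sequence already provided by Theorem~\ref{hard}(ii). Recall that this gives, for each $n \geq 1$, a short exact sequence of dg endofunctors
\[
0 \longrightarrow G^n[-n] \xlongrightarrow{\tau_n} F_n \xlongrightarrow{\Pi_n} F_{n-1} \longrightarrow 0,
\]
which means that evaluating at any complex $K \in \mathrm{C}^{\mathrm{b}}(\mathcal{C})$ yields a short exact sequence of complexes (the degreewise splitting is evident from the iterated cone description $F_n(K) = \bigoplus_{i=0}^{n} G^i(K)[-i]$).

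The base case $n=0$ is immediate since $F_0 = \mathrm{id}_{\mathrm{C}^{\mathrm{b}}(\mathcal{C})}$, which is trivially exact. For the inductive step, assume $F_{n-1}$ is exact. Since exactness is stable under composition and shift, the hypothesis that $G$ is exact implies that $G^n[-n]$ is exact as well. Now let $K$ be any acyclic complex; applying the above short exact sequence at $K$ gives
\[
0 \longrightarrow G^n(K)[-n] \longrightarrow F_n(K) \longrightarrow F_{n-1}(K) \longrightarrow 0,
\]
where the two outer terms are acyclic by what has just been noted. The associated long exact sequence in cohomology (or the elementary two-out-of-three property for acyclic complexes in a short exact sequence of complexes) forces $F_n(K)$ to be acyclic, which establishes exactness of $F_n$.

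There is essentially no obstacle here: the combinatorial work has already been absorbed into the statement of Theorem~\ref{hard}(ii). The only point requiring a moment of care is the interpretation of "exact" in $\mathrm{C}^{\mathrm{b}}(\mathcal{C})$, which tacitly assumes $\mathcal{C}$ is abelian so that acyclicity and quasi-isomorphisms make sense; granted that convention, the proof reduces to the inductive argument above.
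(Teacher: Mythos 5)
Your proof is correct and follows essentially the same route as the paper: the paper inducts using the identification $F_{n+1}[1] \simeq \mathrm{cone}(\Theta_n \colon F_n \to G^{n+1}[-n])$ together with the fact that the cone of a morphism of exact functors is exact, which is exactly the two-out-of-three argument you run on the short exact sequence $0 \to G^n[-n] \to F_n \to F_{n-1} \to 0$ from Theorem~\ref{hard}(ii). No gap; your remark about exactness tacitly requiring $\mathcal{C}$ abelian matches the paper's conventions.
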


\begin{proof}
If $G$ is exact, then so are all the functors $G^n$. Let us now remark that the cone of a morphism of exact endofunctors of $\mathrm{C}^{\mathrm{b}}(\mathcal{C})$ is also exact. Hence the result follows by induction, since 
\[
F_{n+1} \, [1]\simeq \mathrm{cone}\, \, (\Theta_{n} \colon F_n \longrightarrow G^{n+1}[-n]).
\]
\end{proof}

\begin{definition} \label{groix}
Let $G_1$ and $G_2$ two endofunctors of $\mathrm{C}^{\mathrm{b}}(\mathcal{C})$, and let 
\[
\Gamma \colon G_1 \longrightarrow G_2
\] 
be a closed dg morphism. We say that $\Gamma$ is a quasi-isomorphism if for any bounded complex $K$ of elements of $\mathcal{C}$, the morphism 
\[
\Gamma(K) \colon G_1(K) \longrightarrow G_2(K)
\] 
is a quasi-isomorphism.
\end{definition}

\begin{remark}
If $\Gamma \colon G_1 \longrightarrow G_2$ is a quasi-isomorphism between exact functors, it induces a true isomorphism between the associated endofunctors of $\mathrm{D}^{\mathrm{b}}(\mathcal{C})$. 
\end{remark}

\begin{proposition} \label{morita}
Let $G_1$ and $G_2$ two exact endofunctors of the category $\mathcal{C}^{\mathrm{b}}(\mathcal{C})$
endowed with morphisms $\Theta_i \colon \mathrm{id}_{\mathrm{C}^{\mathrm{b}}(\mathcal{C})} \longrightarrow G_i$ for $i=1, 2$, and let $\Gamma \colon G_1 \longrightarrow G_2$ be a quasi-isomorphism such that $\Psi \circ \Theta_1=\Theta_2$. Then for any positive integer $n$, $\Gamma$ induces quasi-isomorphisms between $F_n^1$ and $F_n^2$.
\end{proposition}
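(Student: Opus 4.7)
The plan is to proceed by induction on $n$, first upgrading $\Gamma$ to compatible quasi-isomorphisms $\Gamma^{(k)} \colon G_1^k \to G_2^k$ at every iterate, then checking that these are compatible with the boundary maps $S_n$ of the respective iterated cones, and finally invoking the long exact sequence of the triangle of Theorem \ref{hard}(ii) to propagate the quasi-isomorphism property.

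First I would define $\Gamma^{(k)}$ inductively by the composition
\[
\Gamma^{(k)} = \Gamma_{G_2^{k-1}} \circ G_1(\Gamma^{(k-1)}) \colon G_1 \circ G_1^{k-1} \longrightarrow G_1 \circ G_2^{k-1} \longrightarrow G_2 \circ G_2^{k-1},
\]
with $\Gamma^{(1)} = \Gamma$. Each $G_i$ is exact, so $G_1^{k-1}$ and $G_2$ preserve quasi-isomorphisms; hence $\Gamma^{(k)}$ is a quasi-isomorphism of functors by induction. Naturality of $\Gamma$ (i.e., the identity $\Gamma \circ G_1(f) = G_2(f) \circ \Gamma$ for any morphism $f$) together with the hypothesis $\Gamma \circ \Theta_1 = \Theta_2$ yields, for every $0 \le i \le n$, the equality
\[
\Gamma^{(n+1)} \circ G_1^{n-i}(\Theta_{1, G_1^i}) \;=\; G_2^{n-i}(\Theta_{2, G_2^i}) \circ \Gamma^{(n)},
\]
which I would verify by rewriting both sides through a sequence of naturality squares comparing $\Gamma_{G_2^{n-i}} \circ G_1^{n-i}(\Gamma^{(i+1)})$ and $G_2^{n-i}(\Gamma^{(i)}) \circ G_1^{n-i}(\Theta_{1, G_1^i})$. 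Summing with the alternating signs of \eqref{sum} gives the key compatibility
\[
\Gamma^{(n+1)} \circ S_n^1 = S_n^2 \circ \Gamma^{(n)}.
\]

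Once this compatibility is established, the collection $(\Gamma^{(k)})_{0 \le k \le n}$ defines a closed dg morphism between the bounded complexes of dg-functors $T^1$ and $T^2$ whose iterated cones (in the sense of \S\ref{puf}) are respectively $F_n^1$ and $F_n^2$. Applying the functor $\Psi$ then produces a canonical closed dg morphism $\Gamma_n \colon F_n^1 \to F_n^2$ which fits into a morphism of the distinguished triangles of Theorem \ref{hard}(ii):
\[
\xymatrix@C=20pt{
G_1^n[-n] \ar[r]^-{\tau_n^1} \ar[d]_-{\Gamma^{(n)}[-n]} & F_n^1 \ar[r]^-{\Pi_n^1} \ar[d]_-{\Gamma_n} & F_{n-1}^1 \ar[d]_-{\Gamma_{n-1}} \ar[r]^-{+1} & \\
G_2^n[-n] \ar[r]^-{\tau_n^2} & F_n^2 \ar[r]^-{\Pi_n^2} & F_{n-1}^2 \ar[r]^-{+1} &
}
\]

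Finally I would conclude by induction on $n$: the base case $n = 0$ is trivial since $\Gamma_0 = \mathrm{id}$, and for the inductive step the outer vertical arrows $\Gamma^{(n)}[-n]$ and $\Gamma_{n-1}$ are quasi-isomorphisms (the first since $\Gamma^{(n)}$ is, the second by induction), so the five-lemma applied to the long exact cohomology sequence of the above triangles forces $\Gamma_n$ to be a quasi-isomorphism as well. The main subtlety is the verification of the compatibility $\Gamma^{(n+1)} \circ S_n^1 = S_n^2 \circ \Gamma^{(n)}$ — not because it is deep, but because it requires unwinding the definition of $\Gamma^{(k)}$ through $n-i$ nested naturality squares for each term in \eqref{sum}; everything afterwards is formal consequence of Theorem \ref{hard} and the five lemma.
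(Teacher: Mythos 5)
Your proof is correct and follows essentially the same route as the paper: the paper also inducts on $n$ using the short exact sequence $0 \to G_i^n[-n] \to F_n^i \to F_{n-1}^i \to 0$ of Theorem \ref{hard}(ii) together with the fact that exactness of the $G_i$ makes $\Gamma$ induce quasi-isomorphisms on all powers $G_i^n$. You merely spell out the detail the paper leaves implicit, namely the compatibility $\Gamma^{(n+1)} \circ S_n^1 = S_n^2 \circ \Gamma^{(n)}$ needed to get the morphism of exact sequences.
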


\begin{proof}
Since $G_1$ and $G_2$ are exact, for any positive integer $n$, the morphism $\Gamma$ induces a quasi-isomorphism between $G_1^n$ and $G_2^n$. For any positive integer $n$, we have a morphism of exact sequences
\[
\xymatrix{
0 \ar[r]& G_1^n[-n] \ar[r] \ar[d] & F_n^1 \ar[r] \ar[d] & F_1^{n-1} \ar[r] \ar[d] & 0 \\
0 \ar[r] &G_2^n[-n] \ar[r] & F_n^2 \ar[r] & F_{n-1}^2 \ar[r] & 0 \\
}
\]
and the result follows by induction.
\end{proof}
\subsection{Comparison} \label{twist}
\subsubsection{The octahedron triangle}
For any object $K$ in $\mathrm{C}^{\mathrm{b}}(\mathcal{C})$, we have a diagram in $\mathrm{D}^{\mathrm{b}}(\mathcal{C})$:
\begin{center}
\begin{tikzpicture}
\node at (-1.2,0) {$F_{n+1}(K)[1] $};
\node at (-1.2,-2) {$F_n(K)$};
\node at (2,1.8) { $\mathrm{cone}\,\Theta_{F_n(K)}$};
\node at (5.2,0) {$G(F_{n-1}(K)) $};
\node at (5.2,-2) {$G(F_{n}(K)) $};
\node at (2,-4) {$G^{n+1}(K)[-n] $};
\node at (2,-1.8) {$\scriptstyle\Theta_{F_n(K)}$};
\draw[line width=2pt,->](-0.6,0.4)--(1.2,1.52);
\draw[line width=2pt,->](2.8,1.46)--(4.5,0.4);
\draw[line width=1pt,dashed,->](1.5,1.48)--(-0.8,-1.65);
\draw[line width=1pt,dashed,->](-1.2,-0.3)--(-1.2,-1.65);
\draw[line width=.5pt,->](5,-1.65)--(5,-0.3);
\draw[line width=0.5pt,->](-1,-2.3)--(01.1,-3.7);
\draw[line width=0.5pt,->](3.1,-3.65)--(5,-2.3);
\draw[line width=1pt,dashed,->](4.6,-.3)--(2.6,-3.65);
\draw[line width=3pt,color=white](1.5,-3.65)--(-.8,-.3);
\draw[line width=0.5pt,->](1.5,-3.65)--(-.8,-.3);
\draw[line width=2.5pt,color=white,->](4.5,-1.65)--(2.3,1.5);
\draw[line width=0.5pt,->](4.5,-1.65)--(2.3,1.5);
\draw[line width=5pt,color=white,->](3.9,0)--(-0.2,0);
\draw[line width=2pt,dashed,->](3.9,0)--(-0.2,0);
\draw[line width=3pt,color=white,->](-0.4,-2)--(4.1,-2);
\draw[line width=0.5pt,->](-0.4,-2)--(4.1,-2);
\end{tikzpicture}
\end{center}
where all dashed arrows are shifted by one. The octahedron axiom yields a triangle
\[
F_{n+1}(K) \longrightarrow \mathrm{cone}\, \Theta_{F_n(K)}[-1] \longrightarrow G(F_{n-1}(K))[-1] \xrightarrow{+1}
\]
This triangle can be lifted at the level of complexes, as we will show in the next result. Before stating it, we introduce some notation: let  $\Delta_{\Theta}$ be the functor $\mathrm{cone} \, \Theta[-1]$. It is equipped with a natural left inverse $\iota \colon \Delta_{\Theta} \longrightarrow \mathrm{id}_{\mathrm{C}^{\mathrm{b}}(\mathcal{C})}$ (so it is a faithful functor).
\begin{theorem} \label{ecolo}
There is a canonical exact sequence
\[
0 \longrightarrow F_{n+1} \xlongrightarrow{\mathfrak{p}_{n+1}} \Delta_{\Theta}(F_n)  \xlongrightarrow{\nu_n} G(F_{n-1})[-1] \longrightarrow 0
\]
in $\mathrm{C}^{\mathrm{b}}(\mathcal{C})$
such that:
\begin{enumerate} 
\item[(i)] The map $\mathfrak{p}_{n+1}$ lifts $\Pi_{n+1} \colon F_{n+1} \longrightarrow F_n$ with respect to $\iota_{F_n}$, that is $\iota_{F_n} \circ \mathfrak{p}_{n+1}=\Pi_{n+1}$.
\vspace{0.2cm}
\item[(ii)] The map $\Delta_{\Theta}(\Pi_{n})-\mathfrak{p}_{n} \circ \iota_{F_n} \colon \Delta_{\Theta}(F_n) \longrightarrow \Delta_{\Theta}(F_{n-1})$ factors through $\nu_n$.
\end{enumerate}
\end{theorem}
\begin{proof}
The morphism of functors $F_{n} \longrightarrow G(F_n)$ can be represented by the diagram 
\[
\xymatrix{
\mathrm{id} \ar[r]^-{S_0} \ar[d]^-{\Theta} & G  \ar[r]^-{S_1} \ar[d]^-{\Theta_G} &  \cdots \ar[r] & G^{n-1} \ar[r]^-{S_{n-1}} \ar[d]^-{\Theta_{G^{n-1}}}  &G^{n}  \ar[d]^-{\Theta_{G_n}}    \\
G                \ar[r]_-{G(S_0)}           & G^2  \ar[r]_-{G(S_1)}     &  \cdots  \ar[r]& G^n \ar[r]_-{G(S_{n-1})} & G^{n+1}
}
\]
Hence $\mathrm{cone} \, \Theta(F_n)[-1]$ is the iterated cone associated with the complex dg-functors
\[
\mathrm{id} \xlongrightarrow{L^0} G \oplus G  \xlongrightarrow{L^1} G^2 \oplus G^2 \xlongrightarrow {L^2} \cdots \xlongrightarrow {L^{n-1}} G^{n} \oplus G^{n} \xlongrightarrow {\mathrm{pr}_2 \circ L^{n}} G^{n+1}
\]
where 
\[
\begin{cases}
L^0=\begin{pmatrix}
\Theta \\ 
-\Theta
\end{pmatrix} \vspace{0.2cm}\\
\vspace{0.2cm}
L^i=\begin{pmatrix}
S_i & 0 \\ 
-\Theta_{G^i} & -G(S_{i-1})
\end{pmatrix} \, \, \, \textrm{for} \, \, 1 \leq i \leq {n-1}.
\end{cases}
\]
Now remark that by \eqref{ok}, we have an exact sequence
\[
\xymatrix{
0 \ar[r]  & G^{i} \ar[r] \ar[d]^{S_i}& G^i \oplus G^i \ar[r] \ar[d]^-{L^i}& G^i \ar[r] \ar[d]^-{G(S_{i-1})}& 0\\
0 \ar[r]& G^{i+1} \ar[r]& G^{i+1} \oplus G^{i+1} \ar[r]& G^{i+1} \ar[r]& 0
}
\]
where the map $G^i \longrightarrow G^i \oplus G^i$ is $\begin{pmatrix}
1 \\ 
1
\end{pmatrix} $ and the one from $G^i \oplus G^i$ to $G^i$ is $((-1)^i, (-1)^{i+1})$. This gives the required exact sequence.
Let us now prove the two remaining statements in the Theorem. Point $($i$)$ is obvious. For point $($ii$)$, the morphism 
\[
\Delta_{\Theta}(\Pi_{n})-\mathfrak{p}_{n-1} \circ \iota_{F_n} \colon \Delta_{\Theta}(F_n) \longrightarrow \Delta_{\Theta}(F_{n-1})
\]
can we written as 
\[
\xymatrix{
\mathrm{id}\ar[r]^-{L^0} \ar[d]^-{\mu_0}& G \oplus G  \ar[r]^-{L^1} \ar[d]^-{\mu_1}& G^2 \oplus G^2 \ar[r]^-{L^2} \ar[d]^-{\mu_2}&\cdots \ar[r]^-{L^{n-1}} &G^{n} \oplus G^{n} \ar[r]^-{L^{n}} \ar[d]^-{\mu_n} &G^{n+1}\\
\mathrm{id}\ar[r]^-{L^0} & G \oplus G  \ar[r]^-{L^1}& G^2 \oplus G^2 \ar[r]^-{L^2} &\cdots \ar[r]^-{\mathrm{pr}_2 \circ L^{n-1}} &G^{n}&
}
\]
where
\[
\mu_i=\begin{pmatrix}
1 & 0 \\ 
0 & 1
\end{pmatrix}-\begin{pmatrix}
0 & 1 \\ 
0 & 1
\end{pmatrix} =\begin{pmatrix}
1 & -1 \\ 
0 & 0
\end{pmatrix}
\]
if $0 \leq i \leq n-1$ and $\mu_n=\mathrm{pr}_2$.
Hence $\Delta_{\Theta}(\Pi_{n-1})-\mathfrak{p}_n \circ \iota_{F_n}$ is obtained as the composition 
\[
\Delta_{\Theta}(F_n) \xlongrightarrow{\nu_n} G(F_{n-1})[-1] \hookrightarrow \Delta_{\Theta}(F_{n-1}).
\]
This finishes the proof.
\end{proof}

\begin{corollary} \label{zwip}
The map $\mathfrak{p}_{n+1} \colon F_{n+1} \longrightarrow \Delta_{\Theta}(F_n)$ is the equalizer of the two morphisms
\[
\begin{cases}
\Delta_{\Theta}(F_n) \xrightarrow{\Delta_{\Theta}(\Pi_{n})} \Delta_{\Theta}(F_{n-1}) \vspace{2pt}\\
\Delta_{\Theta}(F_n) \xrightarrow{\iota_{F_n}} F_n \xrightarrow{\mathfrak{p}_{n-1}} \Delta_{\Theta}(F_{n-1}).
\end{cases}
\]
\end{corollary}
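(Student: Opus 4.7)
The plan is to deduce this corollary as an essentially formal consequence of Theorem \ref{strange}. Since the equalizer of two parallel morphisms $f, g$ in an additive category with kernels is computed as the kernel of the difference $f - g$, I would first reduce the claim to identifying
\[
\ker\bigl(\Delta_G(\Pi_n) - \mathfrak{p}_n \circ \iota_{F_n}\bigr) \colon \Delta_G(F_n) \longrightarrow \Delta_G(F_{n-1})
\]
with the morphism $\mathfrak{p}_{n+1}$.

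Next, I would invoke Theorem \ref{strange}(ii) to factor this difference as $\mathrm{incl} \circ \nu_n$, where $\mathrm{incl} \colon G(F_{n-1})[-1] \hookrightarrow \Delta_G(F_{n-1})$ is the canonical inclusion coming from the description $\Delta_G(F_{n-1}) = \mathrm{cone}(\Theta_{F_{n-1}})[-1]$. Since the underlying graded object of this shifted cone splits as $G(F_{n-1})[-1] \oplus F_{n-1}$, this inclusion is a closed morphism of complexes and is a componentwise split monomorphism, hence a monomorphism in $\mathrm{C}(\mathcal{C})$.

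Because $\mathrm{incl}$ is a monomorphism, the kernel of $\mathrm{incl} \circ \nu_n$ coincides with the kernel of $\nu_n$. Finally, the short exact sequence
\[
0 \longrightarrow F_{n+1} \xlongrightarrow{\mathfrak{p}_{n+1}} \Delta_G(F_n) \xlongrightarrow{\nu_n} G(F_{n-1})[-1] \longrightarrow 0
\]
furnished by Theorem \ref{strange} identifies this kernel with $\mathfrak{p}_{n+1}$, which is the desired conclusion.

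I do not anticipate any genuine obstacle: the corollary is pure bookkeeping once Theorem \ref{strange} is in hand. The only subtlety worth flagging is that the factorization from part (ii) of that theorem lands \emph{a priori} in $\Delta_G(F_{n-1})$, so one must observe that the intermediate map factors through the subobject $G(F_{n-1})[-1]$ via a monomorphism; this is automatic from the cone description of $\Delta_G(F_{n-1})$, and no further computation is required.
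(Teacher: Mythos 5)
Your argument is correct and is essentially the argument the paper intends: the corollary is stated without a separate proof precisely because it follows from Theorem \ref{strange} by the chain "equalizer of $f,g$ = kernel of $f-g$", the factorization of $\Delta_G(\Pi_n)-\mathfrak{p}_n\circ\iota_{F_n}$ through the (degreewise split, hence monic) inclusion $G(F_{n-1})[-1]\hookrightarrow\Delta_G(F_{n-1})$ given by part (ii), and the identification of $\ker\nu_n$ with $\mathfrak{p}_{n+1}$ via the (degreewise split) short exact sequence. Your flagged subtlety — that the factorization must pass through the subobject $G(F_{n-1})[-1]$ via a monomorphism — is exactly the right point to check, and it holds for the reason you give.
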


\subsubsection{Structure theorem}

\begin{notations} \label{tt} Let us fix a pair $(G, \Theta)$ as before.
\begin{enumerate}
\item[(i)] The monomorphism $\mathfrak{j}_n \colon F_{n} \hookrightarrow \Delta_{\Theta}^n$
is
\[
\mathfrak{j}_n \colon F_n \xhookrightarrow{\mathfrak{p}_n} \Delta_{\Theta} \circ F_{n-1} \xhookrightarrow{\Delta_{\Theta}(\mathfrak{p}_{n-1})} \cdots \xhookrightarrow{\Delta_{\Theta}^{n-2}(\mathfrak{p}_{2})} \Delta_{\Theta}^{n-1} \circ F_1 \xhookrightarrow{\Delta_{\Theta}^{n-1}(\mathfrak{p}_{1})} \Delta_{\Theta}^n 
\]
\item[(ii)] The maps $(\pi_{n, i})_{1 \leq i \leq n}$ are the $n$ natural projections from $\Delta_{\Theta}^n$ to $\Delta_{\Theta}^{n-1}$ induced by the map $\Delta_{\Theta} \longrightarrow \mathrm{id}_{\mathrm{C}^{\mathrm{b}}(\mathcal{C})}$. 
\vspace{0.2cm}
\item[(iii)] The functor $\Delta_{\Theta}^{[n]}$ is the equalizer of the $n$ maps $\pi_{n, i}$.
\vspace{0.2cm}
\item[(iv)] The maps $(\pi_{n})_{1 \leq i \leq n}$ are the $n$ natural projections from $\Delta_{\Theta}^{[n]}$ to $\Delta_{\Theta}^{[n-1]}$ given for any $
n$ by any $\pi_{n, i}$.
\end{enumerate}
\end{notations}
\begin{lemma} \label{bof}
For any integer $n$, the map $\mathfrak{j}_n$ factors through the functor $\Delta_{\Theta}^{[n]}$, and \[
(\mathfrak{j}_n)_{n \geq 0} \colon (F_n)_{n \geq 0} \longrightarrow (\Delta_{\Theta}^{[n]})_{n \geq 0}\] is a morphism of projective systems.
\end{lemma}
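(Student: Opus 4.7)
The plan is to prove both assertions simultaneously by induction on $n$. The base case $n=1$ is immediate, since $F_1 = \Delta_G = \Delta_G^{[1]}$ and $\mathfrak{j}_1$ is (essentially) the identity. For the inductive step, I would use the decomposition $\mathfrak{j}_n = \Delta_G(\mathfrak{j}_{n-1}) \circ \mathfrak{p}_n$ and the observation that, among the $n$ natural projections, $\pi_{n,i}$ factors as $\Delta_G(\pi_{n-1,i-1})$ for $i \geq 2$, whereas $\pi_{n,1}=\iota_{\Delta_G^{n-1}}$ is the unique projection touching the outermost $\Delta_G$.

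For indices $i \geq 2$ one immediately obtains
\[
\pi_{n,i} \circ \mathfrak{j}_n \;=\; \Delta_G\bigl(\pi_{n-1,i-1} \circ \mathfrak{j}_{n-1}\bigr) \circ \mathfrak{p}_n,
\]
and the inductive hypothesis that $\mathfrak{j}_{n-1}$ factors through $\Delta_G^{[n-1]}$ forces these expressions to agree; let $A$ denote this common value. The real content of the lemma is to show $\pi_{n,1} \circ \mathfrak{j}_n = A$. Naturality of $\iota$ applied to the morphism $\mathfrak{j}_{n-1}$, together with Theorem \ref{strange}(i) which says that $\mathfrak{p}_n$ lifts $\Pi_n$, yields $\pi_{n,1} \circ \mathfrak{j}_n = \mathfrak{j}_{n-1} \circ \iota_{F_{n-1}} \circ \mathfrak{p}_n = \mathfrak{j}_{n-1} \circ \Pi_n$. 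On the other hand the same naturality argument, applied one step further inside, rewrites $A$ as $\Delta_G(\mathfrak{j}_{n-2}) \circ \Delta_G(\Pi_{n-1}) \circ \mathfrak{p}_n$. The coincidence of these two expressions is exactly where Corollary \ref{zwip} enters: it identifies $\Delta_G(\Pi_{n-1}) \circ \mathfrak{p}_n$ with $\mathfrak{p}_{n-1} \circ \iota_{F_{n-1}} \circ \mathfrak{p}_n = \mathfrak{p}_{n-1} \circ \Pi_n$, and the definition $\mathfrak{j}_{n-1} = \Delta_G(\mathfrak{j}_{n-2}) \circ \mathfrak{p}_{n-1}$ then closes the calculation.

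For the second assertion, the projective system structure on $(\Delta_G^{[n]})_{n \geq 0}$ is induced by any of the $\pi_{n,i}$ composed with the inclusion, and the identity $\pi_{n,1} \circ \mathfrak{j}_n = \mathfrak{j}_{n-1} \circ \Pi_n$ established above already supplies the required commutation with the maps $\Pi_n$. The only residual point is to check that this structural map really lands inside $\Delta_G^{[n-1]}$; this amounts to the standard cosimplicial identity that two applications of $\iota$ at different positions of $\Delta_G^n$ commute, which follows directly from naturality of $\iota$.

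I expect the main obstacle to be the key coincidence arising in the inductive step: the two naturally produced expressions for $\pi_{n,1} \circ \mathfrak{j}_n$ and $A$ look genuinely different, and only the equalizer property of $\mathfrak{p}_n$ recorded in Corollary \ref{zwip} forces them to agree. Everything else is routine bookkeeping with naturality of $\iota$ and with the definitions of $\mathfrak{p}_n$ and $\mathfrak{j}_n$.
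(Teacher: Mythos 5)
Your proof is correct and follows the same inductive strategy as the paper: decompose $\mathfrak{j}_{n}$ as $\Delta_G(\mathfrak{j}_{n-1})\circ\mathfrak{p}_{n}$, treat the outermost projection $\pi_{n,1}=\iota_{\Delta_G^{n-1}}$ separately from the projections $\pi_{n,i}=\Delta_G(\pi_{n-1,i-1})$ for $i\geq 2$, and use the naturality of $\iota$ together with the fact that $\mathfrak{p}_n$ lifts $\Pi_n$. The one place where you genuinely diverge is the reconciliation of $\pi_{n,1}\circ\mathfrak{j}_n$ with the common value $A$ of the remaining compositions: the paper disposes of this by asserting the identity $\iota_{\Delta_G^n}=\Delta_G(\pi_{n,1})$, which, taken literally, would say that $\pi_{n+1,1}=\pi_{n+1,2}$ as natural transformations and is false in general (for $\Delta_G=\mathrm{cone}\,\Theta[-1]$ the two maps $\Delta_G^2\rightarrow\Delta_G$ are distinct projections of the underlying graded object). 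Your route through Corollary \ref{zwip} --- rewriting $\Delta_G(\Pi_{n-1})\circ\mathfrak{p}_n$ as $\mathfrak{p}_{n-1}\circ\iota_{F_{n-1}}\circ\mathfrak{p}_n=\mathfrak{p}_{n-1}\circ\Pi_n$ so that $A=\Delta_G(\mathfrak{j}_{n-2})\circ\mathfrak{p}_{n-1}\circ\Pi_n=\mathfrak{j}_{n-1}\circ\Pi_n$ --- is the correct way to force the two expressions to agree after precomposition with $\mathfrak{p}_n$, and is presumably what the author intended; it makes essential use of the equalizer property of $\mathfrak{p}_n$ rather than an identity between the projections themselves. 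Your closing remark about the projective-system structure on $(\Delta_G^{[n]})_{n\geq 0}$ --- that the restriction of any $\pi_{n,i}$ to the equalizer lands in $\Delta_G^{[n-1]}$, by the simplicial identities satisfied by $\iota$ --- addresses a point the paper leaves implicit.
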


\begin{proof}
We proceed by induction. The morphism $\mathfrak{j}_{n+1}$ can be written as
\begin{equation}
F_{n+1} \xhookrightarrow{\mathfrak{p}_{n+1}} \Delta_{\Theta} (F_n) \xhookrightarrow{\Delta_{\Theta}(\mathfrak{j}_n)} \Delta_{\Theta}^{n+1}.
\end{equation}
Let us consider the following commutative diagram.
\begin{equation} \label{enfiiin}
\xymatrix@C=40pt{
F_{n+1} \ar@{^{(}->}[r]^-{\mathfrak{p}_{n+1}} & \Delta_{\Theta}(F_n) \ar@{^{(}->}[r]^-{\Delta_{\Theta}(\mathfrak{j}_{n})} \ar[d]^-{\iota_{F_n}}&  \ar[d]^-{\iota_{ \Delta_{\Theta}^n }} \Delta_{\Theta}^{n+1}\\
& F_n \ar@{^{(}->}[r]^-{\mathfrak{j}_{n}} & \Delta_{\Theta}^n}
\end{equation}
Since $\iota_{\Delta_{\Theta}^n}=\pi_{n+1, 1}$ and $\iota_{F_n} \circ \mathfrak{p}_{n+1}=\Pi_{n+1}$, the morphism
\[
F_{n+1} \xhookrightarrow{\mathfrak{j}_{n+1}} \Delta_{\Theta}^{n+1} \xrightarrow{\pi_{n+1, 1}} \Delta_{\Theta}^n
\]
is equal to
\[
F_{n+1} \xrightarrow{\Pi_{n+1}} F_n \xhookrightarrow{\mathfrak{j}_{n}} \Delta_{\Theta}^n.
\]
We have $\iota_{\Delta_{\Theta}^n}=\Delta_{\Theta} (\pi_{n, 1})$, so that the compositions of the up horizontal  and right down arrows of the square in diagram \eqref{enfiiin} is $\Delta_{\Theta}(\pi_{n, 1} \circ \mathfrak{j}_n)$. By induction, for any integer $i$ with $1 \leq i \leq n$, 
\[
\Delta_{\Theta}(\pi_{n, 1} \circ \mathfrak{j}_n)=\Delta_{\Theta}(\pi_{n, i} \circ \mathfrak{j}_n)=\pi_{n+1, i+1} \circ \Delta_{\Theta}(\mathfrak{j}_n), 
\]
so that the morphisms
\[
F_{n+1} \xhookrightarrow{\mathfrak{j}_{n+1}} \Delta_{\Theta}^{n+1} \xrightarrow{\pi_{n+1, i+1}} \Delta_{\Theta}^n
\]
are all equal to
\[
F_{n+1} \xrightarrow{\Pi_{n+1}} F_n \xhookrightarrow{\,\mathfrak{j}_n\,} \Delta_{\Theta}^n.
\]
This finishes the proof.
\end{proof}

\begin{theorem} \label{orange}
The sequence of morphisms
 \[
(\mathfrak{j}_n)_{n \geq 0} \colon (F_n)_{n \geq 0} \longrightarrow (\Delta_{\Theta}^{[n]})_{n \geq 0}
\]
defines an isomorphism of projective systems of dg-endofunctors of $\mathrm{C}^{\mathrm{b}}(\mathcal{C})$.
\end{theorem}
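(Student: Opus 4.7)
The proof goes by induction on $n$, with Corollary \ref{zwip} as the main engine. By Lemma \ref{bof} we already know that $(\mathfrak{j}_n)_{n \geq 0}$ is a morphism of projective systems, so it suffices to show each $\mathfrak{j}_n$ is an isomorphism of dg-endofunctors. The base cases are immediate: $F_0 = \mathrm{id}_{\mathrm{C}^{\mathrm{b}}(\mathcal{C})} = \Delta_G^{[0]}$ (equalizer of the empty diagram) and $F_1 = \Delta_G = \Delta_G^{[1]}$ (equalizer of a single map, which is just the source).

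For the inductive step, assume $\mathfrak{j}_n$ and $\mathfrak{j}_{n-1}$ are isomorphisms. The first task is to describe $\Delta_G^{[n+1]}$ explicitly. Decomposing the $n+1$ projections as $\pi_{n+1, 1} = \iota_{\Delta_G^n}$ and $\pi_{n+1, i+1} = \Delta_G(\pi_{n, i})$ for $i = 1, \ldots, n$, and using that $\Delta_G$ preserves the equalizer $\Delta_G^{[n]} \hookrightarrow \Delta_G^n$ (since on underlying graded objects $\Delta_G \simeq \mathrm{id} \oplus G[-1]$ is a direct sum of additive functors, hence commutes with the kernels defining that equalizer), one sees that $\Delta_G^{[n+1]}$ lies inside $\Delta_G(\Delta_G^{[n]}) \subset \Delta_G^{n+1}$ and is cut out there by the single remaining equation $\pi_{n+1, 1}(x) = \pi_{n+1, 2}(x)$; that is, $\Delta_G^{[n+1]}$ is the equalizer of $\iota_{\Delta_G^n}$ and $\Delta_G(\pi_{n, 1})$ restricted to $\Delta_G(\Delta_G^{[n]})$.

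Now transport this equalizer through the isomorphism $\Delta_G(\mathfrak{j}_n) \colon \Delta_G(F_n) \xrightarrow{\sim} \Delta_G(\Delta_G^{[n]})$. Naturality of $\iota$ gives $\iota_{\Delta_G^n} \circ \Delta_G(\mathfrak{j}_n) = \mathfrak{j}_n \circ \iota_{F_n} = \Delta_G(\mathfrak{j}_{n-1}) \circ \mathfrak{p}_n \circ \iota_{F_n}$ (using $\mathfrak{j}_n = \Delta_G(\mathfrak{j}_{n-1}) \circ \mathfrak{p}_n$), while the identity $\pi_{n, 1} \circ \mathfrak{j}_n = \mathfrak{j}_{n-1} \circ \Pi_n$ from Lemma \ref{bof} yields $\Delta_G(\pi_{n, 1}) \circ \Delta_G(\mathfrak{j}_n) = \Delta_G(\mathfrak{j}_{n-1}) \circ \Delta_G(\Pi_n)$. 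Since $\Delta_G(\mathfrak{j}_{n-1})$ is a monomorphism (being an isomorphism onto its image), the pulled-back equalizer condition simplifies to $\mathfrak{p}_n \circ \iota_{F_n} = \Delta_G(\Pi_n)$ as morphisms $\Delta_G(F_n) \to \Delta_G(F_{n-1})$. By Corollary \ref{zwip}, this is precisely the equalizer description of $\mathfrak{p}_{n+1}(F_{n+1}) \hookrightarrow \Delta_G(F_n)$. Thus $\mathfrak{j}_{n+1} = \Delta_G(\mathfrak{j}_n) \circ \mathfrak{p}_{n+1}$ identifies $F_{n+1}$ with $\Delta_G^{[n+1]}$, closing the induction.

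The main obstacle is the verification that $\Delta_G$ commutes with the equalizer defining $\Delta_G^{[n]}$, which is what allows us to locate $\Delta_G^{[n+1]}$ inside $\Delta_G(\Delta_G^{[n]})$ and reduce the $(n+1)$-fold equalizer to a single two-term equalizer; this is essentially combinatorial and follows from the decomposition of $\Delta_G$ into identity and shifted $G$ on underlying graded objects. Once this preservation is established, the rest is a diagram chase using only naturality of $\iota \colon \Delta_G \to \mathrm{id}$, the inductive factorization $\mathfrak{j}_n = \Delta_G(\mathfrak{j}_{n-1}) \circ \mathfrak{p}_n$, and Corollary \ref{zwip}.
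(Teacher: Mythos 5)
Your proof is correct and follows essentially the same route as the paper: induct on $n$, identify the equalizer of the $n$ projections $\pi_{n+1,i}$ ($i\geq 2$) with $\Delta_G$ applied to the $n$-th equalizer (the paper phrases this via faithfulness of $\Delta_G$ and lands on $\Delta_G(F_n)$ directly, which by the inductive hypothesis is the same as your $\Delta_G(\Delta_G^{[n]})$), then reduce the remaining condition to the two-map equalizer of $\Delta_G(\Pi_n)$ and $\mathfrak{p}_n\circ\iota_{F_n}$ after cancelling the monomorphism $\Delta_G(\mathfrak{j}_{n-1})$, and conclude by Corollary \ref{zwip}. No substantive differences.
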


\begin{proof}
We argue by induction. As $\Delta_{\Theta}$ is faithful, the equalizer of the $n$ maps 
\[
\pi_{n+1, i} \colon \Delta_{\Theta}^{n+1} \longrightarrow \Delta_{\Theta}^n \qquad \qquad 2 \leq i \leq n+1
\]
is
\[
\Delta_{\Theta}(\mathfrak{j}_n) \colon \Delta_{\Theta}(F_n) \hookrightarrow \Delta_{\Theta}^{n+1}
\]
If $(\mathfrak{D}, \chi)$ is the equalizer of the $(n+1)$ maps $\pi_{n+1, i}$ for $1 \leq i \leq n$, then $\chi$ factors through $\Delta_{\Theta}(F_{n})$ as shown below
\[
\xymatrix@R30pt@C=30pt{ \mathfrak{D} \, \ar@{^{(}->}[r]^-{\chi} \ar@{^{(}->}[d]^-{\widetilde{\chi}}& \Delta_{\Theta}^{n+1} \\
\Delta_{\Theta}(F_n) \ar@{^{(}->}[ru] &
}
\]
and $(\mathfrak{D}, \widetilde{\chi})$ is the equalizer of the two maps
\[
\begin{cases}
\Delta_{\Theta}(F_n) \hookrightarrow \Delta_{\Theta}^{n+1} \xrightarrow{\pi_{n+1, i}} \Delta_{\Theta}^n \\
\Delta_{\Theta}(F_n)  \hookrightarrow \Delta_{\Theta}^{n+1} \xrightarrow{\pi_{n+1, 1}} \Delta_{\Theta}^n
\end{cases}
\]
where in the first morphism, $i$ is any integer such that $2 \leq i \leq n+1$ (the corresponding morphism doesn't depend on $i$).
These morphisms can be written as
\[
\begin{cases}
\Delta_{\Theta}(F_n) \xrightarrow{\Delta_{\Theta}(\Pi_{n})} \Delta_{\Theta}(F_{n-1}) \xhookrightarrow{\Delta_{\Theta}(\mathfrak{j}_{n-1})} \Delta_{\Theta}^n \\
\Delta_{\Theta}(F_n) \xrightarrow{\iota_{F_n}} F_n \xhookrightarrow{\,\,\mathfrak{p}_n\,\,} \Delta_{\Theta}(F_{n-1}) \xhookrightarrow{\, \, \Delta_{\Theta}(\mathfrak{j}_{n-1})} \Delta_{\Theta}^n
\end{cases}
\]
Since the equalizer remains unchanged after post-composition with a monomorphism, it is isomorphic to $(F_{n+1}, \mathfrak{p}_{n+1})$ thanks to Lemma \ref{zwip}. This completes the induction step.
\end{proof}

\subsubsection{Derived equalizers} \label{modelcat}

Let $H$ be an element of $\mathrm{EndFct}_{\mathrm{dg}}(\mathrm{C}^{\mathrm{b}}(\mathcal{C}))$ endowed with a closed dg morphism $\Psi \colon H \longrightarrow \mathrm{id}_{\mathrm{C}^{\mathrm{b}}(\mathcal{C})}$. For any positive integer $n$, we denote by $(\pi_{n, i})_{1 \leq i \leq n}$ the $n$ natural projections from $H^n$ to $H^{n-1}$ induced by $\Psi$.
\begin{definition} \label{definitif}
If $(H, \Psi)$ is given, let $n$ be a positive integer..
\begin{enumerate}
\item[(i)] The $n^{\mathrm{th}}$ (standard) equalizer of $(H, \Psi)$, denoted by $H^{[n]}$, is the equalizer of the $n$ maps $(\pi_{n, i})_{1 \leq i \leq n}$.
\vspace{0.2cm}
\item[(ii)] If $\widetilde{H}$ denotes the functor $\mathrm{cone}\, \Psi$ and $\Theta \colon \mathrm{id}_{\mathrm{C}^{\mathrm{b}}(\mathcal{C})} \longrightarrow \widetilde{H}$ is the associated morphism, the $n^{\mathrm{th}}$ derived equalizer of $(H, \Psi)$, denoted by $H^{[[n]]}$, is the functor $\Delta_{\Theta}^{[n]}$ (\textit{see} Definition \ref{tt}).
\end{enumerate}
\end{definition}
\begin{remark}
The word ``derived'' in the name ``derived equalizer'' refers to the formalism of Quillen derived functors between model categories. Although not strictly necessary, this approach is explained in Appendix \ref{quillen} and sheds light on many considerations about derived equalizers.
\end{remark}
Let $U$ denote the element of the center of $\mathrm{EndFct}_{\mathrm{dg}}(\mathcal{C})$ defined by 
\[
U=\mathrm{cone}\,\left(  \mathrm{id}_{\mathrm{C}^{\mathrm{b}}(\mathcal{C})} \longrightarrow  \mathrm{id}_{\mathrm{C}^{\mathrm{b}}(\mathcal{C})} \right)\!.
\]
and let $\iota \colon U[-1]  \longrightarrow \mathrm{id}_{\mathrm{C}^{\mathrm{b}}(\mathcal{C})}$ be the corresponding natural morphism. Thanks to Lemma \ref{doublecone}, there is an isomorphism
\begin{equation} \label{fib}
\Delta_{{\Theta}} \simeq H \oplus U [-1]
\end{equation}
such that the diagram
\[
\xymatrix@C=20pt@R=20pt{
\Delta_{\widetilde{H}} \ar[rr]^-{\sim} \ar[rd] && H \oplus U [-1] \ar[ld]^-{\Psi \oplus \,\iota} \\
& \mathrm{id}_{\mathrm{C}^{\mathrm{b}}(\mathcal{C})} &
}
\]
commutes. Hence there is a natural morphism of projective systems
\[
(H^{[n]})_{n \geq 0} \longrightarrow (H^{[[n]]})_{n \geq 0}.
\]
Let us give a few properties:
\begin{proposition} \label{morita2} $ $
\vspace{0.2cm}
\begin{enumerate}
\item[--] If $H$ is an exact dg-endofunctor of $\mathcal{C}^{\mathrm{b}}(\mathcal{C})$ endowed with a morphism from $H$ to $\mathrm{id}_{\mathrm{C}^{\mathrm{b}}(\mathcal{C})}$, then all functors $H^{[[n]]}$ are also exact.
\vspace{0.2cm}
\item[--] Assume that $\mathcal{S}$ is a tensor category, and let $H$ be a lax monoidal dg-endofunctor of $\mathcal{C}^{\mathrm{b}}(\mathcal{S})$ endowed with a multiplicative morphism from $H$ to $\mathrm{id}_{\mathrm{C}^{\mathrm{b}}(\mathcal{S})}$. Then $(H^{[n]})_{n \geq 0}$ and $(H^{[[n]]})_{n \geq 0}$ form a projective system of lax monoidal functors, and the natural  morphisms from $H^{[n]}$ to $H^{[[n]]}$ are multiplicative.
\vspace{0.2cm}
\item[--] Let $H_1$ and $H_2$ be two exact dg-endofunctors of the category $\mathcal{C}^{\mathrm{b}}(\mathcal{C})$
endowed with morphisms $\Psi_i \colon H_i \longrightarrow  \mathrm{id}_{\mathrm{C}^{\mathrm{b}}(\mathcal{C})}$ for $i=1, 2$, and let $\Gamma \colon H_1 \longrightarrow H_2$ be a quasi-isomorphism such that $\Psi_2 \circ \Gamma=\Psi_1$. Then for any positive integer $n$, $\Gamma$ induces quasi-isomorphisms between $H_1^{[[n]]}$ and $H_2^{[[n]]}$.
\end{enumerate}
\end{proposition}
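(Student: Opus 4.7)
The plan is to unify all three assertions through Theorem~\ref{orange}, which identifies $H^{[[n]]}=\Delta_{\widetilde{H}}^{[n]}$ with the functor $F_n$ built from the pair $(\widetilde{H},\Theta)$ where $\widetilde{H}=\mathrm{cone}\,\Psi$ and $\Theta\colon\mathrm{id}_{\mathrm{C}^{\mathrm{b}}(\mathcal{C})}\to\widetilde{H}$ is the canonical map. Once this translation is in place, exactness and quasi-isomorphism statements reduce to results already proved in \S\ref{natanz}, and the lax monoidal statement reduces to a careful application of Proposition~\ref{brique}.

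For the first bullet I would first observe that when $H$ is exact the cone $\widetilde{H}=\mathrm{cone}\,\Psi$ is also exact: indeed it fits between $H$ and $\mathrm{id}_{\mathrm{C}^{\mathrm{b}}(\mathcal{C})}$ (both exact) in the usual triangle, so the associated long exact sequence squeezes it. The exactness lemma stated just before Definition~\ref{groix} then gives the exactness of every $F_n$, and therefore of $H^{[[n]]}$. For the third bullet, a quasi-isomorphism $\Gamma\colon H_1\to H_2$ induces a quasi-isomorphism $\widetilde{\Gamma}\colon\widetilde{H}_1\to\widetilde{H}_2$ between the cones, and the hypothesis $\Psi_2\circ\Gamma=\Psi_1$ forces $\widetilde{\Gamma}\circ\Theta_1=\Theta_2$ by naturality of $\Theta$. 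Both $\widetilde{H}_i$ are exact by the previous step, so Proposition~\ref{morita} applies and produces quasi-isomorphisms $F_n^1\to F_n^2$, hence $H_1^{[[n]]}\to H_2^{[[n]]}$.

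The second bullet is the real work. The $H^{[n]}$ case is formal: each $H^n$ is lax monoidal as a composition of lax monoidal functors, the projections $\pi_{n,i}\colon H^n\to H^{n-1}$ are multiplicative because they are obtained by inserting the multiplicative morphism $\Psi$ in the $i$-th slot, and the equalizer of multiplicative morphisms between lax monoidal functors inherits a canonical lax monoidal structure by restricting $\mathfrak{m}_{H^n}$ and using the universal property to check that its image lies in $H^{[n]}(K\otimes L)$. The same pattern handles $H^{[[n]]}$ once $\Delta_{\widetilde{H}}$ is equipped with a lax monoidal structure and a multiplicative map to $\mathrm{id}_{\mathrm{C}^{\mathrm{b}}(\mathcal{S})}$: for this I would apply Proposition~\ref{brique} to a mapping-cylinder-type lax monoidal enlargement, that is, an exact sequence $0\to N\to H'\to\mathrm{id}\to 0$ whose right map is a multiplicative surjection, chosen so that $\mathrm{cone}(N\to H')$ is canonically isomorphic to $\Delta_{\widetilde{H}}$ as a complex of dg-functors. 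Proposition~\ref{brique} then transports the desired structure onto $\Delta_{\widetilde{H}}$, and via the splitting \eqref{fib} the inclusion of $H$ into $\Delta_{\widetilde{H}}$ is multiplicative, from which the multiplicativity of the natural maps $H^{[n]}\to H^{[[n]]}$ and of both transition morphisms in the projective systems follows formally.

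The main obstacle is precisely the realization of $\Delta_{\widetilde{H}}$ as the output of Proposition~\ref{brique}: since $\Psi$ is only multiplicative and not surjective, the proposition does not apply directly to $(H,\Psi)$, and the cylinder replacement $H'$ must be engineered so that the lax monoidal structure Proposition~\ref{brique} places on $\mathrm{cone}(N\to H')$ matches, under \eqref{fib}, the structure inherited on the $H$-summand of $\Delta_{\widetilde{H}}\simeq H\oplus U[-1]$. Once this matching is pinned down, associativity, the unit axiom, and compatibility with the transition maps of the two projective systems are a routine but tedious verification.
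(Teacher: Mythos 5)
Your handling of the first and third bullets is correct and is essentially the paper's argument: after the identification $H^{[[n]]}=\Delta_{\widetilde{H}}^{[n]}\simeq F_n$ of Theorem \ref{orange}, exactness of $\widetilde{H}=\mathrm{cone}\,\Psi$ plus the exact sequences of Theorem \ref{hard} (ii) give the first point by induction, and Proposition \ref{morita} applied to $\widetilde{\Gamma}\colon\widetilde{H}_1\to\widetilde{H}_2$ gives the third. The $H^{[n]}$ half of the second bullet is also fine.

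The gap is in the lax monoidal structure on $\Delta_{\widetilde{H}}$, which you propose to extract from Proposition \ref{brique} applied to an unspecified enlargement $0\to N\to H'\to\mathrm{id}\to 0$ with $\mathrm{cone}(N\to H')\cong\Delta_{\widetilde{H}}$. This reduction cannot be carried out. First, the underlying graded object of $\mathrm{cone}(N\to H')$ is $N[1]\oplus H'$ with $H'$ an extension of $\mathrm{id}_{\mathrm{C}^{\mathrm{b}}(\mathcal{S})}$ by $N$, whereas $\Delta_{\widetilde{H}}\simeq H\oplus U[-1]$ has underlying graded object $H\oplus\mathrm{id}\oplus\mathrm{id}[-1]$; the natural candidates (the mapping cylinder of $\Psi$, or $H\oplus\mathrm{id}$ with the sum map to $\mathrm{id}$) yield cones of the shape $U(H)\oplus\mathrm{id}$ rather than $H\oplus U[-1]$, and in general no choice of $(N,H')$ reconciles the two shapes. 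Second, and more fundamentally, the multiplication on $\Delta_{\widetilde{H}}$ cannot consist only of degree-preserving components of the kind Proposition \ref{brique} produces: under the splitting \eqref{fib}, the summand $U(K)[-1]\otimes U(L)[-1]$ reaches degree $2$ relative to $K\otimes L$, while $U(K\otimes L)[-1]$ stops at degree $1$, so a degree-shifting product $U(K)\otimes U(L)\to U(K\otimes L)[1]$ is unavoidable. This is exactly what the paper does: it defines the multiplication on $H\oplus U[-1]$ explicitly as a matrix whose entries are the multiplication $\mathfrak{m}$ of $H$, the maps $H(K)\otimes U(L)\to K\otimes U(L)\simeq U(K\otimes L)$ and $U(K)\otimes H(L)\to U(K)\otimes L\simeq U(K\otimes L)$ obtained from $\Psi$, and an explicitly constructed closed morphism $U(K)\otimes U(L)\to U(K\otimes L)[1]$, with unit $\mathbf{1}_{\mathcal{S}}\to H(\mathbf{1}_{\mathcal{S}})\to\Delta_{\widetilde{H}}(\mathbf{1}_{\mathcal{S}})$. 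That explicit construction, in particular the shifted $U\otimes U$ component, is the ingredient your outline is missing and cannot obtain from Proposition \ref{brique}; once it is in place, the multiplicativity of $H^{[n]}\to H^{[[n]]}$ does follow formally from the multiplicativity of $H\to\Delta_{\widetilde{H}}$, as you say.
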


\begin{proof}
The first and third point follow directly by induction using the exact sequence provided in Theorem \ref{hard} (ii). For the second point, the multiplicativity of $H^{[n]}$ is straightforward. It also implies the multiplicativity of $H^{[[n]]}$, as we shall see now. First we remark that for any bounded complexes $K$ and $L$ of elements of $\mathcal{S}$, there is a natural morphism
\[
U(K) \otimes U(L) \longrightarrow U(K \otimes L) [1]
\]
given by the morphism
\[
\xymatrix@C=40pt@R=30pt{
K \otimes L \ar@{=}[d] \ar[r]^-{ \begingroup \tiny {\begin{pmatrix}
1 \\ -1
\end{pmatrix}} \endgroup} & K \otimes L \oplus K \otimes L \ar[r]^-{\tiny{\begin{pmatrix}
1 & 1
\end{pmatrix}} } \ar[d]^-{\tiny{\begin{pmatrix}
1 & 1
\end{pmatrix}} }& K \otimes L \\
K \otimes L \ar[r] & K \otimes L &
}
\]
Now we can endow $H \oplus U [-1]$ with a lax monoidal structure as follows: we define the multiplicative morphism as a matrix of the type
{\renewcommand{\arraystretch}{1.5}
\let\oldbullet\bullet
\renewcommand{\bullet}{\scalebox{0.5}{$\oldbullet$}}
\[
\begin{blockarray}{ccccc}
&\scriptstyle{H(K) \,\otimes\, H(L)}&\scriptstyle{H(K) \,\otimes\, U(L)[-1]} &\scriptstyle{U(K) \,\otimes \,H(L) [-1]}& \scriptstyle{U(K) \,\otimes\, U(L) [-2]}\\
\begin{block}{r(cccc)}
\scriptstyle{H(K \, \otimes \, L)} &*  & 0  &  0    &  0  \\
\scriptstyle{U(K \,\otimes \,L)} [-1] &0   &*   &  * & *    \\
\end{block}
\end{blockarray}
\]}
whose components are:
\begin{enumerate}
\vspace{0.2cm}
\item[--] the morphism $H(K) \otimes H(L) \longrightarrow H(K \otimes L)$ provided by the lax monoidal structure of $H$,
\vspace{0.2cm}
\item[--] the morphism $H(K) \otimes U(L) \longrightarrow K \otimes U(L)  \xrightarrow{\sim} U(K \otimes L)$,
\vspace{0.2cm}
\item[--] the morphism $U(K) \otimes H(L) \longrightarrow U(K) \otimes L \xrightarrow{\sim} U(K \otimes L)$,
\vspace{0.2cm}
\item[--] the morphism $U(K) \otimes U(L) \longrightarrow  U(K \otimes L)[1]$ formerly introduced.
\vspace{0.2cm}
\end{enumerate}
The unit of $\Delta_{{\Theta}}$ is defined by the composition
\[
\mathbf{1}_{\mathcal{S}} \longrightarrow H(\mathbf{1}_{\mathcal{S}}) \longrightarrow \Delta_{\widetilde{H}}(\mathbf{1}_{\mathcal{S}}).
\]
Hence $H^{[[n]]}$ are also lax monoidal functors, and the compatibility of the multiplicative structures follows from the fact that the natural morphism from $H$ to $\Delta_{{\Theta}}$ is multiplicative.
\end{proof}

\begin{lemma}
Let $
0 \longrightarrow N \longrightarrow A \xlongrightarrow{j} B \xlongrightarrow{p} \mathrm{id}_{\mathrm{C}^{\mathrm{b}}(\mathcal{C})} \longrightarrow 0
$
be an exact sequence in $\mathrm{EndFct}_{\mathrm{dg}}(\mathrm{C}^{\mathrm{b}}(\mathcal{C}))$ and let $H=\mathrm{cone}\,\,(A \longrightarrow B)$ and $Z=\mathrm{cone}\,\,(A \longrightarrow A/N)$. Finally, let $\Psi \colon H \longrightarrow \mathrm{id}_{\mathrm{C}^{\mathrm{b}}(\mathcal{C})}$ be the natural morphism induced by the commutative diagram
\[
\xymatrix{
A \ar[r] \ar[d] & B \ar[d]^-{p} \\
0 \ar[r] &  \mathrm{id}_{\mathrm{C}^{\mathrm{b}}(\mathcal{C})}
}
\]
If $A$ is right exact, then the morphism $H(\Psi)-\Psi_H \colon H^2 \longrightarrow H$ factors through a surjective morphism $\nu \colon H^2 \longrightarrow Z$.
\end{lemma}

\begin{proof}
There is an exact sequence $0 \longrightarrow Z \longrightarrow H \xlongrightarrow{\Psi} \mathrm{id}_{\mathrm{C}^{\mathrm{b}}(\mathcal{C})} \longrightarrow 0$. Since $\Psi \circ (H(\Psi)-\Psi_H)$ vanishes, the image of $H(\Psi)-\Psi_H$ lies in $Z$. To prove that it is the whole of $Z$, we consider the composition
\begin{equation} \label{raki}
H \circ B \longrightarrow H^2 \xlongrightarrow {H(\Psi)-\Psi_H} H
\end{equation}
It can be realized by taking the total complexes of the lines of the commutative diagram
\[
\xymatrix{
A \circ B \ar[r] \ar[d]_-{A(p)} & B^2 \ar[d]^-{B(p)-p_B}\\
A \ar[r]^-{j} & B
}
\]
Since $A$ is right exact, $A(p)$ is surjective. This proves that the image of $B(p)-p_B$ contains $A/N$, so it is equal to $A/N$. As a consequence $Z$ is contained in the image of $\eqref{raki}$, hence in the image of $H(\Psi)-\Psi_H$.
\end{proof}
\begin{lemma} \label{mince}
For any positive integer $n$, the morphisms
\[
\begin{cases}
H^{[n]} \longrightarrow H^{[n-1]} \\
H \circ H^{[n]} \longrightarrow H^2\circ H^{[n-1]} \xrightarrow{\nu_{H^{[n-1]}}} Z  \circ H^{[n-1]}
\end{cases}
\]
are onto. In particular there is a natural exact sequence
\[
0 \longrightarrow H^{[n+1]} \longrightarrow H(H^{[n]}) \longrightarrow Z (H^{[n-1]}) \longrightarrow 0.
\]
\end{lemma}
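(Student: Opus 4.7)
I would prove the lemma by induction on $n$, establishing both surjectivity statements and deducing the short exact sequence as a consequence.

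\textbf{Base case} $n = 1$. Set $H^{[0]} = \mathrm{id}_{\mathrm{C}^{\mathrm{b}}(\mathcal{C})}$. The first map is $\Psi \colon H \to \mathrm{id}$, which is surjective because taking cones in the defining resolution $0 \to N \to A \to B \to \mathrm{id} \to 0$ yields a short exact sequence $0 \to Z \to H \to \mathrm{id} \to 0$. The second map is $\nu \colon H^2 \to Z$, surjective by its very construction. The identification $\ker \nu = H^{[2]}$ is immediate from the factorization $H(\Psi) - \Psi_H = \iota \circ \nu$ together with the fact that $\iota \colon Z \hookrightarrow H$ is a monomorphism, coming from $A/N \hookrightarrow B$.

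\textbf{Inductive step.} Assume the three conclusions of the lemma at level $n$. The key technical identity driving the induction is that the restriction $\nu|_{Z \circ H} \colon Z \circ H \to Z$ coincides with $Z(\Psi)$; this follows from $Z = \ker \Psi$ together with the naturality of $\iota \colon Z \hookrightarrow H$. For the surjectivity of $H^{[n+1]} \to H^{[n]}$, observe that this map is the restriction to $H^{[n+1]} \subset H(H^{[n]})$ of $\Psi_{H^{[n]}} \colon H(H^{[n]}) \to H^{[n]}$, which is itself surjective with kernel $Z(H^{[n]})$. Using the inductive short exact sequence $0 \to H^{[n+1]} \to H(H^{[n]}) \to Z(H^{[n-1]}) \to 0$, a direct diagram chase reduces this surjectivity to that of the induced map $Z(H^{[n]}) \to Z(H^{[n-1]})$, which by the key identity above coincides with $Z$ applied to the map $H^{[n]} \to H^{[n-1]}$. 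Since $Z$ is right exact (as $A$, hence $A/N$, is right exact and $Z = \mathrm{cone}(A \to A/N)$), and this last map is surjective by induction, the claim follows.

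\textbf{Second surjectivity and exact sequence.} For the surjectivity of $H(H^{[n+1]}) \to Z(H^{[n]})$, I would apply $H$ to the inductive exact sequence to obtain $H(H^{[n+1]}) \to H^2(H^{[n]}) \to H(Z(H^{[n-1]})) \to 0$, and compare it with the surjection $\nu_{H^{[n]}} \colon H^2(H^{[n]}) \twoheadrightarrow Z(H^{[n]})$, whose kernel is $H^{[2]}(H^{[n]})$. A snake-lemma-style diagram chase reduces the claim to the surjectivity of an induced map $H^{[2]}(H^{[n]}) \to H(Z(H^{[n-1]}))$, and I expect this final reduction to be the main technical obstacle: it requires one more application of the identity $\nu|_{Z \circ H} = Z(\Psi)$, combined with the right exactness of $H$ (which itself follows from right exactness of $A$ and $B$). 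Finally, the short exact sequence $0 \to H^{[n+2]} \to H(H^{[n+1]}) \to Z(H^{[n]}) \to 0$ follows from the just-established surjectivity together with the kernel identification, by exactly the same argument as in the base case.
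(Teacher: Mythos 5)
Your base case and the first half of the inductive step are correct and follow essentially the same route as the paper: the identity $\nu|_{Z\circ H}=Z(\Psi)$ (in the paper's words, the composition $Z\circ H\to H^2\xrightarrow{\nu}Z$ is induced by $\Psi\colon H\to\mathrm{id}$) is indeed the key observation, and the diagram chase combining the column $0\to Z(H^{[n]})\to H(H^{[n]})\to H^{[n]}\to 0$ with the inductive row $0\to H^{[n+1]}\to H(H^{[n]})\to Z(H^{[n-1]})\to 0$ does reduce the surjectivity of $H^{[n+1]}\to H^{[n]}$ to that of $Z$ applied to $H^{[n]}\to H^{[n-1]}$, which holds because $Z$ preserves epimorphisms.

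The genuine gap is in your treatment of the second surjectivity, $H(H^{[n+1]})\to Z(H^{[n]})$. The route you sketch --- applying $H$ to the inductive exact sequence, comparing with $\nu_{H^{[n]}}$, and reducing to the surjectivity of a map $H^{[2]}(H^{[n]})\to H(Z(H^{[n-1]}))$ --- is not carried out, and you explicitly flag the final reduction as an unresolved obstacle; as written the lemma is therefore not proved. Moreover this detour is unnecessary: the same key identity you already used disposes of it immediately. The composition
\[
Z(H^{[n+1]})\hookrightarrow H(H^{[n+1]})\longrightarrow H^2(H^{[n]})\xrightarrow{\ \nu_{H^{[n]}}\ }Z(H^{[n]})
\]
is $Z$ applied to the map $H^{[n+1]}\to H^{[n]}$ that you have just shown to be onto, hence is onto, and a fortiori $H(H^{[n+1]})\to Z(H^{[n]})$ is onto. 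In other words, both surjectivities at the next level fall out of the single surjection $\alpha=Z(H^{[n+1]}\to H^{[n]})$ in the same diagram, which is exactly how the paper argues; you only exploited $\alpha$ for one of the two conclusions. With that repair (and the kernel identification $H^{[n+2]}=\ker\bigl(H(H^{[n+1]})\to Z(H^{[n]})\bigr)$, which you correctly reduce to the equalizer description and the injectivity of $Z\hookrightarrow H$), the proof closes.
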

\begin{proof}
We argue by induction. There is a natural morphism $Z \longrightarrow H$ given by the morphism
\[
\xymatrix@C=25pt@R=15pt{
A \ar[r] \ar[d] &A/N \ar[d] \\
A \ar[r] & B
}
\]
Besides, the composition 
\[
Z \circ H \longrightarrow H^2 \xlongrightarrow{\nu} Z
\]
is induced by the morphism $H \longrightarrow \mathrm{id}_{\mathrm{C}^{\mathrm{b}}(\mathcal{C})}$. Thus the composition
\[
Z \circ H^{[n-1]} \longrightarrow H  \circ H^{[n-1]} \longrightarrow H^2\circ H^{[n-2]} \xrightarrow{\nu_{H^{[n-2]}}} Z  \circ H^{[n-2]}
\]
is induced by the natural morphism $H^{[n-1]} \longrightarrow H^{[n-2]}$ which is onto by induction. Let us now consider the diagram
\[
\xymatrix{
&&Z (H^{[n-1]}) \ar[d] \ar[rd]^-{\alpha} & \\
0 \ar[r] & H^{[n]} \ar[r] \ar[rd]_-{\gamma} & H (H^{[n-1]}) \ar[r]_-{\beta} \ar[d] & Z (H^{[n-2]})  \\ 
&&H^{[n-1]} \ar[d]& \\
&&0&
}
\]
where both horizontal and vertical sequences are exact. As $\alpha$ is onto, so are $\beta$ and $\gamma$. This completes the induction step.
\end{proof}
\begin{theorem} \label{canal}
Assume to be given an exact sequence 
\[
A \longrightarrow B \xlongrightarrow{p} \mathrm{id}_{\mathrm{C}^{\mathrm{b}}(\mathcal{C})} \longrightarrow 0
\]
in $\mathrm{EndFct}_{\mathrm{dg}}(\mathrm{C}^{\mathrm{b}}(\mathcal{C}))$, and let $H=\mathrm{cone}\, \, (A \longrightarrow B)$ and $U=\mathrm{cone}\, \, (\mathrm{id}_{\mathrm{C}^{\mathrm{b}}(\mathcal{C})} \xrightarrow{\mathrm{id}} \mathrm{id}_{\mathrm{C}^{\mathrm{b}}(\mathcal{C})})$. 
\par \smallskip
\begin{enumerate}
\item[(i)] If $A$ is right exact and if $\mathrm{ker}\, \, (A \longrightarrow B)= \{0\}$, for any positive  integer $n$, there is an exact sequence
\[
0 \longrightarrow H^{[n+1]} \longrightarrow H(H^{[n]}) \longrightarrow U \circ A \,(H^{[n-1]}) \longrightarrow 0.
\]
\item[(ii)] 
If $A$ and $B$ are exact, for any nonnegative integer $n$, the functors $H^{[n]}$ and $H^{[[n]]}$ are exact, and the morphism
\[
H^{[n]} \longrightarrow H^{[[n]]}
\]
is an isomorphism of endofunctors of $\mathrm{D}^{\mathrm{b}}(\mathcal{C})$.
\end{enumerate}
\end{theorem}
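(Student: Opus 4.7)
For part~(i), I would specialize Lemma~\ref{mince}. The hypothesis $\ker(A \to B) = \{0\}$ forces $N = 0$ in the four-term exact sequence $0 \to N \to A \to B \to \mathrm{id} \to 0$, so $A/N = A$, and the functor $Z = \mathrm{cone}(A \to A/N)$ supplied by Lemma~\ref{mince} collapses to $\mathrm{cone}(\mathrm{id}_A) = U \circ A$. Substituting this identification directly yields the claimed exact sequence.

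For part~(ii), the exactness of $H^{[[n]]}$ is immediate from the first assertion of Proposition~\ref{morita2}, since $H = \mathrm{cone}(A \to B)$ is exact as a cone of exact functors. My plan is to reduce the two remaining claims -- exactness of $H^{[n]}$ and invertibility of $H^{[n]} \to H^{[[n]]}$ in $\mathrm{D}^{\mathrm{b}}(\mathcal{C})$ -- to the single statement that this natural morphism is a pointwise quasi-isomorphism. Granting that, exactness of $H^{[n]}$ follows from exactness of $H^{[[n]]}$ by two-out-of-three for quasi-isomorphisms, and the derived-category isomorphism is automatic.

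I would argue the quasi-isomorphism by induction on $n$, using the splitting $\Delta_{\widetilde{H}} \simeq H \oplus U[-1]$ of equation~(\ref{fib}). Since $U[-1]$ is acyclic, the canonical inclusion $H \hookrightarrow \Delta_{\widetilde{H}}$ is a pointwise quasi-isomorphism of exact dg-endofunctors commuting with the projections to $\mathrm{id}$; iterating, the induced maps $H^n \to \Delta_{\widetilde{H}}^n$ are pointwise quasi-isomorphisms compatible with all the $\pi_{n,i}$. To compare equalizers, I would place the Lemma~\ref{mince} short exact sequence
\[
0 \longrightarrow H^{[n+1]} \longrightarrow H(H^{[n]}) \longrightarrow Z(H^{[n-1]}) \longrightarrow 0
\]
alongside its counterpart for $H^{[[n+1]]} = \Delta_{\widetilde{H}}^{[n+1]}$ obtained by running the same inductive reasoning on $\Delta_{\widetilde{H}}$. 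The middle terms will be quasi-isomorphic by the inductive hypothesis together with the exactness of $H$, and the right-hand terms will be quasi-isomorphic because the $Z'$-functor attached to $\Delta_{\widetilde{H}}$ will differ from $Z$ only by an acyclic summand coming from the $U[-1]$ factor. The five lemma applied to the long exact cohomology sequences then closes the induction.

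The hard part will be carrying out the Lemma~\ref{mince}-type analysis for $\Delta_{\widetilde{H}}$: this functor is not literally of the form $\mathrm{cone}(A' \to B')$ with $A'$ and $B'$ in degree zero, so either Lemma~\ref{mince} must be extended to a shift-tolerant setting, or, alternatively, the splitting~(\ref{fib}) should be combined with the iterated cone description $\Delta_{\widetilde{H}}^{[n]} \simeq F_n$ of Theorem~\ref{orange} to compute the analogous exact sequence directly. In either route, the decisive fact is the acyclicity of $U[-1]$, which makes all of its contributions invisible in the derived category.
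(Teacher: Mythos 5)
Your proposal is correct and follows essentially the same route as the paper: part (i) is Lemma \ref{mince} with $N=0$ so that $Z=\mathrm{cone}(\mathrm{id}_A)=U\circ A$, and part (ii) is an induction comparing the Lemma \ref{mince} exact sequence with the analogous one for $\Delta_{\widetilde H}^{[\,\cdot\,]}$, the latter being supplied (as in your ``alternative'' route) by Theorem \ref{strange} together with the identification $\Delta_{\widetilde H}^{[n]}\simeq F_n$ of Theorem \ref{orange}. The only ingredient the paper makes explicit that you leave implicit is the pointwise quasi-isomorphism $Z\to\widetilde H[-1]$ used to compare the right-hand terms of the two sequences; your remark that the discrepancy is an acyclic contribution of the $U[-1]$ factor is exactly what that map encodes, and the paper also derives exactness of $H^{[n]}$ directly from the nine lemma rather than by two-out-of-three, though both arguments work.
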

\begin{proof}
The first point results directly from Lemma \ref{mince} and the fact that $Z=U \circ A$. For the second point, the functors $H^{[[n]]}$ are exact thanks to Proposition \ref{morita2}. Besides, since $Z$ is exact, the exact sequence
\[
0 \longrightarrow H^{[n+1]} \longrightarrow H(H^{[n]}) \longrightarrow Z (H^{[n-1]}) \longrightarrow 0
\]
provided by Lemma \ref{mince} and the nine lemma show by induction that the functors $H^{[n]}$ are also exact. The functor $\widetilde{H}$ is the cone of the complex of functors $A \longrightarrow B \longrightarrow \mathrm{id}_{\mathrm{C}^{\mathrm{b}}(\mathcal{C})}$. Hence there is a morphism $Z \longrightarrow \widetilde{H}[-1]$ given by
\[
\xymatrix@C=25pt@R=15pt{
A \ar[r] \ar[d] & A/N \ar[d]& \\
A \ar[r] & B \ar[r] & \mathrm{id}_{\mathrm{C}^{\mathrm{b}}(\mathcal{C})}
} 
\]
which is an isomorphisms of endofunctors of $\mathrm{D}^{\mathrm{b}}(\mathcal{C})$.
Let us consider the diagram
\[
\xymatrix{
0 \ar[r] & H^{[n+1]} \ar[r] \ar[d] & H (H^{[n]}) \ar[r] \ar[d] &Z (H^{[n-1]}) \ar[r]\ar[d] &0 \\
0 \ar[r] & H^{[[n+1]]} \ar[r] & \Delta_{{\Theta}} (H^{[[n]]}) \ar[r] &\widetilde{H} (H^{[[n-1]]}) [-1] \ar[r]  &0
}
\]
where the first line is exact thanks to Lemma \ref{mince}, and the second line is also exact thanks to Theorem \ref{ecolo}. It implies directly the required result by induction.
\end{proof}
Lastly, we give a more simple situation where standard and derived equalizers are quasi-isomorphic:

\begin{remark} \label{since}
Let $L$ be an element of $\mathrm{EndFct}_{\mathrm{dg}}(\mathrm{C}^{\mathrm{b}}(\mathcal{C}))$, and define a pair $(H, \Psi)$ as follows: $H=L \oplus \mathrm{id}_{\mathrm{C}^{\mathrm{b}}}(\mathcal{C})$ and $\Psi$ is the second projection. Then for any nonnegative integer $n$, the map from $H^{[n]}$ to $H^{[[n]]}$ is a quasi-isomorphism, and $H^{[n]}$ is isomorphic to the functor $\bigoplus_{p=0}^n L^p$.
\end{remark}

\section{Square zero extensions} \label{carrenul}

\subsection{General properties} \label{nakon}

\subsubsection{Setting}
Let $A$ be a commutative algebra over a field $\mathbf{k}$ of characteristic zero. If no ring is specified, tensor product will always be taken over $A$.
\par \medskip
Let $I$ be a free $A$-module of finite rank, and let $B$ be a $\mathbf{k}$-square-zero extension of $A$ by $I$, i.e. we have an exact sequence
\begin{equation} \label{ati}
0 \longrightarrow I \longrightarrow B \xlongrightarrow{\pi} A \longrightarrow 0
\end{equation}
of $k$-algebras, where $I^2=0$. We will always assume that $B$ is trivial (as a $\mathbf{k}$-extension), which means that \eqref{ati}
 splits. Hence $B$ is isomorphic to the trivial extension $I \oplus A$ as a $\mathbf{k}$-vector space, the ring structure being given by
\[
(i, a) . (i', a')=(ia'+ai', aa').
\]
Splittings of the sequence \eqref{ati} form an affine space over the module $\mathrm{Der}\,(A, I)$ of $I$-values derivations of $A$. In the majority of the results that will follow, we fix a splitting $\sigma$ of \eqref{ati}.
\par \medskip
Modules over $B$ admit a simple description, that we give now. Let $M$ and $N$ be two $A$-modules, and fix a splitting of \eqref{ati}.
\begin{enumerate}
\vspace{0.2cm}
\item[(i)] Any extension $V$ in $\mathrm{Ext}^1_B(M, N)$ yields an $A$-linear multiplication map
\[
\mu_V \colon I \otimes M \longrightarrow N
\]
given by $\mu_{{V}}(i\otimes m)=i{v}$ where ${v}$ is any lift of $m$ in ${V}$. This definition is meaningful since two different lifts lie in $N$, which is annihilated by $I$. Besides $V$ defines an extension class in $\mathrm{Ext}^1_A(M, N)$ because via the splitting we took, every $B$-module becomes an $A$-module.
\vspace{0.2cm}
\item[(ii)] If $Z$ and $\mu$ are in $\mathrm{Ext}^1_A(M, N)$ and $\mathrm{Hom}_A(I \otimes M, N)$ respectively, there is an associated extension $Z_{\mu}$ in $\mathrm{Ext}^1_B(M, N)$ defined as follows: as an $A$-module $Z_{\mu}=Z$, and the action of $I$ is given by the composition
\[
I \otimes Z \twoheadrightarrow I \otimes M \xlongrightarrow{\mu} N \hookrightarrow Z.
\]
\end{enumerate}
\begin{lemma} \label{easy}
The map
\[
\mathrm{Ext}^1_B(M, N) \xrightarrow{\sim} \mathrm{Ext}^1_A(M, N) \oplus \mathrm{Hom}_A(I \otimes M, N)
\]
where the second component is $V \longrightarrow \mu_V$, is a group isomorphism. Its inverse is given by $(Z, \mu) \longrightarrow Z_{\mu}$. Besides, it is independent of the chosen splitting  of \eqref{ati}.
\end{lemma}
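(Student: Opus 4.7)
The plan is to verify that the two assignments $\Phi \colon V \mapsto (V_\sigma, \mu_V)$ and $\Psi \colon (Z, \mu) \mapsto Z_\mu$ are well defined, mutually inverse, and compatible with Baer sums. I fix throughout a splitting $\sigma$ of \eqref{ati}, which is used both to view a $B$-module as an $A$-module and to define $Z_\mu$.

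First I would check the well-definedness of $\mu_V$. If $v, v'$ are two lifts of $m \in M$ in $V$, then $v - v' \in N$, and since $N$ carries the canonical $B$-module structure on which $I$ acts by zero, we get $i(v - v') = 0$; hence $\mu_V$ is independent of the lift. The $A$-linearity of $\mu_V$ follows from the commutativity of $B$, because $i \cdot \sigma(a) = \sigma(a) \cdot i$. For $Z_\mu$, the only $B$-module axiom that is not automatic is $(ii')z = i(i'z)$ for $i, i' \in I$, which holds since $I^2 = 0$ and the image of the $I$-action lies in $N$ where $I$ acts trivially.

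The mutual inverseness is then immediate. Given $(Z, \mu)$, the $B$-action on $Z_\mu$ is defined by $i \cdot z = \iota(\mu(i \otimes \pi(z)))$, so picking any lift $v$ of $m$ yields $\mu_{Z_\mu}(i \otimes m) = \mu(i \otimes m)$; hence $\Phi \circ \Psi = \mathrm{id}$. Conversely, starting from a $B$-extension $V$, the relation $I^2 = 0$ forces $IV \subseteq N$, so the $I$-action on $V$ factors as
\[
I \otimes V \twoheadrightarrow I \otimes M \xrightarrow{\mu_V} N \hookrightarrow V,
\]
which is exactly the rule defining $(V_\sigma)_{\mu_V}$. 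Thus $V = (V_\sigma)_{\mu_V}$ as $B$-modules and $\Psi \circ \Phi = \mathrm{id}$.

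The compatibility with the group structures reduces to a direct inspection of the Baer sum $V \boxplus V' = (V \times_M V')/\{(n, -n) \colon n \in N\}$. A lift of $m \in M$ takes the form $[(v, v')]$ with $\pi(v) = \pi(v') = m$, and $i \cdot [(v, v')] = [(iv, iv')] = [(iv + iv', 0)]$ modulo the antidiagonal; thus $\mu_{V \boxplus V'} = \mu_V + \mu_{V'}$, while the underlying $A$-extension is manifestly $V_\sigma \boxplus V'_\sigma$. The one subtlety worth highlighting — essentially the main obstacle — is bookkeeping around the splitting $\sigma$: the $A$-structure and the formula for $Z_\mu$ both depend on $\sigma$, but the resulting bijection is canonical, and the independence of the isomorphism class of $Z_\mu$ from $\sigma$ follows a posteriori from $\Psi \circ \Phi = \mathrm{id}$.
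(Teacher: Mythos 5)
Your proof is correct and is exactly the direct verification the paper intends: the paper omits the argument entirely, declaring it straightforward and leaving it to the reader, and your write-up supplies the expected details (well-definedness of $\mu_V$, the $B$-module axioms for $Z_\mu$, mutual inverseness, and compatibility with Baer sums). One tiny quibble: the inclusion $IV \subseteq N$ follows because $I$ annihilates the quotient $V/N \simeq M$ ($M$ being an $A$-module regarded as a $B$-module via $\pi$), not from $I^2 = 0$ as such; this misattribution does not affect the argument.
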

The proof is straightforward, we leave it to the reader. 
\begin{corollary} A module over $B$ is given by two $A$-modules $M$ and $N$, an extension class in $\mathrm{Ext}^1_A(M, N)$, and a surjective $A$-linear morphism from $I \otimes M$ to $N$.
\end{corollary}

\begin{proof}
Let $V$ be a $B$-module and put $M=V \otimes_B A$ and $N=IV$. There is an exact sequence of $B$-modules
\[
0 \longrightarrow N \longrightarrow U \longrightarrow M \longrightarrow 0
\]
which allows to see $V$ as a class in $\mathrm{Ext}^1_B(M, N)$: hence it defines a class in $\mathrm{Ext}^1_A(M, N)$ as well as a morphism $\mu_V \colon I \otimes M \longrightarrow N$ which is surjective by definition of $M$ and $N$. 
\par \medskip
Conversely, given two $A$-modules $M$ and $N$, an extension class in $\mathrm{Ext}^1_A(M, N)$ and a surjective map $\mu \colon I \otimes M \longrightarrow N$, we can consider the associated element $Z_{\mu}$ in $\mathrm{Ext}^1_B(M, N)$. To prove that both constructions are mutually inverse, we must prove that there is an isomorphism of extensions
\[
\xymatrix{
0 \ar[r] & IZ_{\mu} \ar[r] \ar[d]^-{\sim} & Z_{\mu} \ar[r] \ar@{=}[d] & Z_{\mu} \otimes_B A \ar[d]^-{\sim} \ar[r] & 0 \\
0 \ar[r] & N \ar[r] & Z_{\mu} \ar[r] & M \ar[r] & 0 
}
\]
There is always a natural morphism between this extension, since $IZ_{\mu}$ is a $A$-submodule of $N$. The surjectivity of $\mu$ ensures that $IZ_{\mu}=N$, which yields the desired result.
\end{proof}

Lastly, let us present two useful base change operations for $B$-modules.
Let $V$ be a $B$-module, and put $M=V \otimes_B A$ and $N=IV$. Assume to be given an $A$-module $Q$ (resp. $R$) and a surjective $A$-linear morphism $u: Q \longrightarrow M$ (resp. $v \colon N \longrightarrow R$). Define the $B$-module $V^{'}$ (resp. $V^{''}$) as follows: $V^{'}$ is given by the cartesian (resp. cocartesian) diagram 
\[
\xymatrix{
V^{'} \ar[r] \ar[d] & Q \ar[d] \\
V\ar[r] & M
}
\quad \textrm{resp.} \quad
\xymatrix{
N \ar[r] \ar[d] & V \ar[d] \\
R \ar[r] &{V}^{''}
}
\]
\begin{lemma}  \label{reidemaster}
There are isomorphisms
\[
\begin{cases}
{V^{'}}\otimes_B A \simeq Q, \, IV^{'} \simeq N \\
 \textrm{resp.}\,\, {V^{''}} \otimes_B A \simeq M, \,IV^{''} \simeq R
\end{cases}
\] 
such that the composition
\[
\begin{cases}
Q \simeq {V^{'}} \otimes_B A\longrightarrow M \\
 \textrm{resp.}\,\, N \longrightarrow IV^{''}\simeq R
\end{cases}
\]
is equal to $u$ (resp. $v$). Besides, the multiplication morphisms of $V^{'}$ (resp. $V^{''}$) is given by the composition
\[
\begin{cases}
\mu_{V^{'}} \colon I \otimes (V^{'} \otimes_B A)\simeq I \otimes Q  \longrightarrow I \otimes M \xlongrightarrow{\mu_V} N \simeq IV^{'} \\
 \textrm{resp.} \,\, \mu_{V^{''}} \colon I \otimes (V^{'} \otimes_B A) \simeq I \otimes M \xlongrightarrow{\mu_V} N \longrightarrow R \simeq IV^{''}
\end{cases}
\]
\end{lemma}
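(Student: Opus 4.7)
The plan is to apply the classification of $B$-modules provided by Lemma \ref{easy}: to give a $B$-module is to give a pair of $A$-modules $M, N$, an extension class $[Z] \in \mathrm{Ext}^1_A(M, N)$, and a surjective $A$-linear map $I \otimes M \to N$. Under this correspondence, $V$ itself is encoded as $(M, N, [V], \mu_V)$ with $M = V \otimes_B A$ and $N = IV$. Throughout the proof I would write $W$ for the target of $v$, to avoid the naming clash in the statement.

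For $V'$, I would first form the fiber product $V \times_M U$ in $A$-modules. Pulling back the short exact sequence $0 \to N \to V \to M \to 0$ along the surjection $u \colon U \to M$ yields a short exact sequence $0 \to N \to V' \to U \to 0$ of $A$-modules, so $V'$ represents the class $u^*[V] \in \mathrm{Ext}^1_A(U, N)$. To upgrade this to a $B$-module structure I define the multiplication $\mu_{V'} \colon I \otimes U \to N$ as the composite $I \otimes U \xrightarrow{\mathrm{id} \otimes u} I \otimes M \xrightarrow{\mu_V} N$, which is surjective because both $u$ and $\mu_V$ are. Lemma \ref{easy} then produces the $B$-module $V'$, and the identifications $V' \otimes_B A \simeq U$ and $IV' \simeq N$ are read off directly from the data. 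Compatibility of $V' \otimes_B A \to M$ with $u$ is built into the pullback.

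Dually, for $V''$ I would form the pushout $V \sqcup_N W$ in $A$-modules. Pushing $0 \to N \to V \to M \to 0$ out along $v$ gives a short exact sequence $0 \to W \to V'' \to M \to 0$, representing the class $v_*[V] \in \mathrm{Ext}^1_A(M, W)$. The $B$-module structure is defined by the (automatically surjective) composite $\mu_{V''} = v \circ \mu_V \colon I \otimes M \to W$, and Lemma \ref{easy} yields the $B$-module $V''$ with $V'' \otimes_B A \simeq M$ and $IV'' \simeq W$; compatibility of $N \to IV''$ with $v$ is built into the pushout. The multiplication formulas displayed at the end of the statement are exactly the $\mu_{V'}$ and $\mu_{V''}$ constructed above.

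The only obstacle is bookkeeping: I must check that the two pieces of data (extension class plus multiplication map) furnished by the (co)limit constructions are mutually compatible in the sense required by Lemma \ref{easy}. Since the classes $u^*[V]$ and $v_*[V]$ come directly from the (co)limits, and $\mu_{V'}$, $\mu_{V''}$ are defined tautologically from $\mu_V$ together with $u$ and $v$, there is nothing substantive beyond unwinding definitions and invoking the one-to-one correspondence in Lemma \ref{easy}.
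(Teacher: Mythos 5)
The paper leaves this lemma to the reader, and your argument is a correct and natural way to fill it in: pulling back, resp.\ pushing out, the extension $0 \to N \to V \to M \to 0$ along $u$, resp.\ $v$, and transporting the multiplication map via Lemma \ref{easy}. The only point worth making explicit is that the $B$-module $V'$ (resp.\ $V''$) of the statement is the fiber product (resp.\ pushout) taken in $B$-modules, so one should observe that its underlying $A$-module is the corresponding (co)limit in $A$-modules and that its $I$-action, computed directly (e.g.\ $i\cdot(x,y)=(ix,0)$ on $V\times_M U$, since $I$ kills $U$), is exactly the composite $\mu_V\circ(\mathrm{id}\otimes u)$ (resp.\ $v\circ\mu_V$) you define — a one-line check that makes your abstract construction coincide with the module in the statement.
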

\begin{proof}
Left to the reader.
\end{proof}

\subsubsection{The functors $\mathrm{Tor}^p_B(\,*\,, A)$}

\begin{lemma}  Let $M$ be an $A$-module. Then $\mathrm{Tor}^1_B(M, A)$ is canonically isomorphic to $I  \otimes M$ as a $B$-module.
\end{lemma}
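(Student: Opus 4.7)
The plan is to use the defining short exact sequence of $B$-modules
\[
0 \longrightarrow I \longrightarrow B \xlongrightarrow{\pi} A \longrightarrow 0
\]
as a length-one partial free resolution of $A$ over $B$. Applying $M \otimes_B -$ and using that $B$ is free (hence flat) over itself kills $\mathrm{Tor}^1_B(M,B)$, so the associated long exact sequence reduces to the identification
\[
\mathrm{Tor}^1_B(M, A) \simeq \ker\bigl(M \otimes_B I \longrightarrow M \otimes_B B\bigr),
\]
where the map on the right is induced by the inclusion $I \hookrightarrow B$ and sends $m \otimes i$ to the element $i \cdot m \in M$.

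The first step is to identify $M \otimes_B I$ with $M \otimes_A I$. The point is that because $I^2 = 0$ inside $B$, the $B$-action on $I$ factors through $\pi \colon B \to A$ (explicitly, $(a, j) \cdot i = ai + ji = ai$ since $ji = 0$); similarly, the $B$-action on $M$ factors through $\pi$ since $M$ is an $A$-module. Hence the $B$-bilinearity relations $am \otimes i = m \otimes ai$ coincide with the $A$-bilinearity relations, and the canonical map $M \otimes_B I \to M \otimes_A I$ is an isomorphism.

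The second step is to observe that the resulting map $M \otimes_A I \to M$ is identically zero, because $I$ annihilates $M$ (the $B$-module structure on $M$ factors through $A = B/I$). Combining both steps gives the canonical isomorphism
\[
\mathrm{Tor}^1_B(M, A) \simeq M \otimes_A I \simeq I \otimes_A M,
\]
the last step by commutativity of the tensor product over $A$.

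There is no real obstacle here: the whole argument is a direct consequence of the splitting of \eqref{ati} and the fact that $I^2 = 0$. The only point requiring a small amount of care is checking that the isomorphism is genuinely canonical, i.e.\ independent of the chosen splitting $\sigma$, but this is automatic since neither the short exact sequence $0 \to I \to B \to A \to 0$ nor the induced long exact Tor sequence depends on $\sigma$.
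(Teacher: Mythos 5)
Your proof is correct and follows essentially the same route as the paper: both hinge on tensoring the defining short exact sequence $0 \to I \to B \to A \to 0$ with $M$ over $B$ and observing that the connecting map $M \otimes_B I \to M$ vanishes because $I$ annihilates $M$. Your version is in fact slightly more self-contained, since you compute directly for general $M$ (checking $M\otimes_B I \simeq M\otimes_A I$) rather than invoking the paper's terse reduction to the case $M=A$.
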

\begin{proof}
The sequence \eqref{ati} gives the long exact sequence
\[
0 \longrightarrow  \mathrm{Tor}^1_B(M, A) \longrightarrow M \otimes I \longrightarrow M \xrightarrow{\,\,\mathrm{id}\,\,} M  \longrightarrow 0.
\]
\end{proof}

\begin{proposition} \label{compris2}
Let $V$ be in $\mathrm{Ext}^1_B(M, N)$. Then the connection morphism 
\[
I \otimes M \simeq \mathrm{Tor}^1_B(M, A) \longrightarrow \mathrm{Tor}^0_B(N, A) \simeq N
\]
is exactly $\mu_V$, and there is an exact sequence
\[
0 \longrightarrow \mathrm{Tor}^1_B(V, A) \longrightarrow I \otimes M \xlongrightarrow{\mu_V} N
\]
\end{proposition}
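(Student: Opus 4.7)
The strategy combines the partial flat resolution $0 \to I \to B \to A \to 0$ of $A$ with the fact that, since $I^{2} = 0$, the $B$-action on $I$ factors through $A = B/I$, whence a natural base-change isomorphism $I \otimes_{B} W \simeq I \otimes_{A} (W/IW)$ for every $B$-module $W$.

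First, since $B$ is flat over itself, the long exact $\mathrm{Tor}$-sequence associated to $0 \to I \to B \to A \to 0$ identifies $\mathrm{Tor}^{1}_{B}(W, A)$ with $\ker(I \otimes_{B} W \to W)$, the map being the multiplication. Applied to $W = V$ and combined with the base change above (writing $M = V/IV$ and $N = IV$, so $0 \to N \to V \to M \to 0$), one finds that this kernel sits inside $I \otimes_{A} M$ and coincides with the kernel of $\mu_{V}$: indeed the multiplication $i \otimes v \mapsto iv$ depends only on the class $\bar v = m$ (because $i \cdot i' v = 0$ for any $i' \in I$), and its image lies in $N = IV$. This directly yields the exact sequence
\[
0 \longrightarrow \mathrm{Tor}^{1}_{B}(V, A) \longrightarrow I \otimes M \xlongrightarrow{\mu_{V}} N.
\]

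Second, to identify the connecting morphism $\partial \colon \mathrm{Tor}^{1}_{B}(M, A) \to \mathrm{Tor}^{0}_{B}(N, A) \simeq N$ with $\mu_{V}$, I would apply the snake lemma to the two-row diagram of two-term complexes $[I \otimes_{B} W \to W]$ obtained by running $W$ through $0 \to N \to V \to M \to 0$. After base change the degree-one row becomes
\[
I \otimes_{A} N \longrightarrow I \otimes_{A} M \xlongrightarrow{\mathrm{id}} I \otimes_{A} M \longrightarrow 0,
\]
where the first map is zero (since $N = IV$ maps to $0$ in $V/IV = M$) and the second is the identity. Chasing the standard construction of the connecting morphism then gives $\partial(i \otimes m) = iv$ for any lift $v$ of $m$, i.e.\ $\partial = \mu_{V}$.

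The only point requiring care is that the top row of the snake-lemma diagram is not left-exact: the map $I \otimes_{A} N \to I \otimes_{A} M$ can have a nontrivial kernel. However, the snake construction of the connecting morphism $\ker \to \mathrm{coker}$ only uses exactness at the middle and at the right of the top row (and symmetrically at the left and middle of the bottom row), which is exactly what we have verified. Once this is noted, every other step is a direct computation.
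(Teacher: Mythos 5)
Your proof is correct, and it takes a genuinely different route from the paper's. The paper resolves the \emph{first} variable: it chooses a splitting $\sigma$ of $0 \to I \to B \to A \to 0$, forms the induced module $\sigma^*V = B\otimes_A V$ (which is $\mathrm{Tor}_B^{>0}(-,A)$-acyclic), sets $S=\ker(\sigma^*V\to V)$, and then identifies the Tor sequence with a snake-lemma sequence by an explicit element-level computation of $S$, $IS$ and $S\otimes_B A$ inside a $3\times 3$ diagram. You instead resolve the \emph{second} variable by the partial flat resolution $0\to I\to B\to A\to 0$ itself, which gives $\mathrm{Tor}^1_B(W,A)=\ker(I\otimes_B W\to W)$ at once, and you exploit the base-change isomorphism $I\otimes_B W\simeq I\otimes_A(W/IW)$, which uses only $I^2=0$. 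This is shorter, dispenses with the splitting $\sigma$ and with the acyclicity of induced modules, and your care about which exactness hypotheses the snake construction actually needs (middle and right of the top row, left and middle of the bottom row) is exactly the right point to make, since $I\otimes_B(-)$ is only right exact. The one routine fact you lean on without proof is that the snake connecting map of this truncated flat resolution agrees with the connecting map of the Tor long exact sequence; this is standard dimension shifting (extend $0\to I\to B\to A\to 0$ to a projective resolution with $P_0=B$ and compare), and the paper's proof relies on the analogous compatibility for its own resolution.

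One caveat, which you share with the paper rather than introduce: you silently replace the given extension by the canonical one, ``writing $M=V/IV$ and $N=IV$.'' For an arbitrary extension $0\to N\to V\to M\to 0$ of $B$-modules with $M,N$ killed by $I$, one only has $IV\subseteq N$, and the second assertion of the Proposition (injectivity of $\mathrm{Tor}^1_B(V,A)\to I\otimes M$) genuinely fails without this identification — e.g.\ for the split extension $0\to A\to A^{2}\to A\to 0$ with trivial $B$-action, $\mathrm{Tor}^1_B(A^{2},A)\simeq (I\otimes A)^{2}$ does not inject into $I\otimes A$. The paper's proof makes the same identification implicitly when it writes $S\otimes_B A$ inside $I\otimes M\oplus V$, and the Proposition is only ever invoked (Corollary \ref{dur}) for the canonical extension $0\to IV\to V\to V/IV\to 0$, where both arguments are valid. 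So your explicit specialization is harmless and, if anything, more honest; your first claim (the identification of the connecting morphism with $\mu_V$) does hold for arbitrary extensions by the same chase.
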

\begin{proof}
Let $S$ be the kernel of the natural morphism from $\sigma^*V$\footnote{Here we make a slight abuse of notation, because $\sigma^* V$ should be $\sigma^* \sigma_* V$: we consider the $B$-module $V$ as an $A$-module.} to $V$ induced by the identity of $V$ via the isomorphism 
\[
\mathrm{Hom}_A(V, V) \simeq \mathrm{Hom}_B(\sigma^*V, V).
\]
We consider the diagram.
\[
\xymatrix{
&0 \ar[d]&0 \ar[d]&0 \ar[d] &\\
0 \ar[r] &I \otimes N \ar[d] \ar[r]& S  \ar[d] \ar[r] &I \otimes M \ar[d] \ar[r] &0\\
0 \ar[r]& \sigma^* N \ar[r] \ar[d]& \sigma^*V \ar[d] \ar[r] & \sigma^* M \ar[d] \ar[r] &0\\
0 \ar[r]& N \ar[r] \ar[d] & V \ar[r] \ar[d]& M \ar[r] \ar[d]& 0 \\
      &   0   &   0    & 0   & 
}
\]
For any $A$-module $P$ and any positive integer $i$ the module $\mathrm{Tor}^i_B(\sigma^*P, A)$ vanishes. Hence the Tor exact sequence 
\[
\mathrm{Tor}^1_B(V, A) \longrightarrow \mathrm{Tor}^1_B(M, A) \longrightarrow \mathrm{Tor}^0_B(N, A)
\] 
can be identified with the exact sequence
\[
\mathrm{ker}\, (S \otimes_B A \longrightarrow V)\longrightarrow I \otimes M \longrightarrow N
\]
obtained by the snake lemma. By diagram chase, we get the first point of the proposition. Now
\[
S=\{(i \otimes v, v') \in \sigma^* V \, \, \textrm{such that}\, \, iv+v'=0\}
\]
so that 
\[
IS=\{(i \otimes v, 0) \in \sigma^* V \, \, \textrm{such that}\, \, v \in IV\}
\]
and we get
\[
S \otimes_B A=\{(i \otimes m, v') \in I \otimes M \oplus V \, \, \textrm{such that}\, \, \mu_V (i \otimes m)+v'=0\}.
\]
Hence $\mathrm{ker}\, (S \otimes_B A \longrightarrow V)$ is the kernel of $\mu_V$, and embeds in $I \otimes M$. 
\end{proof}

\begin{corollary} \label{dur}
Let $V$  be a $B$-module, and let $M=V \otimes_B A$. Then the map
\[
\mathrm{Tor}^1_B(V, A) \longrightarrow \mathrm{Tor}^1_B(M, A) \simeq I \otimes M
\]
is injective, and its image is $\mathrm{ker}\,\mu_V$.
\end{corollary}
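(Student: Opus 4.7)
The plan is to recognise that Corollary \ref{dur} is essentially a direct specialisation of Proposition \ref{compris2}, applied to the canonical short exact sequence associated with any $B$-module $V$.

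More precisely, given a $B$-module $V$, I would consider the short exact sequence
\[
0 \longrightarrow IV \longrightarrow V \longrightarrow M \longrightarrow 0
\]
of $B$-modules, where $M = V\otimes_B A = V/IV$. Viewing $V$ as the element of $\mathrm{Ext}^1_B(IV, M)$ represented by this sequence (in the paper's convention where $M$ is the quotient), I would compute the associated multiplication morphism $\mu_V \colon I\otimes M \to IV$. By its very definition, $\mu_V(i\otimes m) = iv$ for any lift $v \in V$ of $m \in M$; since any two such lifts differ by an element of $IV$ and $I\cdot IV = I^2V = 0$, this is well-defined and coincides with the canonical multiplication map $I\otimes V \twoheadrightarrow I\otimes M \to IV$, $i\otimes v \mapsto iv$.

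With this identification in hand, Proposition \ref{compris2} applied to the extension $0\to IV \to V \to M \to 0$ gives an exact sequence
\[
0 \longrightarrow \mathrm{Tor}^1_B(V, A) \longrightarrow \mathrm{Tor}^1_B(M, A) \xlongrightarrow{\mu_V} IV,
\]
where the first map is precisely the functorial map induced by the quotient $V \twoheadrightarrow M$, since this is the map appearing in the Tor long exact sequence of the above short exact sequence. Combined with the isomorphism $\mathrm{Tor}^1_B(M, A) \simeq I\otimes M$, this yields the injectivity statement and the identification of the image with $\ker \mu_V$, as required.

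There is no real obstacle here: the only verification of substance is to check that the map $\mathrm{Tor}^1_B(V,A) \to \mathrm{Tor}^1_B(M,A)$ appearing in the statement of the corollary genuinely agrees with the connecting map in the Tor long exact sequence attached to $0 \to IV \to V \to M \to 0$ (which is a standard functoriality check), and to match $\mu_V$ in the sense of Proposition \ref{compris2} with the obvious multiplication $I\otimes M \to IV$.
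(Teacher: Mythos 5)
Your proposal is correct and is exactly the paper's argument: the paper's proof likewise views $V$ as the canonical class of the extension $0 \to IV \to V \to M \to 0$ and invokes Proposition \ref{compris2}. Your additional checks (that $\mu_V$ is the obvious multiplication $I \otimes M \to IV$, and that the injection in Proposition \ref{compris2} is the functorial map induced by $V \twoheadrightarrow M$) are the routine verifications the paper leaves implicit.
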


\begin{proof}
The $B$-module $V$ defines a canonical class in $\mathrm{Ext}^1_B(M, IV)$. Hence the result follows from Proposition \ref{compris2}.
\end{proof}

\begin{proposition} \label{Tor2}
Let $V$  be a $B$-module, and let $M=V \otimes_B A$. Then for every positive integer $p$, there is a functorial isomorphism
\[
\mathrm{Tor}^p_B(V, A) \simeq I^{\otimes (p-1)} \otimes \mathrm{Tor}^1_B(V, A).
\]
\end{proposition}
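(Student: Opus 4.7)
The plan is to establish the isomorphism by dimension shifting, using the defining short exact sequence
\[
0 \longrightarrow I \longrightarrow B \xlongrightarrow{\pi} A \longrightarrow 0
\]
of $B$-modules, and inducting on $p$.

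The key preliminary observation is that $I$, viewed as a $B$-module, is isomorphic to $A^{r}$ where $r$ is the rank of $I$ as an $A$-module. Indeed, because $I^2=0$, the $B$-action on $I$ factors through the quotient $A=B/I$, and on the resulting $A$-module structure $I$ is free of rank $r$ by hypothesis. Since this identification is canonical (i.e.\ natural in $V$), one may equivalently write $I\simeq A\otimes_A I$ as $B$-modules, where $A$ carries its natural $B$-module structure and $I$ appears only as a free $A$-module of the correct rank. As a consequence, for any $B$-module $W$ and any nonnegative integer $q$ one has a canonical, functorial identification
\[
\mathrm{Tor}^{q}_B(W, I) \;\simeq\; \mathrm{Tor}^{q}_B(W, A) \otimes_A I,
\]
obtained by noticing that $I$ is a flat $A$-module so tensoring a flat $B$-resolution of $W$ with $A^{r}$ commutes with direct sums.

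Next, I would apply the functor $V\otimes_B(-)$ to the short exact sequence displayed above and take the associated long exact Tor sequence. Because $B$ is free over itself, $\mathrm{Tor}^{p}_B(V,B)=0$ for every $p\geq 1$, so the connecting morphism produces a canonical isomorphism
\[
\mathrm{Tor}^{p}_B(V,A) \;\xrightarrow{\;\sim\;}\; \mathrm{Tor}^{p-1}_B(V,I)
\]
for every $p\geq 2$. Combining with the preliminary identification, this becomes
\[
\mathrm{Tor}^{p}_B(V,A) \;\simeq\; \mathrm{Tor}^{p-1}_B(V,A)\otimes_A I,
\]
and an immediate induction on $p$ (the base case $p=1$ being tautological) delivers the announced isomorphism
\[
\mathrm{Tor}^{p}_B(V,A) \;\simeq\; I^{\otimes(p-1)}\otimes_A \mathrm{Tor}^{1}_B(V,A).
\]

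The only delicate point is ensuring functoriality of the isomorphism in $V$: this holds because each ingredient (the connecting morphism in the Tor long exact sequence, and the identification $I\simeq A\otimes_A I$ of $B$-modules) is natural in the left argument. The main potential obstacle is verifying that the $B$-module structure on $I$ is genuinely the one coming from restriction along $\pi$, but this is forced by the relation $I^2=0$ and requires no choice of splitting $\sigma$, so the isomorphism constructed is canonical and does not depend on the $\mathbf{k}$-linear splitting of \eqref{ati}.
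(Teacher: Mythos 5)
Your proof is correct, but it follows a genuinely different route from the paper's. You dimension-shift in the \emph{second} variable: applying $V\otimes_B(-)$ to $0\to I\to B\to A\to 0$ and using that $B$ is Tor-acyclic over itself gives $\mathrm{Tor}^p_B(V,A)\simeq \mathrm{Tor}^{p-1}_B(V,I)$ for $p\geq 2$, and since $I^2=0$ forces the $B$-action on $I$ to factor through $A$ with $I$ free over $A$, this is $\mathrm{Tor}^{p-1}_B(V,A)\otimes I$; induction finishes the argument. The paper instead dimension-shifts in the \emph{first} variable: after choosing a splitting $\sigma$ it forms the syzygy sequence $0\to S\to \sigma^*V\to V\to 0$ from the proof of Proposition \ref{compris2}, with $\sigma^*V=B\otimes_A V$ Tor-acyclic, so that $\mathrm{Tor}^{p+1}_B(V,A)\simeq \mathrm{Tor}^p_B(S,A)$, and then computes the multiplication map $\mu_S$ explicitly to identify $\mathrm{Tor}^1_B(S,A)\simeq \mathrm{ker}\,\mu_S\simeq I\otimes\mathrm{ker}\,\mu_V$ via Corollary \ref{dur}. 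Your version is shorter and manifestly canonical, since it never invokes a splitting of the extension; the paper's version, at the cost of the auxiliary module $S$ and an explicit formula for $\mu_S$, keeps the identification in the form $\mathrm{Tor}^1_B(\cdot,A)\simeq\mathrm{ker}\,\mu_{(\cdot)}$ that the rest of \S\ref{carrenul} is built on (e.g.\ Propositions \ref{cns} and \ref{coince}). Both arguments are valid and yield the functoriality claimed, since in each case every ingredient (connecting morphisms, and tensoring with the flat $A$-module $I$) is natural in $V$.
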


\begin{proof}
We prove the result by induction on $p$. Let us consider the module $S$ introduced in the proof of Proposition \ref{compris2}. We have an exact sequence
\begin{equation} \label{s}
0 \longrightarrow S \longrightarrow \sigma^* V \longrightarrow V \longrightarrow 0
\end{equation}
so that $\mathrm{Tor}^{p+1}_B(V, A)$ is isomorphic to $\mathrm{Tor}^p_B(S, A)$. Recall from the proof of Proposition \ref{compris2} that there is an isomorphism between $I \otimes M$ and $S \otimes_B A$ given by 
\[
i \otimes m \mapsto (i \otimes m, -\mu_V(i \otimes m)). 
\]
The multiplication map
\[
\mu_S \colon I \otimes (S \otimes_B A) \longrightarrow IS 
\] 
is given via this isomorphism by
\[
i' \otimes (i \otimes m) \longrightarrow (-i' \otimes \mu_V(i \otimes m), 0).
\]
Hence 
\[
\mathrm{Tor}^1_B(S, A) \simeq \mathrm{ker}\, \mu_S \simeq I \otimes \mathrm{ker} \mu_V \simeq I \otimes \mathrm{Tor}^1_B(V, A)
\]
and
\begin{align*}
\mathrm{Tor}^{p+1}_B(V, A) \simeq \mathrm{Tor}^p_B(S, A)& \simeq I^{\otimes (p-1)} \otimes \mathrm{Tor}^1_B(S, A)\\
& \simeq I^{\otimes p} \otimes \mathrm{Tor}^1_B(V, A). 
\end{align*}
\end{proof}

\subsubsection{Principal parts}
As $I^2=0$, the $B$-module $\Omega^1_B$ fits into a natural (split) exact sequence
\[
0 \longrightarrow I \longrightarrow \Omega^1_B \otimes_B A \xlongrightarrow{p} \Omega^1_A \longrightarrow 0
\] 
which is the conormal sequence associated with the map $B \longrightarrow A$. We put $E=\Omega^1_B \otimes_B A$, $E$ is canonically isomorphic to $I \oplus \Omega^1_A$ after a choice of a splitting $\sigma$ of \eqref{ati}. 
\par \medskip
Recall that for any module $V$ over a commutative $\mathbf{k}$-algebra $R$, the module of principal parts $\mathrm{P}^1_R(V)$ is the $R$-module defined (as a $\mathbf{k}$-vector space) by
\[
\mathrm{P}^1_R(V)=(\Omega^1_R \otimes_R V) \oplus V
\]
where $R$ acts by 
\[
r (\omega \otimes v, v')=(r \omega \otimes v+dr \otimes v' , rv'). 
\]
Hence there is an exact sequence
\[
0 \longrightarrow \Omega^1_R \otimes_R V \longrightarrow \mathrm{P}^1_R(V) \longrightarrow  V \longrightarrow 0
\]
of $R$-modules that splits over $\mathbf{k}$ (but not always over $R$). The main result we prove is:
\begin{theorem} \label{wazomba}
Let $V$ be a $B$-module and let $M=V \otimes_B A$. Then the map
\[
\mathrm{Tor}^1_B(\mathrm{P}^1_B(V), A) \longrightarrow \mathrm{Tor}^1_B(V, A)
\]
vanishes. More precisely, the connection morphism
\[
\mathrm{ker} \, \mu_V \simeq\mathrm{Tor}^1_B (V, A) \longrightarrow \mathrm{Tor}^0_B(\Omega^1_B \otimes_B V, A) \simeq E \otimes M
\]
is obtained by the chain of inclusions
\[
\mathrm{ker} \, \mu_V \hookrightarrow I \otimes M \hookrightarrow E \otimes M.
\]
\end{theorem}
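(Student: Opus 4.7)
The plan is to derive the theorem from the long exact Tor sequence attached to the jet sequence
$$0 \longrightarrow \Omega^1_B \otimes_B V \longrightarrow \mathrm{P}^1_B(V) \longrightarrow V \longrightarrow 0$$
of $B$-modules. Writing $W = \Omega^1_B \otimes_B V$ and using that $W \otimes_B A \simeq E \otimes M$, the relevant portion reads
$$\mathrm{Tor}^1_B(\mathrm{P}^1_B(V), A) \longrightarrow \mathrm{Tor}^1_B(V, A) \xlongrightarrow{\partial} E \otimes M.$$
If I can identify the boundary map $\partial$ with the chain of inclusions $\ker\mu_V \hookrightarrow I \otimes M \hookrightarrow E \otimes M$ (via the isomorphism $\mathrm{Tor}^1_B(V, A) \simeq \ker\mu_V$ of Corollary \ref{dur}), then $\partial$ is injective and the vanishing of the preceding arrow drops out by exactness. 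So the whole theorem reduces to an explicit computation of $\partial$.

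To carry out this computation, I would use the presentation $\mathrm{P}^1_B(V) = W \oplus V$ as a $\mathbf{k}$-vector space together with its twisted $B$-action, and compute the kernel of the natural map $\lambda \colon E \otimes M \simeq W \otimes_B A \to \mathrm{P}^1_B(V) \otimes_B A$, which by exactness is the image of $\partial$. Expanding the action of $I$ on $\mathrm{P}^1_B(V)$ yields the fundamental new relation $(di \otimes v, iv) \equiv 0$ in $\mathrm{P}^1_B(V) \otimes_B A$ for every $i \in I$ and $v \in V$. Combined with the observation that under the splitting $E \simeq I \oplus \Omega^1_A$ determined by $\sigma$ the class of $di$ in $E$ coincides with $i \in I$ (this is exactly the content of the conormal sequence $0 \to I \to E \to \Omega^1_A \to 0$, since $di$ projects to $d$ of $0 \in A$), a direct analysis shows that $\ker \lambda$ consists precisely of elements of $I \otimes M \subset E \otimes M$ of the form $\sum_l i_l \otimes m_l$ with $\sum_l i_l v_l = 0$ in $V$; i.e.\ of the subspace $\ker\mu_V$.

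Since $\mathrm{Tor}^1_B(V, A)$ and the image of $\partial$ are both identified with $\ker\mu_V$, a short naturality check --- comparing the snake lemma construction of $\partial$ with the one used in Proposition \ref{compris2} and Corollary \ref{dur} --- will show that $\partial$ is in fact the inclusion. The main obstacle I anticipate is the careful bookkeeping of the various splittings governing the identifications $E \simeq I \oplus \Omega^1_A$, $\mathrm{Tor}^1_B(V, A) \simeq \ker\mu_V$ and $W \otimes_B A \simeq E \otimes M$, together with a clean verification that $I \cdot \mathrm{P}^1_B(V)$ is generated by the relation $(di \otimes v, iv) \equiv 0$ on top of those already present in $I \cdot W$; once this is in place, the injectivity of $\partial$ immediately yields the vanishing of $\mathrm{Tor}^1_B(\mathrm{P}^1_B(V), A) \to \mathrm{Tor}^1_B(V, A)$ and finishes the proof.
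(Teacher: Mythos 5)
Your proposal is correct, and it takes a genuinely different route from the paper. The paper first reduces to the case where $V$ is an $A$-module: the surjection $V \to M = V\otimes_B A$ induces a map of jet sequences, the right-hand Tor terms are identified, and by Corollary \ref{dur} the induced map $\mathrm{Tor}^1_B(V,A)\to\mathrm{Tor}^1_B(M,A)$ is the inclusion $\ker\mu_V\hookrightarrow I\otimes M$; for $V=M$ it then sandwiches $0\to E\otimes M\to\mathrm{P}^1_B(M)\to M\to 0$ between the analogous sequences for $\mathrm{P}^1_A(M)$ and $\sigma^*M$ and reads the connecting map off Proposition \ref{compris2} applied to $\sigma^*M$. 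You instead compute $I\cdot\mathrm{P}^1_B(V)$ directly for arbitrary $V$: the relation $i\cdot(0,v)=(di\otimes v,iv)$ is exactly the new generator beyond $I\cdot W$, the identification of $\overline{di}\in E$ with $i\in I$ is precisely the conormal map, and your computation of $\ker\lambda$ as $\ker\mu_V\subset I\otimes M\subset E\otimes M$ is correct (the choice of lifts $v_l$ of $m_l$ is immaterial because $I^2=0$). What your approach buys is uniformity --- no reduction to $V=M$ and no auxiliary comparison with $\sigma^*M$ and $\mathrm{P}^1_A$. What it costs is that computing $\ker\lambda$ only pins down the \emph{image} of $\partial$, whereas the vanishing statement needs $\partial$ injective; a surjection of $\mathrm{Tor}^1_B(V,A)$ onto an abstractly isomorphic submodule need not be injective, so the deferred ``naturality check'' is not bookkeeping but the actual crux. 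It does go through: using the flat resolution $\cdots\to B\otimes_A I\to B\to A\to 0$ of $A$, a class $\sum_l i_l\otimes m_l\in\ker\mu_V$ lifts to $\sum_l(0,v_l)\otimes i_l$ in $\mathrm{P}^1_B(V)\otimes_A I$ and multiplies out to $\bigl(\sum_l di_l\otimes v_l,\,0\bigr)$, exhibiting $\partial$ as the asserted chain of inclusions; alternatively, you may borrow the paper's reduction to $V=M$ for this single identification.
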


\begin{proof}
It enough to prove the second statement of the theorem. We have a commutative diagram
\[
\xymatrix{
\mathrm{Tor}^1_B(V, A) \ar[r] \ar[d]& \mathrm{Tor}^0_B(\Omega^1_B \otimes_B V , A) \ar[d]^-{\wr} \\
\mathrm{Tor}^1_B (M, A) \ar[r] &  \mathrm{Tor}^0_B(\Omega^1_B \otimes_B M , A)
}
\]
Hence, by Corollary \ref{dur}, it suffices to prove that the connecting homomorphism 
\[
\mathrm{Tor}^1_B (M, A) \longrightarrow \mathrm{Tor}^0_B(\Omega^1_B \otimes_B M , A)
\] 
associated with the short exact sequence 
\[
0 \longrightarrow \Omega^1_B \otimes_B M \longrightarrow \mathrm{P}^1_B(M) \longrightarrow M \longrightarrow 0
\]
is naturally identified with the inclusion $I \otimes M \hookrightarrow E \otimes M$.
Via the trivialisation given by $\sigma$, the $B$-module $\mathrm{P}^1_B(M)$ is isomorphic (as a $\mathbf{k}$-vector space) to
\[
I \otimes M \oplus \Omega^1_A\otimes M \oplus M,
\]
and $B$ acts by the formula
\[
(i, a) . (i' \otimes m, \omega \otimes m', m'')=(ai' \otimes m + i \otimes m'', a \omega \otimes m' + da \otimes m'', a m'').
\]
Hence there are two natural exact sequences
\[
\begin{cases}
0 \longrightarrow \Omega^1_A \otimes M \longrightarrow \mathrm{P}^1_B(M) \longrightarrow \sigma^* M \longrightarrow 0 \\
0 \longrightarrow I \otimes M \longrightarrow \mathrm{P}^1_B(M) \longrightarrow \mathrm{P}^1_A(M) \longrightarrow 0
\end{cases}
\]
This gives a commutative diagram
\[
\xymatrix{
0 \ar[r]& \Omega^1_A \otimes M  \ar[r]  & \mathrm{P}^1_A(M) \ar[r]  & M \ar[r] & 0 \\
0 \ar[r]& E \otimes M  \ar[r] \ar[d] \ar[u]& \mathrm{P}^1_B(M) \ar[r] \ar[d] \ar[u]& M \ar[r] \ar@{=}[d] \ar@{=}[u]& 0 \\
0 \ar[r]&  I \otimes M \ar[r] & \sigma^* M \ar[r] & M \ar[r] &0
}
\]
This yields a commutative diagram of connecting morphisms
\[
\xymatrix{
\mathrm{Tor}^1_B(M, A) \ar[r]& \mathrm{Tor}^0_B(\Omega^1_A \otimes M, A)\\
\mathrm{Tor}^1_B(M, A) \ar@{=}[u] \ar@{=}[d] \ar[r]& \mathrm{Tor}^0_B(E \otimes M, A) \ar[u] \ar[d]\\
\mathrm{Tor}^1_B(M, A) \ar[r]&  \mathrm{Tor}^0_B(I \otimes M, A)
}
\]
and since the top connecting morphism is zero we get
\[
\xymatrix{
& \mathrm{Tor}^0_B(\Omega^1_A \otimes M, A)\\
\mathrm{Tor}^1_B(M, A) \ar[ru]^-{0} \ar@{=}[d] \ar[r]& \mathrm{Tor}^0_B(E \otimes M, A) \ar[u] \ar[d]\\
I \otimes M \ar@{=}[r] & I \otimes M
}
\]
This finishes the proof.
\end{proof}
As a corollary of this result, there is a natural exact sequence
\begin{equation} \label{boulon}
0 \longrightarrow \mathrm{Tor}^1_B(V, A)  \longrightarrow E \otimes M \longrightarrow \mathrm{P}^1_B(V) \otimes_B A \longrightarrow M \longrightarrow 0.	
\end{equation}
\begin{definition} \label{residual}
The \textit{residual Atiyah morphism} of a $B$-module $V$ is the morphism 
\[
\chi_{V} \colon V \otimes_B A \longrightarrow IV[1]
\] 
in $\mathrm{D}^{\mathrm{b}}(B)$ attached to the exact sequence of $B$-modules
\[
0 \longrightarrow IV \longrightarrow V \longrightarrow V \otimes_B A \longrightarrow 0.
\]
\end{definition}
\par \bigskip
Let us now fix an $A$-module $M$. The principal parts exact sequence
\[
0 \longrightarrow E \otimes M \longrightarrow \mathrm{P}^1_B(M) \longrightarrow M \longrightarrow 0
\]
defines a morphism
\[
\mathrm{at}_B(M) \colon M \longrightarrow E \otimes M [1]
\]
in $\mathrm{D}^{\mathrm{b}}(B)$, which is the Atiyah class of $M$ over $B$. 
\begin{proposition} \label{snort}
For any $A$-module $M$ and any splitting $\sigma$ of \eqref{atiyah}, the morphism
\[
\mathrm{at}_B(M) \colon M \longrightarrow E \otimes M [1] \simeq I \otimes M[1] \oplus \Omega^1_A \otimes M[1]
\]
is the couple $\{ \chi_{\sigma^*M}, \mathrm{at}_A(M)\}$.
\end{proposition}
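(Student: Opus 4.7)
The plan is to compute the extension class of the principal parts sequence $0 \to E \otimes M \to \mathrm{P}^1_B(M) \to M \to 0$ componentwise, using Lemma \ref{easy} (which splits any $B$-extension into an $A$-extension part and a multiplication part) and the splitting $E \simeq I \oplus \Omega^1_A$ provided by $\sigma$. Concretely, writing
\[
\mathrm{Ext}^1_B(M, E\otimes M) \simeq \mathrm{Ext}^1_B(M, I\otimes M) \oplus \mathrm{Ext}^1_B(M, \Omega^1_A\otimes M),
\]
and applying Lemma \ref{easy} to each summand, I need to read off four pieces of data from $\mathrm{P}^1_B(M)$ and compare them with the corresponding decompositions of $\chi_{\sigma^*M}$ and $\mathrm{at}_A(M)$.

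First I would compute the \emph{multiplication morphism} $\mu_{\mathrm{P}^1_B(M)} \colon I \otimes M \longrightarrow E\otimes M$. Using the explicit formula for the $B$-action on $\mathrm{P}^1_B(M) \simeq I\otimes M \oplus \Omega^1_A\otimes M \oplus M$ established inside the proof of Theorem \ref{wazomba}, the action of $(i,0)$ on a pure element $(0,0,m)$ is $(i\otimes m, 0, 0)$. Hence $\mu_{\mathrm{P}^1_B(M)}$ is the canonical inclusion of $I\otimes M$ into the first summand of $E\otimes M$; its $I$-component is $\mathrm{id}_{I\otimes M}$ and its $\Omega^1_A$-component vanishes.

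Next I would compute the \emph{$A$-extension class}. The large commutative diagram at the end of the proof of Theorem \ref{wazomba} displays the $B$-extension $\mathrm{P}^1_B(M)$ together with its pushout along $E\otimes M \twoheadrightarrow \Omega^1_A\otimes M$ (the top row, which is the $A$-extension $\mathrm{P}^1_A(M)$ defining $\mathrm{at}_A(M)$) and its pullback along $I\otimes M \hookrightarrow E\otimes M$ (the bottom row, which is $\sigma^*M$). Viewed as a sequence of $A$-modules via $\sigma$, the latter is split by the obvious projection $\sigma^*M = I\otimes M \oplus M \twoheadrightarrow I\otimes M$, so its class in $\mathrm{Ext}^1_A(M, I\otimes M)$ is zero. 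Therefore the $A$-extension class of $\mathrm{P}^1_B(M)$ is $(0, \mathrm{at}_A(M))$ in $\mathrm{Ext}^1_A(M, I\otimes M) \oplus \mathrm{Ext}^1_A(M, \Omega^1_A \otimes M)$.

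Finally I would assemble the two computations. On the $I\otimes M$-side, the pair (zero $A$-extension, identity $\mu$) is, by Lemma \ref{easy}, exactly the class of the $B$-extension $\sigma^*M$, whose connecting morphism is by definition $\chi_{\sigma^*M}$. On the $\Omega^1_A\otimes M$-side, the pair $(\mathrm{at}_A(M),0)$ is precisely the image of $\mathrm{at}_A(M)$ under the natural map $\mathrm{Ext}^1_A \hookrightarrow \mathrm{Ext}^1_B$ provided by Lemma \ref{easy}. Combining the two components yields $\mathrm{at}_B(M) = \{\chi_{\sigma^*M}, \mathrm{at}_A(M)\}$. I expect the main obstacle to be purely bookkeeping: making sure the identifications $E \simeq I \oplus \Omega^1_A$, $\mathrm{P}^1_B(M) \simeq I\otimes M \oplus \Omega^1_A\otimes M \oplus M$ and the direction of the isomorphism in Lemma \ref{easy} are consistent in sign, so that the identity map in the $\mu$-component really matches the identity appearing in $\chi_{\sigma^*M}$.
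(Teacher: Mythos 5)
Your proof is correct and follows essentially the same route as the paper, which simply reads the two components of $\mathrm{at}_B(M)$ off the commutative diagram of extensions (rows $\sigma^*M$, $\mathrm{P}^1_B(M)$, $\mathrm{P}^1_A(M)$) from the proof of Theorem \ref{wazomba}; your explicit bookkeeping via Lemma \ref{easy} just makes that pushout computation concrete. One terminological nit: the bottom row $\sigma^*M$ is the pushout of the middle row along the projection $E\otimes M \twoheadrightarrow I\otimes M$ supplied by $\sigma$, not a ``pullback along $I\otimes M \hookrightarrow E\otimes M$''.
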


\begin{proof}
This follows directly from the diagram
\[
\xymatrix{ 0 \ar[r] & I \otimes M \ar[r] & \sigma^* M \ar[r] & M \ar[r] &0 \\
0 \ar[r] & E \otimes M \ar[r] \ar[u] \ar[d]& \mathrm{P}^1_B(M) \ar[r] \ar[u] \ar[d]& M \ar[r] \ar[u] \ar[d]&0 \\
0 \ar[r] &  \Omega^1_A \ar[r] & \mathrm{P}^1_A(M) \ar[r] & M \ar[r] &0
}
\]
appearing in the proof of Theorem \ref{wazomba}.
\end{proof}

\subsection{Local obstruction theory} \label{oups}

\subsubsection{Admissible complexes} \label{lauvitel}
In this section, we use the homological grading convention for complexes in order to avoid negative indices. All complexes will be concentrated in nonnegative homological degrees. For any complex of $B$-modules $K_{\bullet}$, we denote by $\overline{K}_{\bullet}$ the complex $K_{\bullet} \otimes_B A$.
\begin{definition} \label{meije}
Let $n$ be in $\mathbb{N} \cup \{ \infty\}$. 
\begin{enumerate}
\vspace{0.2cm}
\item[--] A complex $K_{\bullet}$ of $B$-modules is $n$-\textit{admissible} if for any $i$ such that $0 \leq i \leq n-1$, the $A$-module $\mathrm{H}_i(\mathrm{Tor}^1_B(K_{\bullet}, A))$ vanishes. 
\vspace{0.2cm}
\item[--] For $n=+ \infty$, we simply say that $K_{\bullet}$ is admissible (instead of ``$\infty$-admissible''). 
\vspace{0.2cm}
\item[--]  We say that that a $B$-module ${K}$ is admissible if it admissible as a complex concentrated in degree $0$, that is if $\mathrm{Tor}^1_B(K, A)$ vanishes. 
\end{enumerate}
\end{definition}
Let us denote by $\mathbf{Tor}^p_B(*, A)$ the hypertor functors defined by the usual formula 
\[
\mathbf{Tor}^p_B(K_{\bullet}, A)=\mathrm{H}_p( K_{\bullet} \,\lltens{}_B \, A).
\]
\begin{proposition} \label{cns}
Let $K_{\bullet}$ be a complex of $B$-modules and $n$ be in $\mathbb{N} \cup \{+ \infty \}$. Then the complex $K_{\bullet}$ is $n$-admissible if and only if the natural map 
\[
\mathbf{Tor}^i_B(K_{\bullet}, A) \longrightarrow \mathrm{H}_{i}(K_{\bullet} \otimes_B A)
\] 
is an isomorphism for $0 \leq i \leq n$ and surjective for $i=n+1$. 
\end{proposition}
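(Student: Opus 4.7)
The plan is to analyze the natural comparison map via the Cartan--Eilenberg hyper-Tor spectral sequence
\[
E^2_{p,q} = \mathrm{H}_p(\mathrm{Tor}^q_B(K_\bullet, A)) \Longrightarrow \mathbf{Tor}^{p+q}_B(K_\bullet, A),
\]
obtained by taking the horizontal (resolution) homology before the vertical (complex) homology in the double complex $P_{\bullet\bullet} \otimes_B A$, where $P_{\bullet\bullet}$ is a Cartan--Eilenberg $B$-projective resolution of $K_\bullet$. Since $K_\bullet$ is concentrated in nonnegative degrees, this spectral sequence is first quadrant and converges. Combining Proposition \ref{Tor2} with the $A$-flatness of $I$ yields $E^2_{p,0} = \mathrm{H}_p(\overline{K}_\bullet)$ and $E^2_{p,q} \simeq I^{\otimes (q-1)} \otimes \mathrm{H}_p(\mathrm{Tor}^1_B(K_\bullet, A))$ for every $q \geq 1$. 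Consequently, $n$-admissibility of $K_\bullet$ is exactly the vanishing $E^2_{p,q} = 0$ for all $q \geq 1$ and $0 \leq p \leq n-1$. The natural map $\mathbf{Tor}^i_B(K_\bullet, A) \to \mathrm{H}_i(\overline{K}_\bullet)$ is the edge map corresponding to the bottom row, namely the composition $\mathbf{Tor}^i \twoheadrightarrow E^\infty_{i,0} \hookrightarrow E^2_{i,0}$.

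For the forward implication, assuming $n$-admissibility, I would first check that for each $i \leq n+1$ every outgoing differential $d_r \colon E^r_{i,0} \to E^r_{i-r,\, r-1}$ with $r \geq 2$ has its target inside the vanishing region (indeed $q = r-1 \geq 1$ and $p = i - r \leq n - 1$), so $E^\infty_{i,0} = E^2_{i,0}$. For $i \leq n$, the remaining graded pieces $E^\infty_{p, i-p}$ with $p \leq i - 1 \leq n - 1$ also vanish (they live in $q = i - p \geq 1$, $p \leq n-1$), so the filtration on $\mathbf{Tor}^i$ collapses and the edge map is an isomorphism. At $i = n+1$ only surjectivity survives, since $E^\infty_{n,1}$ originates from $E^2_{n,1} = \mathrm{H}_n(\mathrm{Tor}^1_B(K_\bullet, A))$, which need not vanish.

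The converse is proved by induction on $n$. The base case $n = 0$ is automatic on both sides: $0$-admissibility is vacuous, and the first-quadrant structure alone forces $\mathbf{Tor}^0 \to \mathrm{H}_0(\overline{K}_\bullet)$ to be an isomorphism and $\mathbf{Tor}^1 \to \mathrm{H}_1(\overline{K}_\bullet)$ to be surjective. For the inductive step, the hypothesis at $n$ entails the hypothesis at $n-1$, so induction supplies $(n-1)$-admissibility. Under $(n-1)$-admissibility the only potentially nonzero $E^2$-term with $q \geq 1$ and $p \leq n$ is $E^2_{n-1, 1} = \mathrm{H}_{n-1}(\mathrm{Tor}^1_B(K_\bullet, A))$, and it is connected to the bottom row through a single possibly nontrivial differential $d_2 \colon E^2_{n+1, 0} \to E^2_{n-1, 1}$. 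The isomorphism assumption at $i = n$ forces $E^\infty_{n-1, 1} = 0$, hence $d_2$ is surjective; the surjectivity assumption at $i = n+1$ forces $E^\infty_{n+1, 0} = E^2_{n+1, 0}$, hence $d_2$ is zero. Together these yield $E^2_{n-1, 1} = 0$, completing the induction. The case $n = +\infty$ follows by applying the result for every finite $n$.

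The main obstacle is the careful setup of the spectral sequence and the identification of the natural comparison map with the correct edge map, as opposed to the edge map of the companion Cartan--Eilenberg spectral sequence with $E^2$ page $\mathrm{Tor}^a_B(\mathrm{H}_b(K_\bullet), A)$, which computes a different invariant. Beyond that, the argument reduces to tracking the vanishing regions at the $E^2$ page and the pivotal observation that in the inductive step one differential $d_2$ is simultaneously pinned down to be surjective and zero, thereby killing the obstruction $\mathrm{H}_{n-1}(\mathrm{Tor}^1_B(K_\bullet, A))$.
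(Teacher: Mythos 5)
Your proof is correct and follows essentially the same route as the paper: the same hyper-Tor spectral sequence $E^2_{p,q}=\mathrm{H}_p(\mathrm{Tor}^q_B(K_\bullet,A))$, the same identification of the comparison map with the edge map $\mathbf{Tor}^i_B(K_\bullet,A)\twoheadrightarrow E^\infty_{i,0}\hookrightarrow E^2_{i,0}$, and the same appeal to Proposition \ref{Tor2} to propagate the vanishing of the $q=1$ row to all rows $q\ge 1$. The only difference is organizational: in the converse you induct on $n$ and pin down the single relevant differential $d_2\colon E^2_{n+1,0}\to E^2_{n-1,1}$ as simultaneously surjective and zero, whereas the paper fixes $n$ and inducts on the page to show each $E^r_{k,1}$ stabilizes at its vanishing abutment --- the two arguments amount to the same computation.
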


\begin{proof}
By \cite[Application 5.7.8]{Weibel}, there is a filtration 
\[
\mathrm{F}_0 \mathbf{Tor}^{i}_{B}(K_{\bullet}, A) \subseteq \mathrm{F}_1 \mathbf{Tor}^{i}_{B}(K_{\bullet}, A) \subseteq 
\cdots \subseteq 
\mathrm{F}_i \mathbf{Tor}^{i}_{B}(K_{\bullet}, A)
\]
and a spectral sequence of homological type such that 
\[
\begin{cases}
\mathrm{E}^2_{p, q}=\mathrm{H}_p(\mathrm{Tor}^q_B(K_{\bullet}, A)) \\
\mathrm{E}^{\infty}_{p, q}=\mathrm{Gr}_p \mathbf{Tor}^{p+q}_B(K_{\bullet}, A)
\end{cases}
\]
The map from $\mathbf{Tor}^i_B(K_{\bullet}, A)$ to $\mathrm{H}_{i}(K_{\bullet} \otimes_B A)$ is the composition
\[
\mathbf{Tor}^i_{B}(K_{\bullet}, A) \twoheadrightarrow \mathrm{Gr}_i \mathbf{Tor}^i_{B}(K_{\bullet}, A) \simeq \mathrm{E}^{\infty}_{i, 0} \hookrightarrow \mathrm{E}^2_{i, 0}.
\]
If the complex $K_{\bullet}$ is $n$-admissible, then $\mathrm{E}^2_{p, 1}$ vanish for $0 \leq p \leq n-1$. Thanks to Proposition \ref{Tor2}, $\mathrm{E}^2_{p, q}$ vanishes as well for $0 \leq p \leq n-1$ and $q \geq 1$. Hence $\mathrm{F}_j \mathbf{Tor}^i_B(K_{\bullet}, A)$ vanishes for $0 \leq j<i \leq n$. For any $r \geq 2$ and $0 \leq p \leq n+1$, the maps $\mathrm{d}^r_{p, 0} \colon \mathrm{E}^r_{p, 0} \longrightarrow \mathrm{E}^r_{p-r, r-1}$ vanishes, and no differential $\mathrm{d}^r$ abuts to $\mathrm{E}^r_{p, 0}$, so that $\mathrm{E}^2_{p, 0} \simeq \mathrm{E}^{\infty}_{p, 0}$. This proves that the map 
\[
\mathbf{Tor}^i_B(K_{\bullet}, A) \longrightarrow \mathrm{H}_{i}(K_{\bullet} \otimes_B A)
\]
is an isomorphism for $0 \leq i \leq n$, and surjective for $i=n+1$. Conversely, assume that all the maps 
\[
\mathbf{Tor}^i_B(K_{\bullet}, A) \longrightarrow \mathrm{H}_{i}(K_{\bullet} \otimes_B A)
\] 
are isomorphisms if $0 \leq i \leq n$, and are surjective for $i=n+1$. Then 
$\mathrm{F}_j \mathbf{Tor}^i_B(K_{\bullet}, A)$ vanishes for $0 \leq j <i \leq n$, and 
$\mathrm{d}^2_{i, 0}$ vanishes for $0 \leq i \leq n+1$. 
\par \medskip
The first point implies that $\mathrm{E}^{\infty}_{p, q}$ vanishes as soon as $q \geq 1$ and $p+q \leq n$. Let us now prove by induction on $k$ that $\mathrm{E}^r_{k, 1}$ vanishes if $r \geq 2$ and $k \leq n-1$. The module $\mathrm{E}^{r+1}_{k, 1}$ is the middle cohomology of the complex
\[
\mathrm{E}^r_{k+r, 2-r} \xlongrightarrow{\mathrm{d}^r_{k+r, 2-r}} \mathrm{E}^r_{k, 1} \xlongrightarrow{\mathrm{d}^r_{k,1}} \mathrm{E}^r_{k-r, r}
\]
Thanks to Proposition \eqref{Tor2}, $\mathrm{E}^2_{k-r, r}$ is isomorphic to $ I^{\otimes r} \otimes\mathrm{E}^2_{k-r, 1}$, so it vanishes by induction. Hence $\mathrm{E}^m_{k-r, r}$ vanishes for any $m \geq 2$ and so in particular $\mathrm{E}^r_{k-r, r}$ vanishes. Besides, $ \mathrm{E}^r_{k+r, 2-r}$ is always zero if $r \geq 3$, and if $r=2$ the differential $\mathrm{d}^2_{k+2, 0}$ vanishes. 
Hence $\mathrm{E}^{r+1}_{k, 1} \simeq \mathrm{E}^{r}_{k, 1}$, and since $\mathrm{E}^{\infty}_{k, 1}$ vanishes, all terms $\mathrm{E}^{r}_{k, 1}$ vanish as well. 
\end{proof}

Given any bounded complex of $B$-modules and any nonnegative integer $n$, there is a canonical procedure that allows to produce $n$-admissible complexes isomorphic to the initial one in $\mathrm{D}^{\mathrm{b}}(B)$. 
\begin{definition} \label{mu}
The functor $\mu$ is the element of $\mathrm{EndFct}_{\mathrm{dg}}(\mathrm{C}^{\mathrm{b}}(B))$ defined by the formula
\[
\mu(K_{\bullet})=\mathrm{cone}\, \, (\Omega_B^1 \otimes_B K_{\bullet} \longrightarrow \mathrm{P}^1_B(K_{\bullet})).
\] 
\end{definition}
The natural morphism from $\mu$ to $\mathrm{id}_{\mathrm{C}^{\mathrm{b}}(B)}$ is a quasi-isomorphism. We write $\widetilde{\mu}$ for the functor $\mathrm{cone} (\mu \longrightarrow \mathrm{id}_{\mathrm{C}^{\mathrm{b}}(B)})$.
\begin{proposition} \label{coince}
Let $K_{\bullet}$ be a bounded complex of $B$-modules. Then there are natural isomorphisms
\[
\begin{cases}
\mathrm{Tor}^1_B(\mu(K_{\bullet}),A) \simeq I \otimes \mathrm{Tor}^1_B(K_{\bullet}, A) [1] \\
\mathrm{Tor}^1_B(\widetilde{\mu}(K_{\bullet}),A) \simeq I \otimes \mathrm{Tor}^1_B(K_{\bullet}, A) [2] \oplus \mathrm{Tor}^1_B(K_{\bullet}, A)
\end{cases}
\]
in $\mathrm{D}^{\mathrm{b}}(B)$.
\end{proposition}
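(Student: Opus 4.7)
The strategy is to peel off the structure of $\mu$ term by term and feed it into Theorem \ref{wazomba}, with Proposition \ref{Tor2} providing the natural identification of $\mathrm{Tor}^2_B$.

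First, from the definition $\mu(K_{\bullet}) = \mathrm{cone}(\Omega^1_B \otimes_B K_{\bullet} \to \mathrm{P}^1_B(K_{\bullet}))$, I would use the tautological termwise-split short exact sequence of complexes of $B$-modules
\[
0 \longrightarrow \mathrm{P}^1_B(K_{\bullet}) \longrightarrow \mu(K_{\bullet}) \longrightarrow (\Omega^1_B \otimes_B K_{\bullet})[1] \longrightarrow 0.
\]
Applying the additive functor $\mathrm{Tor}^1_B(-, A)$ termwise preserves the splitness, yielding a distinguished triangle whose connecting morphism is induced, up to sign, by the structural inclusion $\Omega^1_B \otimes_B K_{\bullet} \hookrightarrow \mathrm{P}^1_B(K_{\bullet})$. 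The first isomorphism will then follow once one checks, termwise for each $B$-module $V = K_n$, that the map
\[
\mathrm{Tor}^1_B(\Omega^1_B \otimes_B V, A) \longrightarrow \mathrm{Tor}^1_B(\mathrm{P}^1_B(V), A)
\]
is surjective with kernel canonically isomorphic to $I \otimes \mathrm{Tor}^1_B(V, A)$.

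For this last point I would write the Tor long exact sequence associated with the principal parts short exact sequence $0 \to \Omega^1_B \otimes_B V \to \mathrm{P}^1_B(V) \to V \to 0$. Surjectivity is immediate from the vanishing of $\mathrm{Tor}^1_B(\mathrm{P}^1_B(V), A) \to \mathrm{Tor}^1_B(V, A)$ granted by Theorem \ref{wazomba}. For the kernel, I would identify it with the image of $\mathrm{Tor}^2_B(V, A) \to \mathrm{Tor}^1_B(\Omega^1_B \otimes_B V, A)$, then observe that the preceding map $\mathrm{Tor}^2_B(\mathrm{P}^1_B(V), A) \to \mathrm{Tor}^2_B(V, A)$ also vanishes, because the natural isomorphism of Proposition \ref{Tor2} identifies it with $\mathrm{id}_I$ tensored with the zero map from Theorem \ref{wazomba}. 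Combined with injectivity, this identifies $\mathrm{Tor}^2_B(V, A) \simeq I \otimes \mathrm{Tor}^1_B(V, A)$ with the desired kernel. The main technical obstacle is precisely this naturality check: one must ensure that the isomorphism of Proposition \ref{Tor2}, which is constructed by induction from the sequence $0 \to S \to \sigma^* V \to V \to 0$, is functorial in $V$ and compatible with the connecting morphisms of the Tor long exact sequence.

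The second isomorphism follows formally from the first. Since $\widetilde{\mu}(K_{\bullet}) = \mathrm{cone}(\mu(K_{\bullet}) \to K_{\bullet})$, and since termwise the morphism $\mu \to \mathrm{id}$ is the structural projection $\mathrm{P}^1_B(K_n) \twoheadrightarrow K_n$ on one summand of $\mu(K_{\bullet})_n = \mathrm{P}^1_B(K_n) \oplus \Omega^1_B \otimes_B K_{n-1}$ and zero on the other, Theorem \ref{wazomba} again forces the induced chain map $\mathrm{Tor}^1_B(\mu(K_{\bullet}), A) \to \mathrm{Tor}^1_B(K_{\bullet}, A)$ to be identically zero at the level of complexes, not merely up to homotopy. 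The cone of a zero map $X \to Y$ splits canonically as $Y \oplus X[1]$, and substituting the first isomorphism into $\mathrm{Tor}^1_B(\mu(K_{\bullet}), A)[1]$ yields $\mathrm{Tor}^1_B(K_{\bullet}, A) \oplus I \otimes \mathrm{Tor}^1_B(K_{\bullet}, A)[2]$, as required.
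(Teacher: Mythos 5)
Your argument is correct and follows essentially the same route as the paper: identify $\mathrm{Tor}^1_B(\mu(K_{\bullet}),A)$ with the cone of $\mathrm{Tor}^1_B(\Omega^1_B\otimes_B K_{\bullet},A)\to\mathrm{Tor}^1_B(\mathrm{P}^1_B(K_{\bullet}),A)$, use Theorem \ref{wazomba} to get the termwise short exact sequence with kernel $\mathrm{Tor}^2_B(K_{\bullet},A)$, and apply Proposition \ref{Tor2} (whose isomorphism is stated to be functorial, so your naturality worry is already covered) to identify that kernel with $I\otimes\mathrm{Tor}^1_B(K_{\bullet},A)$. Your treatment of $\widetilde{\mu}$ via the vanishing of the chain map and the splitting of the cone of a zero map is a clean way of carrying out what the paper dismisses as ``the same method.''
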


\begin{proof}
We start by noticing that
\[
\mathrm{Tor}^1_{B}({\mu ({}{K}}_{\bullet}), {A})\!=\!\mathrm{cone} \, \{\mathrm{Tor}^1_{B}(\Omega^1_B\otimes_{B} {{}{{{}{K}}}}_{\bullet}, {A}) \!\longrightarrow \!\mathrm{Tor}^1_{B}(\mathrm{P}^1_{B}({{}{{{}{K}}}}_{\bullet}), {A})\}.
\]
By Theorem \ref{wazomba}, there is an exact sequence
\[
0 \!\longrightarrow\!\mathrm{Tor}^2_{B}({{}{{{}{K}}}}_{\bullet}, {A}) \!\longrightarrow\!\mathrm{Tor}^1_{B} (\Omega^1_B\otimes_{{}{O}_B} {{}{{{}{K}}}}_{\bullet}, {A}) \!\longrightarrow\!\mathrm{Tor}^1_{B}(\mathrm{P}^1_{B}({{}{{{}{K}}}}_{\bullet}), {A}) \!\longrightarrow 0
\]
and thanks to Proposition \ref{Tor2}, 
\[
\mathrm{Tor}^2_B({{}{{{}{K}}}}_{\bullet}, {A}) \simeq I \otimes\mathrm{Tor}^1_{B}({{}{{{}{K}}}}_{\bullet}, {A}).
\]
This gives the first isomorphism. The second one is proven using the same method.
\end{proof}

\begin{corollary} \label{stair}
If ${}{K}_{\bullet}$ is $n$-admissible, then $\mu({}{K}_{\bullet})$ is $(n+1)$-admissible.
\end{corollary}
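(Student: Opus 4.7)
The plan is to deduce the corollary almost immediately from the first isomorphism in Proposition \ref{coince}, namely
\[
\mathrm{Tor}^1_B(\mu(K_\bullet), A) \simeq I \otimes \mathrm{Tor}^1_B(K_\bullet, A)\,[1],
\]
by computing the homology of both sides. First I would take homology in degree $i$ with $0 \leq i \leq n$: the right-hand side has homology $\mathrm{H}_{i-1}(I \otimes \mathrm{Tor}^1_B(K_\bullet, A))$ after the shift.

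Next I would use that $I$ is a free (hence flat) $A$-module of finite rank, so tensoring with $I$ is exact and commutes with homology, giving
\[
\mathrm{H}_i(\mathrm{Tor}^1_B(\mu(K_\bullet), A)) \simeq I \otimes \mathrm{H}_{i-1}(\mathrm{Tor}^1_B(K_\bullet, A)).
\]
For $i=0$ this is automatically zero since the complex $\mathrm{Tor}^1_B(K_\bullet, A)$ sits in nonnegative homological degrees. For $1 \leq i \leq n$, the right-hand side vanishes exactly when $\mathrm{H}_{i-1}(\mathrm{Tor}^1_B(K_\bullet, A)) = 0$ for $0 \leq i-1 \leq n-1$, which is precisely the hypothesis that $K_\bullet$ is $n$-admissible. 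Hence $\mathrm{H}_i(\mathrm{Tor}^1_B(\mu(K_\bullet), A))$ vanishes for $0 \leq i \leq n = (n+1)-1$, so $\mu(K_\bullet)$ is $(n+1)$-admissible by Definition \ref{meije}.

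There is no real obstacle here; the only mild subtlety is to check that the isomorphism of Proposition \ref{coince} is compatible with the homological grading conventions used in Definition \ref{meije}, and that the freeness of $I$ legitimately commutes $I \otimes -$ with taking homology of the complex $\mathrm{Tor}^1_B(K_\bullet, A)$ of $A$-modules. Both are immediate.
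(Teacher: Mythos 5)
Your proof is correct and follows essentially the same route as the paper: apply the first isomorphism of Proposition \ref{coince} and read off the vanishing of $\mathrm{H}_i(\mathrm{Tor}^1_B(\mu(K_\bullet),A)) \simeq \mathrm{H}_{i-1}(I \otimes \mathrm{Tor}^1_B(K_\bullet,A))$ for $0 \leq i \leq n$ from the $n$-admissibility of $K_\bullet$. You merely make explicit two points the paper leaves implicit (the flatness of $I$ and the trivial $i=0$ case), which is harmless.
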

\begin{proof}
If $i \leq n+1$, we have 
\[
\mathrm{H}_i(\mathrm{Tor}^1_B(\mu({}{K}_{\bullet}), A)) \simeq \mathrm{H}_{i-1}({I} \otimes \mathrm{Tor}^1_B({}{K}_{\bullet}, A))=\{0\}.
\]
\end{proof}

\begin{theorem} \label{loire}
Let $n$ be a nonnegative integer and ${}{K}_{\bullet}$ be an $n$-admissible complex. Then for any positive integer $p$, the natural morphism from $\mu^{[p]}({}{K}_{\bullet})$ to $K_{\bullet}$ is a quasi-isomorphism, and $\mu^{[p]}({}{K}_{\bullet})$ is $n+p$-admissible.
\end{theorem}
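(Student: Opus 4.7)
The plan is to prove both assertions simultaneously by a single induction on $p$. For the base case $p = 1$, I observe that $\mu^{[1]} = \mu$ because the equalizer of a single morphism is its source; the natural quasi-isomorphism $\mu(K_{\bullet}) \to K_{\bullet}$ noted right after Definition \ref{mu} yields the first claim, while the $(n+1)$-admissibility is precisely the content of Corollary \ref{stair}.

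For the inductive step, my main tool will be Theorem \ref{canal}(i) applied to the natural exact sequence of dg-endofunctors $0 \to \Omega^1_B \otimes_B - \to \mathrm{P}^1_B \to \mathrm{id}_{\mathrm{C}^{\mathrm{b}}(B)} \to 0$: the source is right exact as a tensor product, and the first map is injective thanks to the principal parts sequence, so the hypotheses hold. This yields a short exact sequence
\[
0 \to \mu^{[p+1]}(K_{\bullet}) \to \mu(\mu^{[p]}(K_{\bullet})) \to U \circ (\Omega^1_B \otimes_B -)(\mu^{[p-1]}(K_{\bullet})) \to 0
\]
of complexes of $B$-modules, where $U$ is the cone-of-identity functor from \eqref{u}. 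The pivotal observation is that the cokernel is termwise a cone of an identity morphism, and is therefore null-homotopic as a complex of $B$-modules, with a null-homotopy that is additive in $K_{\bullet}$.

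The quasi-isomorphism claim is then immediate: the null-homotopic cokernel is acyclic, so the inclusion $\mu^{[p+1]}(K_{\bullet}) \hookrightarrow \mu(\mu^{[p]}(K_{\bullet}))$ is a quasi-isomorphism, and composing it with $\mu(\mu^{[p]}(K_{\bullet})) \to \mu^{[p]}(K_{\bullet}) \to K_{\bullet}$---the first arrow coming from $\mu \to \mathrm{id}$, the second from the inductive hypothesis---produces the required map.

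For the admissibility, the inductive hypothesis together with Corollary \ref{stair} already gives that $\mu(\mu^{[p]}(K_{\bullet}))$ is $(n+p+1)$-admissible. Transferring this to $\mu^{[p+1]}(K_{\bullet})$ uses the fact that $\mathrm{Tor}^q_B(-, A)$, being additive, preserves the termwise null-homotopy of the cokernel; consequently $\mathrm{Tor}^q_B(U \circ (\Omega^1_B \otimes_B -)(\mu^{[p-1]}(K_{\bullet})), A)$ is acyclic for every $q \geq 0$. Applying the termwise long exact sequence of $\mathrm{Tor}$ degree by degree to the short exact sequence and then passing to homology of the resulting complexes, a diagram chase forces $\mathrm{H}_j(\mathrm{Tor}^1_B(\mu^{[p+1]}(K_{\bullet}), A)) \cong \mathrm{H}_j(\mathrm{Tor}^1_B(\mu(\mu^{[p]}(K_{\bullet})), A))$ to vanish for $0 \leq j \leq n+p$, yielding the desired $(n+p+1)$-admissibility. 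This reconciliation between the termwise Tor long exact sequences and the differential in the complex direction is the step I expect to require the most care, and is the main technical obstacle of the proof.
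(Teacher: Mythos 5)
Your proposal follows the paper's route essentially verbatim: the engine is the same exact sequence
\[
0 \longrightarrow \mu^{[p+1]}(K_{\bullet}) \longrightarrow \mu(\mu^{[p]}(K_{\bullet})) \longrightarrow U(\Omega^1_B \otimes_B \mu^{[p-1]}(K_{\bullet})) \longrightarrow 0
\]
from Theorem \ref{canal}(i), the quasi-isomorphism claim is handled identically, and invoking Corollary \ref{stair} on $\mu(\mu^{[p]}(K_{\bullet}))$ and transferring is only a cosmetic variant of the paper's direct appeal to Proposition \ref{coince} (of which Corollary \ref{stair} is itself a consequence). Both versions hinge on the same transfer step: that the inclusion $\mu^{[p+1]}(K_{\bullet}) \hookrightarrow \mu(\mu^{[p]}(K_{\bullet}))$ induces a quasi-isomorphism on $\mathrm{Tor}^1_B(-,A)$.

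That transfer step is where your write-up, as it stands, is not yet a proof. You deduce it from two facts: the complexes $\mathrm{Tor}^q_B(U(\Omega^1_B\otimes_B\mu^{[p-1]}(K_{\bullet})),A)$ are acyclic (correct, since an additive functor preserves the contracting homotopy), and ``a diagram chase'' on the assembled degreewise long exact sequences of $\mathrm{Tor}$. But a long exact sequence of complexes in which every third term is acyclic --- even contractible --- does \emph{not} force the remaining maps to be quasi-isomorphisms: the images of the connecting maps are subquotients of acyclic complexes and need not be acyclic. (For instance, the exact sequence of complexes $0\to B^1\to C^1\xrightarrow{\partial}A^0\xrightarrow{0}B^0\to C^0\to 0$ with $C^1=(\mathbb{Z}\xrightarrow{\;=\;}\mathbb{Z})$ in degrees $1,0$, $A^0=\mathbb{Z}$ placed in degree $1$, $\partial$ the identity in degree $1$, $B^1=(0\to\mathbb{Z})$ and $B^0=C^0=0$ has all ``$C$'' terms contractible, yet $A^0\to B^0$ is not a quasi-isomorphism.) To close the gap you need the specific structure of this sequence rather than the formal acyclicity: for example, that the degreewise connecting homomorphisms $\mathrm{Tor}^{q+1}_B(U(\cdots)_j,A)\to\mathrm{Tor}^q_B(\mu^{[p+1]}(K_{\bullet})_j,A)$ vanish, so that the long exact sequence of $\mathrm{Tor}$ complexes breaks into genuine short exact sequences of complexes with acyclic third term. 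This is exactly the kind of control supplied by Theorem \ref{wazomba} and the surjectivity statements of Lemma \ref{mince}, and it is what makes Proposition \ref{coince} compute $\mathrm{Tor}^1_B(\mu(-),A)$ on the nose. You correctly identified this as the crux (and the paper's own proof is equally terse there), but the chase needs the vanishing of the connecting maps, not merely the acyclicity of the cokernel's $\mathrm{Tor}$'s.
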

\begin{proof}
Thanks to Theorem \ref{canal} (i), there is an exact sequence
\[
0 \longrightarrow \mu^{[p+1]}({}{K}_{\bullet}) \longrightarrow \mu(\mu^{[p]}({}{K}_{\bullet})) \longrightarrow U(\Omega^1_B \otimes_B \mu^{[p-1]}({}{K}_{\bullet})) \longrightarrow 0.
\]
where $U(M_{\bullet})=\mathrm{cone}(M_{\bullet} \longrightarrow M_{\bullet})$ for any $M_{\bullet}$ in  $\mathrm{C}^{\mathrm{b}}(B)$.
Hence, thanks to Proposition \ref{coince},  
\[
\mathrm{Tor}^1_B(\mu^{[p+1]}({}{K}_{\bullet}), A) \simeq {I} \otimes \mathrm{Tor}^1_B(\mu^{[p]}({}{K}_{\bullet}), A) [1].
\]
This gives the result.
\end{proof}

\subsubsection{The category $\mathrm{D}^{\mathrm{adm}}(B)$} \label{ex}
Given a bounded complex $K_{\bullet}$ of $B$-modules, it is interesting to know if $K_{\bullet}$ can be reconstructed from the two complexes $\overline{K}_{\bullet}=K_{\bullet} \otimes_B A$ and $\mathrm{Tor}^1_B(K_{\bullet}, A)$. At the level of complexes, the answer is given by Lemma \ref{easy}: $K_{\bullet}$ is entirely determined by $K_{\bullet}$, the submodule $\mathrm{Tor}^1_B(K_{\bullet}, A)$ of $I \otimes \overline{K}_{\bullet}$, and the extension class of the exact sequence
\begin{equation} \label{oubli}
0 \longrightarrow I \otimes \overline{K}_{\bullet}/\mathrm{Tor}^1_B(K_{\bullet}, A) \longrightarrow K_{\bullet} \longrightarrow \overline{K}_{\bullet} \longrightarrow 0
\end{equation}
in $\mathrm{D}^{\mathrm{b}}(A)$.
We can address the same problem in the derived setting: assume to be given a quadruplet  $(M_{\bullet}$, $N_{\bullet}, \mu, \delta)$ where:
\begin{enumerate}
\vspace{0.2cm}
\item[--] $M_{\bullet}, N_{\bullet}$ are in $\mathrm{D}^{\mathrm{b}}(A)$,
\vspace{0.2cm}
\item[--] $\mu$ is in $\mathrm{Hom}_{\mathrm{D}^{\mathrm{b}}(A)} (I \otimes M_{\bullet}, N_{\bullet})$,
\vspace{0.2cm}
\item[--] $\delta$ is in $\mathrm{Hom}_{\mathrm{D}^{\mathrm{b}}(A)} (M_{\bullet}, N_{\bullet}[1])$.
\vspace{0.2cm}
\end{enumerate}
We look for elements $K_{\bullet}$ in $\mathrm{C}^{-}(B)$ such that $\overline{K}_{\bullet}$ and $IK_{\bullet}$ are isomorphic in $\mathrm{D}^{\mathrm{b}}(A)$ to $M_{\bullet}$ and $N_{\bullet}$ respectively, and via this isomorphisms $\mu$ is the multiplication class $\mu_{K_{\bullet}}$, and $\delta$ is the extension class of \eqref{oubli}. Besides, we want to define a refined notion of weak equivalence in $\mathrm{C}^{-}(B)$ in order that such a complex $K_{\bullet}$ be unique up to weak equivalence. More precisely, a morphism $\varphi \colon K_{\bullet} \longrightarrow L_{\bullet}$ will be a weak equivalence if and only if both $\varphi$ and $\overline{\varphi}$ are quasi-isomorphisms in $\mathrm{C}^{-}(B)$ and $\mathrm{C}^{-}(A)$ respectively.
\par \medskip
\begin{definition}
Let $\mathfrak{N}$ be the null system in $\mathrm{K}^{-}(B)$ defined as follows:
\[
\mathfrak{N}=\{ K_{\bullet}\,\,\textrm{in}\,\,\mathrm{K}^{-}(B) \,\,\textrm{such that}\,\, \overline{K}_{\bullet} \,\,\textrm{and}\,\,\mathrm{Tor}^1_B(K_{\bullet}, A)\,\,\textrm{are exact}\}.
\]
The admissible derived category $\mathrm{D}^{\mathrm{adm}}(B)$ is the triangulated category defined as the localization of $\mathrm{K}^{-}(B)$ with respect to the null system $\mathfrak{N}$.
\end{definition}
Elements of $\mathfrak{N}$ are exact complexes, but the converse is not true. In fact, elements of $\mathfrak{N}$ are those for which the $\mathrm{E}^2$ page of the hypertor spectral sequence vanishes. 
Hence a morphism $\varphi \colon K_{\bullet} \longrightarrow L_{\bullet}$ is an isomorphism in $\mathrm{D}^{\mathrm{adm}}(B)$ if and only if $\overline{\varphi}$ and $ \mathrm{Tor}^1_B(\varphi, A)$ are quasi-isomorphisms.
\begin{remark} \label{gnagna}
Thanks to \eqref{oubli}, the null system can also be described as
\[
\mathfrak{N}=\{ K_{\bullet}\,\,\textrm{in}\,\,\mathrm{K}^{-}(B) \,\,\textrm{such that}\,\, {K}_{\bullet} \,\,\textrm{and}\,\,\overline{K}_{\bullet} \,\,\textrm{are exact}\}.
\]
Therefore a morphism of complexes $\varphi \colon K_{\bullet} \longrightarrow L_{\bullet}$ is an isomorphism in the category $\mathrm{D}^{\mathrm{adm}}(B)$ if and only if
$\varphi$ and $\overline{\varphi}$ are quasi-isomorphisms.
\end{remark}
Let us give a few properties related to the categories $\mathrm{D}^{\mathrm{adm}}(B)$. 
\begin{proposition} \label{menhir}
Let $\varphi \colon K_{\bullet} \longrightarrow L_{\bullet}$ be a quasi-isomorphism between two elements of $\mathrm{C}^{-}(B)$. 
\begin{enumerate}
\item[(i)] Assume that the complexes $K_{\bullet}$ and $L_{\bullet}$ are both $n$-admissible and of of length at most $n$ for some $n$ in $\mathbb{N} \cup \{ + \infty\}$.
Then $\varphi$ is an isomorphism in $\mathrm{D}^{\mathrm{adm}}(B)$.
\item[(ii)] Assume that $\varphi$ induces an isomorphism in $\mathrm{D}^{\mathrm{adm}}(B)$. Then $K_{\bullet}$ is admissible if and only if $L_{\bullet}$ is admissible.
\end{enumerate}
\end{proposition}

\begin{proof}
Let us consider the following diagram: 
\[
\xymatrix@C=50pt{
\mathbf{Tor}^i_B(K_{\bullet}, A) \ar[r]^-{\mathbf{Tor}^i_B(\varphi, A)} \ar[d] &  \mathbf{Tor}^i_B(L_{\bullet}, A) \ar[d] \\
\mathrm{H}_{i}(K_{\bullet} \otimes_B A) \ar[r]^-{\mathrm{H}_i(\overline{\varphi})} & \mathrm{H}_{i}(L_{\bullet} \otimes_B A)
}
\]
Since $\varphi$ is a quasi-isomorphism, the map $\mathbf{Tor}^i_B(\varphi, A)$ is an isomorphism. 
\par \medskip
For (i), thanks to Proposition \ref{cns}, both vertical arrows are isomorphisms for $0 \leq i \leq n$. It follows that $\mathrm{H}_i(\overline{\varphi})$ is an isomorphism, and since $K_{\bullet}$ and $L_{\bullet}$ are of length at most $n$, $\overline{\varphi}$ is an isomorphism. Hence $\varphi$ induces an isomorphism in $\mathrm{D}^{\mathrm{adm}}(B)$. 
\par \medskip
For (ii), if $\varphi$ induces an isomorphism in $\mathrm{D}^{\mathrm{adm}}(B)$, then, given any integer $i$, the map $\mathrm{H}_i(\overline{\varphi})$ is an isomorphism. It follows that the right vertical map is an isomorphism if and only if the left vertical map is an isomorphism. Using again Proposition \ref{cns}, we get the result.
\end{proof}

\begin{remark} \label{drone}
It follows from Proposition \ref{menhir} (ii) that if $K_{\bullet}$ and $L_{\bullet}$ are isomorphic in $\mathrm{D}^{\mathrm{adm}}(B)$, then $K_{\bullet}$ is admissible if and only if $L_{\bullet}$ is admissible.
\end{remark}

Let us now consider the upper truncation functors $\tau^{\geq -n} \colon \mathrm{C}^{\mathrm{b}}(B) \longrightarrow \mathrm{C}^{\mathrm{b}}(B)$, i.e., the functors defined by $\tau^{\geq -n} K_{\bullet}=(N \longrightarrow K_n \longrightarrow K_{n-1} \longrightarrow \ldots )$ where $N$ is the kernel of the map $K_n \longrightarrow K_{n-1}$ and is put in homological degree $n+1$. It is well known that $\tau^{\geq -n}$ induce an endofunctor of $\mathrm{D}^-(B)$. At the level of the category $\mathrm{D}^{\mathrm{adm}}(B)$, we have the following result: 
\begin{proposition} \label{tonk}
Let $K_{\bullet}$ be an admissible complex, and assume that there exists an integer $n$ such that $\mathrm{H}_p(\overline{K}_{\bullet})$ vanishes for $p>n$. Then $\tau^{\geq -n} K_{\bullet}$ is admissible and the morphism
$
K_{\bullet} \longrightarrow \tau^{\geq -n} K_{\bullet}
$
is an isomorphism in $\mathrm{D}^{\mathrm{adm}}(B)$.
\end{proposition}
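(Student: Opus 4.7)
The plan is to prove the proposition in two stages: (a) verify that the morphism $K_\bullet \to \tau^{\geq -n} K_\bullet$ is an isomorphism in $\mathrm{D}^{\mathrm{adm}}(B)$, and (b) verify admissibility of the target. By Remark \ref{gnagna}, step (a) reduces to showing that both $K_\bullet \to \tau^{\geq -n} K_\bullet$ and its reduction modulo $I$ are quasi-isomorphisms.

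The main obstacle—and the heart of the argument—is to establish that $\mathrm{H}_i(K_\bullet) = 0$ for $i > n$, since the hypothesis furnishes this vanishing only for $\overline{K}_\bullet$. I would obtain this via two short exact sequences of complexes. First, applying Corollary \ref{dur} degreewise yields an exact sequence
\[
0 \longrightarrow \mathrm{Tor}^1_B(K_\bullet, A) \longrightarrow I \otimes \overline{K}_\bullet \longrightarrow IK_\bullet \longrightarrow 0.
\]
Admissibility of $K_\bullet$ makes the left term acyclic; the $A$-freeness of $I$ together with the hypothesis on $\overline{K}_\bullet$ gives $\mathrm{H}_i(I \otimes \overline{K}_\bullet) = I \otimes \mathrm{H}_i(\overline{K}_\bullet) = 0$ for $i > n$; so the associated long exact sequence forces $\mathrm{H}_i(IK_\bullet) = 0$ for $i > n$. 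Then the tautological sequence $0 \to IK_\bullet \to K_\bullet \to \overline{K}_\bullet \to 0$ and its long exact sequence of homology propagate the vanishing to $K_\bullet$ itself.

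With $\mathrm{H}_i(K_\bullet) = 0$ for $i > n$ in hand, the canonical truncation $K_\bullet \to \tau^{\geq -n} K_\bullet$ is a quasi-isomorphism in $\mathrm{K}^-(B)$. Since $-\otimes_B A$ preserves cokernels, $\overline{\tau^{\geq -n} K_\bullet}$ coincides with $\tau^{\geq -n}\, \overline{K}_\bullet$, so the reduced truncation is also a quasi-isomorphism, directly by the hypothesis. Remark \ref{gnagna} then delivers (a).

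For (b), I exploit the commutative square
\[
\xymatrix{
\mathbf{Tor}^i_B(K_\bullet, A) \ar[r] \ar[d] & \mathbf{Tor}^i_B(\tau^{\geq -n} K_\bullet, A) \ar[d] \\
\mathrm{H}_i(\overline{K}_\bullet) \ar[r] & \mathrm{H}_i(\overline{\tau^{\geq -n} K_\bullet})
}
\]
The top arrow is an isomorphism since $\mathbf{Tor}$ factors through the ordinary derived category and $K_\bullet \to \tau^{\geq -n} K_\bullet$ is a quasi-isomorphism in $\mathrm{K}^-(B)$; the left vertical is an isomorphism by admissibility of $K_\bullet$ and Proposition \ref{cns}; the bottom arrow is an isomorphism by the observation of the previous paragraph. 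Hence the right vertical is an isomorphism for every $i \geq 0$, and the converse direction of Proposition \ref{cns} concludes that $\tau^{\geq -n} K_\bullet$ is admissible.
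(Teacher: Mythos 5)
Your proof is correct, but it takes a genuinely different route from the paper's. The paper represents $\tau^{\geq -n}K_\bullet$ by an explicit bounded complex and runs the hypertor spectral sequence on that truncation a second time: admissibility of $K_\bullet$ kills $\mathrm{E}^2_{p,q}$ for $p\leq n-1$, $q\geq 1$, so the remaining obstruction $\mathrm{E}^2_{n,1}=\mathrm{H}_n(\mathrm{Tor}^1_B(\tau^{\geq -n}K_\bullet,A))$ survives to $\mathrm{E}^\infty$ and is identified with a graded piece of $\mathbf{Tor}^{n+1}_B(\tau^{\geq -n}K_\bullet,A)\simeq \mathrm{H}_{n+1}(\overline{K}_\bullet)=0$; it then concludes by citing Proposition \ref{menhir}. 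You instead avoid any further spectral-sequence argument for the truncated complex: you first prove the Nakayama-type statement $\mathrm{H}_p(K_\bullet)=0$ for $p>n$ by the two-step dévissage through Corollary \ref{dur} and the tautological sequence $0\to IK_\bullet\to K_\bullet\to \overline{K}_\bullet\to 0$, get the $\mathrm{D}^{\mathrm{adm}}$-isomorphism from Remark \ref{gnagna}, and then transport admissibility across the quasi-isomorphism using the functoriality of the comparison map $\mathbf{Tor}^i_B(\,\cdot\,,A)\to \mathrm{H}_i(\overline{\,\cdot\,})$ together with both directions of Proposition \ref{cns}. What your approach buys is that it makes fully explicit a point the paper leaves implicit, namely that $K_\bullet\to\tau^{\geq -n}K_\bullet$ is an honest quasi-isomorphism (the paper's identification $\mathbf{Tor}^{n+1}_B(\tau^{\geq -n}K_\bullet,A)\simeq\mathbf{Tor}^{n+1}_B(K_\bullet,A)$ and its appeal to Proposition \ref{menhir} both presuppose this), and it replaces the second spectral-sequence computation by soft functoriality; what the paper's argument buys is that it localizes the whole verification in the single group $\mathrm{E}^2_{n,1}$ and stays entirely within the machinery already set up for Proposition \ref{cns}. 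Your identification $\overline{\tau^{\geq -n}K_\bullet}=\tau^{\geq -n}\overline{K}_\bullet$ via right-exactness of $\otimes_B A$ is the one genuinely new observation you need, and it is correct.
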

\begin{proof}
Let $N=\mathrm{ker}\,\{K_{n} \longrightarrow K_{n-1}\}$. Then $\tau^{\geq -n} K_{\bullet}$ is the complex
\[
N \longrightarrow K_n \longrightarrow K_{n-1} \longrightarrow \cdots \longrightarrow K_0.
\]
We consider again the hypertor spectral sequence associated to this complex. Since $K_{\bullet}$ is admissible, $\mathrm{E}^2_{p, q}$ vanishes for all integers $p$ and $q$ such that $0 \leq p \leq n-1$ and $q \geq 1$. Hence 
\[
\mathrm{E}^2_{n, 1} \simeq \mathrm{E}^{\infty}_{n, 1} \simeq \mathrm{Gr}_n \mathbf{Tor}^{n+1}_B(\tau^{\geq -n} K_{\bullet}, A). 
\]
As $K_{\bullet}$ is admissible, we have isomorphisms
\[
\mathbf{Tor}^{n+1}_B(\tau^{\geq -n} K_{\bullet}, A) \simeq \mathbf{Tor}^{n+1}_B(K_{\bullet}, A) \simeq \mathrm{H}_{n+1}(\overline{K}_{\bullet})=\{0\}.
\]
Hence $\mathrm{E}^2_{n, 1}$ vanishes, so that $\tau^{\geq -n} K_{\bullet}$
is admissible. Then the result follows from Proposition \ref{menhir}.
\end{proof}
\begin{corollary} \label{bute}
A complex $K_{\bullet}$ is isomorphic in $\mathrm{D}^{-}(B)$ to a bounded admissible complex if and only if the derived pullback $K \, \lltensl{}_B \, A$ is cohomologically bounded.\footnote{The superscript ``$\ell$'' means that the tensor product is derived with respect to the left variable and not as a bifunctor, see \cite[\S 3]{Grivaux-HKR} for more details on this issue.}
\end{corollary}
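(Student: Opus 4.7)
The ``only if'' direction is immediate from Proposition \ref{cns}: if $L_{\bullet}$ is a bounded admissible complex isomorphic to $K_{\bullet}$ in $\mathrm{D}^{-}(B)$, then applying that proposition with $n=+\infty$ identifies $K_{\bullet}\lltensl_{B}A\simeq L_{\bullet}\lltensl_{B}A$ with $\overline{L}_{\bullet}$ in $\mathrm{D}^{-}(A)$, which has bounded cohomology because $L_{\bullet}$ itself is bounded.

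For the ``if'' direction, my plan is to produce a flat resolution, observe that it is automatically admissible, and then apply Proposition \ref{tonk} to truncate it. Starting from any flat resolution $F_{\bullet}\to K_{\bullet}$ with $F_{i}=0$ for $i<0$, the flatness of each $F_{i}$ gives $\mathrm{Tor}^{1}_{B}(F_{i},A)=0$, so the entire complex $\mathrm{Tor}^{1}_{B}(F_{\bullet},A)$ is identically zero and $F_{\bullet}$ is admissible. Moreover $\overline{F}_{\bullet}=F_{\bullet}\otimes_{B}A$ computes $K_{\bullet}\lltensl_{B}A$, so by hypothesis there exists an integer $n$ such that $\mathrm{H}_{p}(\overline{F}_{\bullet})=0$ for $p>n$. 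Proposition \ref{tonk} then asserts that $L_{\bullet}:=\tau^{\geq -n}F_{\bullet}$ is admissible, bounded (concentrated in homological degrees $[0,n]$), and isomorphic to $F_{\bullet}$ in $\mathrm{D}^{\mathrm{adm}}(B)$.

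The last step is to transport this isomorphism from $\mathrm{D}^{\mathrm{adm}}(B)$ to $\mathrm{D}^{-}(B)$. This is automatic because the null system $\mathfrak{N}$ defining $\mathrm{D}^{\mathrm{adm}}(B)$ is contained in the class of exact complexes (as recorded in Remark \ref{gnagna}), so the universal property of localization yields a canonical functor $\mathrm{D}^{\mathrm{adm}}(B)\to \mathrm{D}^{-}(B)$ which preserves isomorphisms. Composing $K_{\bullet}\simeq F_{\bullet}$ in $\mathrm{D}^{-}(B)$ with $F_{\bullet}\simeq L_{\bullet}$ then delivers the required bounded admissible representative. No step is genuinely delicate; I initially considered using the canonical admissible replacement $\mu^{[p]}(K_{\bullet})$ from Theorem \ref{loire}, but this is trickier to make work because the length of $\mu^{[p]}(K_{\bullet})$ grows with $p$ while the complex is only $p$-admissible, so full admissibility does not arise in a single step. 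The flat resolution bypasses this obstacle by delivering admissibility at once at the cost of possible unboundedness, which is then corrected by $\tau^{\geq -n}$.
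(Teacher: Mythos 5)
Your proof is correct, and it fills in the argument exactly as the paper intends: the paper states this corollary without proof immediately after Proposition \ref{tonk}, whose hypotheses you verify by first passing to a flat (hence automatically admissible) resolution and then truncating, while the converse direction is indeed just Proposition \ref{cns} with $n=+\infty$. Your side remark about why Theorem \ref{loire} alone does not suffice (only $p$-admissibility for a complex whose length grows with $p$) is accurate and correctly identifies why the flat resolution is the right tool here.
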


\begin{proposition} \label{marre}
Given $(M_{\bullet}, N_{\bullet}, \mu, \delta)$, there exists $K_{\bullet}$ in $\mathrm{C}^{-}(B)$ corresponding to these data whose isomorphism class in $\mathrm{D}^{\mathrm{adm}}(B)$ is unique. Besides, the map
\[
(M_{\bullet}, N_{\bullet}, \mu, \delta) \longrightarrow K_{\bullet}
\]
is functorial, and $K_{\bullet}$ is admissible if and only if $\mu$ is an isomorphism in $\mathrm{D}^-(B)$.
\end{proposition}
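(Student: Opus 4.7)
The plan is to construct $K_\bullet$ by hand as a two-term extension equipped with a $B$-structure, then derive uniqueness and functoriality from the canonicity of the construction, and finally identify the $\mathrm{Tor}^1$-complex of $K_\bullet$ with the homotopy kernel of $\mu$ to obtain the admissibility criterion.

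First I would fix a K-projective resolution $P_\bullet \to M_\bullet$ in $\mathrm{C}^{-}(A)$, so that $I \otimes P_\bullet$ is again K-projective (since $I$ is locally free, hence flat). Choose any representative $Q_\bullet$ of $N_\bullet$ in $\mathrm{C}^{\mathrm{b}}(A)$ and lift $\mu$, $\delta$ to honest chain maps $\tilde\mu \colon I \otimes P_\bullet \to Q_\bullet$ and $\tilde\delta \colon P_\bullet \to Q_\bullet[1]$. Using a splitting $\sigma$ of \eqref{ati}, I would define $K_\bullet$ as the graded $A$-module $Q_\bullet \oplus P_\bullet$ with differential $\bigl(\begin{smallmatrix} d_Q & \tilde\delta \\ 0 & d_P \end{smallmatrix}\bigr)$, and upgrade it to a $B$-complex by declaring that $I$ acts via $i \cdot (q, p) = (\tilde\mu(i \otimes p), 0)$. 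Chain-level compatibility follows by direct computation: the only potentially offending term is $\tilde\delta \circ \tilde\mu$, but the image of $\tilde\mu$ lies in the summand $Q_\bullet$, on which the relevant component of $\tilde\delta$ does not act. By construction $\overline{K}_\bullet \simeq P_\bullet \simeq M_\bullet$, the extension class of $0 \to IK_\bullet \to K_\bullet \to \overline{K}_\bullet \to 0$ is $\tilde\delta = \delta$, and the multiplication class is $\tilde\mu = \mu$, establishing existence.

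For uniqueness in $\mathrm{D}^{\mathrm{adm}}(B)$ and functoriality, any morphism of quadruplets $(M_\bullet, N_\bullet, \mu, \delta) \to (M'_\bullet, N'_\bullet, \mu', \delta')$ lifts, using K-projectivity of $P_\bullet$, to a chain map between chosen resolutions compatible with the lifts of $\mu$ and $\delta$ up to homotopy, hence to a chain map $\varphi \colon K_\bullet \to K'_\bullet$. This makes the assignment $(M_\bullet, N_\bullet, \mu, \delta) \mapsto K_\bullet$ functorial into $\mathrm{K}^{-}(B)$, and \textit{a fortiori} into $\mathrm{D}^{\mathrm{adm}}(B)$. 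When the two quadruplets coincide, the resulting $\varphi$ satisfies that both $\overline{\varphi}$ and $\mathrm{Tor}^1_B(\varphi, A)$ are quasi-isomorphisms, since they agree with the identity on the initial data; by Remark \ref{gnagna} this makes $\varphi$ an isomorphism in $\mathrm{D}^{\mathrm{adm}}(B)$, which gives uniqueness.

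Finally, for the admissibility criterion, Corollary \ref{dur} applied degreewise identifies $\mathrm{Tor}^1_B(K_\bullet, A)$ with the termwise kernel of $\tilde\mu$; after factoring $\tilde\mu$ through a termwise surjection (via a mapping cylinder, which does not change the isomorphism class of $N_\bullet$ in $\mathrm{D}^{\mathrm{b}}(A)$), this kernel fits in a distinguished triangle
\[
\mathrm{Tor}^1_B(K_\bullet, A) \longrightarrow I \otimes M_\bullet \xlongrightarrow{\mu} N_\bullet \xlongrightarrow{+1}
\]
in $\mathrm{D}^{\mathrm{b}}(A)$. Admissibility of $K_\bullet$ is exactness of $\mathrm{Tor}^1_B(K_\bullet, A)$, which by this triangle is equivalent to $\mu$ being an isomorphism in $\mathrm{D}^{\mathrm{b}}(A)$, hence in $\mathrm{D}^{-}(B)$. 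The main obstacle I anticipate is the chain-level bookkeeping required to ensure that the triangle above is genuinely distinguished, or equivalently, to arrange $\tilde\mu$ as a termwise surjection while preserving the other data; this is a standard model-categorical maneuver available in any projective model structure on $\mathrm{C}^{-}(A)$, but it needs to be traced carefully through the construction of $K_\bullet$ to see that nothing else moves.
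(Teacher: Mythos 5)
Your existence argument is the paper's own: resolve $M_{\bullet}$ and $N_{\bullet}$, lift $\mu$ and $\delta$ to chain maps, take the shifted cone of $\tilde\delta$ and endow it with the $I$-action through $\tilde\mu$ as in Lemma \ref{easy}; and your derivation of the admissibility criterion from Corollary \ref{dur} is correct (the paper leaves that part implicit). One bookkeeping point: termwise surjectivity of $\tilde\mu$ is needed already for existence, not only at the end, since $IK_{\bullet}=\mathrm{im}\,\tilde\mu$, so without it $\overline{K}_{\bullet}=(Q_{\bullet}/\mathrm{im}\,\tilde\mu)\oplus P_{\bullet}$ rather than $P_{\bullet}$ and $IK_{\bullet}$ does not represent $N_{\bullet}$. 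The paper arranges this by adding a null-homotopic complex to $P_{\bullet}$, which is the maneuver you defer to the last paragraph; it should be performed before you assert $\overline{K}_{\bullet}\simeq M_{\bullet}$.

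The genuine gap is in the uniqueness/functoriality step. A morphism of quadruplets lifts to chain maps $\tilde f\colon P_{\bullet}\to P'_{\bullet}$ and $\tilde g\colon Q_{\bullet}\to Q'_{\bullet}$ intertwining $\tilde\mu$ and $\tilde\mu'$ only \emph{up to homotopy}, whereas $B$-linearity of the induced map $Q_{\bullet}\oplus P_{\bullet}\to Q'_{\bullet}\oplus P'_{\bullet}$ requires the strict identity $\tilde g\circ\tilde\mu=\tilde\mu'\circ(\mathrm{id}\otimes\tilde f)$: the module structure is rigid data, not homotopy data, so your ``hence to a chain map $\varphi\colon K_{\bullet}\to K'_{\bullet}$'' does not yet produce a morphism in $\mathrm{C}^{-}(B)$. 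Moreover, your argument only compares two outputs of the construction, while the statement (and its later uses, e.g.\ in Theorem \ref{localobs} and Proposition \ref{parfait}) requires that an \emph{arbitrary} complex $L_{\bullet}$ in $\mathrm{C}^{-}(B)$ realizing the quadruplet be isomorphic to $K_{\bullet}$ in $\mathrm{D}^{\mathrm{adm}}(B)$. The paper resolves both difficulties at once with the base-change Lemma \ref{reidemaster}: after adding contractible summands one obtains surjections $P_{\bullet}\twoheadrightarrow\overline{L}_{\bullet}$ and $Q_{\bullet}\twoheadrightarrow IL_{\bullet}$ compatible with the data, and the fiber-product/pushout constructions of that lemma produce genuine $B$-linear comparison complexes, with no strictification problem; combined with Remark \ref{gnagna} this yields the isomorphism in $\mathrm{D}^{\mathrm{adm}}(B)$. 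You should either import that argument or explain how to strictify the compatibility with $\mu$.
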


\begin{proof}
Let $P_{\bullet}$ and  $Q_{\bullet}$ be projective resolution of $K_{\bullet}$ and $N_{\bullet}$ respectively. We can represent $\mu$ and $\delta$ by true morphisms $\mu \colon I \otimes P_{\bullet} \longrightarrow Q_{\bullet}$ and $\delta \colon P_{\bullet} \longrightarrow Q_{\bullet}[1]$. By adding if necessary a null homotopic complex to $P_{\bullet}$, we can even assume that $\mu$ becomes is surjective. Let $K_{\bullet}$ denote the cone of $\delta \colon P_{\bullet} \longrightarrow Q_{\bullet}[1]$ shifted by $-1$. Repeating the construction of Lemma \eqref{easy}, $K_{\bullet}$ is naturally a complex of $B$-modules satisfying all required properties.
\par \medskip
Let us now discuss uniqueness. Assume that a complex $L_{\bullet}$ corresponds to $(M_{\bullet}, N_{\bullet}, \mu, \delta)$. We have two morphisms $P_{\bullet} \longrightarrow \overline{L}_{\bullet}$ and $Q_{\bullet} \longrightarrow IL_{\bullet}$. By adding to $K_{\bullet}$ a null-homotopic complex if necessary, we can assume that these two morphisms are surjective. Then Lemma \ref{reidemaster} implies that $L_{\bullet}$ is isomorphic to $K_{\bullet}$ in $\mathrm{D}^{\mathrm{adm}}(B)$.
\end{proof}

To avoid dealing with unbounded projective complexes, it is possible to use perfect complexes instead of admissible ones. Perfect complexes of $B$-modules admit a very simple characterization, which we give now. We leave the adaptations from the admissible to the perfect setting to the reader.
\begin{proposition} \label{parfait}
Let $K_{\bullet}$ be a bounded complex of $B$-modules. Then $K_{\bullet}$ is perfect if and only if the derived pullback $K_{\bullet} \,\lltensl{}_B \,A$ is perfect.
\end{proposition}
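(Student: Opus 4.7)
The plan is to prove the two directions separately. The forward direction is routine: if $K_\bullet$ is perfect over $B$, it is represented by a strict perfect complex of $B$-modules, and applying $- \lltensl{}_B A$ yields a strict perfect complex of $A$-modules, hence $K_\bullet \lltensl{}_B A$ is perfect over $A$. All the content lies in the converse, so I focus on it.

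For the converse, I assume $K_\bullet \lltensl{}_B A$ is perfect, hence in particular cohomologically bounded. By Corollary \ref{bute}, I may replace $K_\bullet$ (up to isomorphism in $\mathrm{D}^-(B)$) by a bounded admissible complex, still denoted $K_\bullet$. Then Proposition \ref{cns} gives that $K_\bullet \lltensl{}_B A \simeq \overline{K}_\bullet$ in $\mathrm{D}^-(A)$, so $\overline{K}_\bullet$ is perfect over $A$. I then choose a strict perfect complex $P_\bullet$ together with a quasi-isomorphism $P_\bullet \to \overline{K}_\bullet$ which I may arrange to be termwise surjective, after adjoining a contractible piece. Because $I$ is $A$-free of finite rank, $Q_\bullet = I \otimes P_\bullet$ is also strict perfect; and since $K_\bullet$ is admissible, Proposition \ref{compris2} applied termwise shows that the multiplication map $\mu \colon I \otimes \overline{K}_\bullet \to IK_\bullet$ is a termwise surjective quasi-isomorphism, so $Q_\bullet$ is a strict perfect resolution of $IK_\bullet$.

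Next, I represent the extension class $\delta \colon \overline{K}_\bullet \to IK_\bullet[1]$ of the short exact sequence $0 \to IK_\bullet \to K_\bullet \to \overline{K}_\bullet \to 0$ by an honest map of complexes $\tilde\delta \colon P_\bullet \to Q_\bullet[1]$, possibly after further enlarging $P_\bullet$ by a contractible summand. Following the recipe in Proposition \ref{marre}, I define $\tilde P_\bullet = \mathrm{cone}\,(\tilde\delta)[-1]$ and equip it with the $B$-module structure dictated by Lemma \ref{easy} applied to the identity multiplication $I \otimes P_\bullet \to I \otimes P_\bullet = Q_\bullet$. The crucial observation is that each term $\tilde P_i \simeq P_i \oplus (I \otimes P_i)$ is isomorphic as a $B$-module to $\sigma^* P_i = P_i \otimes_A B$ for any splitting $\sigma$ of \eqref{ati}, so is a finitely generated projective $B$-module. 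Hence $\tilde P_\bullet$ is a strict perfect complex of $B$-modules.

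To conclude, I compare $\tilde P_\bullet$ with $K_\bullet$. By construction, the data $(M_\bullet, N_\bullet, \mu, \delta)$ attached to $\tilde P_\bullet$ coincides with that of $K_\bullet$ up to the chosen quasi-isomorphisms $P_\bullet \to \overline{K}_\bullet$ and $Q_\bullet \to IK_\bullet$, so the uniqueness statement in Proposition \ref{marre} yields an isomorphism $\tilde P_\bullet \simeq K_\bullet$ in $\mathrm{D}^{\mathrm{adm}}(B)$. By Remark \ref{gnagna}, such an isomorphism is in particular a quasi-isomorphism in $\mathrm{D}^-(B)$, which proves that $K_\bullet$ is perfect. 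The step I expect to require the most care is the identification of $\tilde P_i$ with $\sigma^* P_i$ as $B$-modules (not merely as $A$-modules) and the production of an actual morphism $\tilde P_\bullet \to K_\bullet$ of complexes of $B$-modules realizing the comparison in $\mathrm{D}^{\mathrm{adm}}(B)$, rather than merely a zig-zag in the localization; this is precisely what makes the projectivity of each $\tilde P_i$ over $B$ propagate into perfectness of the whole complex.
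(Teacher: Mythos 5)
Your proof is correct and follows essentially the same route as the paper: reduce to the bounded admissible case via Corollary \ref{bute}, then run the reconstruction of Proposition \ref{marre} on the quadruplet data using a bounded projective resolution of the perfect complex $\overline{K}_{\bullet}$, so that the resulting model has terms $P_i \oplus (I \otimes P_i) \simeq B \otimes_A P_i$ and is therefore a bounded complex of projective $B$-modules isomorphic to $K_{\bullet}$ in $\mathrm{D}^{\mathrm{adm}}(B)$. The only cosmetic difference is that you feed Proposition \ref{marre} the quadruplet $(\overline{K}_{\bullet}, IK_{\bullet}, \mu, \delta)$ while the paper uses $(\overline{K}_{\bullet}, I \otimes \overline{K}_{\bullet}, \mathrm{id}, \alpha)$; these agree because admissibility makes $\mu$ a quasi-isomorphism, and your version makes explicit the projectivity of each term that the paper leaves implicit.
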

\begin{proof}
If $K_{\bullet} \lltensl{}_B A$ is perfect, then Corollary \ref{bute} shows that $K_{\bullet}$ it quasi-isomorphic to a bounded admissible complex, so that we can assume that $K_{\bullet}$ is indeed bounded and admissible. Let us consider the exact sequence
\[
0 \longrightarrow I \otimes \overline{K}_{\bullet} \longrightarrow K_{\bullet} \longrightarrow \overline{K}_{\bullet} \longrightarrow 0
\]
as an exact sequence of $A$-modules, the $A$-module structure on $K_{\bullet}$ being given after the choice of a retraction $\sigma$ of the Atiyah sequence \eqref{ati}.
It gives a morphism 
\[
\alpha \colon \overline{K}_{\bullet} \longrightarrow I \otimes \overline{K}_{\bullet} [1]
\]
Thanks to Proposition \ref{marre}, the complex $K_{\bullet}$ can be reconstructed from the quadruplet $(\overline{K}_{\bullet}, I \otimes \overline{K}_{\bullet}, \mathrm{id}, \alpha)$. Since $\overline{K}_{\bullet}$ is a perfect complex, it admits a bounded projective resolution. Hence the proof of Proposition \ref{marre} shows that the complex $K_{\bullet}$ is isomorphic to a bounded complex of projective $B$-modules in $\mathrm{D}^{\mathrm{adm}}(B)$.
\end{proof}
\subsubsection{The local HKR class} \label{tnt}
Let $K_{\bullet}$ be a bounded complex of $B$-modules. We have an exact sequence of $A$-modules (the $A$-module-structure on $K_{\bullet}$ being given by $\sigma$):
\begin{equation} \label{ext2}
0 \longrightarrow \mathrm{Tor}^1_B(K_{\bullet}, A) \longrightarrow I \otimes \overline{K}_{\bullet} \xlongrightarrow{\mu_{K_{\bullet}}} K_{\bullet} \longrightarrow \overline{K}_{\bullet} \longrightarrow 0.
\end{equation}
\begin{definition} \label{extloc}
For any complex $K_{\bullet}$ of $B$-modules, the local HKR class of $K_{\bullet}$ is the morphism 
\[
{\theta}_{K_{\bullet}} \colon \overline{K}_{\bullet} \longrightarrow \mathrm{Tor}^1_B(K_{\bullet}, A) [2]
\]
in $\mathrm{D}^{\mathrm{b}}(A)$ associated with \eqref{ext2}.
\end{definition}

\begin{remark}
The morphism $\theta_{K_{\bullet}}$ is well defined on the admissible derived category $\mathrm{D}^{\mathrm{adm}}(B)$, but not on $\mathrm{D}^{-}(B)$. In fact we can see $\theta$ as a natural transformation 
\[
\xymatrix@R=30pt@C=50pt{\relax
   \mathrm{D}^{\mathrm{adm}}(B) \UN[r]{\mathrm{Tor}^0_B(\,*\,, \,A)}{\mathrm{Tor}^1_B(\,*\,, \, A)\,[2]}{\theta} & \mathrm{D}^-(A)}
\]
\end{remark}

\begin{theorem} \label{localobs}
Let $K_{\bullet}$ be a bounded complex of $B$-modules. Then the following properties are equivalent:
\begin{enumerate}
\vspace{0.2cm}
\item[(i)] The local HKR class ${\theta}_{K_{\bullet}}$ vanishes.
\vspace{0.2cm}
\item[(ii)] The map $K_{\bullet} \lltensl{}_B A \longrightarrow  K_{\bullet} \otimes_B A$ admits a right inverse in $\mathrm{D}^{-}(A)$.
\vspace{0.2cm}
\item[(iii)] There exists a bounded admissible complex $V_{\bullet}$ of $B$-modules and a morphism in $\mathrm{D}^{\mathrm{b}}(B)$ from $V_{\bullet}$ to $K_{\bullet}$ such that the induced map 
\[
V_{\bullet} \lltensl{}_B A \longrightarrow K_{\bullet} \lltensl{}_B A \longrightarrow K_{\bullet} \otimes_B A
\] 
in $\mathrm{D}^{-}(A)$ is an isomorphism.
\vspace{0.2cm}
\item[(iv)] There exists a bounded admissible complex $V_{\bullet}$ of $B$-modules and a morphism in $\mathrm{D}^{\mathrm{adm}}(B)$ from $V_{\bullet}$ to $K_{\bullet}$ such that the induced map $\overline{V}_{\bullet} \longrightarrow \overline{K}_{\bullet}$ 
in $\mathrm{D}^{\mathrm{b}}(A)$ is an isomorphism.
\vspace{0.2cm}
\item[(v)] There exists a bounded admissible complex $V_{\bullet}$ of $B$-modules and a sub-complex $T_{\bullet}$ of $I {V}_{\bullet}$ such that $K_{\bullet}$ is isomorphic to $V_{\bullet}/ T_{\bullet} $ in $\mathrm{D}^{\mathrm{adm}}(B)$.
\end{enumerate}
\par \medskip
Under any of these conditions,
\begin{align*}
K_{\bullet} \lltensl{}_B A & \simeq K_{\bullet} \otimes_B A \,\oplus\, \mathrm{Tor}^1_B(K_{\bullet}, A) \lltensl{}_B A \, [1] \\
& \simeq K_{\bullet} \otimes_B A \,\oplus\, \bigoplus_{p \geq 0} I^{\otimes p}  \otimes \mathrm{Tor}^1_B(K_{\bullet}, A) [p+1].
\end{align*}

\end{theorem}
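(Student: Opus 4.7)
The strategy is to establish the cyclic chain $(i) \Rightarrow (v) \Rightarrow (iv) \Rightarrow (iii) \Rightarrow (ii) \Rightarrow (i)$ and then derive the decomposition from any of them.

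First I would break \eqref{ext2} into the two short exact sequences of $A$-modules
\[
0 \to \mathrm{Tor}^1_B(K_\bullet, A) \to I \otimes \overline{K}_\bullet \xrightarrow{\pi} IK_\bullet \to 0, \qquad 0 \to IK_\bullet \to K_\bullet \to \overline{K}_\bullet \to 0,
\]
whose extension classes $\alpha \in \mathrm{Ext}^1_A(IK_\bullet, \mathrm{Tor}^1_B(K_\bullet, A))$ and $\beta \in \mathrm{Ext}^1_A(\overline{K}_\bullet, IK_\bullet)$ Yoneda-compose to $\theta_{K_\bullet}$. For $(i) \Rightarrow (v)$, vanishing of $\alpha \circ \beta$ allows one to lift $\beta$ through $\pi[1]$ to some $\tilde{\beta} \in \mathrm{Hom}_{\mathrm{D}^{\mathrm{b}}(A)}(\overline{K}_\bullet, I \otimes \overline{K}_\bullet [1])$. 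Feeding the quadruplet $(\overline{K}_\bullet, I \otimes \overline{K}_\bullet, \mathrm{id}, \tilde\beta)$ into Proposition \ref{marre} produces a bounded complex $V_\bullet$ of $B$-modules, which is admissible since its multiplication is $\mathrm{id}$. The pair (identity on $M$, $\pi$ on $N$) defines a morphism of quadruplets from this one to $(\overline{K}_\bullet, IK_\bullet, \mu_{K_\bullet}, \beta)$ by the very construction of $\tilde\beta$, so functoriality in Proposition \ref{marre} yields a morphism $V_\bullet \to K_\bullet$ in $\mathrm{D}^{\mathrm{adm}}(B)$; Lemma \ref{reidemaster} identifies the associated subcomplex $T_\bullet \subseteq I \otimes \overline{V}_\bullet$ with $\mathrm{Tor}^1_B(K_\bullet, A)$.

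The implications $(v) \Rightarrow (iv) \Rightarrow (iii) \Rightarrow (ii)$ are then routine: Proposition \ref{cns} gives $V_\bullet \lltensl{}_B A \simeq \overline{V}_\bullet$ whenever $V_\bullet$ is admissible, so the isomorphism $\overline{V}_\bullet \to \overline{K}_\bullet$ of (iv) is exactly the one required in (iii), and composing with its inverse yields the section of (ii). For $(ii) \Rightarrow (i)$, I would identify $\theta_{K_\bullet}$ with the connecting morphism of the truncation triangle
\[
\mathrm{Tor}^1_B(K_\bullet, A)[1] \to \tau^{\geq -1}(K_\bullet \lltensl{}_B A) \to \overline{K}_\bullet \xrightarrow{+1}
\]
by writing out the hypertor spectral sequence at chain level, using Proposition \ref{Tor2} to control its $\mathrm{E}^2$ page in low bidegree. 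A right inverse to $K_\bullet \lltensl{}_B A \to \overline{K}_\bullet$ induces one on the truncation, splits the triangle, and forces $\theta_{K_\bullet}$ to vanish.

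For the decomposition statement, start from a morphism $V_\bullet \to K_\bullet$ as in (iii) with $V_\bullet$ bounded admissible, and let $C_\bullet$ denote its cone. Then $\overline{C}_\bullet \simeq 0$ and the long exact sequence of hypertors combined with the admissibility of $V_\bullet$ yields $\mathrm{Tor}^1_B(C_\bullet, A) \simeq \mathrm{Tor}^1_B(K_\bullet, A)$. The section provided by (ii) splits the triangle $V_\bullet \lltensl{}_B A \to K_\bullet \lltensl{}_B A \to C_\bullet \lltensl{}_B A$, yielding $K_\bullet \lltensl{}_B A \simeq \overline{K}_\bullet \oplus (C_\bullet \lltensl{}_B A)$. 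A termwise application of Proposition \ref{Tor2} then identifies $C_\bullet \lltensl{}_B A$ with $\mathrm{Tor}^1_B(K_\bullet, A) \lltensl{}_B A [1]$, which one further use of Proposition \ref{Tor2} expands into $\bigoplus_{p \geq 0} I^{\otimes p} \otimes \mathrm{Tor}^1_B(K_\bullet, A) [p+1]$. I expect the main technical obstacle to be the rigorous identification of $\theta_{K_\bullet}$ with the truncation connecting morphism: this requires a careful spectral-sequence chase at chain level to see that the pertinent $\mathrm{d}_2$ differential is precisely the Yoneda pairing produced by \eqref{ext2}.
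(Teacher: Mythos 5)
Your treatment of the central implication $(\mathrm{i}) \Rightarrow (\mathrm{iv})/(\mathrm{v})$ is essentially the paper's: the same splitting of \eqref{ext2} into two short exact sequences, the same lifting of the class of $0 \to IK_{\bullet} \to K_{\bullet} \to \overline{K}_{\bullet} \to 0$ through $I \otimes \overline{K}_{\bullet} \twoheadrightarrow IK_{\bullet}$, and the same appeal to Proposition \ref{marre} via a morphism of quadruplets. Two small omissions there: Proposition \ref{marre} produces an object of $\mathrm{C}^{-}(B)$, so you still need Proposition \ref{tonk} to truncate it into a \emph{bounded} admissible complex; and the passage to $(\mathrm{v})$ is done in the paper by an explicit kernel construction rather than Lemma \ref{reidemaster}, though your variant is workable.

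The genuine gap is in $(\mathrm{ii}) \Rightarrow (\mathrm{i})$. You propose to identify $\theta_{K_{\bullet}}$ with the connecting morphism of the truncation triangle by a hypertor spectral-sequence computation, but a spectral sequence only controls the maps \emph{induced on homology} by $\theta_{K_{\bullet}}$ (its $\mathrm{d}_2$ differentials), not $\theta_{K_{\bullet}}$ itself as a morphism in $\mathrm{D}^{-}(A)$; since what must be deduced is the vanishing of $\theta_{K_{\bullet}}$ as a derived-category morphism, controlling its action on homology is not enough. The paper sidesteps this entirely: it takes the two-term resolution $S_{\bullet} \to \sigma^* K_{\bullet}$ of $K_{\bullet}$ (with $S_{\bullet}$ the kernel of the natural map $\sigma^* K_{\bullet} \to K_{\bullet}$), observes that a right inverse as in $(\mathrm{ii})$ splits the complex $S_{\bullet} \otimes_B A \to K_{\bullet}$ over $\overline{K}_{\bullet}$, and identifies that complex with $I \otimes \overline{K}_{\bullet} \to K_{\bullet}$ via the quasi-isomorphism from the proof of Proposition \ref{compris2}; the splitting of the latter over $\overline{K}_{\bullet}$ is precisely the vanishing of the class of the four-term sequence \eqref{ext2}. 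A similar issue affects your derivation of the final decomposition: the step ``$C_{\bullet} \lltensl{}_B A \simeq \mathrm{Tor}^1_B(K_{\bullet}, A) \lltensl{}_B A \,[1]$ by termwise Proposition \ref{Tor2}'' does not follow, because the termwise Tor modules of a complex do not determine its hypertor. The correct (and available) route is the one the paper takes: use $(\mathrm{v})$ to replace $K_{\bullet}$ by $V_{\bullet}/T_{\bullet}$, so that the short exact sequence $0 \to T_{\bullet} \to V_{\bullet} \to K_{\bullet} \to 0$ gives a split triangle whose third term is $T_{\bullet} \lltensl{}_B A\,[1]$, where $T_{\bullet} \simeq \mathrm{Tor}^1_B(K_{\bullet}, A)$ carries the trivial $B$-action; the Koszul-type resolution of such a module then yields the expansion $\bigoplus_{p \geq 0} I^{\otimes p} \otimes \mathrm{Tor}^1_B(K_{\bullet}, A)\,[p+1]$.
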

\vspace{0.1cm}
\begin{proof}
$(\mathrm{i}) \longrightarrow (\mathrm{iv})$ Let us consider the two exact sequences
\[
\begin{cases}
0 \longrightarrow I K_{\bullet} \longrightarrow K_{\bullet} \longrightarrow \overline{K}_{\bullet} \longrightarrow 0\\
0 \longrightarrow \mathrm{Tor}^1_B(K_{\bullet}, A) \longrightarrow I \otimes \overline{K}_{\bullet} \xlongrightarrow{\mu_{K_{\bullet}}} I K_{\bullet} \longrightarrow 0
\end{cases}
\]
They yield two morphisms 
\[
\alpha \colon \overline{K}_{\bullet} \longrightarrow I K_{\bullet}[1] \quad \textrm{and} \quad \beta \colon  I K_{\bullet} \longrightarrow \mathrm{Tor}^1_B(K_{\bullet}, A)[1]
\] 
and $\beta \circ \alpha$ is exactly the local HKR class ${\theta}_{K_{\bullet}}$. By considering the exact sequence
\[
\xymatrix{
\mathrm{Hom}_{\mathrm{D}^{\mathrm{b}}(A)}(\overline{K}_{\bullet}, I \otimes \overline{K}_{\bullet} [1]) \ar[r] & \mathrm{Hom}_{\mathrm{D}^{\mathrm{b}}(A)}(\overline{K}_{\bullet}, I {K}_{\bullet} [1]) \ar[d]^-{\beta \,\circ\, (\star)} \\
& \mathrm{Hom}_{\mathrm{D}^{\mathrm{b}}(A)}(\overline{K}_{\bullet}, \mathrm{Tor}^1_B(K_{\bullet}, A) [2]) 
}
\]
we see that the map $\alpha$ can be lifted to a morphism 
\begin{equation} \label{sel}
\widetilde{\alpha} \colon \overline{K}_{\bullet} \longrightarrow I \otimes \overline{K}_{\bullet} [1]. 
\end{equation}
Using the notation of Proposition \ref{marre}, there is a morphism of quadruplets
\[
(\overline{K}_{\bullet}, I \otimes \overline{K}_{\bullet}, \mathrm{id}, \widetilde{\alpha}) \longrightarrow (\overline{K}_{\bullet}, I K_{\bullet}, \mu_{K_{\bullet}}, {\alpha}).
\]
Thanks to Proposition \ref{marre}, we get an admissible complex $V_{\bullet}$ and a morphism 
from $V_{\bullet}$ to $K_{\bullet}$
in $\mathrm{D}^{\mathrm{adm}}(B)$ such that the induced map from
$\overline{V}_{\bullet}$ to $\overline{K}_{\bullet}$
is an isomorphism in $\mathrm{D}^{\mathrm{b}}(A)$. Thanks to Proposition \ref{tonk}, we can replace $V_{\bullet}$ by a truncation of sufficiently high order, so that it becomes admissible and bounded.  
\par \medskip
$(\mathrm{iv}) \longrightarrow (\mathrm{v})$ Thanks to Remark \ref{drone}, we can assume up to changing the isomorphism class of $V_{\bullet}$ in $\mathrm{D}^{\mathrm{adm}}(B)$ that $V_{\bullet} \longrightarrow K_{\bullet}$ is a true morphism of complexes. Besides, by adding the null-homotopic complex $\mathrm{cone} \, (\mathrm{id}_{K_{\bullet}})[-1]$ which is admissible, we can assume that the morphism $V_{\bullet} \longrightarrow K_{\bullet}$ is degreewise surjective, and therefore, so are the morphisms $\overline{V}_{\bullet} \longrightarrow \overline{K}_{\bullet}$ and $IV_{\bullet} \longrightarrow IK_{\bullet}$. Let $T_{\bullet}$ be the kernel of the map $IV_{\bullet} \longrightarrow IK_{\bullet}$. We have a diagram
\[
\xymatrix{
0 \ar[r] & IV_{\bullet} / T_{\bullet} \ar[r] \ar[d]^-{\sim} & V_{\bullet} /T_{\bullet} \ar[r] \ar[d] & \overline{V}_{\bullet} \ar[r] \ar[d] & 0 \\
0 \ar[r] & IK_{\bullet} \ar[r] & K_{\bullet}  \ar[r] & \overline{K}_{\bullet} \ar[r]  & 0
}
\]
where first vertical arrow is an isomorphism, and the two remaining ones are componentwise surjective. Hence the kernels of these two arrows are isomorphic, so we get that the middle one is a quasi-isomorphism.
Since $ \overline{V}_{\bullet}\simeq \overline{V_{\bullet}/T_{\bullet}}$, the induced map
\[
\overline{V_{\bullet}/T_{\bullet}} \longrightarrow \overline{K}_{\bullet}
\]
is also a quasi-isomorphism. Hence $K_{\bullet}$ is isomorphic to the complex $V_{\bullet}/T_{\bullet}$ in the admissible derived category $\mathrm{D}^{\mathrm{adm}}(B)$ (\textit{see} Remark \ref{gnagna}). 
\par \medskip
$(\mathrm{v}) \longrightarrow (\mathrm{iv}) \longrightarrow (\mathrm{iii})  \longrightarrow (\mathrm{ii})$ Obvious.
\par \medskip
$(\mathrm{ii}) \longrightarrow (\mathrm{i})$ There is a natural $B$-linear morphism from $\sigma^* K_{\bullet}$ (where $K_{\bullet}$ is considered as a $A$-module) to $K_{\bullet}$. Let $S_{\bullet}$ denote its kernel, and let $L_{\bullet}$ be the resolution of $K_{\bullet}$ given by $L_{\bullet}=\mathrm{cone}\, (S_{\bullet} \longrightarrow \sigma^* K_{\bullet})$. As for any resolution, the corresponding map $L_{\bullet} \otimes_B A \longrightarrow K_{\bullet} \otimes_B A $ admits a right inverse given by the composition
\[
K_{\bullet} \otimes_B A \longrightarrow K_{\bullet} \lltensl{}_B A \simeq  L_{\bullet} \lltensl{}_B A
\longrightarrow L \otimes_B A.
\]
From this we see that the complex $\mathrm{cone}\,(S_{\bullet} \otimes_B A \longrightarrow K_{\bullet})$, where $K_{\bullet}$ is again considered as an $A$-module, is 
isomorphic to $\overline{K}_{\bullet} \oplus \mathrm{Tor}^1_B(K_{\bullet}, A) [1]$ in $\mathrm{D}^{-}(A)$. Now the natural morphism
\[
\xymatrix{
I \otimes \overline{K}_{\bullet}\ar[r] \ar[d] &K_{\bullet} \ar@{=}[d] \\
S_{\bullet} \otimes_B A \ar[r] &K_{\bullet}
}
\]
is a quasi-isomorphism (\textit{see} the proof of Proposition \ref{compris2}) after taking the cones of each line. Hence $\theta_{K_{\bullet}}$ vanishes.
\par \medskip
Let us now prove the last statement of the theorem. We can replace $K_{\bullet}$ by a complex of the form $P_{\bullet}/T_{\bullet}$ where $P_{\bullet}$ is a bounded complex of projective $B$-modules, and $T_{\bullet}$ is a sub-complex of the complex $I{P}_{\bullet}$. Then $\mathrm{Tor}^1_B(K_{\bullet}, A)$ is isomorphic to $T_{\bullet}$. Now we have a distinguished triangle
\[
T_{\bullet}\,  \lltensl{}_B A \longrightarrow P_{\bullet} \lltensl{}_B A \longrightarrow K_{\bullet} \, \lltensl{}_B A \xlongrightarrow{+1}
\]
The second map is a splitting of the map $K_{\bullet} \lltensl{}_B A \longrightarrow K_{\bullet} \otimes_B A$, so that $Q_{K_{\bullet}}$ is isomorphic to $T_{\bullet} \lltensl{}_B A \,[1]$. This gives the result.
\end{proof}
\subsubsection{Intrinsic construction of the local HKR class} \label{esthete}
Definition \ref{extloc} has an important drawback: it strongly depends on the map $\sigma \colon A \longrightarrow B$, that is on a splitting of $B$.
We can provide an alternative definition of the local HKR class to fix this. 
\par \medskip
For any complex $K_{\bullet}$ of $B$-modules, there is an exact sequence
\begin{equation} \label{boulon8}
0 \longrightarrow \mathrm{Tor}^1_B(K_{\bullet}, A)  \longrightarrow E \otimes \overline{K}_{\bullet} \longrightarrow \mathrm{P}^1_B(K_{\bullet}) \otimes_B A \longrightarrow \overline{K}_{\bullet} \longrightarrow 0.	
\end{equation}
given by \eqref{boulon}. Then we have: 

\begin{proposition} \label{campingaz}
For any bounded complex $K_{\bullet}$ of $B$-modules, the morphism from $\overline{K}_{\bullet}$ to $\mathrm{Tor}^1_B(K_{\bullet}, A)[2]$ given by \eqref{boulon8} is the local HKR class $\theta_{K_{\bullet}}$.
\end{proposition}

\begin{proof}
We have a commutative diagram of complexes A-modules (\textit{see}  the proof of Theorem \ref{wazomba}):
\[
\xymatrix{
&& 0  \ar[d] &0 \ar[d] & &  \\
&&  \Omega^1_A \otimes \overline{{K}}_{\bullet} \ar[r] \ar[d] & \Omega^1_A \otimes \overline{{K}}_{\bullet}  \ar[d] & &  \\
0 \ar[r] &\mathrm{Tor}^1_B({K}_{\bullet}, A) \ar[r] \ar@{=}[d]&{E} \otimes \overline{{K}}_{\bullet} \ar[d]\ar[r] & j^* \mathrm{P}^1_B({K}_{\bullet}) \ar[r] \ar[d] & \overline{{K}}_{\bullet} \ar[r] \ar@{=}[d]& 0 \\
0 \ar[r] & \mathrm{Tor}^1_B({K}_{\bullet}, A) \ar[r] &I \otimes  \overline{{K}}_{\bullet} \ar[r] \ar[d]& \sigma_* {K}_{\bullet} \ar[r] \ar[d] & \overline{{K}}_{\bullet} \ar[r] &0 \\
&& 0   &0  & &  \\
}
\]
where all lines and columns are exact. This proves the result.
\end{proof}

Lastly, we provide a result that will turn out to be crucial in the geometric case.

\begin{theorem} \label{shihiro}
Let $K_{\bullet}$ be a complex of $B$-modules, and consider the map 
\[
\Lambda \colon \overline{K}_{\bullet} \longrightarrow E \otimes \overline{K}_{\bullet} / \mathrm{Tor}^1_B(K_{\bullet}, A) [1]
\]
in $\mathrm{D}^-(B)$ obtained by taking the difference of the extension morphism attached to the exact sequence
\[
0 \longrightarrow E \otimes \overline{K}_{\bullet} /\mathrm{Tor}^1_B(K_{\bullet}, A)  \longrightarrow \mathrm{P}^1_B(K_{\bullet}) \otimes_B A \longrightarrow \overline{K}_{\bullet} \longrightarrow 0
\]
and the Atiyah morphism $\mathrm{at}_S(\overline{K}_{\bullet})$. Then $\Lambda$ is equal to the composition
\[
\overline{K}_{\bullet} \xlongrightarrow{\chi_{K_{\bullet}}} IK_{\bullet}[1] \simeq  I \otimes \overline{K}_{\bullet} /\mathrm{Tor}^1_B(K_{\bullet}, A) [1] \hookrightarrow  E \otimes \overline{K}_{\bullet} /\mathrm{Tor}^1_B(K_{\bullet}, A) [1]
\]
where $\chi$ denotes the residual Atiyah morphism (\textit{see} Definition \ref{residual}). 
\end{theorem}

\begin{proof}
The $A$-module $\mathrm{P}^1_B(K_{\bullet}) \otimes_B A$ is, as a $\mathbf{k}$-vector space the quotient 
\[
\frac{ E \otimes \overline{K}_{\bullet} /\mathrm{Tor}^1_B(K_{\bullet}, A) \oplus K_{\bullet}}{ \left\{ (i \otimes \overline k, ik), i \in I, k \in K_{\bullet} \right\}} \cdot
\]
The $A$-module structure is given by $a (e \otimes \overline{k}, k')=(ae \otimes \overline{k}+\overline{db}\otimes \overline{k}', bk')$
where $b$ is any element in $B$ such that $\overline{b}=a$.
Now the Baer difference of the two exact sequences
\[
\begin{cases}
0 \longrightarrow E \otimes \overline{K}_{\bullet} /\mathrm{Tor}^1_B(K_{\bullet}, A)  \longrightarrow \mathrm{P}^1_B(K_{\bullet}) \otimes_B A \longrightarrow \overline{K}_{\bullet} \longrightarrow 0 \\
0 \longrightarrow E \otimes \overline{K}_{\bullet} /\mathrm{Tor}^1_B(K_{\bullet}, A)  \longrightarrow \mathrm{P}^1_B(\overline{K}_{\bullet}) / \mathrm{Tor}^1_B(K_{\bullet}, A)  \longrightarrow \overline{K}_{\bullet} \longrightarrow 0 \\
\end{cases}
\]
is the extension whose middle term is the complex of $B$-modules
\[
M_{\bullet} = \frac{ E \otimes \overline{K}_{\bullet} /\mathrm{Tor}^1_B(K_{\bullet}, A) \oplus E \otimes \overline{K}_{\bullet} /\mathrm{Tor}^1_B(K_{\bullet}, A) \oplus K_{\bullet} }{ \left\{ (i \otimes \overline k, 0, ik)+(e \otimes \overline{k'}, e \otimes \overline{k'}, 0), i \in I, k, k' \in K_{\bullet} \right\}} \cdot
\]
We claim that the morphism from $K_{\bullet}$ to $M_{\bullet}$ given by $k \longrightarrow (0, 0, k)$ is $B$-linear. Indeed, we have
\[
b.(0, 0, k) = (\overline{db} \otimes k, \overline{db} \otimes k, bk)= (0, 0, bk).
\]
This gives a morphism of exact sequences
\[
\xymatrix{
0 \ar[r] & IK_{\bullet} \ar[r] \ar[d] & K_{\bullet} \ar[r] \ar[d] & \overline{K}_{\bullet} \ar[r] \ar@{=}[d] & 0 \\
0 \ar[r] &E \otimes \overline{K}_{\bullet} /\mathrm{Tor}^1_B(K_{\bullet}, A) \ar[r] & M_{\bullet} \ar[r] & \overline{K}_{\bullet} \ar[r] & 0 
}
\]
from which the result follows.
\end{proof}

\section{Deformation theory} \label{ispahan}

\subsection{Infinitesimal thickenings}  \label{setting}
Let $\mathbf{k}$ be a field of characteristic zero and let $(X, \mathcal{O}_X)$ be a $\mathbf{k}$-ringed space that is either a smooth $\mathbf{k}$-scheme or a smooth complex manifold (in this case $\mathbf{k}=\mathbb{C}$). We introduce the following standard notation:
\vspace{0.1cm}
\begin{enumerate}
\item[--] $\mathrm{C}^{\mathrm{b}}(X)$ (resp. $\mathrm{C}^{-}(X)$) is the category of bounded (resp. bounded from above) complexes of sheaves
of $\mathcal{O}_X$-modules.
\vspace{0.2cm}
\item[--] $\mathrm{K}^{\mathrm{b}}(X)$ and $\mathrm{K}^{-}(X)$ are the homotopy categories of $\mathrm{C}^{\mathrm{b}}(X)$ and $\mathrm{C}^{-}(X)$ respectively.
\vspace{0.2cm}
\item[--] $\mathrm{D}^{\mathrm{b}}(X)$ and $\mathrm{D}^{-}(X)$ are the derived categories of $\mathrm{C}^{\mathrm{b}}(X)$ and $\mathrm{C}^{-}(X)$ respectively.
\end{enumerate}
\par \medskip
Let $\mathbf{k}_X$ be the sheaf of locally constant $\mathbf{k}$-valued functions on $X$, and let $\mathcal{I}$ be a locally free sheaf of finite rank on $X$. 
\begin{definition}
An infinitesimal thickening of $X$ by $\mathcal{I}$ is a sheaf of $\mathbf{k}_X$-algebras $\mathcal{O}_S$ on $X$ fitting into an exact sequence of sheaves of $\mathbf{k}_X$-algebras
\begin{equation} \label{atiyah}
0 \longrightarrow \mathcal{I} \longrightarrow \mathcal{O}_S \longrightarrow \mathcal{O}_X \longrightarrow 0
\end{equation}
where $\mathcal{I}$ satisfies $\mathcal{I}^2=0$, that is locally split in the category of sheaves of $\mathbf{k}_X$-algebras. 
\end{definition}
The local splitting condition means that $\mathcal{O}_{S}$ is locally isomorphic to the trivial $\mathbf{k}_X$-extension of $\mathcal{O}_X$ by $\mathcal{I}$, which is the sheaf
$\mathcal{I} \oplus\mathcal{O}_{X}$ endowed with the ring structure
\[
(i, f). (i', f')=(if'+i'f, ff').
\]
\par \medskip
In geometrical terms, if we consider $S=(X, \mathcal{O}_S)$ as a ringed space, then there is a natural closed immersion $j \colon X \longrightarrow S$ that admits locally a right inverse. If we work in the algebraic category, the map $X \longrightarrow S$ is locally of the form $\mathrm{Spec}\, A \longrightarrow \mathrm{Spec} \, B$ where $B$ is the trivial $\mathbf{k}$-extension of $A$ by the free $A$-module $\Gamma(\mathrm{Spec}\, A, \mathcal{I})$.
\par \medskip
Let us introduce again some notation: for any sheaf of $\mathcal{O}_S$-modules $\mathcal{F}$ on $X$, we put
\vspace{0.1cm}
\begin{enumerate}
\item[--] $\overline{\mathcal{F}}=j^* \mathcal{F}$,
\vspace{0.2cm}
\item[--] $\mathrm{Tor}^i_S(\mathcal{F}, \mathcal{O}_X)=\mathrm{Tor}^i_{\mathcal{O}_S}(\mathcal{F}, \mathcal{O}_X)$.
\end{enumerate}
\par \medskip
If $(X, \mathcal{I})$ is given, the isomorphism classes of infinitesimal thickenings of $X$ by $\mathcal{I}$ are classified by the cohomology group $\mathrm{H}^1(X, \mathcal{D}er({\mathcal{O}_X}, \mathcal{I})),
$
and 
\[
\mathrm{H}^1(X, \mathcal{D}er({\mathcal{O}_X}, \mathcal{I})) \simeq \mathrm{H}^1(X, \mathcal{H}om (\Omega^1_X,  \mathcal{I})) \simeq \mathrm{Ext}^1_{\mathcal{O}_X}(\Omega^1_X,  \mathcal{I}).
\]
We can see this latter space as the space of morphisms in $\mathrm{D}^{\mathrm{b}}(X)$ from $\Omega_X^1$ to $\mathcal{I}[1]$. Hence every such ringed space $S$ is given (up to isomorphism) by a morphism
\begin{equation} \label{eta}
\eta \colon \Omega^1_X \longrightarrow \mathcal{I}[1]
\end{equation}
in the derived category $\mathrm{D}^{\mathrm{b}}(X)$ of coherent sheaves on $X$\footnote{The extension class corresponding to $\eta$ is called the Kodaira-Spencer class in \cite{Huybrechts-Thomas}.}. In the sequel, we will therefore consider an infinitesimal thickening of $X$ as a triplet $(X, \mathcal{I}, \eta)$ where $\eta$ is a morphism in $\mathrm{D}^{\mathrm{b}}(X)$ from $\Omega^1_X$ to $\mathcal{I}[1]$.
Let $\mathcal{E}=j^* {\Omega^1_S}$. We can write down the conormal exact sequence of $j$, which is
\begin{equation} \label{conormal}
0 \longrightarrow \mathcal{I} \longrightarrow \mathcal{E} \longrightarrow \Omega^1_X \longrightarrow 0, 
\end{equation}
and its extension class is precisely $\eta$. This shows how to extract $\eta$ intrinsically from the pair $(X, S)$.
\par \medskip
A particular case of this construction is the following one: fix a closed embedding $i \colon X \longrightarrow Y$ of complex manifolds, and define $S$ as the first formal neighbourhood of $X$ in $Y$. Then $\eta$ is the extension class of the conormal exact sequence
\[
0 \longrightarrow \mathrm{N}^*_{X/Y} \longrightarrow \Omega^1_{Y | X} \longrightarrow \Omega^1_X \longrightarrow 0.
\]
Many notions that have been introduced in \S \ref{carrenul} for the local case admit a straightforward adaptation in the geometric setting, and some need to be refined. Let us be more specific:
\begin{enumerate}
\item[--] All the results in \S \ref{nakon} remain unchanged, the most important ones being Proposition \ref{compris2} and Theorem \ref{wazomba}. \vspace{0.2cm}
\item[--] The theory of admissible complexes developed in \S \ref{lauvitel} remains unchanged. Concerning \S \ref{ex}, the derived category $\mathrm{D}^{\mathrm{adm}}(S)$ is well-defined.
However, Proposition \ref{marre} only holds when the thickening $S$ is trivial (that is when $j$ admits a global retraction). Lastly, the characterization of perfect complexes (Proposition \ref{parfait}) is still valid since it is a local property on $X$.
\vspace{0.2cm}
\item[--] The material of \S \ref{tnt} can not be directly adapted unless $S$ is globally trivial. We will explain in the remaining part of the section how to define an analogue of the local HKR class is the geometric setting, in order that Theorem \ref{localobs} be valid.
\vspace{0.2cm}
\item[--] In \S \ref{esthete}, Theorem \ref{shihiro} remains valid.
\end{enumerate}
\par \medskip
For any complex $\mathcal{K}_{\bullet}$ of $\mathcal{O}_S$-modules, we have an exact sequence
\begin{equation} \label{pout}
0 \longrightarrow \mathrm{Tor}^1_S(\mathcal{K}_{\bullet}, \mathcal{O}_X) \longrightarrow \mathcal{E} \otimes \overline{\mathcal{K}}_{\bullet} \longrightarrow j^* \mathrm{P}^1_S(\mathcal{K}_{\bullet}) \longrightarrow \overline{\mathcal{K}}_{\bullet} \longrightarrow 0
\end{equation}
that corresponds to the sheaf version of \eqref{boulon8}. 
\par \medskip
\begin{definition} \label{farid} For any complex $\mathcal{K}_{\bullet}$ in $\mathrm{C}^{-}(S)$, the geometric HKR class of $\mathcal{K}_{\bullet}$ is the morphism
\[
\Theta_{\mathcal{K}_{\bullet}} \colon \overline{\mathcal{K}}_{\bullet} \longrightarrow \mathrm{Tor}^1_S(\mathcal{K}_{\bullet}, \mathcal{O}_X)[2]
\]
given by \eqref{pout}.
\end{definition}
The analogue of Proposition \eqref{campingaz} holds under the assumption that $S$ is locally trivial: 
\begin{proposition}
If $S$ is globally trivial (that is if the embedding $j$ admits a retraction $\sigma \colon S \longrightarrow X$) the global HKR class $\Theta_{\mathcal{K}_{\bullet}}$ is the extension class associated with the exact complex of $\mathcal{O}_X$-modules 
\[
0 \longrightarrow \mathrm{Tor}^1_S(\mathcal{K}_{\bullet}, \mathcal{O}_X) \longrightarrow  \mathcal{I} \otimes \overline{\mathcal{K}}_{\bullet} \longrightarrow \sigma_* \mathcal{K}_{\bullet} \longrightarrow \overline{\mathcal{K}}_{\bullet} \longrightarrow 0
\]
corresponding to the multiplication map.
\end{proposition}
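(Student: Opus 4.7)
The plan is to globalize the local calculations of Section \ref{nakon} using the global retraction $\sigma$, producing a morphism of $4$-term exact sequences from the one defining $\Theta_{\mathcal{K}_\bullet}$ (which involves $\mathcal{E}$ and $j^*\mathrm{P}^1_S$) to the one in the statement (which involves $\mathcal{I}$ and $\sigma_* \mathcal{K}_\bullet$), inducing the identity on both outer terms. By the Yoneda interpretation of $\mathrm{Ext}^2_{\mathcal{O}_X}(\overline{\mathcal{K}}_\bullet, \mathrm{Tor}^1_S(\mathcal{K}_\bullet, \mathcal{O}_X))$, this will imply the two classes coincide.

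First I would exploit $\sigma$ to produce the vertical data. The retraction yields a splitting of the conormal sequence \eqref{conormal}, hence a canonical projection $\pi_1 \colon \mathcal{E} \twoheadrightarrow \mathcal{I}$. Globalizing the two natural exact sequences for $\mathrm{P}^1_B$ that appear in the proof of Theorem \ref{wazomba}, the retraction also provides a canonical projection $\pi_2 \colon j^* \mathrm{P}^1_S(\mathcal{K}_\bullet) \twoheadrightarrow \sigma_* \mathcal{K}_\bullet$ whose kernel is $\Omega^1_X \otimes \overline{\mathcal{K}}_\bullet$. These projections do not depend on further choices because $\sigma$ is global.

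Next I would assemble the morphism of complexes
\[
\xymatrix@C=18pt{
0 \ar[r] & \mathrm{Tor}^1_S(\mathcal{K}_\bullet, \mathcal{O}_X) \ar[r] \ar@{=}[d] & \mathcal{E} \otimes \overline{\mathcal{K}}_\bullet \ar[r] \ar[d]^-{\pi_1 \otimes \mathrm{id}} & j^* \mathrm{P}^1_S(\mathcal{K}_\bullet) \ar[r] \ar[d]^-{\pi_2} & \overline{\mathcal{K}}_\bullet \ar[r] \ar@{=}[d] & 0 \\
0 \ar[r] & \mathrm{Tor}^1_S(\mathcal{K}_\bullet, \mathcal{O}_X) \ar[r] & \mathcal{I} \otimes \overline{\mathcal{K}}_\bullet \ar[r]^-{\mu_{\mathcal{K}_\bullet}} & \sigma_* \mathcal{K}_\bullet \ar[r] & \overline{\mathcal{K}}_\bullet \ar[r] & 0
}
\]
and check commutativity of its three squares. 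The leftmost square is a consequence of the explicit description in Theorem \ref{wazomba}, which identifies the connexion morphism $\mathrm{Tor}^1_S(\mathcal{K}_\bullet, \mathcal{O}_X) \hookrightarrow \mathcal{E} \otimes \overline{\mathcal{K}}_\bullet$ as the composition $\mathrm{Tor}^1_S(\mathcal{K}_\bullet, \mathcal{O}_X) \hookrightarrow \mathcal{I} \otimes \overline{\mathcal{K}}_\bullet \hookrightarrow \mathcal{E} \otimes \overline{\mathcal{K}}_\bullet$. The rightmost square is clear: both horizontal arrows are the natural projections onto $\overline{\mathcal{K}}_\bullet$ and $\pi_2$ is compatible with them by construction.

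The main work is the middle square, which requires unpacking the $\sigma$-trivialized description of $\mathrm{P}^1_S(\mathcal{K}_\bullet)$ globalized from the formula in the proof of Theorem \ref{wazomba}: after applying $j^*$ and splitting $\mathcal{E}$, the map $\mathcal{E} \otimes \overline{\mathcal{K}}_\bullet \to j^* \mathrm{P}^1_S(\mathcal{K}_\bullet)$ sends $\mathcal{I} \otimes \overline{\mathcal{K}}_\bullet$ inside the subsheaf $\sigma_* \mathcal{K}_\bullet$ precisely via $\mu_{\mathcal{K}_\bullet}$, while mapping $\Omega^1_X \otimes \overline{\mathcal{K}}_\bullet$ into $\mathrm{ker}\, \pi_2$. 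This is exactly the sheaf incarnation of Proposition \ref{snort}, and it forces the middle square to commute. Having the morphism of $4$-term sequences in hand, standard Yoneda calculus yields that $\Theta_{\mathcal{K}_\bullet}$ equals the extension class of the lower sequence, as claimed. The only genuine subtlety is the verification of the middle square; everything else is a direct globalization of Section \ref{nakon}.
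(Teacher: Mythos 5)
Your proposal is correct and follows essentially the same route as the paper: the paper's proof consists precisely of the commutative diagram of two four-term exact sequences you describe, with identity maps on the outer terms, the quotients $\mathcal{E}\otimes\overline{\mathcal{K}}_\bullet \twoheadrightarrow \mathcal{I}\otimes\overline{\mathcal{K}}_\bullet$ and $j^*\mathrm{P}^1_S(\mathcal{K}_\bullet)\twoheadrightarrow \sigma_*\mathcal{K}_\bullet$ in the middle (both with kernel $\Omega^1_X\otimes\overline{\mathcal{K}}_\bullet$), and the conclusion by comparison of Yoneda classes. Your verification of the middle square via the $\sigma$-trivialized description of $\mathrm{P}^1_S$ (Theorem \ref{wazomba} and Proposition \ref{snort}) is exactly the content the paper leaves implicit in asserting that all rows and columns of its diagram are exact.
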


\begin{proof}
The proof is identical to the one of Proposition \ref{campingaz}.
\end{proof}

\begin{proposition} \label{link}
For anycomplex ${\mathcal{K}_{\bullet}}$ in $\mathrm{C}^-(X)$, the morphism $\Theta_{{\mathcal{K}_{\bullet}}}$ is the composition
\[
{\mathcal{K}_{\bullet}} \xlongrightarrow{\mathrm{at}_X(\mathcal{K}_{\bullet})} \Omega^1_X \otimes {\mathcal{K}_{\bullet}} [1] \xlongrightarrow{\mathrm{id}\, \otimes \, \eta}  \mathcal{I} \otimes {\mathcal{K}_{\bullet}} [2]
\]
where $\mathrm{at}_X({\mathcal{K}_{\bullet}})$ denotes the Atiyah class of ${\mathcal{K}_{\bullet}}$.
\end{proposition}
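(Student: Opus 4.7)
The plan is to decompose the 4-term exact sequence defining $\Theta_{\mathcal{K}_{\bullet}}$ as a Yoneda splicing of two short exact sequences whose extension classes are precisely $\eta \otimes \mathrm{id}$ and the Atiyah class $\mathrm{at}_{\mathcal{K}_{\bullet}}$. First, the statement only makes sense once $\mathcal{K}_{\bullet}$ has been promoted to a complex of $\mathcal{O}_S$-modules via $j_*$; in that situation the multiplication map $\mu_{j_*\mathcal{K}_{\bullet}}$ is identically zero, so Corollary \ref{dur} yields $\mathrm{Tor}^1_S(j_*\mathcal{K}_{\bullet}, \mathcal{O}_X) \simeq \mathcal{I} \otimes \mathcal{K}_{\bullet}$. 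The defining 4-term sequence \eqref{pout} then reads
\[
0 \longrightarrow \mathcal{I} \otimes \mathcal{K}_{\bullet} \longrightarrow \mathcal{E} \otimes \mathcal{K}_{\bullet} \longrightarrow j^* \mathrm{P}^1_S(j_*\mathcal{K}_{\bullet}) \longrightarrow \mathcal{K}_{\bullet} \longrightarrow 0.
\]

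Next I would factor this sequence through $\Omega^1_X \otimes \mathcal{K}_{\bullet}$, which is the image of the middle arrow. The point is that the quotient of $j^* \mathrm{P}^1_S(j_*\mathcal{K}_{\bullet})$ by the subsheaf $\mathcal{I} \otimes \mathcal{K}_{\bullet}$ is canonically isomorphic to $\mathrm{P}^1_X(\mathcal{K}_{\bullet})$: this identification is precisely the content of the diagram appearing at the very end of the proof of Theorem \ref{wazomba}, valid locally after choosing a splitting $\sigma$ of \eqref{atiyah}, and promoting it to a global statement reduces to checking that the isomorphism does not depend on $\sigma$. This is a short verification using that two splittings differ by an $\mathcal{I}$-valued derivation, which dies in the quotient.

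With this identification, the 4-term sequence factors as the splicing of
\[
0 \longrightarrow \mathcal{I} \otimes \mathcal{K}_{\bullet} \longrightarrow \mathcal{E} \otimes \mathcal{K}_{\bullet} \longrightarrow \Omega^1_X \otimes \mathcal{K}_{\bullet} \longrightarrow 0
\]
and
\[
0 \longrightarrow \Omega^1_X \otimes \mathcal{K}_{\bullet} \longrightarrow \mathrm{P}^1_X(\mathcal{K}_{\bullet}) \longrightarrow \mathcal{K}_{\bullet} \longrightarrow 0.
\]
The first is the conormal sequence \eqref{conormal} tensored with $\mathcal{K}_{\bullet}$, whose class is $\eta \otimes \mathrm{id}_{\mathcal{K}_{\bullet}}$ by the definition of $\eta$; the second is the principal parts sequence on $X$, whose class is $\mathrm{at}_{\mathcal{K}_{\bullet}}$. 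Since the extension class of a Yoneda-spliced 4-term sequence is the Yoneda product of the two short extension classes, and the Yoneda product corresponds to composition of morphisms in $\mathrm{D}^-(X)$, the class of the 4-term sequence is $(\eta \otimes \mathrm{id}) \circ \mathrm{at}_{\mathcal{K}_{\bullet}}$, which is the asserted formula.

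The one step requiring genuine care is verifying that the identification $j^* \mathrm{P}^1_S(j_*\mathcal{K}_{\bullet})/(\mathcal{I} \otimes \mathcal{K}_{\bullet}) \simeq \mathrm{P}^1_X(\mathcal{K}_{\bullet})$ globalizes canonically, so that the decomposition of the 4-term sequence is well-defined independently of any local trivialization of $S$. Everything else is formal manipulation with extensions and the definitions of the Atiyah and Kodaira-Spencer classes.
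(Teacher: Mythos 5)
Your argument is correct and is essentially the paper's proof: both identify the four-term sequence \eqref{pout} applied to $j_*\mathcal{K}_{\bullet}$ (for which $\mu_{j_*\mathcal{K}_{\bullet}}=0$, so $\mathrm{Tor}^1_S(j_*\mathcal{K}_{\bullet},\mathcal{O}_X)\simeq\mathcal{I}\otimes\mathcal{K}_{\bullet}$) with the Yoneda splice of the conormal sequence \eqref{conormal} tensored with $\mathcal{K}_{\bullet}$ and the principal-parts sequence of $\mathcal{K}_{\bullet}$ over $X$ — the paper merely asserts this identification, while you spell it out. The one imprecision is in your phrasing of the key step: $\mathcal{I}\otimes\mathcal{K}_{\bullet}$ is not a subsheaf of $j^*\mathrm{P}^1_S(j_*\mathcal{K}_{\bullet})$ but rather the kernel of $\mathcal{E}\otimes\mathcal{K}_{\bullet}\to j^*\mathrm{P}^1_S(j_*\mathcal{K}_{\bullet})$; the correct statement is that $j^*\mathrm{P}^1_S(j_*\mathcal{K}_{\bullet})=\mathrm{P}^1_S(j_*\mathcal{K}_{\bullet})/(\mathcal{I}\otimes\mathcal{K}_{\bullet})$ is \emph{already} canonically $\mathrm{P}^1_X(\mathcal{K}_{\bullet})$ (via the $\sigma$-independent functoriality surjection $\mathrm{P}^1_S(j_*\mathcal{K}_{\bullet})\to\mathrm{P}^1_X(\mathcal{K}_{\bullet})$ induced by $\Omega^1_S\to\Omega^1_X$), so no further quotient is taken and the splicing proceeds exactly as you describe.
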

\begin{proof}
The morphisms $\mathrm{at}_X(\mathcal{K}_{\bullet})$ and $\mathrm{id} \otimes  \eta$ correspond to the extension classes of the short exact sequences
\[
\begin{cases}
0 \longrightarrow  \Omega^1_X \, \otimes \, {\mathcal{K}_{\bullet}}  \longrightarrow \mathrm{P}^1_X({\mathcal{K}_{\bullet}}) \longrightarrow {\mathcal{K}_{\bullet}} \longrightarrow 0 \\
0 \longrightarrow \mathcal{I} \, \otimes \, {\mathcal{K}_{\bullet}} \longrightarrow \mathcal{E} \, \otimes \,  {\mathcal{K}_{\bullet}}  \longrightarrow  \Omega^1_X \, \otimes \, {\mathcal{K}_{\bullet}} \longrightarrow 0.
\end{cases}
\]
Their Yoneda product is the exact sequence
\[
0 \longrightarrow\mathcal{I}   \, \otimes \,   {\mathcal{K}_{\bullet}} \longrightarrow \mathcal{E}   \, \otimes \, {\mathcal{K}_{\bullet}} \longrightarrow \mathrm{P}^1_X({\mathcal{K}_{\bullet}}) \longrightarrow {\mathcal{K}_{\bullet}} \longrightarrow 0
\]
and the corresponding morphism from ${\mathcal{K}_{\bullet}}$ to $\mathcal{I}  \otimes    {\mathcal{K}_{\bullet}}\,  [2]$  in the derived category is $\Theta_{\mathcal{K}_{\bullet}}$.
\end{proof}
\begin{corollary} \label{mononoke}
For any element $\mathcal{K}_{\bullet}$ and $\mathcal{L}_{\bullet}$ in $\mathrm{C}^{-}(X)$, the morphism
\[
\Theta_{\mathcal{K}_{\bullet} \, \lltens{}_{\mathcal{O}_X} \, \mathcal{L}_{\bullet}} \colon \mathcal{K}_{\bullet} \, \lltens{}_{\mathcal{O}_X} \, \mathcal{L}_{\bullet} \longrightarrow \mathcal{I} \, \lltens{}_{\mathcal{O}_X} \, \mathcal{K}_{\bullet} \, \lltens{}_{\mathcal{O}_X} \, \mathcal{L}_{\bullet}\,[2]
\]
is equal to $\,\Theta_{\mathcal{K}_{\bullet}} \, \lltens{}_{\mathcal{O}_X} \, \mathrm{id}_{\mathcal{L}_{\bullet}}+ \mathrm{id}_{\mathcal{K}_{\bullet}} \, \lltens{}_{\mathcal{O}_X} \,\Theta_{\mathcal{L}_{\bullet}}$.
\end{corollary}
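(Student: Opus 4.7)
The strategy is to reduce the statement to the classical Leibniz rule for Atiyah classes and then invoke Proposition \ref{link} to translate it into the required identity for $\Theta$. First I would recall that for any two complexes $\mathcal{K}_\bullet$ and $\mathcal{L}_\bullet$ in $\mathrm{C}^-(X)$, the Atiyah class satisfies
\[
\mathrm{at}_{\mathcal{K}_\bullet \, \lltens{}_{\mathcal{O}_X} \, \mathcal{L}_\bullet}=\mathrm{at}_{\mathcal{K}_\bullet} \, \lltens{}_{\mathcal{O}_X} \, \mathrm{id}_{\mathcal{L}_\bullet}+ \mathrm{id}_{\mathcal{K}_\bullet} \, \lltens{}_{\mathcal{O}_X} \,\mathrm{at}_{\mathcal{L}_\bullet}
\]
as morphisms $\mathcal{K}_\bullet \lltens{}_{\mathcal{O}_X} \mathcal{L}_\bullet \to \Omega^1_X \otimes (\mathcal{K}_\bullet \lltens{}_{\mathcal{O}_X} \mathcal{L}_\bullet)[1]$ in $\mathrm{D}^-(X)$.

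To prove this Leibniz rule, the key point is to construct a closed surjection of complexes
\[
\mathrm{P}^1_X(\mathcal{K}_\bullet) \otimes \mathcal{L}_\bullet \oplus_{\mathcal{K}_\bullet \otimes \mathcal{L}_\bullet} \mathcal{K}_\bullet \otimes \mathrm{P}^1_X(\mathcal{L}_\bullet) \longrightarrow \mathrm{P}^1_X(\mathcal{K}_\bullet \otimes \mathcal{L}_\bullet)
\]
induced by the formulas $(\omega \otimes k, \ell) \mapsto (\omega \otimes k \otimes \ell, k \otimes \ell)$ and $(k, \omega \otimes \ell) \mapsto (\omega \otimes k \otimes \ell, k \otimes \ell)$, yielding a morphism of short exact sequences whose kernels give the direct sum decomposition on $\Omega^1_X \otimes \mathcal{K}_\bullet \otimes \mathcal{L}_\bullet$. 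Passing to projective resolutions of $\mathcal{K}_\bullet$ and $\mathcal{L}_\bullet$ lets one upgrade this identity from sheaves to complexes, and the resulting two exact sequences have the cocycle on the mapping cone (the sum of the two Atiyah cocycles) expressing the Leibniz rule directly.

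Once this identity is established, the corollary follows in one line. By Proposition \ref{link}, $\Theta_{\mathcal{M}_\bullet}=(\eta \otimes \mathrm{id}_{\mathcal{M}_\bullet}) \circ \mathrm{at}_{\mathcal{M}_\bullet}$ for any complex $\mathcal{M}_\bullet$, so the Leibniz rule gives
\[
\Theta_{\mathcal{K}_\bullet \, \lltens{}_{\mathcal{O}_X} \, \mathcal{L}_\bullet}=(\eta \otimes \mathrm{id}) \circ \bigl(\mathrm{at}_{\mathcal{K}_\bullet} \otimes \mathrm{id}_{\mathcal{L}_\bullet}+\mathrm{id}_{\mathcal{K}_\bullet} \otimes \mathrm{at}_{\mathcal{L}_\bullet}\bigr)= \Theta_{\mathcal{K}_\bullet} \otimes \mathrm{id}_{\mathcal{L}_\bullet}+\mathrm{id}_{\mathcal{K}_\bullet} \otimes \Theta_{\mathcal{L}_\bullet},
\]
where naturality of the tensor product with $\eta$ is used to move it past the Atiyah morphisms on each factor.

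The only genuine obstacle is the Leibniz rule for the Atiyah class itself; everything else is formal. In principle this is a classical result (going back to Atiyah, and reproved in many references including Kapranov and Markarian in the very context recalled in the introduction), so depending on the author's preferences this corollary may be proved either by citing the Leibniz rule or by a brief principal-parts computation along the lines sketched above.
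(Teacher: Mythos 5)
Your proposal is correct and matches the paper's argument: the proof given there is exactly the one-line reduction via Proposition \ref{link} to the Leibniz rule for the Atiyah class, which the paper simply cites from Markarian (Lemma 2) rather than reproving. Your sketch of the principal-parts argument for the Leibniz rule is a reasonable expansion of that citation but adds nothing the paper needs.
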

\begin{proof}
This follows from the analogous formula for the Atiyah morphism, which is well-known (\textit{see} \cite[Lemma 2]{Markarian}).
\end{proof}

\subsection{The global extension theorem}
In this section, we state and prove the geometric version of Theorem \ref{localobs}. 
\begin{theorem} \label{tabriz}
For any bounded complex $\mathcal{K}_{\bullet}$ of $\mathcal{O}_S$-modules, the following properties are equivalent:
\vspace{0.1cm}
\begin{enumerate}
\item[(i)] The HKR class ${\Theta}_{\mathcal{K}_{\bullet}}$ vanishes.
\vspace{0.2cm}
\item[(ii)] The morphism $\mathbb{L}j^* \mathcal{K}_{\bullet} \longrightarrow j^* \mathcal{K}_{\bullet}$ admits a right inverse in $\mathrm{D}^{\mathrm{-}}(X)$.
\vspace{0.2cm}
\item[(iii)] There exists a bounded admissible complex $\mathcal{L}_{\bullet}$ and a morphism in $\mathrm{D}^{\mathrm{b}}(S)$  from  $\mathcal{L}_{\bullet}$ to $\mathcal{K}_{\bullet}$ such that the composition
\[
\mathbb{L}j^* \mathcal{L}_{\bullet} \longrightarrow \mathbb{L} j^* \mathcal{K}_{\bullet} \longrightarrow j^* \mathcal{K}_{\bullet}
\]
is an isomorphism in $\mathrm{D}^{-}(X)$.
\vspace{0.2cm}
\item[(iv)] There exists a bounded admissible complex $\mathcal{L}_{\bullet}$ and a morphism in $\mathrm{D}^{\mathrm{adm}}(S)$  from  $\mathcal{L}_{\bullet}$ to $\mathcal{K}_{\bullet}$ such that the induced morphism from $\overline{\mathcal{L}}_{\bullet}$ to $\overline{\mathcal{K}}_{\bullet}$
is an isomorphism in $\mathrm{D}^{\mathrm{b}}(X)$.
\vspace{0.2cm}
\item[(v)] There exists a bounded admissible complex $\mathcal{L}_{\bullet}$ and a sub-complex $\mathcal{T}_{\bullet}$ of $\mathcal{I} \mathcal{L}_{\bullet}$ such that $\mathcal{K}_{\bullet}$ is isomorphic to $\mathcal{L}_{\bullet} / \mathcal{T}_{\bullet}$ in $\mathrm{D}^{\mathrm{adm}}(S)$.
\end{enumerate}
\vspace{0.2cm}
If any of these properties hold, there is an isomorphism
\[
\mathbb{L}j^* \mathcal{K}_{\bullet} \simeq j^* \mathcal{K}_{\bullet} \oplus \mathbb{L}j^*\, \mathrm{Tor}^1_{S}(\mathcal{K}_{\bullet}, \mathcal{O}_X) [1].
\]
\end{theorem}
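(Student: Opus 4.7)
The plan is to follow the cycle
$(\mathrm{i}) \Rightarrow (\mathrm{iv}) \Rightarrow (\mathrm{v}) \Rightarrow (\mathrm{iv}) \Rightarrow (\mathrm{iii}) \Rightarrow (\mathrm{ii}) \Rightarrow (\mathrm{i})$
of Theorem \ref{localobs}, globalizing each step. The chain $(\mathrm{v}) \Rightarrow (\mathrm{iv}) \Rightarrow (\mathrm{iii}) \Rightarrow (\mathrm{ii})$ globalizes without difficulty: for a bounded admissible $\mathcal{L}_\bullet$, admissibility together with Proposition \ref{Tor2} imply that $\mathbb{L}j^*\mathcal{L}_\bullet \to j^*\mathcal{L}_\bullet$ is an isomorphism, so a morphism $\mathcal{L}_\bullet \to \mathcal{K}_\bullet$ as in (iv) yields the desired right inverse of (ii) via $j^*\mathcal{K}_\bullet \simeq j^*\mathcal{L}_\bullet \simeq \mathbb{L}j^*\mathcal{L}_\bullet \to \mathbb{L}j^*\mathcal{K}_\bullet$. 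For $(\mathrm{iv}) \Rightarrow (\mathrm{v})$ I would mimic the local proof: after replacing $\mathcal{L}_\bullet$ by a suitable resolution, one may take the morphism to $\mathcal{K}_\bullet$ to be an honest map of complexes whose induced map $\mathcal{I}\otimes\overline{\mathcal{L}}_\bullet \to \mathcal{I}\otimes\overline{\mathcal{K}}_\bullet/\mathrm{Tor}^1_S(\mathcal{K}_\bullet, \mathcal{O}_X)$ is surjective; letting $\mathcal{T}_\bullet$ be its kernel and invoking Remark \ref{gnagna} shows that $\mathcal{L}_\bullet/\mathcal{T}_\bullet \to \mathcal{K}_\bullet$ is an isomorphism in $\mathrm{D}^{\mathrm{adm}}(S)$.

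The implication $(\mathrm{ii}) \Rightarrow (\mathrm{i})$ reduces to its local counterpart: since $\Theta_{\mathcal{K}_\bullet}$ is a morphism in $\mathrm{D}^{\mathrm{b}}(X)$ and its vanishing can be checked on an open cover, I cover $X$ by opens $U$ on which the thickening $S$ is trivial. On each such $U$, Definition \ref{farid} reduces to Definition \ref{extloc}, and the restriction of the global right inverse to $U$ supplies a right inverse in the sense of Theorem \ref{localobs}, forcing the local HKR class to vanish on every $U$, hence $\Theta_{\mathcal{K}_\bullet} = 0$ globally.

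The main obstacle is $(\mathrm{i}) \Rightarrow (\mathrm{iv})$. In the local case, Proposition \ref{marre} manufactures an admissible complex from the quadruplet $(\overline{\mathcal{K}}_\bullet, \mathcal{I}\otimes\overline{\mathcal{K}}_\bullet, \mathrm{id}, \widetilde{\alpha})$, where $\widetilde{\alpha}\colon \overline{\mathcal{K}}_\bullet \to \mathcal{I}\otimes\overline{\mathcal{K}}_\bullet[1]$ is a lift of the connecting morphism $\alpha\colon \overline{\mathcal{K}}_\bullet \to \mathcal{I}\mathcal{K}_\bullet[1]$ whose existence is guaranteed by the vanishing of $\Theta_{\mathcal{K}_\bullet}$. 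Globally, Proposition \ref{marre} fails in the absence of a splitting of \eqref{atiyah}, so my plan is to perform the construction by descent: choose an open cover $\{U_i\}$ of $X$ trivializing the thickening, apply Proposition \ref{marre} on each $U_i$ to obtain admissible $\mathcal{L}^{(i)}_\bullet$ together with morphisms $\mathcal{L}^{(i)}_\bullet \to \mathcal{K}_\bullet|_{U_i}$, and glue along overlaps. The critical technical point is that the globally defined $\widetilde{\alpha}$ restricts to the datum used to construct each $\mathcal{L}^{(i)}_\bullet$, so the uniqueness part of Proposition \ref{marre} (taken in $\mathrm{D}^{\mathrm{adm}}$) provides canonical gluing isomorphisms on overlaps; the main difficulty will be to verify that these isomorphisms satisfy the cocycle condition up to quasi-isomorphism in $\mathrm{D}^{\mathrm{adm}}(S)$, yielding an honest global object $\mathcal{L}_\bullet$ and the required morphism.

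Finally, the splitting $\mathbb{L}j^*\mathcal{K}_\bullet \simeq j^*\mathcal{K}_\bullet \oplus \mathbb{L}j^*\mathrm{Tor}^1_S(\mathcal{K}_\bullet, \mathcal{O}_X)[1]$ will follow from (v): the presentation $\mathcal{K}_\bullet \simeq \mathcal{L}_\bullet/\mathcal{T}_\bullet$ in $\mathrm{D}^{\mathrm{adm}}(S)$ gives a distinguished triangle $\mathbb{L}j^*\mathcal{T}_\bullet \to j^*\mathcal{L}_\bullet \to \mathbb{L}j^*\mathcal{K}_\bullet$, split by the right inverse already obtained in (ii); Proposition \ref{Tor2} then identifies the residual summand with $\mathbb{L}j^*\mathrm{Tor}^1_S(\mathcal{K}_\bullet, \mathcal{O}_X)[1]$, exactly as in the concluding lines of the proof of Theorem \ref{localobs}.
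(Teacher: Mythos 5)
There are two genuine gaps, both coming from the same misconception: that statements in the derived category $\mathrm{D}^{\mathrm{b}}(X)$ can be checked or constructed locally on $X$.

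For $(\mathrm{ii}) \Rightarrow (\mathrm{i})$, you assert that the vanishing of $\Theta_{\mathcal{K}_{\bullet}}$ "can be checked on an open cover". This is false for morphisms in a derived category of sheaves: $\mathrm{Hom}_{\mathrm{D}^{\mathrm{b}}(X)}(\overline{\mathcal{K}}_{\bullet}, \mathrm{Tor}^1_S(\mathcal{K}_{\bullet},\mathcal{O}_X)[2])$ receives contributions from higher cohomology of local $\mathscr{H}om$ sheaves (already $\mathrm{Hom}_{\mathrm{D}(X)}(\mathcal{O}_X,\mathcal{F}[1])=\mathrm{H}^1(X,\mathcal{F})$ is killed by restriction to a fine cover without being zero), so a class that dies on each trivializing open need not die globally. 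Indeed the whole point of the geometric statement, and of the Arinkin--C\u{a}ld\u{a}raru example recalled in the introduction, is that $\Theta$ can be a purely global obstruction. The paper's argument is instead global: using the resolution $\Omega^1_S\otimes\mathcal{K}_{\bullet}\to\mathrm{P}^1_S(\mathcal{K}_{\bullet})$ one identifies $\Theta_{\mathcal{K}_{\bullet}}$ with the composition of $j^*\mathcal{K}_{\bullet}\to\mathcal{Q}_{\mathcal{K}_{\bullet}}[1]$ (the connecting map of the triangle $\mathbb{L}j^*\mathcal{K}_{\bullet}\to j^*\mathcal{K}_{\bullet}\to\mathcal{Q}_{\mathcal{K}_{\bullet}}[1]$) with a fixed map $\mathcal{Q}_{\mathcal{K}_{\bullet}}[1]\to\mathrm{Tor}^1_S(\mathcal{K}_{\bullet},\mathcal{O}_X)[2]$; a right inverse kills the connecting map, hence $\Theta_{\mathcal{K}_{\bullet}}$.

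For $(\mathrm{i}) \Rightarrow (\mathrm{iv})$, the descent strategy does not go through as stated: objects of a derived (or localized homotopy) category do not glue from local data with isomorphisms on overlaps, even when the cocycle condition holds up to quasi-isomorphism, because one needs a coherent system of higher homotopies that $\mathrm{D}^{\mathrm{adm}}(S)$ does not remember. You correctly flag the cocycle verification as "the main difficulty", but it is not merely a verification -- it is the place where the argument breaks, and no amount of care with the local $\widetilde{\alpha}$'s repairs it at the level of triangulated categories. The paper sidesteps gluing entirely: the sheaf $\mathcal{E}=j^*\Omega^1_S$ and the functor $\mathrm{P}^1_S$ are global objects that interpolate between $\mathcal{I}$ and $\Omega^1_X$ even when no global retraction exists. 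One constructs two \emph{global} morphisms to $\mathcal{E}\otimes\overline{\mathcal{K}}_{\bullet}[1]$ -- a lift $\widetilde{\gamma}$ of the class $\gamma$ (whose existence uses $\Theta_{\mathcal{K}_{\bullet}}=0$, via the exact sequence of $\mathrm{Hom}$-groups) and the principal-parts class $\epsilon$ -- shows that $\epsilon-\widetilde{\gamma}\circ\pi$ factors through the subobject $\mathrm{Tor}^1_S(\mathcal{K}_{\bullet},\mathcal{O}_X)[1]$, and defines $\mathcal{L}_{\bullet}$ as the shifted cone of the resulting map $\Psi$. Only the \emph{verification} that $\mathbb{L}j^*\mathcal{L}_{\bullet}\to j^*\mathcal{K}_{\bullet}$ is an isomorphism is local (that is legitimate, since being an isomorphism in $\mathrm{D}^-(X)$ is a local property, unlike being zero as a morphism). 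Your treatment of $(\mathrm{v})\Rightarrow(\mathrm{iv})\Rightarrow(\mathrm{iii})\Rightarrow(\mathrm{ii})$, of $(\mathrm{iv})\Rightarrow(\mathrm{v})$, and of the final splitting is essentially correct and matches the paper.
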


\begin{proof}
The implications $(\textrm{v}) \longrightarrow (\textrm{iv}) \longrightarrow (\textrm{iii}) \longrightarrow (\textrm{ii})$ are straightforward.
\par \medskip $(\mathrm{ii}) \longrightarrow (\mathrm{i})$ The proof follows closely the corresponding implication in Theorem \ref{localobs} except that we have to work with the geometric local HKR class. Let $\mathcal{Q}_{\mathcal{K}_{\bullet}}$ be the cone of the morphism $\mathbb{L} {j}^* \mathcal{K}_{\bullet} \longrightarrow j^* \mathcal{K}_{\bullet}$ shifted by $-1$, so that there is an exact triangle
\begin{equation} \label{noos} \mathbb{L} j^* \mathcal{K}_{\bullet} \longrightarrow j^* \mathcal{K}_{\bullet} \longrightarrow \mathcal{Q}_{\mathcal{K}_{\bullet}} [1] \xlongrightarrow{+1}
\end{equation}
Let $\widetilde{\mathcal{K}}_{\bullet}$ be the cone of the natural morphism from $\Omega^1_S \otimes \mathcal{K}_{\bullet}$ to $\mathrm{P}^1_S(\mathcal{K}_{\bullet})$. We fix a projective resolution $\mathcal{P}_{\bullet} \longrightarrow \mathcal{K}_{\bullet}$ of $\mathcal{K}_{\bullet}$. Since $\widetilde{\mathcal{K}}_{\bullet}$ and $\mathcal{K}_{\bullet}$ are isomorphic in the derived category, there exists a morphism from $\mathcal{P}_{\bullet}$ to $\widetilde{\mathcal{K}}_{\bullet}$ such that the diagram 
\[
\xymatrix{
\mathcal{P}_{\bullet} \ar[r] \ar[d] & \mathcal{K}_{\bullet} \ar@{=}[d] \\
\widetilde{\mathcal{K}}_{\bullet} \ar[r] &\mathcal{K}_{\bullet}
}
\]
commutes. Let $\mathcal{M}_{\bullet}$ (resp. $\mathcal{N}_{\bullet}$) be the cone of the the morphism $\mathcal{P}_{\bullet} \longrightarrow \mathcal{K}_{\bullet}$ (resp. $\widetilde{\mathcal{K}}_{\bullet} \longrightarrow \mathcal{K}_{\bullet}$). Then we have a morphism of distinguished triangles
\[
\xymatrix{
j^* \mathcal{P}_{\bullet} \ar[r] \ar[d] & j^* \mathcal{K}_{\bullet} \ar[r] \ar[d] & j^* \mathcal{M}_{\bullet} \ar[r]^-{+1} \ar[d]& \\
j^* \widetilde{\mathcal{K}}_{\bullet} \ar[r] & j^* \mathcal{K}_{\bullet} \ar[r] & j^* \mathcal{N}_{\bullet} \ar[r]^-{+1} &
}
\]
in the homotopy category $\mathrm{K}^{\mathrm{b}}(X)$. Notice that the first triangle is isomorphic to \eqref{noos} in $\mathrm{D}^{\mathrm{b}}(X)$. Now $j^* \mathcal{N}_{\bullet}$ is the iterated cone of the morphisms of complexes
\[
\Omega^1_X \otimes j^* \mathcal{K}_{\bullet} \longrightarrow j^* \mathrm{P}^1_S (\mathcal{K}_{\bullet}) \longrightarrow j^* \mathcal{K}_{\bullet}
\]
so it is isomorphic to $\mathrm{Tor}^1_S(\mathcal{K}_{\bullet}, \mathcal{O}_X)[2]$ in $\mathrm{D}^{\mathrm{b}}(X)$, and via this isomorphism the morphism $j^* \mathcal{K}_{\bullet} \longrightarrow j^* \mathcal{N}_{\bullet}$ is nothing but $\Theta_{\mathcal{K}_{\bullet}}$.
Hence we get a commutative diagram of morphisms
\[
\xymatrix@C=10pt@R=40pt{
j^* \mathcal{K}_{\bullet} \ar[rr] \ar[rd]_-{\Theta_{\mathcal{K}_{\bullet}}} && \mathcal{Q}_{\mathcal{K}_{\bullet}}[1] \ar[ld] \\
&\mathrm{Tor}^1_S(\mathcal{K}_{\bullet}, \mathcal{O}_X) [2]&
}
\]
in the derived category $\mathrm{D}^{\mathrm{b}}(X)$. If the natural morphism from $\mathbb{L} j^* \mathcal{K}_{\bullet}$ to $j^* \mathcal{K}_{\bullet}$ admits a right inverse, the connection morphism from $j^* \mathcal{K}_{\bullet}$ to $\mathcal{Q}_{\mathcal{K}_{\bullet}}[1]$ associated to \eqref{noos} vanishes, and so does $\Theta_{\mathcal{K}_{\bullet}}$.
\par \medskip
$(\textrm{i}) \longrightarrow (\textrm{iii})$. This is the crucial step. We consider the two exact sequences
\[
\begin{cases}
0 \longrightarrow \mathcal{E} \otimes \overline{\mathcal{K}}_{\bullet}/ \mathrm{Tor}^1_S(\mathcal{K}_{\bullet}, \mathcal{O}_X) \longrightarrow j^* \mathrm{P}^1_S(\mathcal{K}_{\bullet}) \longrightarrow \overline{\mathcal{K}}_{\bullet} \longrightarrow 0 \\
0 \longrightarrow \mathrm{Tor}^1_S(\mathcal{K}_{\bullet}, \mathcal{O}_X) \longrightarrow \mathcal{E} \otimes \overline{\mathcal{K}}_{\bullet} \longrightarrow \mathcal{E} \otimes \overline{\mathcal{K}}_{\bullet}/ \mathrm{Tor}^1_S(\mathcal{K}_{\bullet}, \mathcal{O}_X) \longrightarrow 0
\end{cases}
\]
They yield two morphisms
\[
\begin{cases}
\gamma \colon \overline{\mathcal{K}}_{\bullet} \longrightarrow \mathcal{E} \otimes \overline{\mathcal{K}}_{\bullet}/ \mathrm{Tor}^1_S(\mathcal{K}_{\bullet}, \mathcal{O}_X)[1] \\ \delta \colon \mathcal{E} \otimes \overline{\mathcal{K}}_{\bullet}/ \mathrm{Tor}^1_S(\mathcal{K}_{\bullet}, \mathcal{O}_X) \longrightarrow  \mathrm{Tor}^1_S(\mathcal{K}_{\bullet}, \mathcal{O}_X) [1]
\end{cases}
\]
and $\delta \circ \gamma = \Theta_{\mathcal{K}_{\bullet}}$. The exact sequence
\[
\xymatrix{
\mathrm{Hom}_{\mathrm{D}^{\mathrm{b}}(X)}(\overline{\mathcal{K}}_{\bullet}, \mathcal{E} \otimes \overline{\mathcal{K}}_{\bullet}[1]) \ar[r] & \mathrm{Hom}_{\mathrm{D}^{\mathrm{b}}(X)}(\overline{\mathcal{K}}_{\bullet}, \mathcal{E} \otimes \overline{\mathcal{K}}_{\bullet}/ \mathrm{Tor}^1_S(\mathcal{K}_{\bullet}, \mathcal{O}_X)[1]) \ar[d] \\
& \mathrm{Hom}_{\mathrm{D}^{\mathrm{b}}(X)}(\overline{\mathcal{K}}_{\bullet}, \mathrm{Tor}^1_S(\mathcal{K}_{\bullet}, \mathcal{O}_X)[2])
}
\]
shows that the map $\gamma$ can be lifted to a morphism 
\[
\widetilde{\gamma} \colon \overline{\mathcal{K}}_{\bullet} \longrightarrow \mathcal{E} \otimes \overline{\mathcal{K}}_{\bullet}[1].
\]
Let $\widetilde{\Lambda} \colon  \overline{\mathcal{K}}_{\bullet} \longrightarrow \mathcal{E} \otimes \overline{\mathcal{K}}_{\bullet}[1]$ be the morphism $\widetilde{\gamma}- \mathrm{at}_S(\overline{\mathcal{K}}_{\bullet})$  in $\mathrm{D}^{\mathrm{b}}(S)$. Then the composition 
\[\overline{\mathcal{K}} \longrightarrow  \mathcal{E} \otimes \overline{\mathcal{K}}_{\bullet}[1] \longrightarrow \mathcal{E} \otimes \overline{\mathcal{K}}_{\bullet}/ \mathrm{Tor}^1_S(\mathcal{K}_{\bullet}, \mathcal{O}_X)[1]
\]
is the geometric counterpart of the morphism $\Lambda$ introduced in Theorem \ref{shihiro}. Let us consider the following two distinguished triangles in $\mathrm{D}^{\mathrm{b}}(S)$: 
\[
\xymatrix@C=50pt@R=40pt{
\mathcal{K}_{\bullet} \ar[r] \ar@{-->}[d]_{\Psi}  &  \overline{\mathcal{K}}_{\bullet} \ar[r] \ar[d]_-{\widetilde{\Lambda}} \ar[rd]_-{\Lambda} & \mathcal{I} \mathcal{K}_{\bullet}[1] \ar[d]  \ar[r]^-{+1} &  \\
\mathrm{Tor}^1_S(\mathcal{K}_{\bullet}, \mathcal{O}_X)[1] \ar[r] & \mathcal{E} \otimes \overline{\mathcal{K}}_{\bullet}[1] \ar[r]& \mathcal{E} \otimes \overline{\mathcal{K}}_{\bullet}/ \mathrm{Tor}^1_S(\mathcal{K}_{\bullet}, \mathcal{O}_X)[1] \ar[r]^-{+1}  &
}
\]
\par \medskip
In the middle square, the bottom triangle commutes, and the top triangle also commutes thanks to Theorem \ref{shihiro} which remains valid in this context. Hence there exists a morphism $\Psi \colon \mathcal{K}_{\bullet} \longrightarrow \mathrm{Tor}^1_S(\mathcal{K}_{\bullet}, \mathcal{O}_X)[1]$ making the above diagram a morphism of distinguished triangles. 
\par \medskip
Define $\mathcal{L}^{\bullet}$ as the cone of the morphism $\mathcal{K}_{\bullet}[-1] \xlongrightarrow{{\Psi}[-1]} \mathrm{Tor}^1_S(\mathcal{K}_{\bullet}, \mathcal{O}_X)$. We claim that the composition
\[
\mathbb{L}j^* \mathcal{L}_{\bullet}  \longrightarrow \mathbb{L}j^* \mathcal{K}_{\bullet} \longrightarrow j^* \mathcal{K}_{\bullet}
\]
is an isomorphism in $\mathrm{D}^-(X)$. This statement being local, let us assume that we are in the globally split case, so that $j  \colon X \longrightarrow S$ admits a global retraction $\sigma$. Hence $\mathcal{E}$ splits as $\mathcal{I} \oplus \Omega^1_X$. Then we have a morphism of distinguished triangles
\[
\xymatrix@C=50pt@R=40pt{
\mathrm{Tor}^1_S(\mathcal{K}_{\bullet}, \mathcal{O}_X)[1] \ar[r] \ar@{=}[d] & \mathcal{E} \otimes \overline{\mathcal{K}}_{\bullet}[1] \ar[r] \ar[d] & \mathcal{E} \otimes \overline{\mathcal{K}}_{\bullet}/ \mathrm{Tor}^1_S(\mathcal{K}_{\bullet}, \mathcal{O}_X)[1] \ar[r]^-{+1} \ar[d] & \\
\mathrm{Tor}^1_S(\mathcal{K}_{\bullet}, \mathcal{O}_X)[1] \ar[r] & \mathcal{I} \otimes \overline{\mathcal{K}}_{\bullet}[1] \ar[r]&\mathcal{I} \mathcal{K}_{\bullet} [1] \ar[r]^-{+1}  &
}
\]
Putting the two together, this gives
\[
\xymatrix@C=20pt@R=30pt{
\mathcal{L}_{\bullet} \ar[d] \ar@{-->}[r]& \mathcal{M}_{\bullet} \ar[d]&& \\
\mathcal{K}_{\bullet} \ar[r] \ar@{-->}[d]_{\Psi}  &  \overline{\mathcal{K}}_{\bullet} \ar[r] \ar[d] & \mathcal{I} \mathcal{K}_{\bullet}[1] \ar@{=}[d]  \ar[r]^-{+1} &  \\
\mathrm{Tor}^1_S(\mathcal{K}_{\bullet}, \mathcal{O}_X)[1] \ar[d]^-{+1} \ar[r] & \mathcal{I} \otimes \overline{\mathcal{K}}_{\bullet}[1] \ar[r] \ar[d]^-{+1}&\mathcal{I} \mathcal{K}_{\bullet} [1] \ar[r]^-{+1}  & \\
&&&
}
\]
Thanks to the geometric version of proposition \ref{marre}, the cone of the second vertical arrow shifted by $-1$ is isomorphic in $\mathrm{D}^{\mathrm{b}}(S)$ to an admissible complex, and the composite morphism 
\[
\mathbb{L}j^* \mathcal{M}_{\bullet} \longrightarrow \mathbb{L}j^* \overline{\mathcal{K}}_{\bullet} \longrightarrow \overline{\mathcal{K}}_{\bullet}
\]
is an isomorphism. Besides, since the last vertical arrow of the morphisms of distinguished triangles is the identity, the nine lemma shows that any vertical morphism from $\mathcal{L}_{\bullet}$ to $\mathcal{M}_{\bullet}$ defining a morphism between the two vertical distinguished triangles is an isomorphism. Hence the composition
\[
\mathbb{L}j^* \mathcal{L}_{\bullet} \longrightarrow \mathbb{L}j^* {\mathcal{K}}_{\bullet} \longrightarrow \overline{\mathcal{K}}_{\bullet}
\]
is an isomorphism. To conclude, it suffices to replace $\mathcal{L}$ an admissible isomorphic complex, which will be automatically bounded.
\par \medskip
$(\textrm{iv}) \longrightarrow (\textrm{v})$ Same proof as in the local case.
\end{proof}

\subsection{The case of a single sheaf}
In this section, we deal with the case where $\mathcal{K}_{\bullet}$ is concentrated in a single degree. The main result we prove is the following:
\begin{theorem} \label{belote}
Let $\mathcal{K}$ be a sheaf of $\mathcal{O}_S$-modules. Then the cone of $\Theta_{\mathcal{K}}[-1]$ is isomorphic in $\mathrm{D}^{\mathrm{b}}(X)$ to $\tau^{\geq -1} \,\mathbb{L}j^* \,\mathcal{K}$.
\end{theorem}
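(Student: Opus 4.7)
The strategy is to exhibit an explicit two-term model of $\tau^{\geq -1}\mathbb{L}j^*\mathcal{K}$ whose canonical truncation triangle visibly has $\Theta_{\mathcal{K}}$ as connecting morphism. The candidate is $j^*\mu(\mathcal{K})$, where $\mu(\mathcal{K})=\mathrm{cone}\{\Omega^1_S \otimes \mathcal{K} \to \mathrm{P}^1_S(\mathcal{K})\}$ is the $1$-admissible replacement from Definition \ref{mu}. Placed in cohomological degrees $-1$ and $0$, it reads $[\mathcal{E} \otimes \overline{\mathcal{K}} \to j^*\mathrm{P}^1_S(\mathcal{K})]$, and the four-term exact sequence \eqref{pout} identifies its cohomology as $\mathrm{Tor}^1_S(\mathcal{K}, \mathcal{O}_X)$ in degree $-1$ and $\overline{\mathcal{K}}$ in degree $0$; by Corollary \ref{stair}, $\mu(\mathcal{K})$ is moreover $1$-admissible and quasi-isomorphic to $\mathcal{K}$ in $\mathrm{D}^{\mathrm{b}}(S)$.

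To compare, I would fix a projective resolution $\mathcal{P}^{\bullet} \to \mathcal{K}$, lift the quasi-isomorphism to a chain map $\mathcal{P}^{\bullet} \to \mu(\mathcal{K})$, and apply $j^*$, producing $\mathbb{L}j^*\mathcal{K}=j^*\mathcal{P}^{\bullet} \to j^*\mu(\mathcal{K})$. Since the target lives in degrees $\geq -1$, this morphism factors through $\tau^{\geq -1}\mathbb{L}j^*\mathcal{K}$ as
\[
\varphi \colon \tau^{\geq -1}\mathbb{L}j^*\mathcal{K} \longrightarrow j^*\mu(\mathcal{K}).
\]
I would then verify that $\varphi$ is a quasi-isomorphism by inspecting both cohomology sheaves: on $H^0$ both sides are $\overline{\mathcal{K}}$ and the induced map is the identity by functoriality, since the lift $\mathcal{P}^{\bullet} \to \mu(\mathcal{K})$ restricts to the identity on $H^0=\mathcal{K}$; on $H^{-1}$ the map is the edge morphism $\mathbf{Tor}^1_S(\mu(\mathcal{K}), \mathcal{O}_X) \to H^{-1}(j^*\mu(\mathcal{K}))$ of the hypertor spectral sequence, which Proposition \ref{cns} forces to be an isomorphism precisely because $\mu(\mathcal{K})$ is $1$-admissible.

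It remains to identify the connecting morphism of the truncation triangle. For any two-term complex $[A \xrightarrow{d} B]$ in degrees $-1, 0$, the canonical truncation triangle
\[
\ker(d)[1] \longrightarrow [A \xrightarrow{d} B] \longrightarrow \mathrm{coker}(d) \xrightarrow{\partial} \ker(d)[2]
\]
has connecting morphism $\partial$ equal to the Yoneda class of the length-two extension $0 \to \ker(d) \to A \to B \to \mathrm{coker}(d) \to 0$. Applied to $j^*\mu(\mathcal{K})$, this four-term sequence is exactly \eqref{pout}, whose class is $\Theta_{\mathcal{K}}$ by Definition \ref{farid}. Combining with the isomorphism $\tau^{\geq -1}\mathbb{L}j^*\mathcal{K} \simeq j^*\mu(\mathcal{K})$ and rotating yields $\tau^{\geq -1}\mathbb{L}j^*\mathcal{K} \simeq \mathrm{cone}(\Theta_{\mathcal{K}})[-1]$.

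The main obstacle is the $H^{-1}$ check: it is precisely the $1$-admissibility of $\mu(\mathcal{K})$ (Corollary \ref{stair}) that tames the generic failure of $j^*$ to preserve hypertor in the relevant degree, so that the explicit two-term model really computes $\tau^{\geq -1}\mathbb{L}j^*\mathcal{K}$ for an arbitrary sheaf $\mathcal{K}$; every other step is a formal manipulation of truncation triangles and Yoneda classes.
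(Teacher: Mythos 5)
Your proposal is correct and follows essentially the same route as the paper: the paper's proof also takes $j^*\mu(\mathcal{K})=[\mathcal{E}\otimes\overline{\mathcal{K}}\to j^*\mathrm{P}^1_S(\mathcal{K})]$ as the model, observes that $\Omega^1_S\otimes\mathcal{K}\to\mathrm{P}^1_S(\mathcal{K})$ is a $1$-admissible resolution of $\mathcal{K}$ (via Theorem \ref{wazomba}, which is the content behind your appeal to Corollary \ref{stair}), concludes via Proposition \ref{cns} that this two-term complex computes $\tau^{\geq -1}\mathbb{L}j^*\mathcal{K}$, and reads off the connecting morphism from the four-term sequence \eqref{pout} defining $\Theta_{\mathcal{K}}$. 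You have merely spelled out the comparison map and the cohomology checks that the paper leaves implicit.
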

\begin{proof}
The complex $\Omega^1_S \otimes \mathcal{K} \longrightarrow \mathrm{P}^1_S(\mathcal{K})$ is a resolution of $\mathcal{K}$. Besides, this complex is $1$-admissible (\textit{see} Definition \ref{meije}): indeed, thanks to Theorem \eqref{wazomba}, the map
\[
\mathrm{Tor}^1_S(\Omega^1_S \otimes \mathcal{K}, \mathcal{O}_X) \longrightarrow 
\mathrm{Tor}^1_S(\mathrm{P}^1_S(\mathcal{K}), \mathcal{O}_X)
\]
is surjective. Therefore, $\tau^{\geq -1} \,\mathbb{L}j^* \mathcal{K}$ is isomorphic to
\[
\mathcal{E} \otimes j^* \mathcal{K} \longrightarrow j^* \mathrm{P}^1_S(\mathcal{K})
\]
in $\mathrm{D}^{\mathrm{b}}(X)$. This gives the result.
\end{proof}
In this situation, we can complete the picture of Theorem \ref{wazomba} by the two following results:

\begin{theorem} \label{yek}
For any sheaf $\mathcal{K}$ of $\mathcal{O}_S$-modules, the following properties are equivalent:
\vspace{0.2cm}
\begin{enumerate}
\item[(i)] The HKR class ${\Theta}_{\mathcal{K}}$ vanishes.
\vspace{0.2cm}
\item[(ii)] The morphism $\mathbb{L}j^* \mathcal{K} \longrightarrow j^* \mathcal{K}$ admits a right inverse in $\mathrm{D}^{\mathrm{-}}(X)$.
\vspace{0.2cm}
\item[(iii)] The object $\tau^{\geq -1}  \, \mathbb{L}j^* \mathcal{K}$ is formal in $\mathrm{D}^{-}(X)$.
\vspace{0.2cm}
\item[(iv)] The sheaf $\mathcal{K}$ extends to an admissible sheaf on $S$.
\vspace{0.2cm}
\end{enumerate}
Under any of these conditions, 
$
\mathbb{L}j^* \mathcal{K} \simeq j^* \mathcal{K} \oplus \mathbb{L}j^* \, \mathrm{Tor}^1_S(\mathcal{K}, \mathcal{O}_X) [1].
$
\end{theorem}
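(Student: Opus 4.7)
The plan is to derive all four equivalences as direct specializations of Theorems \ref{tabriz} and \ref{belote}; the only substantial extra work is to replace a bounded admissible complex by a single admissible sheaf. Setting $\mathcal{K}_\bullet = \mathcal{V}$ (viewed as a complex concentrated in degree $0$) in Theorem \ref{tabriz} immediately yields the equivalence $(\mathrm{i}) \Leftrightarrow (\mathrm{ii})$ and the concluding decomposition formula $\mathbb{L}j^*\mathcal{V} \simeq j^*\mathcal{V} \oplus \mathbb{L}j^*\,\mathrm{Tor}^1_S(\mathcal{V}, \mathcal{O}_X)[1]$.

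For $(\mathrm{i}) \Leftrightarrow (\mathrm{iii})$, Theorem \ref{belote} provides the canonical isomorphism $\tau^{\geq -1}\mathbb{L}j^*\mathcal{V} \simeq \mathrm{cone}(\Theta_\mathcal{V}[-1])$ in $\mathrm{D}^{\mathrm{b}}(X)$. Rotating the cone triangle presents this object as the truncation triangle
\[
\mathrm{Tor}^1_S(\mathcal{V}, \mathcal{O}_X)[1] \longrightarrow \tau^{\geq -1}\mathbb{L}j^*\mathcal{V} \longrightarrow j^*\mathcal{V} \xrightarrow{\Theta_\mathcal{V}} \mathrm{Tor}^1_S(\mathcal{V}, \mathcal{O}_X)[2],
\]
whose connecting morphism is $\Theta_\mathcal{V}$ itself. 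Since $\tau^{\geq -1}\mathbb{L}j^*\mathcal{V}$ has cohomology sheaves only in degrees $-1$ and $0$, formality is equivalent to the splitting of this triangle, hence to the vanishing of $\Theta_\mathcal{V}$.

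For $(\mathrm{i}) \Leftrightarrow (\mathrm{iv})$, the direction $(\mathrm{iv}) \Rightarrow (\mathrm{i})$ is immediate: an admissible sheaf $\widetilde{\mathcal{V}}$ extending $\mathcal{V}$ (i.e.\ with a morphism to $\mathcal{V}$ in $\mathrm{D}^{\mathrm{adm}}(S)$ inducing an isomorphism on $j^*$) serves as the complex $\mathcal{L}_\bullet$ required by Theorem \ref{tabriz}$(\mathrm{iv})$, so $\Theta_\mathcal{V}$ vanishes. The converse is where the real work lies. First, Theorem \ref{tabriz} yields a bounded admissible complex $\mathcal{L}_\bullet$ together with a morphism to $\mathcal{V}$ in $\mathrm{D}^{\mathrm{adm}}(S)$ such that $\overline{\mathcal{L}}_\bullet \xrightarrow{\sim} \overline{\mathcal{V}}$ in $\mathrm{D}^{\mathrm{b}}(X)$. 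Since $\mathcal{L}_\bullet$ is admissible, Proposition \ref{cns} gives $\mathbb{L}j^*\mathcal{L}_\bullet \simeq \overline{\mathcal{L}}_\bullet \simeq \overline{\mathcal{V}}$, concentrated in cohomological degree zero. Applying Proposition \ref{tonk} (combined with its dual side, available because $\mathcal{L}_\bullet$ is bounded and the admissibility spectral sequence constrains the cohomology to vanish outside degree zero) reduces $\mathcal{L}_\bullet$ inside $\mathrm{D}^{\mathrm{adm}}(S)$ to the single sheaf $\widetilde{\mathcal{V}} := H^0(\mathcal{L}_\bullet)$; this sheaf is admissible since $\mathbb{L}j^*\widetilde{\mathcal{V}}$ is concentrated in degree zero, and it provides the desired extension of $\mathcal{V}$.

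The main obstacle is this last truncation step. The subtlety is that the admissibility condition is not preserved under arbitrary quasi-isomorphisms (cf.\ Remark \ref{gnagna}), so truncation has to be carried out inside the admissible derived category $\mathrm{D}^{\mathrm{adm}}(S)$, where Proposition \ref{tonk} guarantees that both the truncated complex and the resulting cohomology sheaf remain admissible.
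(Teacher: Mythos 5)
Your proof is correct and follows essentially the same route as the paper: Theorem \ref{tabriz} for $(\mathrm{i})\Leftrightarrow(\mathrm{ii})$, $(\mathrm{iv})\Rightarrow(\mathrm{i})$ and the final decomposition, Theorem \ref{belote} for the equivalence with formality of the truncation, and Proposition \ref{tonk} to collapse the bounded admissible complex $\mathcal{L}_{\bullet}$ to the single admissible sheaf $\mathcal{H}^0(\mathcal{L}_{\bullet})$. The only superfluous point is your appeal to a ``dual side'' of Proposition \ref{tonk}: since $\mathcal{L}_{\bullet}$ lives in nonpositive cohomological degrees and $\overline{\mathcal{L}}_{\bullet}\simeq j^*\mathcal{V}$ is concentrated in degree zero, the one-sided truncation $\tau^{\geq 0}$ of Proposition \ref{tonk} already produces the sheaf $\mathcal{H}^0(\mathcal{L}_{\bullet})$, exactly as in the paper.
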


\begin{proof} $ $ \par
\vspace{0.2cm}
$(\textrm{ii}) \Leftrightarrow (\textrm{iii})$ Obvious.
\par \medskip
$(\textrm{i}) \Leftrightarrow (\textrm{ii})$ and $(\textrm{iv}) \longrightarrow (\textrm{ii})$ Follows from Theorem \ref{tabriz}.
\par \medskip
$(\textrm{i}) \longrightarrow (\textrm{iv})$ According to Theorem \ref{tabriz}, there exists
an admissible complex $\mathcal{L}_{\bullet}$ concentrated in negative degrees and a morphism from $\mathcal{L}_{\bullet}$ to $\mathcal{K}$ in $\mathrm{D}^{\mathrm{adm}}(S)$ such that the composition
\[
j^* \mathcal{L}_{\bullet} \longrightarrow j^* \mathcal{K}
\]
is an isomorphism. According to Proposition \ref{tonk}, we can replace $\mathcal{L}_{\bullet}$ by its last truncation $\mathcal{H}_0(\mathcal{L}_{\bullet})$, which is still admissible.
\end{proof}

\begin{corollary} \label{hmpf}
Let $\mathcal{K}$ be a sheaf of $\mathcal{O}_S$-modules. Then $\mathbb{L}j^* \mathcal{K}$ is formal in $\mathrm{D}^-(X)$ if and only $\Theta_{\mathcal{K}}$ and $\{ \Theta_{\mathrm{Tor}^p_{\mathcal{O}_S}(\mathcal{K}, \mathcal{O}_X)} \}_{p \geq 0}$ vanish.
\end{corollary}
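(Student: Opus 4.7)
The plan is to derive Corollary \ref{hmpf} by iterating Theorem \ref{yek} along the tower of Tor modules. The key input, combining the formula $\mathrm{Tor}^1_S(\mathcal{W},\mathcal{O}_X) \simeq \mathcal{I}\otimes \mathcal{W}$ (valid for any $\mathcal{O}_X$-module $\mathcal{W}$ regarded as an $\mathcal{O}_S$-module via the trivial $\mathcal{I}$-action) with Proposition \ref{Tor2}, is the self-reproduction property
\[
\mathrm{Tor}^1_S(\mathrm{Tor}^p_S(\mathcal{V},\mathcal{O}_X),\mathcal{O}_X) \simeq \mathrm{Tor}^{p+1}_S(\mathcal{V},\mathcal{O}_X), \qquad p \geq 1,
\]
so that Theorem \ref{yek} can be applied recursively along the whole Tor tower, each time exchanging $\mathrm{Tor}^p$ for $\mathrm{Tor}^{p+1}$.

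For the backward implication, assuming $\Theta_\mathcal{V}=0$ and all $\Theta_{\mathrm{Tor}^p_S(\mathcal{V},\mathcal{O}_X)}=0$, I would apply Theorem \ref{yek} first to $\mathcal{V}$ to produce the splitting $\mathbb{L}j^*\mathcal{V}\simeq j^*\mathcal{V}\oplus \mathbb{L}j^*\mathrm{Tor}^1_S(\mathcal{V},\mathcal{O}_X)[1]$, then apply the same theorem to $\mathrm{Tor}^1_S(\mathcal{V},\mathcal{O}_X)$ (using that $j^*\mathrm{Tor}^p = \mathrm{Tor}^p$ since $\mathcal{I}$ acts trivially on each $\mathrm{Tor}^p$), and inductively. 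After $N$ iterations this gives
\[
\mathbb{L}j^*\mathcal{V} \simeq \bigoplus_{p=0}^{N}\mathrm{Tor}^p_S(\mathcal{V},\mathcal{O}_X)[p]\oplus \mathbb{L}j^*\mathrm{Tor}^{N+1}_S(\mathcal{V},\mathcal{O}_X)[N+1].
\]
Since the remainder sits in cohomological degrees $\leq -(N+1)$, applying $\tau^{\geq -N}$ makes it disappear, so every truncation of $\mathbb{L}j^*\mathcal{V}$ is formal. These formal descriptions are compatible by construction, and since each cohomological degree receives a contribution from only one summand, they assemble into the formality isomorphism $\mathbb{L}j^*\mathcal{V}\simeq \bigoplus_{p\geq 0}\mathrm{Tor}^p_S(\mathcal{V},\mathcal{O}_X)[p]$.

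For the forward implication, formality of $\mathbb{L}j^*\mathcal{V}$ gives an evident right inverse to the truncation map $\mathbb{L}j^*\mathcal{V}\to j^*\mathcal{V}$ (inclusion of the degree-zero summand), so Theorem \ref{yek} delivers $\Theta_\mathcal{V}=0$ together with the canonical splitting $\mathbb{L}j^*\mathcal{V}\simeq j^*\mathcal{V}\oplus\mathbb{L}j^*\mathrm{Tor}^1_S(\mathcal{V},\mathcal{O}_X)[1]$. Since the second summand is intrinsically the fiber of this map, comparing with the formal decomposition forces $\mathbb{L}j^*\mathrm{Tor}^1_S(\mathcal{V},\mathcal{O}_X)[1]\simeq \bigoplus_{p\geq 1}\mathrm{Tor}^p_S(\mathcal{V},\mathcal{O}_X)[p]$; desuspending and reindexing via the self-reproduction identity, this is exactly the formal decomposition of $\mathbb{L}j^*\mathrm{Tor}^1_S(\mathcal{V},\mathcal{O}_X)$. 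A second application of Theorem \ref{yek} then gives $\Theta_{\mathrm{Tor}^1_S(\mathcal{V},\mathcal{O}_X)}=0$, and iterating yields the vanishing of all higher $\Theta_{\mathrm{Tor}^p_S(\mathcal{V},\mathcal{O}_X)}$. The delicate step I expect to require the most care is the limit argument in the backward direction: one must verify that the iterated splittings produced degree-by-degree by Theorem \ref{yek} cohere into a single isomorphism in $\mathrm{D}^-(X)$, which is essentially the only place where working in $\mathrm{D}^-(X)$ rather than a bounded category plays a substantive role.
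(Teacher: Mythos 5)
Your proposal follows what is clearly the intended (and unwritten) argument for this corollary: iterate Theorem \ref{yek} along the Tor tower, using $\mathrm{Tor}^1_S(\mathrm{Tor}^p_S(\mathcal{V},\mathcal{O}_X),\mathcal{O}_X)\simeq \mathrm{Tor}^{p+1}_S(\mathcal{V},\mathcal{O}_X)$ (the unnamed lemma on $\mathrm{Tor}^1_B(M,A)\simeq I\otimes M$ combined with Proposition \ref{Tor2}) to restart the induction at each stage. Both implications work essentially as you describe.

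Two points need attention. First, the statement includes $p=0$, i.e.\ the vanishing of $\Theta_{\mathrm{Tor}^0_{\mathcal{O}_S}(\mathcal{V},\mathcal{O}_X)}=\Theta_{j_*\overline{\mathcal{V}}}$, which your forward direction never produces: the iteration only yields $\Theta_{\mathcal{V}}=0$ and $\Theta_{\mathrm{Tor}^p}=0$ for $p\geq 1$, and for a non-admissible $\mathcal{V}$ the classes $\Theta_{\mathcal{V}}$ and $\Theta_{j_*\overline{\mathcal{V}}}$ are a priori different. The missing term is in fact redundant: applying the naturality of $\Theta$ to the surjection $\mathcal{V}\to j_*\overline{\mathcal{V}}$, and using that the induced map $\mathrm{Tor}^1_S(\mathcal{V},\mathcal{O}_X)\to \mathrm{Tor}^1_S(j_*\overline{\mathcal{V}},\mathcal{O}_X)\simeq \mathcal{I}\otimes\overline{\mathcal{V}}$ is the inclusion of $\ker\mu_{\mathcal{V}}$ (Corollary \ref{dur}), one gets $\Theta_{j_*\overline{\mathcal{V}}}=\iota\circ\Theta_{\mathcal{V}}$, so $\Theta_{\mathcal{V}}=0$ forces $\Theta_{\mathrm{Tor}^0}=0$; but a complete write-up must say this. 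Second, on the passage to the limit: you should not phrase it as reconstructing $\mathbb{L}j^*\mathcal{V}$ from its truncations, which would raise left-completeness issues; what you actually have is, for every $N$, a splitting of the \emph{whole} object $\mathbb{L}j^*\mathcal{V}\simeq \bigoplus_{p\leq N}\mathrm{Tor}^p_S(\mathcal{V},\mathcal{O}_X)[p]\oplus \mathbb{L}j^*\mathrm{Tor}^{N+1}_S(\mathcal{V},\mathcal{O}_X)[N+1]$, compatible as $N$ grows, hence a coherent family of maps $\mathrm{Tor}^p_S(\mathcal{V},\mathcal{O}_X)[p]\to \mathbb{L}j^*\mathcal{V}$ for all $p$. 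Since the degreewise direct sum computes the coproduct in the derived category of sheaves, these assemble into a single map $\bigoplus_{p\geq 0}\mathrm{Tor}^p_S(\mathcal{V},\mathcal{O}_X)[p]\to \mathbb{L}j^*\mathcal{V}$, which is an isomorphism because it induces an isomorphism on each cohomology sheaf. With these two clarifications your proof is complete.
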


\section{Structure of derived self intersections} \label{colline}

\subsection{Preliminar material}
We fix a pair $(X, S)$ where $S$ is a locally trivial thickening of $X$ defined by a pair ($\mathcal{I}, \eta)$ where $\mathcal{I}$ is a locally free sheaf on $\mathcal{I}$ and $\eta$ is a class in $\mathrm{Ext}^1_{\mathcal{O}_X}(\Omega^1_X, \mathcal{I})$. We now specialize to the case the complex of $\mathcal{O}_S$-modules $\mathcal{K}_{\bullet}$ is of the form $j_* \mathcal{V}_{\bullet}$ with $\mathcal{V}_{\bullet}$ a complex of $\mathcal{O}_X$-modules.
\begin{lemma} \label{tree}
The direct image functor $j_* \colon \mathrm{C}^{-}(X) \longrightarrow \mathrm{C}^-(S)$ factorizes through a functor 
\[
j_* \colon \mathrm{D}^-(X) \longrightarrow \mathrm{D}^{\mathrm{adm}}(S)
\] 
that lifts the usual push forward functor $j_*$ at the level of derived categories.
\end{lemma}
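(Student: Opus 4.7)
The plan is to verify that the additive functor $j_* \colon \mathrm{C}^-(X) \to \mathrm{C}^-(S)$ sends quasi-isomorphisms in $\mathrm{C}^-(X)$ to isomorphisms in $\mathrm{D}^{\mathrm{adm}}(S)$; this, combined with the fact that any additive functor preserves chain homotopies, provides via the universal property of Verdier localization the desired triangulated functor $j_* \colon \mathrm{D}^-(X) \to \mathrm{D}^{\mathrm{adm}}(S)$. Since $j_*$ is additive and triangulated, it suffices to check that the image of an acyclic complex $\mathcal{C}_\bullet$ in $\mathrm{C}^-(X)$ lies in the null system $\mathfrak{N}$.

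To that end I would appeal to Remark \ref{gnagna}: a complex $K_\bullet$ of $\mathcal{O}_S$-modules lies in $\mathfrak{N}$ if and only if both $K_\bullet$ and $\overline{K}_\bullet = j^* K_\bullet$ are exact. For $K_\bullet = j_* \mathcal{C}_\bullet$, the first condition follows from the fact that $j$ is a closed immersion whose underlying continuous map is a homeomorphism onto its image, so that $j_*$ is an exact functor of abelian sheaves. The second condition follows from the canonical isomorphism $j^* j_* \simeq \mathrm{id}$, which holds because the ideal $\mathcal{I}$ annihilates any sheaf in the essential image of $j_*$; one obtains $j^* j_* \mathcal{C}_\bullet \simeq \mathcal{C}_\bullet$, which is acyclic by assumption. (Alternatively, using the original description of $\mathfrak{N}$, the lemma identifying $\mathrm{Tor}^1_B(M, A)$ with $I \otimes_A M$ for a module $M$ pulled back from $A$ gives a termwise isomorphism $\mathrm{Tor}^1_S(j_* \mathcal{C}_\bullet, \mathcal{O}_X) \simeq \mathcal{I} \otimes_{\mathcal{O}_X} \mathcal{C}_\bullet$, which is exact since $\mathcal{I}$ is locally free.)

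To finish, it remains to compare the constructed functor with the usual derived pushforward. The inclusion of null systems shows that $\mathfrak{N}$ is contained in the class of acyclic complexes in $\mathrm{K}^-(S)$, hence there is a canonical triangulated functor $\mathrm{D}^{\mathrm{adm}}(S) \to \mathrm{D}^-(S)$. Composing with our $j_*$, one obtains a functor $\mathrm{D}^-(X) \to \mathrm{D}^-(S)$ that is induced at the level of complexes by the ordinary $j_*$; since $j_*$ is exact, this composition agrees with $Rj_*$.

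The construction presents no genuine obstacle: the only point requiring attention is that $\mathrm{D}^{\mathrm{adm}}(S)$ is a \emph{finer} localization of $\mathrm{K}^-(S)$ than $\mathrm{D}^-(S)$ (it has fewer isomorphisms), so the factorization has to land in this richer category; this is precisely what Remark \ref{gnagna} makes transparent, and the verification is reduced to the two standard properties of closed immersions recalled above.
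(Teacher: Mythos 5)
Your proof is correct and takes essentially the same route as the paper: the paper's (very short) argument checks directly that for a quasi-isomorphism $\varphi$ one has $\overline{j_* \varphi} = \varphi$ and $\mathrm{Tor}^1_S(j_* \varphi, \mathcal{O}_X) = \mathrm{id}_{\mathcal{I}} \otimes \varphi$, which are exactly the two observations you make, merely phrased via the null system $\mathfrak{N}$ and acyclic complexes instead of via the quasi-isomorphism criterion for isomorphisms in $\mathrm{D}^{\mathrm{adm}}(S)$.
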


\begin{proof}
We must prove that for quasi-isomorphism $\varphi \colon \mathcal{V}_{\bullet} \longrightarrow \mathcal{W}_{\bullet} $ between elements in $\mathrm{C}^{-}(X)$, $j_* \varphi$ is an isomorphism in the admissible derived category $\mathrm{D}^{\mathrm{adm}}(S)$. This is straightforward, since $\overline{j_* \varphi}$ equals $\varphi$, and $\mathrm{Tor}^1_S(j_* \varphi, \mathcal{O}_X)$ equals $\mathrm{id}_{\mathcal{I}} \otimes \varphi$.
\end{proof}
As a corollary, we can consider the diagram
\[
\xymatrix@R=30pt@C=50pt{\relax
  \mathrm{D}^-(X)  \ar@/^5pc/[rr]^-{\mathrm{id}_{\mathrm{D}^-(X)}}  \ar@/_5pc/[rr]_-{\mathcal{I} [2] \, \lltens \, *}  \ar[r]^-{j_*} & \mathrm{D}^{\mathrm{adm}}(S) \UN[r]{\mathrm{Tor}^0_{\mathcal{O}_S}(*, \,\mathcal{O}_X)\,\,\,}{\mathrm{Tor}^1_{\mathcal{O}_S}(*, \,\mathcal{O}_X)[2]\, \, \,}{\Theta} & \mathrm{D}^-(X)}
\]
which gives a natural transformation $\Theta_{j_* (\star)} \colon \mathrm{Id} \longrightarrow \mathcal{I}\,[2]  \otimes \star $ between endofunctors of $\mathrm{D}^-(X)$. For any object $\mathcal{V}_{\bullet}$ of $\mathrm{D}^{-}(X)$, the corresponding morphism of $\mathrm{D}^-(X)$ is $\Theta_{j_*\mathcal{V}_{\bullet}}$. By a slight abuse of notation, we will denote it by $\Theta_{\mathcal{V}_{\bullet}}$. Remark that Proposition \ref{link} gives an explicit description of $\Theta_{\mathcal{V}_{\bullet}}$.
\par \medskip
Let us give a few properties of the endofunctor $\mathbb{L}j^*j_*$ of $\mathrm{D}^-(X)$.
\begin{enumerate}
\vspace{0.2cm}
\item[--]
The functor $\mathbb{L}j^*j_*$ is continuous (i.e. commutes with arbitrary limits and colimits).
\vspace{0.2cm} 
\item[--]
Given a nonzero complex $\mathcal{V}_{\bullet}$ in $\mathrm{D}^{\mathrm{b}}(X)$, $\mathbb{L}j^*(j_*\mathcal{V}_{\bullet})$ is \textit{never} bounded.
\vspace{0.2cm}
\item[--]
The functor $\mathbb{L}j^*j_*$ carries a natural lax monoidal structure: if $\mathcal{V}_{\bullet}$ and $\mathcal{W}_{\bullet}$ are sheaves on $\mathcal{O}_X$-modules, the product map 
is given by the composition
\[
\xymatrix{
j^*j_* \mathcal{V}_{\bullet} \, \lltens{}_{\mathcal{O}_X} \, j^*j_* \mathcal{W}_{\bullet} \ar[d]^-{\wr} \\
 j^*(j_* \mathcal{V}_{\bullet} \, \lltens{}_{\mathcal{O}_Y} \, j_*\mathcal{W}_{\bullet})\ar[d] &  \\
 j^* j_* ( \mathcal{V}_{\bullet} \,\lltens{}_{\mathcal{O}_X}\, j^* j_*\mathcal{W}_{\bullet}) \ar[r]& j^*j_*(\mathcal{V}_{\bullet} \,\lltens{}_{\mathcal{O}_X}\, \mathcal{W}_{\bullet}) \\
}
\]
and the unit is $
\mathcal{O}_X \simeq j^* \mathcal{O}_S \simeq \mathbb{L}j^* \mathcal{O}_S \longrightarrow \mathbb{L}j^* (j_* \mathcal{O}_X).
$
\vspace{0.2cm}
\item[--] The ring object $\mathbb{L}j^*(j_*\mathcal{O}_X)$ in $\mathrm{D}^{-}(X)$ is the structural sheaf of the derived intersection of $X$ in $S$.
\end{enumerate}
In the single sheaf case, we can provide a simple formality criterion under some additional hypotheses:
\begin{theorem} \label{sanaa}
Let $\mathcal{V}$ be a coherent sheaf of $\mathcal{O}_X$-modules which is not a torsion sheaf. Then the following properties are equivalent:
\begin{enumerate}
\vspace{0.2cm}
\item[(i)] $\mathbb{L}j^*(j_*\mathcal{V})$ is formal in $\mathrm{D}^-(X)$.
\vspace{0.2cm}
\item[(ii)] $\tau^{\geq -2}\, \mathbb{L}j^*(j_*\mathcal{V})$ is formal in $\mathrm{D}^{\mathrm{b}}(X)$.
\vspace{0.2cm}
\item[(iii)] $\Theta_{\mathcal{V}}$ and $\Theta_{\mathcal{I}}$ vanish.
 \vspace{0.2cm}
\end{enumerate}
\begin{proof}
 $ $ \par
\vspace{0.2cm}
$(\textrm{i}) \longrightarrow (\textrm{ii})$ Obvious.
\par
\vspace{0.2cm}
$(\textrm{ii}) \longrightarrow (\textrm{iii})$ If $\tau^{\geq 2} \, $ is formal, then Theorem \ref{yek} implies that $\Theta_{\mathcal{V}}$ vanishes and that 
\[
\mathbb{L}j^*(j_*\mathcal{V}) \simeq \mathcal{V} \oplus \mathbb{L}j^*( j_*(\mathcal{I} \otimes \mathcal{V}))[1].
\]
Hence $\tau^{\geq -1} \, \mathbb{L}j^*(j_*(\mathcal{I} \otimes \mathcal{V}))$ is formal, so  $\Theta_{\mathcal{I} \otimes \mathcal{V}}$ vanishes. Using Corollary \ref{mononoke}, the derived trace of $\Theta_{\mathcal{I} \otimes \mathcal{V}}$ with respect to the factor $\mathcal{V}$ is $r \times \Theta_{\mathcal{I}}$ where $r$ is the generic rank of $\mathcal{V}$. As $r$ is nonzero, $\Theta_{\mathcal{I}}$ vanishes.
\par
\vspace{0.2cm}
$(\textrm{iii}) \longrightarrow (\textrm{i})$ If $\Theta_{\mathcal{I}}$ and $\Theta_{\mathcal{V}}$ vanish, then all the classes $\Theta_{\mathcal{I}^{\otimes p} \otimes \mathcal{V}}$ vanish, so $\mathbb{L}j^*(j_*\mathcal{V})$ is formal thanks to Corollary \ref{hmpf}.
\end{proof}
\end{theorem}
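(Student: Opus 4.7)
The plan is to cycle $(\mathrm{i}) \Rightarrow (\mathrm{ii}) \Rightarrow (\mathrm{iii}) \Rightarrow (\mathrm{i})$, where $(\mathrm{i}) \Rightarrow (\mathrm{ii})$ is automatic since any truncation of a formal complex is formal.

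For $(\mathrm{ii}) \Rightarrow (\mathrm{iii})$ I would bootstrap Theorem \ref{yek}. Formality of $\tau^{\geq -2}\aleph(\mathcal{V})$ implies formality of the further truncation $\tau^{\geq -1}\aleph(\mathcal{V})$, so Theorem \ref{yek} applied to the $\mathcal{O}_S$-module $j_* \mathcal{V}$ forces $\Theta_{\mathcal{V}} = 0$ and yields a splitting
\[
\aleph(\mathcal{V}) \simeq \mathcal{V} \oplus \aleph(\mathcal{I} \otimes \mathcal{V})[1].
\]
The formality assumption then passes to $\tau^{\geq -1}\aleph(\mathcal{I}\otimes \mathcal{V})$, so a second application of Theorem \ref{yek} gives $\Theta_{\mathcal{I}\otimes \mathcal{V}} = 0$. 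The Leibniz rule of Corollary \ref{mononoke}, combined with $\Theta_{\mathcal{V}} = 0$, reduces this to $\Theta_{\mathcal{I}} \otimes \mathrm{id}_{\mathcal{V}} = 0$. Taking the derived trace on the $\mathcal{V}$ factor, which is meaningful because the non-torsion hypothesis guarantees a nonzero generic rank $r$ for $\mathcal{V}$, recovers $r\,\Theta_{\mathcal{I}} = 0$, whence $\Theta_{\mathcal{I}} = 0$ in the $\mathbb{Q}$-linear category $\mathrm{D}^{\mathrm{b}}(X)$.

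For $(\mathrm{iii}) \Rightarrow (\mathrm{i})$ I would apply Corollary \ref{hmpf} to $j_* \mathcal{V}$. The sheaf analogue of the isomorphism $\mathrm{Tor}^1_S(j_*\mathcal{V}, \mathcal{O}_X) \simeq \mathcal{I} \otimes \mathcal{V}$, together with Proposition \ref{Tor2}, identifies every higher Tor sheaf as $\mathrm{Tor}^p_S(j_*\mathcal{V}, \mathcal{O}_X) \simeq \mathcal{I}^{\otimes p} \otimes \mathcal{V}$. Iterating the Leibniz rule then gives
\[
\Theta_{\mathcal{I}^{\otimes p} \otimes \mathcal{V}} = p\,\Theta_{\mathcal{I}} \otimes \mathrm{id}_{\mathcal{V}} + \mathrm{id}_{\mathcal{I}^{\otimes p}} \otimes \Theta_{\mathcal{V}} = 0
\]
under hypothesis $(\mathrm{iii})$, for every $p \geq 0$. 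All the obstructions of Corollary \ref{hmpf} vanish, so $\aleph(\mathcal{V})$ is formal.

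The main obstacle I anticipate is the trace step in $(\mathrm{ii}) \Rightarrow (\mathrm{iii})$: one has to make legitimate sense of ``derived trace on the $\mathcal{V}$ factor'' when $\mathcal{V}$ is only generically locally free. The expected remedy is to restrict to the dense open subset $U \subseteq X$ on which $\mathcal{V}$ is locally free of positive rank $r$, perform the genuine trace there to obtain $r\,\Theta_{\mathcal{I}}|_U = 0$, and then promote this to a global vanishing using that $\Theta_{\mathcal{I}}$ targets the shifted locally free sheaf $\mathcal{I}[2]$ and is therefore controlled by its restriction to a dense open. Everything else is a mechanical combination of Theorem \ref{yek}, Corollary \ref{mononoke} and Corollary \ref{hmpf}.
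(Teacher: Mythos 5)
Your argument is the paper's argument: the same cycle $(\mathrm{i})\Rightarrow(\mathrm{ii})\Rightarrow(\mathrm{iii})\Rightarrow(\mathrm{i})$, the same double application of Theorem \ref{yek} through the splitting $\aleph(\mathcal{V})\simeq\mathcal{V}\oplus\aleph(\mathcal{I}\otimes\mathcal{V})[1]$, the same use of the Leibniz rule of Corollary \ref{mononoke} followed by a partial trace over the $\mathcal{V}$-factor, and the same reduction of $(\mathrm{iii})\Rightarrow(\mathrm{i})$ to Corollary \ref{hmpf} via the identification $\mathrm{Tor}^p_S(j_*\mathcal{V},\mathcal{O}_X)\simeq\mathcal{I}^{\otimes p}\otimes\mathcal{V}$. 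Up to the point where you discuss the trace, everything matches and is correct.

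The one place you diverge is your proposed remedy for the trace step, and that remedy does not work. Restricting to a dense open $U$ on which $\mathcal{V}$ is locally free does give $r\,\Theta_{\mathcal{I}}|_U=0$, but you cannot ``promote'' this to global vanishing: $\Theta_{\mathcal{I}}$ lives in $\mathrm{Ext}^2_{\mathcal{O}_X}(\mathcal{I},\mathcal{I}^{\otimes 2})\simeq \mathrm{H}^2(X,\mathcal{I}^\vee\otimes\mathcal{I}^{\otimes 2})$, and restriction of $\mathrm{H}^2$ of a locally free sheaf to a dense open is very far from injective (the complement of an ample divisor is affine, so \emph{every} such class dies there). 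The fact that the target is locally free gives you nothing in degree $2$. The correct justification --- which is what the paper's phrase ``derived trace'' means --- is global: since $X$ is smooth, the coherent sheaf $\mathcal{V}$ is a perfect complex, so there is a genuine partial trace $\mathrm{tr}_{\mathcal{V}}$ defined on all of $X$, sending $\Theta_{\mathcal{I}}\lltens{}\mathrm{id}_{\mathcal{V}}$ to $\mathrm{tr}(\mathrm{id}_{\mathcal{V}})\cdot\Theta_{\mathcal{I}}$, where $\mathrm{tr}(\mathrm{id}_{\mathcal{V}})\in\mathrm{H}^0(X,\mathcal{O}_X)$ is the generic rank $r$. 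Applying this to the identity $\Theta_{\mathcal{I}}\lltens{}\mathrm{id}_{\mathcal{V}}=0$ gives $r\,\Theta_{\mathcal{I}}=0$ globally, and $r\neq 0$ by the non-torsion hypothesis. With the trace performed this way your proof is complete and coincides with the paper's.
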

Assume that the thickening $S$ is globally trivial, that is the morphism $j$ admits a global retraction $\sigma \colon S \longrightarrow X$. Then every complex of sheaves $\mathcal{V}_{\bullet}$ in $\mathrm{C}^{-}(X)$ admits an admissible resolution $K_{\mathcal{V}_{\bullet}}$, which is $\sigma^* \mathcal{V}_{\bullet} \otimes K_{\mathcal{O}_X}$ where $K_{\mathcal{O}_X}$ is the complex
\[
\cdots \longrightarrow  \sigma^* {\mathcal{I}}^{\otimes 3} \longrightarrow  \sigma^* {\mathcal{I}}^{\otimes 2} \longrightarrow \sigma^*{\mathcal{I}}  \longrightarrow \mathcal{O}_S.
\]
This gives a distinguished HKR isomorphism
\[
\Gamma_{\sigma} \colon \mathbb{L}j^*(j_*\mathcal{V}_{\bullet}) \xlongrightarrow{\sim} \bigoplus_{p \geq 0} \,\mathcal{I}^{\otimes p} \otimes \mathcal{V}_{\bullet} \, [p]
\]
in $\mathrm{D}^-(X)$. Besides, via this isomorphism, the lax monoidal structure on $\mathbb{L}j^*j_*$ is simply given by the shuffle product (\textit{see} \cite[Proposition 1.10]{Arinkin-Caldararu}).
We now come back to the general case, and set the following definition:
\begin{definition}
The Arinkin-C\u{a}ld\u{a}raru functor is the element $H$ of $\mathrm{EndFct}_{\mathrm{dg}}^*(\mathrm{C}^{\mathrm{b}}(X))$ defined by
\[
H(\mathcal{V_{\bullet}})=\mathrm{cone}\, (\Omega^1_X \otimes \mathcal{V}_{\bullet} \longrightarrow \mathrm{P}^1_X(\mathcal{V}_{\bullet})).
\]
\end{definition}
Thanks to Propositions \ref{chaberton} and \ref{brique}, the functor $H$ is exact, and is naturally a lax monoidal functor of $\mathrm{C}^{\mathrm{b}}(X)$. Hence:
\begin{enumerate}
\item[--] According to Theorem \ref{canal} (ii), the functors $H^{[n]}$ and $H^{[[n]]}$ are exact and bounded. They are also naturally lax monoidal functors thanks to Proposition \ref{morita2}.
\vspace{0.2cm}
\item[--] By Proposition \ref{morita2}, the natural morphism from $H^{[n]}$ to $H^{[[n]]}$ is multiplicative. Theorem \ref{canal} (ii) implies that this morphism is a quasi-isomorphism.
\vspace{0.2cm}
\item[--] All these structures extend on $\mathrm{C}^-(X)$, and can be defined on $\mathrm{D}^-(X)$ using flat resolutions.
\vspace{0.2cm}
\end{enumerate}
\par \medskip
Let us discuss the case where $S$ is a globally trivial thickening of $X$. Let $T$ be the element of $\mathrm{EndFct}^*_{\mathrm{dg}}(\mathrm{C}^{\mathrm{b}}(X))$ defined by
\[
T(\mathcal{V}_{\bullet})=\mathcal{I} \otimes \mathcal{V}_{\bullet} \,[1] \,\oplus \, \mathcal{V}_{\bullet}.
\]
Thanks to Proposition \ref{since},
\begin{enumerate}
\vspace{0.2cm}
\item[--] $T^{[n]}$ is naturally isomorphic to $\mathcal{V}_{\bullet} \longrightarrow \bigoplus_{p \geq 0} \,\mathcal{I}^{\otimes p} \otimes \mathcal{V}_{\bullet} \,[p]$.
\vspace{0.2cm}
\item[--] The natural map from $T^{[n]}$ to ${T}^{[[n]]}$ is a quasi-isomorphism.
\vspace{0.2cm}
\end{enumerate}
\begin{proposition}
If there exists a global retraction $\sigma \colon S \longrightarrow X$ and if $\,U=\mathrm{cone}\,\left(  \mathrm{id}_{\mathrm{C}^{\mathrm{b}}(X)} \longrightarrow  \mathrm{id}_{\mathrm{C}^{\mathrm{b}}(\mathcal{C})} \right)$, then there is a natural exact sequence
\[
0 \longrightarrow U(\Omega^1_X \otimes \mathcal{V}_{\bullet}) \longrightarrow H(\mathcal{V}_{\bullet}) \longrightarrow T(\mathcal{V}_{\bullet}) \longrightarrow 0 
\]
of dg-endofunctors of $\mathrm{C}^{\mathrm{b}}(X)$, and for any nonnegative integer $n$ the map from $H^{[n]}$ to $T^{[n]}$ is a quasi-isomorphism.
\end{proposition}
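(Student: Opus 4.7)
The plan is to exploit the global retraction $\sigma$ to produce a canonical direct sum decomposition of $H$. The retraction provides a splitting $\mathcal{E} \simeq \Omega^1_X \oplus \mathcal{I}$ of the conormal sequence, and, using this splitting, the structural map entering the defining cone of $H$ decomposes as the direct sum of the inclusion into the principal-parts ``differential'' factor on the $\Omega^1_X$-summand and the zero map on the $\mathcal{I}$-summand (the latter being a consequence of the explicit description of $j^*\mathrm{P}^1_S(j_*\mathcal{V}_{\bullet})$ obtained in the proof of Theorem \ref{wazomba}). This identifies $H(\mathcal{V}_{\bullet})$ with a direct sum
\[
\mathrm{cone}\bigl(\Omega^1_X \otimes \mathcal{V}_{\bullet} \xrightarrow{\mathrm{id}} \Omega^1_X \otimes \mathcal{V}_{\bullet}\bigr) \,\oplus\, \mathrm{cone}\bigl(\mathcal{I} \otimes \mathcal{V}_{\bullet} \xrightarrow{0} \mathcal{V}_{\bullet}\bigr),
\]
the first factor being $U(\Omega^1_X \otimes \mathcal{V}_{\bullet})$ by the very definition of $U$, and the second being $\mathcal{I} \otimes \mathcal{V}_{\bullet}[1] \oplus \mathcal{V}_{\bullet} = T(\mathcal{V}_{\bullet})$. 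Projection onto the second summand is the required morphism $H \to T$, and the short exact sequence is natural in $\mathcal{V}_{\bullet}$ by naturality of the splitting induced by $\sigma$.

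For the second assertion, the plan is to compare $H^{[n]}$ and $T^{[n]}$ through their derived equalizers. The functor $H$ is a cone of exact dg-endofunctors sitting in an exact sequence $A \to B \to \mathrm{id} \to 0$, so Theorem \ref{canal}(ii) provides a quasi-isomorphism $H^{[n]} \to H^{[[n]]}$; the functor $T = L \oplus \mathrm{id}$ with $L = \mathcal{I} \otimes \cdot[1]$ is handled by Proposition \ref{since}, yielding likewise that $T^{[n]} \to T^{[[n]]}$ is a quasi-isomorphism. Since $U$ is contractible, $U(\Omega^1_X \otimes \mathcal{V}_{\bullet})$ is acyclic, so the morphism $H \to T$ built in the first step is itself a quasi-isomorphism of exact dg-functors, compatible with their projections to $\mathrm{id}_{\mathrm{C}^{\mathrm{b}}(X)}$. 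Proposition \ref{morita2} then provides a quasi-isomorphism $H^{[[n]]} \to T^{[[n]]}$, and composing the three quasi-isomorphisms gives the required statement.

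The principal obstacle is verifying all compatibilities strictly at the dg level: the decomposition of the first step must genuinely be a direct sum of dg-endofunctors (with the correct differentials in each factor), and the projection $H \to T$ must strictly intertwine the two projections to $\mathrm{id}_{\mathrm{C}^{\mathrm{b}}(X)}$ so that the hypotheses of Proposition \ref{morita2} hold on the nose. Both reduce to the naturality of the global splitting coming from $\sigma$, and once this naturality is in hand the argument becomes bookkeeping using the machinery developed in \S\ref{pita}.
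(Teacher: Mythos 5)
Your second step is essentially the paper's argument: one forms the square comparing $H^{[n]}\to T^{[n]}$ with $H^{[[n]]}\to T^{[[n]]}$, the verticals being quasi-isomorphisms by Theorem \ref{canal}(ii) and Proposition \ref{since}, and the bottom row by Proposition \ref{morita2} applied to the quasi-isomorphism $H\to T$. The genuine gap is in your first step: the claimed direct sum decomposition
\[
H(\mathcal{V}_{\bullet})\simeq \mathrm{cone}\bigl(\Omega^1_X\otimes\mathcal{V}_{\bullet}\xrightarrow{\mathrm{id}}\Omega^1_X\otimes\mathcal{V}_{\bullet}\bigr)\oplus\mathrm{cone}\bigl(\mathcal{I}\otimes\mathcal{V}_{\bullet}\xrightarrow{0}\mathcal{V}_{\bullet}\bigr)
\]
is false. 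The retraction $\sigma$ does split the conormal sequence, so the \emph{source} $\mathcal{E}\otimes\mathcal{V}_{\bullet}$ of the structural map decomposes as $\Omega^1_X\otimes\mathcal{V}_{\bullet}\oplus\mathcal{I}\otimes\mathcal{V}_{\bullet}$, and the map is indeed the canonical inclusion on the first summand and zero on the second (this is the content of the local sequence $0\to\Omega^1_A\otimes M\to\mathrm{P}^1_B(M)\to\sigma^*M\to 0$ from the proof of Theorem \ref{wazomba}). But the \emph{target} $j^*\mathrm{P}^1_S(j_*\mathcal{V}_{\bullet})\simeq\mathrm{P}^1_X(\mathcal{V}_{\bullet})$ does not split $\mathcal{O}_X$-linearly as $\Omega^1_X\otimes\mathcal{V}_{\bullet}\oplus\mathcal{V}_{\bullet}$: the obstruction is the Atiyah class of $\mathcal{V}_{\bullet}$, which is nonzero in general. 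So the ``principal obstacle'' you flag at the end — that the decomposition be a genuine direct sum of dg-endofunctors with the correct $\mathcal{O}_X$-module structures — is a real obstruction, and it does not reduce to the naturality of the splitting induced by $\sigma$: the retraction splits the conormal sequence, not the principal parts sequence.

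What is true, and what the proposition actually asserts, is only the extension. There is a morphism of short exact sequences from $0\to\Omega^1_X\otimes\mathcal{V}_{\bullet}\to\mathcal{E}\otimes\mathcal{V}_{\bullet}\to\mathcal{I}\otimes\mathcal{V}_{\bullet}\to 0$ to $0\to\Omega^1_X\otimes\mathcal{V}_{\bullet}\to j^*\mathrm{P}^1_S(j_*\mathcal{V}_{\bullet})\to\mathcal{V}_{\bullet}\to 0$, the identity on the subobjects and zero on the quotients, and taking cones of the vertical maps yields exactly $0\to U(\Omega^1_X\otimes\mathcal{V}_{\bullet})\to H(\mathcal{V}_{\bullet})\to T(\mathcal{V}_{\bullet})\to 0$, with $H\to T$ a quotient map rather than a projection onto a summand. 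Fortunately this is all your second step requires: the kernel $U(\Omega^1_X\otimes\mathcal{V}_{\bullet})$ is acyclic, so $H\to T$ is still a quasi-isomorphism of exact dg-functors compatible with the two augmentations to $\mathrm{id}_{\mathrm{C}^{\mathrm{b}}(X)}$, and Proposition \ref{morita2} applies as you intended. The gap is therefore localized: replace ``direct sum'' by ``extension'' in the first step and the proof closes, coinciding with the paper's.
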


\begin{proof}
The first part follows directly from the exact sequence
\[
0 \longrightarrow \Omega^1 \otimes \mathcal{V}_{\bullet} \longrightarrow \mathrm{P}^1_S(\mathcal{V}_{\bullet}) \longrightarrow \sigma^* \mathcal{V}_{\bullet} \longrightarrow 0
\]
obtained in the local case in the proof of Theorem \ref{wazomba}. For the second part we have a commutative diagram
\[
\xymatrix{
H^{[n]} \ar[r] \ar[d]^-{\wr} & T^{[n]} \ar[d]^-{\wr} \\
H^{[[n]]} \ar[r]^-{\sim} & T^{[[n]]}
}
\]
where the bottom horizontal map is a quasi-isomorphism because of Proposition \ref{morita2}. Hence the top horizontal map is a quasi-isomorphism.
\end{proof}
\subsection{Main theorem}
In this section, we compute explicitly the functor $\mathbb{L}j^*j_*$. Consider the endofunctor $\mu \colon \mathcal{K}_{\bullet} \longrightarrow \mathrm{cone}\, \, (\Omega_S^1 \otimes_{\mathcal{O}_S} \mathcal{K}_{\bullet} \longrightarrow \mathrm{P}^1_S(\mathcal{K}_{\bullet})) $ of $\mathrm{C}^{\mathrm{b}}(S)$ (\textit{see} Definition \ref{mu} for the local case); it is a bounded dg-endofunctor of $\mathrm{C}^{\mathrm{b}}(S)$. There is a natural morphism
\[
j^* \circ  {\mu} \circ j_* \longrightarrow j^* \circ (j_*  \circ j^*) \circ{\mu} \circ j_* \xlongrightarrow{\sim}  {{H}}
\]
of dg-endofunctors of $\mathrm{C}^{{\mathrm{b}}}(X)$. For any nonnegative integer $n$, we define a natural transformation $\chi_n \colon \mathbb{L}j^* j_* \longrightarrow H^{[[n]]}$ as follows: for any complex $\mathcal{V}_{\bullet}$ in $\mathrm{C}^{-}(X)$ the morphism $\chi_n(\mathcal{V}_{\bullet})$ is the composition.
\[
\mathbb{L}j^*(j_*\mathcal{V}_{\bullet}) \xleftarrow{\sim}  \mathbb{L} j^* \mu^{[n]} (j_* \mathcal{V}_{\bullet}) \longrightarrow j^* \,\mu^{[n]} (j_* \mathcal{V}_{\bullet}) \longrightarrow H^{[n]}(\mathcal{V}_{\bullet}).
\]
\begin{theorem} \label{winner}
Assume to be given a pair $(X, S)$ where $X$ is either a smooth scheme over a field of characteristic zero or a complex manifold, and $S$ is a locally trivial infinitesimal thickening of $S$. Then the following properties are valid:
\begin{enumerate}
\vspace{0.1cm}
\item[(i)] The morphisms $\chi_n \colon \mathbb{L}j^* j_* \longrightarrow H^{[[n]]}$ are multiplicative.
\vspace{0.2cm}
\item[(ii)] For any complex $\mathcal{V}_{\bullet}$ concentrated in negative degrees, the local homology morphism $\mathcal{H}_p(\chi_n(\mathcal{V}_{\bullet}))$ is an isomorphism for $0 \leq p \leq n$.
\vspace{0.2cm}
\item[(iii)] The sequence of morphisms $(\chi_n)_{n \geq 0}$ define a multiplicative isomorphism
\[
\mathbb{L}j^* j_* \simeq \underset{n}{\varprojlim} \, H^{[n]}.
\]
\item[(iv)] If $S$ is globally trivial and $\sigma$ is an associated retraction of $j$, the composition
\[
\mathbb{L}j^* j_* \simeq \underset{n}{\varprojlim} \, H^{[n]} \longrightarrow  \underset{n}{\varprojlim} \, T^{[n]} \simeq \bigoplus_{p \geq 0} \, \mathcal{I}^{\otimes p} \otimes (\star)\, [p]
\]
is the generalized HKR isomorphism $\Gamma_{\sigma}$.
\vspace{0.2cm}
\end{enumerate}
\end{theorem}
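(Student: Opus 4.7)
The heart of the argument is point (ii); once established, points (i), (iii), and (iv) follow by taking inverse limits and tracking compatibilities. The key device is the resolution $\mu^{[n]}(j_* \mathcal{V}_{\bullet}) \to j_* \mathcal{V}_{\bullet}$. Indeed, $j_* \mathcal{V}_{\bullet}$ is trivially $0$-admissible in the sense of Definition \ref{meije}, so by Theorem \ref{loire} the complex $\mu^{[n]}(j_* \mathcal{V}_{\bullet})$ is $n$-admissible (and quasi-isomorphic to $j_* \mathcal{V}_{\bullet}$). Applying Proposition \ref{cns} to this resolution gives that the natural maps
\[
\mathbf{Tor}^i_S(\mu^{[n]}(j_* \mathcal{V}_{\bullet}), \mathcal{O}_X) \longrightarrow \mathcal{H}_i(j^* \mu^{[n]}(j_* \mathcal{V}_{\bullet}))
\]
are isomorphisms for $0 \le i \le n$. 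The left-hand side is canonically $\mathcal{H}_i(\aleph(\mathcal{V}_{\bullet}))$ because $\mu^{[n]}$ preserves quasi-isomorphisms to $j_* \mathcal{V}_\bullet$.

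\textbf{Proof of (ii).} It remains to identify the right-hand side, in the derived category, with $\mathcal{H}_i(H^{[[n]]}(\mathcal{V}_{\bullet}))$. First, by Theorem \ref{canal} (ii) applied to the exact sequence $\Omega^1_X \otimes (\cdot) \to \mathrm{P}^1_X(\cdot) \to \mathrm{id} \to 0$ of functors underlying $H$, the natural morphism $H^{[n]} \to H^{[[n]]}$ is a quasi-isomorphism between exact bounded functors. Second, the natural morphism $j^* \circ \mu \circ j_* \to H$ of dg-endofunctors of $\mathrm{C}^{\mathrm{b}}(X)$ is a quasi-isomorphism: this is a local statement, and in the trivially split case it reduces to the identifications $j^* \mathrm{P}^1_S(j_* \mathcal{W}) \simeq \mathrm{P}^1_X(\mathcal{W})$ and $j^*(\Omega^1_S \otimes_S j_* \mathcal{W}) \simeq \mathcal{E} \otimes \mathcal{W}$ coming from Theorem \ref{wazomba}, combined with a cone inspection. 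Passing to $n^{\mathrm{th}}$ equalizers, which preserve quasi-isomorphisms on exact functors by Proposition \ref{morita2}, yields (ii).

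\textbf{From (ii) to (iii) and (i).} Taking the inverse limit over $n$, point (ii) together with the stabilization on each cohomology degree (controlled by the exact sequence in Theorem \ref{canal} (i), which in each degree eventually involves $\mathcal{I}^{\otimes p}$-tensors vanishing in the stable range) gives the isomorphism $\aleph \simeq \varprojlim_n H^{[[n]]} \simeq \varprojlim_n H^{[n]}$ of (iii). For the multiplicativity in (i) (and therefore in (iii)), the functor $\mu$ is the cone of a multiplicative morphism between lax monoidal functors $\Omega^1_S \otimes (\cdot) \to \mathrm{P}^1_S$ and hence carries a natural lax monoidal structure by Proposition \ref{brique}; this structure is inherited by $\mu^{[n]}$ via Proposition \ref{morita2} and is compatible with that on $H^{[n]}$. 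The lax monoidal structure on $\aleph$ comes from $\mathbb{L}j^* \circ j_*$, and multiplicativity of $\chi_n$ reduces to the multiplicativity of each map in its defining composition.

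\textbf{Proof of (iv) and main obstacle.} When $S$ is globally split by a retraction $\sigma$, the Koszul-type complex $K_{\mathcal{V}_{\bullet}} = \sigma^* \mathcal{V}_{\bullet} \otimes K_{\mathcal{O}_X}$ is an explicit admissible resolution of $j_* \mathcal{V}_{\bullet}$, giving $\aleph(\mathcal{V}_{\bullet}) \simeq \bigoplus_{p\ge 0} \mathcal{I}^{\otimes p} \otimes \mathcal{V}_{\bullet}[p]$. The plan is to construct a functorial chain map $K_{\mathcal{V}_{\bullet}} \to \mu^{[\infty]}(j_* \mathcal{V}_{\bullet})$ in the admissible derived category, check that its image under the projection to the functor $T^{[n]}$ (which by Proposition \ref{since} is $\bigoplus_{p=0}^n \mathcal{I}^{\otimes p}\otimes(\cdot)[p]$) is the tautological inclusion, and then use compatibility of the shuffle product with the multiplicative structures identified above. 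The principal obstacle is the identification $j^* \circ \mu \circ j_* \simeq H$ used in (ii): it must be performed at the dg level (not merely up to quasi-isomorphism degreewise) so that the equalizer-preservation results of Proposition \ref{morita2} can legitimately be applied to the entire projective system and respect the lax monoidal structure needed for (i) and (iv).
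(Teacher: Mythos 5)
The first half of your argument for (ii) is sound and agrees with the paper: $j_*\mathcal{V}_{\bullet}$ is vacuously $0$-admissible, Theorem \ref{loire} makes $\mu^{[n]}(j_*\mathcal{V}_{\bullet})$ an $n$-admissible resolution, and Proposition \ref{cns} identifies $\mathcal{H}_i(j^*\mu^{[n]}(j_*\mathcal{V}_{\bullet}))$ with $\mathcal{H}_i(\aleph(\mathcal{V}_{\bullet}))$ for $i\leq n$. The gap is in the second half, where you compare $j^*\mu^{[n]}(j_*\mathcal{V}_{\bullet})$ with $H^{[[n]]}(\mathcal{V}_{\bullet})$ by ``passing to $n^{\mathrm{th}}$ equalizers'' a quasi-isomorphism $j^*\circ\mu\circ j_*\to H$ via Proposition \ref{morita2}. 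This step fails for three compounding reasons. First, $\mu^{[n]}$ is the equalizer of the iterates $\mu^n\to\mu^{n-1}$ computed in endofunctors of complexes of $\mathcal{O}_S$-modules; the object entering $\chi_n$ is $j^*(\mu^{[n]}(j_*\mathcal{V}_{\bullet}))$, which is neither $(j^*\circ\mu\circ j_*)^{[n]}(\mathcal{V}_{\bullet})$ nor the equalizer of the maps $j^*\mu^n(j_*\mathcal{V}_{\bullet})\to j^*\mu^{n-1}(j_*\mathcal{V}_{\bullet})$: the right exact functor $j^*$ does not commute with equalizers, and $j^*\circ\mu^n\circ j_*$ differs from $(j^*\circ\mu\circ j_*)^n$. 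Second, Proposition \ref{morita2} asserts invariance only of the \emph{derived} equalizers $H^{[[n]]}$ under quasi-isomorphisms of \emph{exact} endofunctors of a single category; standard equalizers do not preserve quasi-isomorphisms (that is the raison d'\^{e}tre of $H^{[[n]]}$), and the comparison you need is across the adjunction $j^*\dashv j_*$, between an equalizer taken over $S$ and one taken over $X$. Third, the exactness hypotheses fail on the $S$-side ($\Omega^1_S$ is not locally free over $\mathcal{O}_S$), which is exactly why $\mu^{[n]}(j_*\mathcal{V}_{\bullet})$ is only $n$-admissible rather than admissible; so neither Theorem \ref{canal} (ii) nor Proposition \ref{morita2} applies to $\mu$ over $S$. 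Your claim that the quasi-isomorphism $j^*\circ\mu\circ j_*\to H$ follows from $j^*\mathrm{P}^1_S(j_*\mathcal{W})\simeq\mathrm{P}^1_X(\mathcal{W})$ and a ``cone inspection'' also glosses over the essential point: by \eqref{pout} the map $\mathcal{E}\otimes\mathcal{W}\to j^*\mathrm{P}^1_S(j_*\mathcal{W})$ has kernel $\mathrm{Tor}^1_S(j_*\mathcal{W},\mathcal{O}_X)\simeq\mathcal{I}\otimes\mathcal{W}$, so $j^*\mu(j_*\mathcal{W})$ carries the extra homology $\mathcal{I}\otimes\mathcal{W}$ in degree one, and matching it with $H$ requires Theorem \ref{wazomba} and a check that the identification is realized by the specific transformation defining $\chi_n$.

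The paper bridges precisely this gap with a construction absent from your proposal: an auxiliary dg functor $\mathfrak{F}_n$ from $\mathrm{C}^{\mathrm{b}}(X)$ to $\mathrm{C}^{\mathrm{b}}(S)$, defined as the iterated cone of $j_*\to\widetilde{\mu}\circ j_*\to\widetilde{\mu}\circ j_*\circ\widetilde{H}\to\cdots\to\widetilde{\mu}\circ j_*\circ\widetilde{H}^{n-1}$, together with natural transformations $\widetilde{\mu}^{[n]}\circ j_*\to\widetilde{\mu}^{[[n]]}\circ j_*\to\mathfrak{F}_n$ that are quasi-isomorphisms on complexes in nonpositive degrees. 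By construction $j^*\mathfrak{F}_n$ equals $H^{[[n]]}$ on the nose, and the crucial step is an explicit local computation showing $\mathrm{Tor}^1_S(\mathfrak{F}_n^{\mathrm{loc}}(\mathcal{V}_{\bullet}),\mathcal{O}_X)\simeq\mathcal{I}^{\otimes n+1}\otimes\mathcal{V}_{\bullet}[n]$, hence that $\mathfrak{F}_n(\mathcal{V}_{\bullet})$ is itself $n$-admissible; only then do $j^*\mu^{[n]}(j_*\mathcal{V}_{\bullet})$ and $H^{[[n]]}(\mathcal{V}_{\bullet})$ both compute $\tau^{\geq -n}\,\aleph(\mathcal{V}_{\bullet})$ compatibly with $\chi_n$. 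Your sketches of (i), (iii) and (iv) are broadly consistent with the paper (for (iv) the paper works instead with auxiliary functors $\widetilde{S}$ and $\mathfrak{K}_n$ rather than a direct map from the Koszul resolution), but all of them rest on (ii), so the proposal as it stands does not constitute a proof.
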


\begin{proof} $ $ \par
\vspace{0.1cm}
$(\textrm{i})$ The functor $\mu^{[n]}$ is a lax multiplicative endofunctor of $\mathcal{C}^{-}(S)$, and the morphism from $\mu^{[n]}$ to $\mathrm{id}_{C^{-}(S)}$ is also multiplicative. If $P_{\mathcal{V}_{\bullet}}$ and $P_{\mathcal{W}_{\bullet}}$ are two flat resolutions over $X$ of complexes $\mathcal{V}_{\bullet}$ and $\mathcal{W}_{\bullet}$ in $\mathrm{C}^-(X)$, we have a commutative diagram
\begingroup \tiny
\[
\scalebox{0.95}{$
\xymatrix{
\mu^{[n]}(j_* P_{\mathcal{V}_{\bullet}}) \, \lltens{}_{\mathcal{O}_Y} \, \mu^{[n]}(j_*P_{\mathcal{W}_{\bullet}}) \ar[r] \ar[d] &\mu^{[n]} (j_*P_{\mathcal{V}_{\bullet}}) \otimes_{\mathcal{O}_Y} \mu^{[n]} (j_* P_{\mathcal{W}_{\bullet}}) \ar[r] \ar[d] & \mu^{[n]} (j_* P_{\mathcal{V}_{\bullet}} \otimes_{\mathcal{O}_Y} j_* P_{\mathcal{W}_{\bullet}})  \ar[d]\\
j_*P_{\mathcal{V}_{\bullet}} \, \lltens{}_{\mathcal{O}_Y} \, j_* P_{\mathcal{W}_{\bullet}} \ar[r] & j_*P_{\mathcal{V}_{\bullet}} \otimes_{\mathcal{O}_Y} j_*P_{\mathcal{W}_{\bullet}}  \ar@{=}[r] & j_* P_{\mathcal{V}_{\bullet}} \otimes_{\mathcal{O}_Y} j_*P_{\mathcal{W}_{\bullet}} 
}$}
\]
\endgroup
The morphism from $j^* \circ \mu^{[n]} \circ j_*$ to $H^{[n]}$ being multiplicative, we get a commutative diagram
\begingroup \tiny
\[
\scalebox{0.85}{$
\xymatrix{ \mathbb{L}j^*(j_*\mathcal{V}_{\bullet}) \, \lltens{} \,\mathbb{L}j^*(j_* \mathcal{W}) \ar[rr] && \mathbb{L}j^*(j_* \mathcal{V}_{\bullet} \,\lltens{}\, \mathcal{W}_{\bullet}) \\
\mathbb{L}j^* \mu^{[n]}(j_* P_{\mathcal{V}_{\bullet}}) \, \lltens{} \, \mathbb{L}j^* \mu^{[n]}(j_* P_{\mathcal{W}_{\bullet}}) \ar[r]\ar[d]\ar[u]^ -{\wr}  & \mathbb{L}j^* \mu^{[n]}(j_* P_{\mathcal{V}_{\bullet}}) \, \otimes \, \mathbb{L}j^* \mu^{[n]}(j_* P_{\mathcal{W}_{\bullet}}) \ar[d]\ar[r] & \mathbb{L}j^* \mu^{[n]} (j_* P_{\mathcal{V}_{\bullet}} \otimes_{\mathcal{O}_S} j_* P_{\mathcal{W}_{\bullet}}) \ar[u]^ -{\wr}\ar[d]  \\
j^* \mu^{[n]}(j_* P_{\mathcal{V}_{\bullet}}) \, \lltens{} \, \mu^{[n]}(j_*P_{\mathcal{W}_{\bullet}}) \ar[r] \ar[d] & j^* \mu^{[n]}(j_* P_{\mathcal{V}_{\bullet}}) \, \otimes \, j^* \mu^{[n]}(j_*P_{\mathcal{W}_{\bullet}}) \ar[r] \ar[d] & j^* \mu^{[n]} (j_* P_{\mathcal{V}_{\bullet}} \otimes_{\mathcal{O}_S} j_* P_{\mathcal{W}_{\bullet}}) \ar[d] \\
H^{[n]}(P_{\mathcal{V}_{\bullet}}) \, \lltens{} \, H^{[n]}(P_{\mathcal{W}_{\bullet}}) \ar[r] & H^{[n]}(P_{\mathcal{V}_{\bullet}}) \, \otimes \, H^{[n]}(P_{\mathcal{W}_{\bullet}}) \ar[r] & H^{[n]} (P_{\mathcal{V}_{\bullet}} \otimes P_{\mathcal{W}_{\bullet}})
}$}
\]
\endgroup
which proves that $\chi_n$ is multiplicative.
\par \medskip
$(\textrm{ii})$ As $\mathbb{L}j^* j_*$ is continuous, we can assume without loss of generality that $\mathcal{V}_{\bullet}$ is bounded. Let us denote by $\widetilde{\Theta}$ the natural morphism from $\mathrm{id}_{\mathrm{C}^{\mathrm{b}}(S)}$ to $\widetilde{\mu}=\mathrm{cone} \, \mu$. There is a natural morphism
\[
\widetilde{\mu} \circ j_* \longrightarrow j_*  \circ j^* \circ \widetilde{\mu} \circ j_* \xlongrightarrow{\sim} j_* \circ  {\widetilde{H}}
\]
of dg-functors from $\mathrm{C}^{{\mathrm{b}}}(X)$ to $\mathrm{C}^{\mathrm{b}}(S)$. For any nonnegative integer $n$, this gives a morphism
\[
\alpha_n \colon \widetilde{\mu}^{n} \circ j_* \simeq \widetilde{\mu} \circ \widetilde{\mu}^{n-1} \circ j_*  \longrightarrow \widetilde{\mu} \circ j_* \circ {\widetilde{H}}^{n-1}.
\]
Now we have a commutative diagram
\[
\xymatrix{
\widetilde{\mu}^n \circ j_*  \ar[d]_-{\alpha_n} \ar[r]^-{S_n^{\widetilde{\mu}} \,\circ \, j_*} & \widetilde{\mu}^{n+1} \circ j_* \ar[d]^-{\alpha_{n+1}}\\
\widetilde{\mu} \circ j_* \circ {\widetilde{H}}^{n-1} \ar[r]_-{W_n} & \widetilde{\mu} \circ j_* \circ {\widetilde{H}}^n
}
\]
where the morphism $S_n^{\widetilde{\mu}}$ is the alternated sum defined by \eqref{sum}\footnote{Since we are going to use the construction of \S \ref{natanz} for the pairs $(\widetilde{\mu}, \widetilde{\Theta})$ and $({{\widetilde{H}}}, \Theta)$, we put a superscript to distinguish them.}, and the bottom morphism  $W_n$ is the sum of 
$\widetilde{\mu} \circ j_* \,(S_{n-1}^{\widetilde{H}})$ and of the morphism obtained as the composition
\[
\widetilde{\mu} \circ j_* \circ {\widetilde{H}}^{n-1} \longrightarrow j_* \circ \widetilde{H}^n  \xlongrightarrow{\widetilde{\Theta}_{j_* \circ {\widetilde{H}}^{n}}} \widetilde{\mu} \circ j_* \circ {\widetilde{H}}^{n}.
\]
Let $\mathfrak{F}_n$ denote the dg-functor from $\mathrm{C}^{{\mathrm{b}}}(X)$ to $\mathrm{C}^{{\mathrm{b}}}(S)$ defined as the iterated cone of the complex
\[
\xymatrix@C=30pt{
j_* \ar[r]^-{W_0} & \widetilde{\mu} \circ j_* \ar[r]^-{W_1} & \widetilde{\mu} \circ j_* \circ {\widetilde{H}} \ar[r] & \cdots  \ar[r]^-{W_{n-1}}& \widetilde{\mu} \circ j_* \circ {\widetilde{H}}^{n-1}
}.
\]
Thanks to the previous discussion, we have a chain of natural transformations
\[
\widetilde{\mu}^{[n]} \circ j_* \longrightarrow \widetilde{\mu}^{[[n]]} \circ j_* \longrightarrow \mathfrak{F}_n.
\]
For any complex $\mathcal{V}_{\bullet}$ in $\mathrm{C}^{{\mathrm{b}}}(X)$ concentrated in nonpositive degrees, the corresponding morphisms
\begin{equation} \label{roulette}
\widetilde{\mu}^{[n]}(j_*\mathcal{V}_{\bullet}) \longrightarrow \widetilde{\mu}^{[[n]]}(j_* \mathcal{V}_{\bullet}) \longrightarrow \mathfrak{F}_n (\mathcal{V}_{\bullet})
\end{equation}
are all quasi-isomorphisms. Indeed, we have a diagram
\[
\xymatrix{\widetilde{\mu}^{[n]}(j_*\mathcal{V}_{\bullet}) \ar[r] \ar[d]^-{\wr} & \widetilde{\mu}^{[[n]]}(j_* \mathcal{V}_{\bullet}) \ar[d] \ar[ld]_-{\sim} \\
j_* \mathcal{V}_{\bullet} & \mathfrak{F}_n (\mathcal{V}_{\bullet}) \ar[l]_-{\sim}
}
\]
where the left vertical arrow is an isomorphism thanks to Theorem \ref{loire}, and the diagonal arrow is an isomorphism since $\widetilde{\mu}$ is quasi-isomorphic to zero. We now claim that $\mathfrak{F}_n(\mathcal{V}_{\bullet})$ is an n-admissible complex, which is a purely local problem. Hence we can assume that $S$ is globally trivial. If $\widetilde{T}$ is the shift functor defined by\footnote{This definition doesn't match with Definition \ref{definitif} when $T$ is the functor $\mathcal{I}[2] \otimes  \star \,$, however it differs from it by an element in the center of $\mathrm{EndFct}_{\mathrm{dg}}(\mathrm{C}^{{\mathrm{b}}}(X))$ whose image lies in null homotopic complexes.} 
\[
\widetilde{T} (\mathcal{V}_{\bullet})=\mathcal{I} \otimes \mathcal{V}_{\bullet} [2]
\]
then there is a natural quasi-isomorphism from $\widetilde{H}$ to $\widetilde{T}$, and the diagram
\[
\xymatrix{
&\mathrm{id}_{\mathrm{C}^{{\mathrm{b}}}(X)} \ar[ld]_-{\Theta} \ar[rd]^-{0}& \\
\widetilde{H} \ar[rr] &&\widetilde{T} 
}
\]
commutes. Hence, if $\mathfrak{F}_n^{\mathrm{loc}}$ is the iterated cone of the complex
\[
\xymatrix@C=30pt{
j_* \ar[r]^-{W_0^{\mathrm{loc}}} & \widetilde{\mu} \circ j_* \ar[r]^-{W_1^{\mathrm{loc}}} & \widetilde{\mu} \circ j_* \circ {\widetilde{T}} \ar[r] & \cdots  \ar[r]^-{W_{n-1}^{\mathrm{loc}}} & \widetilde{\mu} \circ j_* \circ {\widetilde{T}}^{n-1}
}
\]
where $W_p^{\mathrm{loc}}$ is given by the composition
\[
W_p^{\mathrm{loc}} \colon \widetilde{\mu} \circ j_* \circ \widetilde{T}^{p-1} \longrightarrow j_* \circ  \widetilde{H} \circ \widetilde{T}^{p-1} \longrightarrow j_* \circ \widetilde{T}^{p} \xlongrightarrow{\widetilde{\Theta}_{j_* \circ \widetilde{T}^{p}}} \widetilde{\mu} \circ j_* \circ \widetilde{T}^{p},
\]
there is a natural quasi-isomorphism from $\mathfrak{F}_n(\mathcal{V}_{\bullet})$ to $\mathfrak{F}_n^{\mathrm{loc}}(\mathcal{V}_{\bullet})$.
Besides, thanks to Lemma \ref{tree}, all arrows from $ \widetilde{\mu} \circ j_* \circ \widetilde{H}^{p}$ to $\widetilde{\mu} \circ j_* \circ \widetilde{T}^{p}$ induce isomorphisms in the admissible derived category $\mathrm{D}^{\mathrm{adm}}(S)$, so the morphism from $\mathfrak{F}_n(\mathcal{V}_{\bullet})$ to $\mathfrak{F}_n^{\mathrm{loc}}(\mathcal{V}_{\bullet})$ is an isomorphism in $\mathrm{D}^{\mathrm{adm}}(S)$. Hence, the claim is equivalent to the fact that $\mathfrak{F}_n^{\mathrm{loc}}(\mathcal{V}_{\bullet})$ is $n$-admissible.
\par \medskip
For any complex $\mathcal{K}_{\bullet}$ of $\mathcal{O}_X$-modules, thanks to Proposition \ref{coince} and to the isomorphism $\mathrm{Tor}^1_S (j_* \mathcal{K}_{\bullet},  \mathcal{O}_X) \simeq \mathcal{I} \otimes \mathcal{K}_{\bullet} $, we have two commutative diagrams
\begingroup \scriptsize
\[
\xymatrix{
\mathrm{Tor}^1_S(\widetilde{\mu} \!\circ \!j_* (\mathcal{K}_{\bullet}), \mathcal{O}_X) \ar[r] \ar[d]^-{\wr} & \mathrm{Tor}^1_S(j_* \!\circ\! \widetilde{H}(\mathcal{K}_{\bullet}), \mathcal{O}_X) \ar[r] & \mathrm{Tor}^1_S(j_* \!\circ \!\widetilde{T}(\mathcal{K}_{\bullet}) , \mathcal{O}_X) \ar[d]^-{\wr}\\
\mathcal{I}^{\otimes 2} \otimes \mathcal{K}_{\bullet} [2] \oplus  \mathcal{I} \otimes \mathcal{K}_{\bullet}\ar[rr]^-{\begin{pmatrix}
1 & 0
\end{pmatrix} } && \mathcal{I}^{\otimes 2} \otimes \mathcal{K}_{\bullet} [2]
}
\]
\endgroup
and 
\[
\xymatrix@C=50pt@R=40pt{
\mathrm{Tor}^1_S(j_* \mathcal{K}_{\bullet} , \mathcal{O}_X) \ar[d]^-{\wr} \ar[r]^-{\widetilde{\Theta}_{j_* \mathcal{K}_{\bullet}}} & \mathrm{Tor}^1_S(\widetilde{\mu} \circ j_* (\mathcal{K}_{\bullet}) , \mathcal{O}_X) \ar[d]^-{\wr}\\
\mathcal{I} \otimes \mathcal{K}_{\bullet} \ar[r]^-{\tiny{\begin{pmatrix}
0 \\ 
1
\end{pmatrix}}} & \mathcal{I}^{\otimes 2} \otimes \mathcal{K}_{\bullet} [2] \oplus \mathcal{I} \otimes \mathcal{K}_{\bullet}
}
\]
This gives the diagram
\begingroup \scriptsize
\[
\xymatrix{
\mathrm{Tor}^1_S (\widetilde{\mu} \circ j_* \circ \widetilde{T}^{p-1}(\mathcal{V}_{\bullet}), \mathcal{O}_X) \ar[d]^-{\wr} \ar[r]^-{\mathrm{Tor}^1_S(W_p^{\mathrm{loc}}, \mathcal{O}_X)} & \mathrm{Tor}^1_S (\widetilde{\mu} \circ j_* \circ \widetilde{T}^{p}(\mathcal{V}_{\bullet}), \mathcal{O}_X) \ar[d]^-{\wr}\\
\mathcal{I}^{\otimes p+1} \otimes \mathcal{V}_{\bullet} [2p] \oplus \mathcal{I}^{\otimes p} \otimes \mathcal{V}_{\bullet} [2p-2] \ar[r]^-{\begin{pmatrix}
0 & 0 \\ 
1 & 0
\end{pmatrix} } & \mathcal{I}^{\otimes p+2} \otimes \mathcal{V}_{\bullet} [2p+2] \oplus \mathcal{I}^{\otimes p+1} \otimes \mathcal{V}_{\bullet} [2p]
}
\]
\endgroup
and we get
\[
\mathrm{Tor}^1_S(\mathfrak{F}_n^{\mathrm{loc}}(\mathcal{V}_{\bullet}), \mathcal{O}_X) \simeq \mathcal{I}^{\otimes n+1} \otimes \mathcal{V}_{\bullet} [n]
\]
so that $\mathfrak{F}_n^{\mathrm{loc}}(\mathcal{V}_{\bullet})$ is $n$-admissible. This proves the claim.
The chain of morphisms \eqref{roulette} induces a commutative diagram
\[
\xymatrix{j^* {\mu}^{[n]}(j_*\mathcal{V}_{\bullet}) \ar@/^3pc/[rr] ^-{\sim} \ar[r] \ar[d] &  j^* {\mu}^{[[n]]}(j_* \mathcal{V}_{\bullet}) \ar[r] \ar[d] &  j^*\mathfrak{F}_n (\mathcal{V}_{\bullet}) \ar@{=}[d] \\
H^{[n]}(\mathcal{V}_{\bullet}) \ar[r]^-{\sim} & H^{[[n]]}(\mathcal{V}_{\bullet}) \ar@{=}[r] & H^{[[n]]}(\mathcal{V}_{\bullet}) 
}
\]
where the top horizontal row is an isomorphism as ${\mu}^{[n]}(\mathcal{V}_{\bullet})$ and $\mathfrak{F}_n(\mathcal{V}_{\bullet})$ are both $n$-admissible, and the bottom horizontal row is an isomorphism thanks to Theorem \ref{canal}.
\par \medskip
$(\mathrm{iii})$ follows directly from $(\mathrm{ii})$.
\par \medskip
$(\mathrm{iv})$ Let $Z$ be the dg-functor from $\mathrm{C}^{\mathrm{b}}(X)$ to $\mathrm{C}^{\mathrm{b}}(S)$ defined by 
\[
Z(V_{\bullet})=\mathrm{cone}\, (\mathcal{I} \otimes \mathcal{V}_{\bullet} \longrightarrow \sigma^* \mathcal{V}_{\bullet}).
\]
There is a natural morphism $\mu \circ j_* $ to $Z$. If $\widetilde{Z}=\mathrm{cone}\, (S \longrightarrow j_*)$, there is a natural morphism $\widetilde{Z} \longrightarrow \widetilde{T}$ such that the diagram
\[
\xymatrix{\widetilde{\mu} \circ j_* \ar[rr]\ar[rd]&& \widetilde{Z} \ar[ld] \\
& \widetilde{T}&}
\]
commutes. Hence we get another commutative diagram
\[
\xymatrix{
\widetilde{\mu} \circ j_* \ar[r] \ar[d] & j_* \circ \widetilde{H} \ar[r] \ar[d] &  j_* \circ \widetilde{T} \ar[r]\ar@{=}[d]& 
\widetilde{\mu} \circ \widetilde{T} \ar[d] \\
\ar@/_2pc/[rrr] _-{\gamma}\widetilde{Z} \ar[r] &  j_* \circ \widetilde{T} \ar@{=}[r]&  j_* \circ \widetilde{T} \ar[r]&
\widetilde{Z} \circ \widetilde{T}
}
\]
where for any $\mathcal{V}_{\bullet}$, $\gamma_{\mathcal{V}_{\bullet}}$ is the composition
\[
\xymatrix{ &&\mathcal{I} \otimes \mathcal{V}_{\bullet} \ar[r] \ar[d]^-{\mathrm{id}} & \sigma^* \mathcal{V}_{\bullet} \ar[r] & \mathcal{V}_{\bullet}\\
\mathcal{I}^{\otimes 2} \otimes  \mathcal{V}_{\bullet} \ar[r] &\sigma^* (\mathcal{I} \otimes \mathcal{V}_{\bullet}) \ar[r] & \mathcal{I} \otimes \mathcal{V}_{\bullet}
}
\] 
If $\mathfrak{K}_n$ is the functor from $\mathrm{C}^{\mathrm{b}}(X)$ to $\mathrm{C}^{\mathrm{b}}(S)$ defined as the iterated cone of the functors
\[
\xymatrix@C=30pt{
j_* \ar[r]& \widetilde{Z} \ar[r]^{\gamma}& \widetilde{Z} \circ \widetilde{T} \ar[r]^ {\gamma_{\widetilde{T}}} & \cdots \ar[r]^ -{\gamma_{\widetilde{T}^{n-2}}}& \widetilde{Z} \circ \widetilde{T}^{n-1},
}
\]
then there is a natural morphism from $\mathfrak{F}_n^{\mathrm{loc}}$ to $\mathfrak{K}_n$, and if $\Gamma_{\sigma}^{(n)}$ is the composition
\[
\mathbb{L}j^* (j_*\mathcal{V}_{\bullet}) \xlongrightarrow{\Gamma_{\sigma}} \bigoplus_{p \geq 0} \mathcal{I}^{\otimes p} \otimes \mathcal{V}_{\bullet} \, [p] \longrightarrow \bigoplus_{p=0}^n \mathcal{I}^{\otimes p} \otimes \mathcal{V}_{\bullet} \, [p] = T^{[n]}(\mathcal{V}_{\bullet})
\]
we have a commutative diagram
\[
\xymatrix@C=50pt{
H^{[n]}\ar@/^1pc/[rrd]^-{\sim} &&\\
&j^* \mathfrak{F}_n^{\mathrm{loc}} \ar[r] \ar[dd]&H^{[[n]]} \ar[dd]^-{\sim} \\
\mathbb{L}j^* j_* \ar[uu]_-{\chi_n} \ar[dd]^-{\Gamma_{\sigma}^{(n)}} \ar[ru] \ar[rd] && \\
&j^* \mathfrak{K}_n \ar[r] &T^{[[n]]}\\
T^{[n]}\ar@/_1pc/[rru]_-{\sim} &&
}
\]
This finishes the proof.
\end{proof}

\appendix

\section{Multiplicativity of principal parts}
Let $X$ be a smooth scheme over a field of characteristic zero (or a complex manifold). Our aim is to prove that the principal parts functor $\mathrm{P}^1_X$ is naturally a lax monoidal functor. Although we didn't find explicitly the material of this section in the literature, the method we use can be found in a slightly different form in \cite{Markarian}.
\par \medskip
Let $D$ be the diagonal in $X^2$, let $W$ be the subscheme of $X^2$ defined by \[
\mathcal{O}_W=\mathcal{O}_{X^2}/{\mathcal{I}_D^2},
\]
let $p_i$ the two projections from $X^2$ to $X$, and let $q_i$ (resp. $q_{ij}$) be the three projections from $X^3$ to $X$ (resp. from $X^3$ to $X^2$). Then for any sheaves $\mathcal{F}$ and $\mathcal{G}$ of $\mathcal{O}_X$-modules, 
\begin{align*}
\mathrm{P}_X^1(\mathcal{F}) \otimes \mathrm{P}^1_X(\mathcal{G})&=p_{1*} (\mathcal{O}_{W} \otimes p_2^* \mathcal{F}) \otimes \mathrm{P}^1_X(\mathcal{G})\\
&=p_{1*} (\mathcal{O}_{W} \otimes p_2^* \mathcal{F} \otimes p_1^* \,\mathrm{P}^1_X(\mathcal{G})) \\ 
&=p_{1*} (\mathcal{O}_{W} \otimes p_2^* \mathcal{F} \otimes p_1^* \,p_{1*}(\mathcal{O}_{W} \otimes p_2^* \mathcal{G}))\\
&=p_{1*} (\mathcal{O}_{W} \otimes p_2^* \mathcal{F} \otimes q_{12*} \, q_{13}^*(\mathcal{O}_{W} \otimes p_2^* \mathcal{G}))\\
&=q_{1*}(q_{12}^* \,\mathcal{O}_W \otimes q_{13}^* \,\mathcal{O}_W \otimes q_2^* \mathcal{F} \otimes q_3^* \mathcal{G}).
\end{align*}
Let $\delta \colon X \times X \longrightarrow X \times X^2=X^3$ be the partial diagonal injection on the two last factors of $X^3$ given by $\delta(x_1, x_2)=(x_1, x_2, x_2)$, and let $T$ be the image of $\delta$. Then we get a morphism
\begin{align*}
\mathrm{P}_X^1(\mathcal{F}) \otimes \mathrm{P}_X^1(\mathcal{G}) & \longrightarrow q_{1*}(q_{12}^* \,\mathcal{O}_W \otimes q_{13}^* \,\mathcal{O}_W \otimes  \mathcal{O}_T \otimes q_2^* \mathcal{F} \otimes q_3^* \mathcal{G})\\
&=q_{1*}(q_{12}^* \,\mathcal{O}_W \otimes q_{13}^* \,\mathcal{O}_W \otimes  \delta_* \mathcal{O}_{X^2} \otimes q_2^* \mathcal{F} \otimes q_3^* \mathcal{G})\\
&=q_{1*} \delta_{*}(\delta^*(q_{12}^* \,\mathcal{O}_W \otimes q_{13}^* \,\mathcal{O}_W) \otimes \delta^*(q_2^* \mathcal{F} \otimes q_3^*\mathcal{G}))\\
&=p_{1*}(\mathcal{O}_W \otimes p_2^* \mathcal{F} \otimes p_2^* \mathcal{G}) \\
&=\mathrm{P}_X^1(\mathcal{F} \otimes \mathcal{G})
\end{align*}
which is a morphism of bifunctors
\[
\mathfrak{m} \colon \mathrm{P}_X^1(\star) \otimes \mathrm{P}_X^1 (\star \star ) \longrightarrow \mathrm{P}^1_X(\star \otimes \star \star).
\]
\begin{lemma}
The morphism $\mathfrak{m}$ is associative.
\end{lemma}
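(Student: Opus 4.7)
The strategy is to lift the entire computation to $X^4$ and observe that both associators factor through a single common morphism corresponding to the ``triple partial diagonal''. Specifically, let $r_i \colon X^4 \to X$ for $1 \le i \le 4$ and $r_{ij} \colon X^4 \to X^2$ be the natural projections. Iterating the calculation that defines $\mathfrak{m}$, together with the projection formula, one obtains
\[
\mathrm{P}^1_X(\mathcal{F}) \otimes \mathrm{P}^1_X(\mathcal{G}) \otimes \mathrm{P}^1_X(\mathcal{H}) \simeq r_{1*}\bigl(r_{12}^*\mathcal{O}_W \otimes r_{13}^*\mathcal{O}_W \otimes r_{14}^*\mathcal{O}_W \otimes r_2^*\mathcal{F} \otimes r_3^*\mathcal{G} \otimes r_4^*\mathcal{H}\bigr).
\]
Let $T_{23}, T_{34}, T_{234} \subset X^4$ be the partial diagonals where coordinates $2,3$ coincide (resp. $3,4$ coincide, resp. $2,3,4$ all coincide), and denote by $\delta \colon X^2 \hookrightarrow X^4$ the inclusion of $T_{234}$, acting as the identity on the first factor and as the small diagonal on the last three.

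The plan is then to show that each of the two possible associations of $\mathfrak{m}$ unfolds on $X^4$ to the composite
\[
r_{1*}\bigl(\bigotimes_j r_{1j}^*\mathcal{O}_W \otimes r_2^*\mathcal{F} \otimes r_3^*\mathcal{G} \otimes r_4^*\mathcal{H}\bigr) \longrightarrow r_{1*}\bigl(\bigotimes_j r_{1j}^*\mathcal{O}_W \otimes \mathcal{O}_{T_{234}} \otimes r_2^*\mathcal{F} \otimes r_3^*\mathcal{G} \otimes r_4^*\mathcal{H}\bigr),
\]
followed by base change along $\delta$ and the resulting identification with $\mathrm{P}^1_X(\mathcal{F} \otimes \mathcal{G} \otimes \mathcal{H})$. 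For the left association, the morphism is obtained by successively imposing the relation $\mathcal{O}_{T_{23}}$ and then $\mathcal{O}_{T_{234}}$, while for the right association one imposes $\mathcal{O}_{T_{34}}$ and then $\mathcal{O}_{T_{234}}$; in both cases the composite is induced by the unique surjection $\mathcal{O}_{X^4} \twoheadrightarrow \mathcal{O}_{T_{234}}$, which is symmetric in the last three factors. The two associations therefore coincide.

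First I would record carefully the base-change and projection-formula identifications needed to pass from the iterated definition of $\mathfrak{m}$ on $X^3$ to the symmetric expression on $X^4$; this amounts to verifying that the cartesian squares involved ($X^3 \to X^2 \leftarrow X^2$ for the projections $q_{ij}$, and analogously for $r_{ij}$) are flat enough for base change to be strict. Next I would spell out how each of the two associators breaks into the two-step surjection described above. The only genuine obstacle is bookkeeping: one must check that the canonical isomorphisms $\delta^*(r_{12}^*\mathcal{O}_W \otimes r_{13}^*\mathcal{O}_W \otimes r_{14}^*\mathcal{O}_W) \simeq p_2^*(\mathcal{O}_W/\mathcal{I}_D^2)$ arising from the two associations agree, which follows from the fact that $\mathcal{I}_D^2$ vanishes on $W$ so that the product $\mathcal{O}_W \otimes \mathcal{O}_W \otimes \mathcal{O}_W$ on the fourfold partial diagonal is symmetric in its three factors. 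Once this coherence is in hand, associativity of $\mathfrak{m}$ drops out of the commutativity of the tensor product on $X^4$.
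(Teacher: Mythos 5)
Your plan is correct and is essentially the paper's own argument: the paper likewise unfolds the triple product on $X^4$ as $r_{1*}$ of a tensor product of the structure sheaves of the three first formal neighbourhoods $\overline{\Delta}_{12},\overline{\Delta}_{13},\overline{\Delta}_{14}$ twisted by the pullbacks of $\mathcal{F},\mathcal{G},\mathcal{H}$, and proves associativity by exhibiting both associators as the two ways around a commutative square of correspondences whose common composite is induced by the surjection onto the triple partial diagonal $\Delta_{234}$. The only difference is cosmetic (your labels $T_{23},T_{34}$ versus the paper's $\Delta_{23},\Delta_{24}$ for the intermediate partial diagonals), so no further comparison is needed.
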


\begin{proof}
For any positive integer $n$, let us denote by $\Delta_{ij}$ the partial diagonal in $X^n$ corresponding to the equality of the $i^{\mathrm{th}}$ and $j^{\mathrm{th}}$ components, and let $\overline{\Delta}_{ij}$ be its first formal neighbourhood in $X^n$. For $n=3$, there is a natural morphism
\[
\mathcal{O}_{\overline{{\Delta}}_{12}} \otimes \mathcal{O}_{\overline{{\Delta}}_{13}} \longrightarrow \mathcal{O}_{\overline{{\Delta}}_{12}}  \otimes \mathcal{O}_{\overline{{\Delta}}_{13}}  \otimes \mathcal{O}_{{{\Delta}_{23}}}
\]
between subsheaves of $X^3$. This morphism, interpreted as a morphism of correspondences from $X^2$ to $X$, is exactly $\mathfrak{m}$. Then the associativity of $\mathfrak{m}$ follows from the commutativity of the diagram of subsheaves of $X^4$
\[
\xymatrix{ 
\mathcal{O}_{\overline{\Delta}_{12}} \otimes \mathcal{O}_{\overline{\Delta}_{13}}  \otimes \mathcal{O}_{\overline{\Delta}_{14}} \ar[r] \ar[d] & \mathcal{O}_{\overline{\Delta}_{12}} \otimes \mathcal{O}_{\overline{\Delta}_{13}}  \otimes \mathcal{O}_{\overline{\Delta}_{14}} \otimes \mathcal{O}_{\Delta_{23}} \ar[d] \\
\mathcal{O}_{\overline{\Delta}_{12}} \otimes \mathcal{O}_{\overline{\Delta}_{13}}  \otimes \mathcal{O}_{\overline{\Delta}_{14}} \otimes \mathcal{O}_{\Delta_{24}} \ar[r] & \mathcal{O}_{\overline{\Delta}_{12}} \otimes \mathcal{O}_{\overline{\Delta}_{13}}  \otimes \mathcal{O}_{\overline{\Delta}_{14}}  \otimes \mathcal{O}_{\Delta_{234}} 
}
\]
viewed as correspondences between $X^3$ and $X$.
\end{proof}
The sheaf $\mathrm{P}^1_X(\mathcal{O}_X)$ is canonically isomorphic to $\Omega_X^1 \oplus \mathcal{O}_X$. Hence the second inclusion defines a natural morphism
\[
\mu \colon \mathcal{O}_X \longrightarrow \mathrm{P}^1_X(\mathcal{O}_X).
\]
\begin{proposition} \label{chaberton}
The pair $(\mathfrak{m}, \mu)$ endows the principal parts functor $\mathrm{P}^1_X$ with the structure of a lax monoidal functor.
\end{proposition}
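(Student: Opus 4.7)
Since the preceding lemma has established the associativity of $\mathfrak{m}$, only the two unit axioms of Definition \ref{mul} remain: that the compositions
\[
\mathrm{P}^1_X(\mathcal{F}) \simeq \mathrm{P}^1_X(\mathcal{F}) \otimes \mathcal{O}_X \xrightarrow{\mathrm{id} \otimes \mu} \mathrm{P}^1_X(\mathcal{F}) \otimes \mathrm{P}^1_X(\mathcal{O}_X) \xrightarrow{\mathfrak{m}} \mathrm{P}^1_X(\mathcal{F})
\]
and its left-handed analogue coincide with $\mathrm{id}_{\mathrm{P}^1_X(\mathcal{F})}$. The plan is to trace each step of the construction of $\mathfrak{m}$ with one argument specialized to $\mathcal{O}_X$ acted on by $\mu$, and to check that every identification collapses to a tautology.

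First I would recast $\mu$ in the correspondence language. Under the isomorphism $\mathrm{P}^1_X(\mathcal{O}_X) = p_{1*}\mathcal{O}_W$ and the splitting $\mathrm{P}^1_X(\mathcal{O}_X) \simeq \Omega^1_X \oplus \mathcal{O}_X$, the morphism $\mu$ is the inclusion of the second factor, which under $(p_1^*, p_{1*})$-adjunction corresponds to the identity of $\mathcal{O}_W = p_1^* \mathcal{O}_X$; equivalently, $\mu$ is the image of the unit section $1 \in \Gamma(W, \mathcal{O}_W)$.

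Next, to verify the right unit axiom I would substitute $\mathcal{G} = \mathcal{O}_X$ in the chain of identifications defining $\mathfrak{m}$ and follow $\mathrm{id} \otimes \mu$ through. Insertion of $\mu$ inserts the unit section in the factor $q_{13}^* \mathcal{O}_W$, and the final step of the construction---multiplication by $\mathcal{O}_T$ and pushforward along $\delta \colon X^2 \hookrightarrow X^3$ onto $T = \Delta_{23}$---is the restriction along this partial diagonal. Because $q_{12} \circ \delta = q_{13} \circ \delta = \mathrm{id}_{X^2}$, $q_1 \circ \delta = p_1$ and $q_2 \circ \delta = p_2$, this restriction returns the tautological identification $\mathrm{P}^1_X(\mathcal{F}) = p_{1*}(\mathcal{O}_W \otimes p_2^* \mathcal{F})$. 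The left unit axiom follows by an identical computation after exchanging positions $2$ and $3$ of $X^3$, which is a symmetry of the expression $q_{1*}(q_{12}^* \mathcal{O}_W \otimes q_{13}^* \mathcal{O}_W \otimes q_2^*\mathcal{F} \otimes q_3^* \mathcal{G})$.

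The main---though mild---obstacle is matching the correspondence-theoretic definition of $\mathfrak{m}$ with its explicit action on sections, which requires carefully unravelling the projection formula and flat base-change isomorphisms used in the derivation; since every morphism involved is flat and $T$ is the non-thickened partial diagonal, no higher derived functors intervene and the whole verification reduces to a diagram chase on $X^3$. Alternatively, one can bypass the correspondence language entirely by checking the unit axioms in local coordinates: writing $\mathrm{P}^1_X(\mathcal{F}) \simeq \mathcal{F} \oplus \Omega^1_X \otimes \mathcal{F}$, the map $\mathfrak{m}$ is given by the Leibniz-type formula $(m, \omega \otimes m') \otimes (n, \eta \otimes n') \mapsto (mn, \omega \otimes m'n + \eta \otimes mn')$ (the term $\omega \eta \otimes m' n'$ vanishing modulo $\mathcal{I}_D^2$), and $\mu(1) = (1, 0)$, so both unit axioms manifestly return the identity.
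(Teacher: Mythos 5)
Your proof is correct and follows essentially the same route as the paper: both identify $\mu$ with the unit section of $\mathcal{O}_W$ under the correspondence description $\mathrm{P}^1_X = p_{1*}(\mathcal{O}_W \otimes p_2^*(-))$ and then chase the composition through the $X^3$-diagram defining $\mathfrak{m}$, using the identities $q_{12}\circ\delta = q_{13}\circ\delta = \mathrm{id}_{X^2}$ to see it collapse to the identity. Your closing local-coordinate check via the Leibniz formula is a harmless additional confirmation not present in the paper.
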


\begin{proof}
We must check that the properties of Definition \ref{mul} are satisfied. For any sheaf $\mathcal{F}$ of $\mathcal{O}_X$ modules, let us describe the composition
\[
\mathrm{P}_X^1(\mathcal{F}) \xlongrightarrow{\mathrm{id} \otimes \mu} \mathrm{P}_X^1(\mathcal{F}) \otimes  \mathrm{P}_X^1(\mathcal{\mathcal{O}_X}) \xlongrightarrow{\mathfrak{m}} \mathrm{P}^1_X(\mathcal{F}).
\]
The unit morphism $\mathcal{O}_X \longrightarrow \mathrm{P}_X^1(\mathcal{O}_X)$ is given by the morphism
\[
\mathcal{O}_X \longrightarrow p_{1*}\, \mathcal{O}_{X^2} \longrightarrow p_{1*}\,\mathcal{O}_{\overline{\Delta}_{12}}.
\] 
Let us consider the diagram
\[
\xymatrix{
q_{1*}(\mathcal{O}_{\overline{\Delta}_{12}} \otimes \mathcal{O}_{\overline{\Delta}_{13}} \otimes q_2^* \mathcal{F}) \ar[r] &  q_{1*}(\mathcal{O}_{\overline{\Delta}_{12}} \otimes \mathcal{O}_{\overline{\Delta}_{13}}  \otimes \mathcal{O}_{{\Delta}_{23}} \otimes q_2^* \mathcal{F}) \\
\ar[u] q_{1*}(\mathcal{O}_{\overline{\Delta}_{12}} \otimes	 q_2^* \mathcal{F}) \ar[r]  & \ar@{=}[u] q_{1*}(\mathcal{O}_{\overline{\Delta}_{12}} \otimes \mathcal{O}_{{\Delta}_{23}} \otimes q_2^* \mathcal{F}) \\
\ar@/^4pc/[uu] p_{1*}(\mathcal{O}_{\overline{\Delta}_{12}} \otimes	 p_2^* \mathcal{F}) \ar@{=}[r] \ar[u] & p_{1*}(\mathcal{O}_{\overline{\Delta}_{12}} \otimes	 p_2^* \mathcal{F}) \ar@{=}[u]
} 
\]
The top horizontal arrow is the map $\mathrm{P}_X^1(\mathcal{F}) \otimes \mathrm{P}_X^1(\mathcal{O}_X) \longrightarrow \mathrm{P}^1_X(F)$, and the top round arrow is the map $\mathrm{P}_X^1(\mathcal{F}) \longrightarrow \mathrm{P}^1_X(\mathcal{F}) \otimes \mathrm{P}^1_X(\mathcal{O}_X)$. This proves the first property of Definition \ref{mul}. The second one is proven in the same way.
\end{proof}
\section{Derived equalizers via model categories} \label{quillen}
In this section, we explain briefly how to use model categories to prove that the derived equalizers introduced in \S \ref{modelcat} can interpreted as specific derived Quillen functors.
\par \medskip
Let $\mathcal{M}$ be a model category. For any object $a$ in $\mathcal{M}$, we denote by $\mathcal{M}/a$ the model category of objects lying over $a$. For any morphism $\varphi \colon a \longrightarrow b$ in $\mathcal{M}$, the push forward functor
\[
\varphi_* \colon \mathcal{M}/a \longrightarrow \mathcal{M}/b
\]
is a left Quillen functor, \textit{i.e.}, it admits a right adjoint.  We call it the right adjoint of $\varphi_*$ the pullback functor of $\varphi$, and denote it by 
\[
\varphi^*  \colon \mathcal{M}/b \longrightarrow \mathcal{M}/a.
\]
If $n$ in a positive integer and $b=\Pi_{i \in \{ 1, n \}} \,a=a^n$, we have \[
\mathrm{Hom}_{\mathcal{M}}(a, a^n) \simeq \mathrm{Hom}_{\mathcal{M}}(a, a)^n.
\] 
Hence there is a natural map
$
\mathfrak{i}_n \colon a \longrightarrow a^n
$
corresponding via the above isomorphisms to $(\mathrm{id}_{\mathcal{M}}, \ldots, \mathrm{id}_{\mathcal{M}})$, 
and the pullback functor 
\[
\mathfrak{i}_n^* \colon \mathcal{M}/a^n \longrightarrow \mathcal{M}/a
\] 
is a right Quillen functor. The functor $\mathfrak{i}_n^*$ admits a very simple description: an object in $\mathcal{M}/a^n$ consists of an object $m$ on $\mathcal{M}$ together with $n$ maps in $\mathrm{Hom}_{\mathcal{C}}(m, a)$. Then its image by $\mathfrak{i}_n$ is the equalizer of these $n$ maps.
\par \medskip
We can derive these functors, obtaining a pair of adjoint functors
\[
\xymatrix{
\mathrm{Ho}(\mathcal{M}/a) \ar@<3pt>[r]^-{\mathrm{L} \mathfrak{i}_n *} & \mathrm{Ho}(\mathcal{M}/{a^n}) \ar@<3pt>[l]^-{\mathrm{R} \mathfrak{i}_n^*}
}
\]
Explicitly, the functor $\mathrm{R} \mathfrak{i}_n*$ is obtained as follows: for any object $c \longrightarrow a^n$ in $\mathcal{M}_{a^n}$, we take an object $c'$ such that the composition
\[
c' \longrightarrow c \longrightarrow a^n
\]
is a fibration in $\mathcal{M}$. Then  $\mathrm{R} \mathfrak{i}_n^*(c)=\mathfrak{i}_n^* (c')$.
\par \medskip
We apply this construction to a very specific situation corresponding to the setting of derived equalizers: let $\mathcal{C}$ be a $\mathbf{k}$-linear category and let $\mathcal{M}$ be the category of dg modules on $\mathrm{C}^{\mathrm{b}}(\mathcal{C}) \otimes \mathrm{C}^{\mathrm{b}}(\mathcal{C})^{\mathrm{op}}$. Then $\mathcal{M}$ can be described as follows: its objects are dg-functors from $\mathrm{C}^{\mathrm{b}}(\mathcal{C}) \otimes \mathrm{C}^{\mathrm{b}}(\mathcal{C})^{\mathrm{op}}$ to the category $\mathrm{C}(\mathbf{k})$ of complexes of $\mathbf{k}$-vector spaces, and its morphisms are natural transformations between dg-functors. 
\par \medskip
As any category of dg-modules, $\mathcal{M}$ has a natural model category structure defined by Toën and Vaqui\'{e} (\textit{see} \cite[Def. 3.1]{ToenV1}), where weak equivalences and fibrations admit the following description: if $\Psi \colon U \longrightarrow V$ is a natural transformation between two objects of $\mathcal{M}$ considered as dg-functors, then $\Psi$ is a weak equivalence (resp. a fibration) if and only if for any object $K$ of $\mathrm{C}^{\mathrm{b}}(\mathcal{C}) \otimes \mathrm{C}^{\mathrm{b}}(\mathcal{C})^{\mathrm{op}}$, $\Psi(K)$ is a quasi-isomorphism (resp. $\Psi(K)$ is surjective). There is a fully faithful embedding
\[
\iota \colon \mathrm{EndFct}_{\mathrm{dg}}(\mathrm{C}^{\mathrm{b}}(\mathcal{C})) \longrightarrow \mathcal{M}
\]
given by 
\[
\iota(F) (K \otimes L)=\mathrm{Hom}_{\mathrm{C}(\mathbf{k})}(L, F(K)).
\]
Let $\varphi \colon F \longrightarrow G$ be a natural transformation between to dg-endofunctors of $\mathrm{C}^{\mathrm{b}}(\mathcal{C})$. Then $\iota(\varphi)$ is a weak equivalence (resp. a fibration) in the model category $\mathcal{M}$ if and only if for any object $K$ of $\mathrm{C}^{\mathrm{b}}(\mathcal{C})$, the morphism $\varphi_{K}$ is a quasi-isomorphism\footnote{This means that $\varphi$ is a quasi-isomorphism as defined in Definition \ref{groix}.} (resp. $\varphi_{K}$ is surjective). 
\par \medskip
Assume to be given a couple $(H, \Psi)$ where $H$ is 
in $\mathrm{EndFct}_{\mathrm{dg}}(\mathrm{C}^{\mathrm{b}}(\mathcal{C}))$ and $\Psi \colon H \longrightarrow \mathrm{id}_{\mathrm{C}^{\mathrm{b}}(\mathcal{C})}$ is a natural transformation. For any nonnegative integer $n$, $H^n$ is endowed with $n$ natural maps to $\mathrm{id}_{\mathrm{C}^{\mathrm{b}}(\mathcal{C})}$, so that we can consider $\iota(H^n)$ as an element in the category $\mathcal{M}/\mathrm{id}_{\mathrm{C}^{\mathrm{b}}(\mathcal{C})}^n$.

\begin{proposition}
Assume to be given a triplet $(\mathcal{C}, H, \Psi)$. Then for any nonnegative integer $n$, the following assertions are valid:
\begin{enumerate}
\vspace{0.2cm}
\item[--] $\iota(H^{[n]})=\mathfrak{i}_n^* \{\iota(H^n)\}$
\vspace{0.2cm}
\item[--] $\Delta_{\widetilde{H}}^n $ is a fibrant element in $\mathcal{M}/\mathrm{id}_{\mathrm{C}^{\mathrm{b}}(\mathcal{C})}^n$ and the natural map $\Delta_{\widetilde{H}}^n \longrightarrow H^n $ is a weak equivalence.
\vspace{0.2cm}
\item[--] $\iota(H^{[[n]]})$ is isomorphic to $\mathrm{R}\mathfrak{i}_n^*\{ \iota(H^n)\}$.
\end{enumerate}
\end{proposition}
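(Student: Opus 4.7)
All three assertions hinge on identifying $\iota(\Delta_{\widetilde{H}}^n)$ as a fibrant replacement of $\iota(H^n)$ over $\iota(\mathrm{id})^n$ in $\mathcal{M}$: once this is done, (1) is a universal-property unfolding, (2) is a fibration-and-weak-equivalence check on the canonical splitting of $\Delta_{\widetilde{H}}$, and (3) follows by applying (1) to the pair $(\Delta_{\widetilde{H}}, \iota_{\widetilde{H}})$.

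For (1), the pullback along the diagonal $\mathfrak{i}_n\colon \iota(\mathrm{id}) \to \iota(\mathrm{id})^n$ is computed in $\mathcal{M}$ as the strict fibre product; since $\mathcal{M}$ is $\mathbf{k}$-linear, this fibre product is precisely the equalizer of the $n$ components of the structure map of $\iota(H^n) \to \iota(\mathrm{id})^n$, which encode by construction the $n$ natural transformations underlying the slice structure. The functor $\iota$ is fully faithful and preserves all equalizers existing in $\mathrm{Fct}_{\mathrm{dg}}(\mathrm{C}^{\mathrm{b}}(\mathcal{C}))$, so the equalizer on the $\mathcal{M}$-side coincides with the one computed on the functor side, which by Definition \ref{definitif}(i) is exactly $H^{[n]}$.

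For (2), Lemma \ref{doublecone} gives an isomorphism $\Delta_{\widetilde{H}} \simeq H \oplus U[-1]$ compatible with the projection to $\mathrm{id}_{\mathrm{C}^{\mathrm{b}}(\mathcal{C})}$ via $\Psi \oplus \iota$, as recorded in \eqref{fib}. Since $U[-1] \to \mathrm{id}_{\mathrm{C}^{\mathrm{b}}(\mathcal{C})}$ is the canonical projection from the shift of the mapping cone of $\mathrm{id}_{\mathrm{C}^{\mathrm{b}}(\mathcal{C})}$, it is degree-wise surjective, so the composite $\Delta_{\widetilde{H}} \to \mathrm{id}_{\mathrm{C}^{\mathrm{b}}(\mathcal{C})}$ is degree-wise surjective as well; its $n$-fold product $\Delta_{\widetilde{H}}^n \to \mathrm{id}_{\mathrm{C}^{\mathrm{b}}(\mathcal{C})}^n$ is therefore a fibration in $\mathcal{M}$, which is exactly the statement that $\iota(\Delta_{\widetilde{H}}^n)$ is fibrant over $\iota(\mathrm{id})^n$. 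On the other hand $U$ is acyclic, hence $\Delta_{\widetilde{H}} \to H$ is a quasi-isomorphism in the sense of Definition \ref{groix}, and since $H$ is exact by Theorem \ref{canal}(ii), the induced morphism $\Delta_{\widetilde{H}}^n \to H^n$ remains a quasi-isomorphism.

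For (3), by definition $\mathrm{R}\mathfrak{i}_n^{*}$ is computed by applying the strict functor $\mathfrak{i}_n^{*}$ to any fibrant replacement over $\iota(\mathrm{id})^n$; by (2), $\iota(\Delta_{\widetilde{H}}^n)$ is such a replacement of $\iota(H^n)$. Applying (1) to the couple $(\Delta_{\widetilde{H}}, \iota_{\widetilde{H}})$ in place of $(H, \Psi)$ yields $\mathfrak{i}_n^{*}\{\iota(\Delta_{\widetilde{H}}^n)\} = \iota(\Delta_{\widetilde{H}}^{[n]})$, which by Definition \ref{definitif}(ii) is $\iota(H^{[[n]]})$. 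The main technical step is the fibration check in (2), namely the degree-wise surjectivity of $\Delta_{\widetilde{H}} \to \mathrm{id}_{\mathrm{C}^{\mathrm{b}}(\mathcal{C})}$, which rests on the explicit shape of the mapping-cone projection $U[-1] \to \mathrm{id}_{\mathrm{C}^{\mathrm{b}}(\mathcal{C})}$; once this is secured, everything else is a formal manipulation of adjunctions and of limits preserved by the fully faithful embedding $\iota$.
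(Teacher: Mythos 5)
Your outline coincides with the paper's (equalizer description of $\mathfrak{i}_n^*$ for the first point, the splitting \eqref{fib} for the second, fibrant replacement for the third), but two load-bearing steps are asserted rather than proved, and one auxiliary step is not licensed by the hypotheses.

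First, the fibrancy check. Fibrancy of $\iota(\Delta_{\widetilde{H}}^n)$ in $\mathcal{M}/\mathrm{id}_{\mathrm{C}^{\mathrm{b}}(\mathcal{C})}^n$ requires the \emph{combined} structure map into the $n$-fold product $\iota(\mathrm{id}_{\mathrm{C}^{\mathrm{b}}(\mathcal{C})})^n$ to be a fibration, i.e.\ jointly surjective onto the direct sum of $n$ copies of the target. You deduce this from the surjectivity of the single map $\Delta_{\widetilde{H}} \rightarrow \mathrm{id}_{\mathrm{C}^{\mathrm{b}}(\mathcal{C})}$ by calling $\Delta_{\widetilde{H}}^n \rightarrow \mathrm{id}^n$ an ``$n$-fold product''; but $\Delta_{\widetilde{H}}^n$ is the $n$-fold \emph{composite}, and surjectivity of each of the $n$ components does not imply surjectivity onto the product (the image could be contained in the diagonal). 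This is precisely the point you flag as ``the main technical step'', and it is the one your argument does not actually establish for $n \geq 2$; one has to use the explicit shape of $\Delta_{\widetilde{H}} \simeq H \oplus U[-1]$ and of the $n$ distinct structure maps to verify joint surjectivity. Relatedly, in your first point you identify the strict equalizer of the $n$ components of $\iota(H^n) \rightarrow \iota(\mathrm{id})^n$ with $H^{[n]}$ ``by Definition \ref{definitif}(i)'', but that definition equalizes the $n$ maps $\pi_{n,i} \colon H^n \rightarrow H^{n-1}$, not $n$ maps into $\mathrm{id}_{\mathrm{C}^{\mathrm{b}}(\mathcal{C})}$; since all full composites $H^n \rightarrow \mathrm{id}$ of a single natural transformation coincide by naturality, the comparison between these two equalizers is exactly the point that needs an argument, and you pass over it.

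Second, to propagate the weak equivalence $\Delta_{\widetilde{H}} \rightarrow H$ to $\Delta_{\widetilde{H}}^n \rightarrow H^n$ you invoke exactness of $H$ via Theorem \ref{canal}(ii); but in this proposition $H$ is an arbitrary element of $\mathrm{Fct}_{\mathrm{dg}}(\mathrm{C}^{\mathrm{b}}(\mathcal{C}))$ with a map to the identity, and no exactness is assumed, so that citation is not available. The correct (and simpler) route is that $U$ is the cone of an identity, hence contractible, so $\Delta_{\widetilde{H}} \rightarrow H$ is a split epimorphism with contractible kernel, i.e.\ a functorial homotopy equivalence; every dg functor preserves homotopy equivalences, so the composite $\Delta_{\widetilde{H}}^n \rightarrow \Delta_{\widetilde{H}}^{n-1}\circ H \rightarrow \cdots \rightarrow H^n$ is a homotopy equivalence, in particular an objectwise quasi-isomorphism, with no hypothesis on $H$.
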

\begin{proof}
The first assertion is straightforward, and the third assertion is a direct consequence of the second. The second assertion follows from equation \eqref{fib}.
\end{proof}

\bibliographystyle{plain}
\bibliography{biblio}

\end{document}